\begin{document}
\parskip =\medskipamount
\newtheorem{theorem}{Theorem}[section]
\newtheorem{proposition}[theorem]{Proposition}
\newtheorem{lemma}[theorem]{Lemma}
\newtheorem{corollary}[theorem]{Corollary}
\newtheorem{defn}[theorem]{Definition}
\newtheorem{conjecture}[theorem]{Conjecture}

\theoremstyle{definition}
\newtheorem{definition}{Definition}

\numberwithin{equation}{section}

\newcommand{\R}{\mathbb R}
\newcommand{\TT}{\mathbb T}
\newcommand{\Z}{\mathbb Z}
\newcommand{\HH}{\mathbb H}
\newcommand{\vE}{\mathcal E} % lattice points on the sphere
\newcommand{\dist}{\operatorname{dist}}
\newcommand{\supp}{\operatorname{supp}}
\newcommand{\diam}{\operatorname{diam}}
\parskip=\medskipamount

\newcommand{\sgn}{\operatorname{sign}}

\def\bfx{{x}}
\def\ve{\varepsilon}
\def\vp{\varphi}
\def\dist{\text {dist}}
\def\iiint{{\small{_\not}}\kern-3.5pt\int}
\def\mod{\text{\rm mod }}
\def\be{\begin{equation}}
\def\ee{\end{equation}}

\input pictexwd
%\input postpict.tex

%\newcommand{\fiverm}{\rmfamily\tiny}
%\declarefixedfont{\fiverm}{\encodingdefault}{\rmdefault}{m}{n}{5}

\title[Restriction of toral eigenfunctions to hypersurfaces]
{Restriction of toral eigenfunctions to hypersurfaces and nodal
sets}
\author{Jean Bourgain and Ze\'ev Rudnick}
\address {School of Mathematics, Institute for Advanced Study,
Princeton, NJ 08540 } \email{bourgain@ias.edu}
\address {Raymond and Beverly Sackler School of Mathematical Sciences,
Tel Aviv University, Tel Aviv 69978, Israel}
\email{rudnick@post.tau.ac.il}
\date{\today}
\maketitle

\section{Introduction}

Let $M$ be a smooth Riemannian surface without boundary, $\Delta$
the corresponding Laplace-Beltrami operator and  $\Sigma$ a smooth
curve in $M$. Burq, G\'erard and Tzvetkov \cite{BGT} established
bounds for the $L^2$-norm of the restriction of eigenfunctions of
$\Delta$ to the curve $\Sigma$, showing that if \hfill\break
$-\Delta \varphi_\lambda = \lambda^2 \varphi_\lambda$, $\lambda>0$,
then
\begin{equation}\label{eq1}
 ||\varphi_\lambda||_{L^2(\Sigma)} \ll \lambda^{1/4} ||\varphi_\lambda||_{L^2(M)}
\end{equation}
and if $\Sigma$ has non-vanishing geodesic curvature then
\eqref {eq1} %{BGT all}
may be  improved to
\begin{equation}\label{eq2}
 ||\varphi_\lambda||_{L^2(\Sigma)} \ll \lambda^{1/6} ||\varphi_\lambda||_{L^2(M)}
\end{equation}
%As pointed out in \cite{BGT},
Both \eqref{eq1}, \eqref {eq2} are saturated for the sphere $S^2$.

In \cite{BGT} it is observed that for the flat torus $M=\TT^2$,
\eqref {eq1} %{BGT all}
can be improved to
\begin{equation}\label{eq3}% {torus epsilon}
 ||\varphi_\lambda||_{L^2(\Sigma)} \ll \lambda^{\epsilon}
 ||\varphi_\lambda||_{L^2(M)},\quad \forall \epsilon>0
\end{equation}
due to the fact that there is a corresponding bound on the supremum
of the eigenfunctions.
They raise the question whether in \eqref {eq3} %{torus epsilon}
the factor $\lambda^{\epsilon} $  can be replaced by a constant,
that is whether there is a uniform $L^2$ restriction bound. As
pointed out by Sarnak \cite{Sar2}, if we take $\Sigma$ to be a
geodesic segment on the torus, %(that is a straight line segment),
this particular problem is essentially equivalent to the currently
open question of whether on the circle $|x|=\lambda$, the number of
lattice points  on an arc of size $ \lambda^{1/2}$ admits a uniform
bound.

In \cite{BGT} results similar to \eqref {eq1} %{BGT all}
are also established in the higher dimensional case for restrictions
of eigenfunctions to smooth
submanifolds, in particular \eqref {eq1} %{BGT all}
holds for codimension-one submanifolds (hypersurfaces) and is sharp
for the sphere $S^{d-1}$.
Moreover \eqref {eq2} %{BGT curved}
remains valid for hypersurfaces with positive curvature \cite{H}.

In this paper we pursue the improvements of \eqref {eq2} %\eqref{BGT curved}
for the standard flat $d$-dimensional tori $\TT^d=\R^d/\Z^d$,
considering the restriction to (codimension-one) hypersurfaces
$\Sigma$ with non-vanishing curvature.
\medskip

\noindent
{\bf Main Theorem.} %\label{thm 1}
{\sl  Let $d=2,3$ and let $\Sigma\subset \TT^d$ be a real analytic
hypersurface
 with non-zero curvature.
There are constants $0<c  <C  <\infty$ and $\Lambda >0$, all
depending on $\Sigma$, so that all eigenfunctions $\varphi_\lambda$
of the Laplacian on $\TT^d$ with  $\lambda>\Lambda$ satisfy
\begin{equation}\label{eq4}
 c ||\varphi_\lambda||_2 \leq ||\varphi_\lambda||_{L^2(\Sigma)} \leq C
 ||\varphi_\lambda||_2
\end{equation}}

Observe that for the lower bound, the curvature assumption is
necessary, since the eigenfunctions $\varphi(x) = \sin(2\pi n_1 x_1)
$ all vanish on the hypersurface $x_1=0$. In fact this lower bound
implies that a curved hypersurface cannot be contained in the nodal
set of eigenfunctions with arbitrarily large eigenvalues.

It was shown in \cite{B-R1} that this last property of the nodal
sets of toral eigenfunctions hold in arbitrary dimension $d$. As we
point out in Section~\ref{sec:intersection},  the argument from
\cite{B-R1} implies in fact  a bound for the $d-2$ dimensional
Hausdorff measure of the intersection of nodal sets with a fixed
hypersurface $\Sigma$:

\begin{theorem}\label{Theorem3}
Let $\Sigma\subset\mathbb T^d$ be a real analytic hypersurface with
nowhere vanishing curvature. Then for $\lambda >\lambda_\Sigma$, the
nodal set $N$ of any eigenfunction $\vp_\lambda$ satisfies
\be\label{eq5} h_{d-2} (N\cap\Sigma)< c_\Sigma \lambda. \ee
\end{theorem}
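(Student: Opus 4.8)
The plan is to localize $\Sigma$, complexify, and reduce everything to a two–sided control on the restricted eigenfunction, which then feeds a standard zero–counting estimate. Write $N=\lambda^2/4\pi^2\asymp\lambda^2$, so that $\vp_\lambda(x)=\sum_{|n|^2=N}c_n e(n\cdot x)$ with $e(t)=e^{2\pi i t}$ and $\|\vp_\lambda\|_2^2=\sum|c_n|^2$; recall $\#\{n\in\Z^d:|n|^2=N\}\ll_d N^{O_d(1)}\ll\lambda^{O_d(1)}$. Cover $\Sigma$ by finitely many real–analytic coordinate patches (their number depending only on $\Sigma$), in each of which, after a rotation, $\Sigma$ is a graph $\{(x',\psi(x')):x'\in B'\}$ of a real–analytic $\psi$ on a ball $B'\subset\R^{d-1}$, with $\psi$ extending holomorphically to a fixed complex neighborhood $\widetilde B'\subset\mathbb C^{d-1}$ of $\overline{B'}$ and with $\|\psi\|_{C^2}$ and $|\det D^2\psi|$ bounded above and below on $B'$ uniformly in $\lambda$ (the lower bound on $\det D^2\psi$ is the curvature hypothesis). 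On such a patch the nodal set trace $N\cap\Sigma$ is the real zero set $\{x'\in B':F(x')=0\}$ of $F(x'):=\vp_\lambda(x',\psi(x'))$, and $h_{d-2}(N\cap\Sigma\cap\text{patch})\asymp h_{d-2}(\{F=0\}\cap B'')$ for a slightly smaller ball $B''\subset B'$. So it suffices to bound the latter by $c_\Sigma\lambda$ for each patch.

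Composing the entire function $\vp_\lambda$, which satisfies $|\vp_\lambda(z)|\le(\sum|c_n|)\,e^{\lambda|\operatorname{Im}z|}\ll\lambda^{O(1)}e^{C\lambda|\operatorname{Im}z|}\|\vp_\lambda\|_2$, with the complexified graph map $x'\mapsto(x',\psi(x'))$ shows that $F$ extends holomorphically to $\widetilde B'$ with the upper bound $\sup_{\widetilde B'}|F|\le e^{c_1\lambda}\|\vp_\lambda\|_2$, where $c_1=c_1(\Sigma)$. The crucial ingredient is a matching lower bound: for a fixed real piece of $\Sigma$, equivalently on the ball $B''$,
\[
 \sup_{B''}|F|\;\ge\; e^{-c_2\lambda}\,\|\vp_\lambda\|_2,\qquad c_2=c_2(\Sigma).
\]
This is exactly the quantitative form of the non–vanishing theorem of \cite{B-R1}, and it is where the curvature genuinely enters. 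Following that argument: continuing $x'$ into $\widetilde B'$ and using that $D^2\psi$ is nondegenerate, one exhibits a point at bounded distance from $B''$ where a single frequency in the sum defining $F$ dominates the rest — the nondegenerate quadratic part of the phase $n'\cdot x'+n_d\psi(x')$ allows one to pick the evaluation point so that the exponential weights attached to the (at most $\lambda^{O(1)}$) frequencies $n$ separate enough to rule out total cancellation — and then one descends to an actual point of $\Sigma$. The admissible exponential loss $e^{-c_2\lambda}$, rather than the loss–free estimate of the Main Theorem, is precisely what lets this run in every dimension, where only $\#\{|n|^2=N\}\ll\lambda^{O(1)}$ is available.

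Granting both bounds, the doubling index of $F$ — the logarithm of $\sup_{\widetilde B'}|F|\big/\sup_{B''}|F|$ — is at most $(c_1+c_2)\lambda$. I would then invoke the standard zero–counting estimate for real–analytic functions of controlled doubling (the Donnelly–Fefferman–type bound: reduce to one complex variable via the integral–geometric formula over lines meeting a slightly smaller ball $B'''$, bound the number of zeros of $F$ on each complexified line by Jensen's formula in terms of the doubling index, and integrate over the family of lines), obtaining $h_{d-2}(\{F=0\}\cap B''')\le C_d\bigl(1+(c_1+c_2)\lambda\bigr)$; in particular the lower bound forces $F\not\equiv0$, so this zero set is genuinely $(d-2)$–dimensional. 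Summing over the finitely many patches yields $h_{d-2}(N\cap\Sigma)<c_\Sigma\lambda$ for $\lambda>\lambda_\Sigma$.

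The main obstacle is the lower bound: extracting from the qualitative argument of \cite{B-R1} a uniform exponential lower bound for $\sup_{\Sigma}|\vp_\lambda|$ on a fixed piece of $\Sigma$ in terms of $\|\vp_\lambda\|_2$, with all constants depending only on $\Sigma$, and arranging that it is attained at genuine points of $\Sigma$ (not merely in the complexification), so that it can serve as the denominator in the zero–counting step. The remaining ingredients — the localization, the complex–analytic upper bound, and the passage from a doubling bound to a Hausdorff–measure bound for the nodal trace — are routine.
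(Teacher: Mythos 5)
Your overall skeleton---complexify the restricted eigenfunction $F=\vp_\lambda\circ(\text{graph map})$, bound its analytic continuation by $e^{C\lambda}\Vert\vp_\lambda\Vert_2$ on a fixed complex neighborhood, and convert two-sided control into a bound on $h_{d-2}(\{F=0\})$ by Jensen's formula on complexified lines plus Crofton's formula---is the same as the paper's (Lemmas \ref{Lemma9.3}--\ref{Lemma9.14} and \eqref{eq9.19}). The genuine gap is the lower bound, and you flag it yourself as the unresolved ``main obstacle'': you assert, but do not prove, that the argument of \cite{B-R1} yields $\sup_{B''}|F|\geq e^{-c_2\lambda}\Vert\vp_\lambda\Vert_2$ at genuine \emph{real} points of $\Sigma$, and your sketch (``a single frequency dominates \dots then one descends to an actual point of $\Sigma$'') is exactly the step that does not come for free. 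As written, the proof is conditional on an input you have not supplied, so the argument is incomplete precisely at the point where the curvature hypothesis has to do its work.

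Moreover, you have made the missing input harder than it needs to be. The zero-counting lemma you want to invoke (the paper's Lemma \ref{Lemma9.14}) only requires a lower bound at a \emph{single point of the complexification}: the quantity feeding Jensen/Crofton is $\min_{a\in D^{m}_{1/2}}\big|\log|\tilde f(a)|\big|$, not a sup over the real ball. The paper exploits exactly this. For $d=2,3$ it uses the restriction lower bound of Theorem \ref{Theorem2x} (valid for $E>E_\Sigma$, which is where curvature and the largeness of $\lambda$ enter), giving $\max_\Sigma|\vp|\gtrsim c_\Sigma$ as in \eqref{eq9.20}; for general $d$, where no such real-point bound is available, it quotes from the proof in \cite{B-R1} the complexified estimate \eqref{eq9.21}, $\max_{a\in\tilde Q}|(\vp\circ p)^\sim(a)|>E^{-C}$, which plugs directly into \eqref{eq9.19} and yields $h_{d-2}(N\cap\Sigma)\ll\sqrt E$. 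So to repair your argument you should either establish (or cite) the complexified bound \eqref{eq9.21} and run the doubling/zero-count with a complex evaluation point, or, in dimensions $2,3$, invoke the Main Theorem's lower bound; no descent to real points of $\Sigma$ is needed, and in fact a real-point bound with exponential loss (the ``goodness'' estimate \eqref{eq9.22}) follows a posteriori from the complexified one rather than serving as its source.
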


For dimension $d=2$, this means an upper bound for the number of
intersection points of a fixed curve with the nodal lines.
Interestingly, using the Main  Theorem, one can show that
conversely:

\begin{theorem}\label{IntTheorem4}
Let $\Sigma \subset\mathbb T^2$ be a real analytic non-geodesic
curve. There is $\lambda_\Sigma$ such that for
$\lambda>\lambda_\Sigma$, the nodal set $N$ of any eigenfunction
$\vp_\lambda$ satisfies \be\label{eq6} \#(N\cap\Sigma)\gg
\lambda^{1-\ve} \text { for all } \ve>0 \ee
\end{theorem}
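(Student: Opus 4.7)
The plan is to combine the Main Theorem with a duality argument and stationary-phase bounds on $\Sigma$.

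First, since $\Sigma$ is real analytic and non-geodesic, its geodesic curvature $\kappa$ does not vanish identically, and by analyticity vanishes only at isolated points. I restrict to a subarc $\Sigma'\subset\Sigma$ on which $\kappa$ is bounded away from zero and apply the Main Theorem to $\Sigma'$: normalizing $\|\vp_\lambda\|_2=1$, this gives $\|f\|_{L^2(\Sigma')}\geq c_{\Sigma'}>0$, where $f:=\vp_\lambda|_{\Sigma'}$. Together with the $\TT^2$ sup-norm estimate $\|\vp_\lambda\|_\infty\ll\lambda^\ve$ (from the divisor bound on $r_2(m):=\#\{n\in\Z^2:|n|^2=m\}$), this yields the $L^1$ lower bound
$$
  \|f\|_{L^1(\Sigma')} \;\geq\; \|f\|_{L^2(\Sigma')}^{2}/\|f\|_\infty \;\gg\; \lambda^{-\ve}.
$$

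Expand $\vp_\lambda(x)=\sum_{|n|^2=m} a_n e^{2\pi i n\cdot x}$ with $m\asymp\lambda^2$ and parametrize $\Sigma'$ by arc length $s\mapsto\gamma(s)$. Let $K:=\#(N\cap\Sigma')$ denote the number of zeros of $f$ on $\Sigma'$, and set $\sigma(s):=\sgn f(s)$, a step function taking values $\pm1$ on the subintervals $I_1,\dots,I_{K+1}$ delimited by the zeros. Then
$$
  \int_{\Sigma'}|f|\,ds \;=\; \int_{\Sigma'}f\,\sigma\,ds \;=\; \sum_n a_n J_n,
  \qquad
  J_n \;:=\; \int_{\Sigma'}\sigma(s)\,e^{2\pi i n\cdot\gamma(s)}\,ds.
$$
The goal is to bound $|J_n|$ in a way that forces $K$ to be large.

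For each $n$ with $|n|\asymp\lambda$, the phase $\psi_n(s):=2\pi n\cdot\gamma(s)$ has derivative $\psi_n'(s)=2\pi n\cdot\gamma'(s)$, with at most finitely many stationary points $s^*$ where $n\perp\gamma'(s^*)$; non-vanishing curvature forces $|\psi_n''(s^*)|\asymp\lambda$. I split $\Sigma'$ into the near-stationary region $|s-s^*|\leq\lambda^{-1/2}$, which contributes $O(\lambda^{-1/2})$ to $J_n$ by the trivial length bound, and its complement. On each subinterval $I_j$ in the complement, $|\psi_n'|\gtrsim\lambda d_j$ where $d_j:=\dist(I_j,s^*)$, and $\sigma$ is constant on $I_j$, so integration by parts combined with the trivial estimate gives $\bigl|\int_{I_j}e^{i\psi_n}ds\bigr|\ll\min\bigl(|I_j|,(\lambda d_j)^{-1}\bigr)$. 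Optimizing the split between these two regimes at a threshold distance $D\asymp K/\lambda$ yields
$$
  \sum_{j}\min\bigl(|I_j|,(\lambda d_j)^{-1}\bigr) \;\ll\; \frac{K}{\lambda}\,\log\!\Big(\frac{\lambda}{K}\Big),
$$
and hence
$$
  |J_n| \;\ll\; \lambda^{-1/2} + \frac{K\log\lambda}{\lambda}.
$$

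Finally, the divisor bound $\sum_n|a_n|\leq\sqrt{r_2(m)}\,\|a\|_2\ll\lambda^\ve$ combines with the previous step to give
$$
  \lambda^{-\ve} \;\ll\; \int_{\Sigma'}|f|\,ds \;\leq\; \Big(\sum_n|a_n|\Big)\max_n|J_n| \;\ll\; \lambda^\ve\Bigl(\lambda^{-1/2}+\frac{K\log\lambda}{\lambda}\Bigr).
$$
For $\lambda$ large the $\lambda^{-1/2}$ term is absorbed and rearranging produces $K\gg\lambda^{1-3\ve}/\log\lambda$, which after relabeling $\ve$ yields $\#(N\cap\Sigma)\geq\#(N\cap\Sigma')\gg\lambda^{1-\ve}$ for every $\ve>0$. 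The main obstacle I foresee is verifying uniformly in $n$ the bound on $\sum_j \min\bigl(|I_j|,(\lambda d_j)^{-1}\bigr)$: one must argue that the interplay between the (unknown) zero spacings and the distance to the stationary point $s^*(n)$ costs only a logarithmic factor, irrespective of how the zeros of $f$ might cluster near some particular $s^*(n)$.
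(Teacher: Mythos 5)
Your route differs genuinely from the paper's. The paper does not introduce the sign function at all: it fixes $\rho=\tfrac12-\ve_0$ and decomposes $\Sigma'$ into $\approx\lambda^{2\rho}$ arcs $\Sigma_\alpha$ of length $\lambda^{-2\rho}$; the restriction lower bound together with $\Vert\vp_\lambda\Vert_\infty\ll\lambda^\ve$ forces $\gg\lambda^{2\rho-\ve}$ of these arcs to carry $L^2$ mass $\gtrsim\lambda^{-2\rho}$, hence $\int_{\Sigma_\alpha}|\vp|\,d\sigma\gtrsim\lambda^{-2\rho-\ve}$; it then throws away the $O(\lambda^{2\rho-\ve_1/2})$ arcs on which some frequency $\xi$ is nearly tangent-orthogonal, and on the remaining arcs a single non-stationary integration by parts gives $|\int_{\Sigma_\alpha}\vp\, d\sigma|\ll\lambda^{-1+\ve_1+\ve}$, so the integral of $\vp$ is strictly smaller than that of $|\vp|$ and each such arc must contain a zero. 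You instead run a global duality through $\sigma=\sgn f$ and control the oscillatory integrals $J_n=\int_{\Sigma'}\sigma\,e^{i\psi_n}$ uniformly in $n$, trading the paper's ``discard bad arcs'' step for a direct treatment of the stationary region. Both arguments feed off the same two inputs (the restriction $L^2$ lower bound and the $\lambda^\ve$ sup bound), but the way they extract a zero-count is different, and yours more transparently ties $K$ to the amount of oscillation that $\sigma$ can absorb.

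There is, however, a quantitative slip in the key step, and it is exactly where you flagged uncertainty. With the threshold $D\asymp K/\lambda$, the far-region contribution cannot be bounded by $(K/\lambda)\log(\lambda/K)$: there is nothing preventing the $K+1$ intervals from clustering at distance $\approx D$ from $s^*$ with tiny lengths, in which case $\sum_{d_j\geq D}(\lambda d_j)^{-1}$ is as large as $(K+1)/(\lambda D)\asymp 1$, and your claimed estimate fails. The extremal configuration — roughly $K$ intervals of length $\approx\sqrt{1/(K\lambda)}$ sitting at distance $\approx\sqrt{K/\lambda}$ from $s^*$ — shows that the true size of $\sum_j\min\bigl(|I_j|,(\lambda d_j)^{-1}\bigr)$ is $\asymp\sqrt{K/\lambda}$, not $K\log\lambda/\lambda$. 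The correct optimization is at $D\asymp\sqrt{K/\lambda}$: the near region contributes $\leq 2D$ by the length bound, while the far region has at most $K+1$ intervals each contributing $\leq(\lambda D)^{-1}$, so the total is $\ll D+K/(\lambda D)\asymp\sqrt{K/\lambda}$. Fortunately this weaker bound still closes the argument: $\lambda^{-\ve}\ll\lambda^\ve\bigl(\lambda^{-1/2}+\sqrt{K/\lambda}\,\bigr)$ gives $K\gg\lambda^{1-4\ve}$ once $\ve<1/4$, and relabeling yields \eqref{eq6}. So your proof is salvageable and in fact correct after replacing the claimed $\bigl(K/\lambda\bigr)\log(\lambda/K)$ estimate and the choice $D\asymp K/\lambda$ by $\sqrt{K/\lambda}$ and $D\asymp\sqrt{K/\lambda}$. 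One further point to nail down in a complete write-up: $\psi_n$ can have several stationary points $s^*$ on $\Sigma'$ (though boundedly many, by analyticity and compactness), so the near-stationary excisions and the distances $d_j$ should be taken with respect to the full finite set of critical points, uniformly in $n$.
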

\noindent and for $d=3$, the following property

\begin{theorem}
Let $\Sigma\subset\mathbb T^3$ be as in the Main Theorem.  There is
$\lambda_\Sigma$ such that for $\lambda>\lambda_\Sigma$, the nodal
set $N$ of any eigenfunction $\vp_\lambda$ intersects $\Sigma$.
\end{theorem}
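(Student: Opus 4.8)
The plan is to argue by contradiction: suppose the nodal set $N$ of $\vp_\lambda$ misses $\Sigma$ entirely for a sequence $\lambda\to\infty$. Then $\vp_\lambda$ does not change sign on $\Sigma$, so without loss of generality $\vp_\lambda>0$ on all of $\Sigma$. The idea is that a one-signed restriction forces $\|\vp_\lambda\|_{L^1(\Sigma)}$ to be comparable to a weighted integral that, after testing against a cleverly chosen nonnegative function, can be related back to the mean value of $\vp_\lambda$ over $\TT^3$ — which is zero (each eigenfunction with $\lambda>0$ has vanishing mean). Concretely, I would let $\psi\ge 0$ be a fixed smooth bump supported in a tubular neighborhood of $\Sigma$ and expand it in the eigenbasis; the key point is that $\langle \vp_\lambda,\psi\rangle$ picks out only the frequency-$\lambda$ part of $\psi$, which is $O(\lambda^{-\infty})$ by rapid decay of Fourier coefficients of a fixed smooth function, hence negligible. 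On the other hand, pushing $\psi$ down to $\Sigma$ via the coarea formula and using $\vp_\lambda>0$ on $\Sigma$, together with control on the normal derivative, one wants to bound $\|\vp_\lambda\|_{L^2(\Sigma)}$ from above by a quantity that tends to $0$ relative to $\|\vp_\lambda\|_2$ — contradicting the lower bound $\|\vp_\lambda\|_{L^2(\Sigma)}\ge c\|\vp_\lambda\|_2$ from the Main Theorem.

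In more detail, the cleanest route is: assume $\vp_\lambda \ge 0$ on $\Sigma$, normalize $\|\vp_\lambda\|_2 = 1$, and use the Main Theorem to get $\|\vp_\lambda\|_{L^2(\Sigma)} \ge c$, hence by Cauchy–Schwarz (since $\Sigma$ has finite surface area) $\|\vp_\lambda\|_{L^1(\Sigma)} \gg 1$; because $\vp_\lambda$ is one-signed on $\Sigma$ this equals $|\int_\Sigma \vp_\lambda\, d\sigma|$. Now I want to show $\int_\Sigma \vp_\lambda\, d\sigma = o(1)$. For this, represent $\int_\Sigma \vp_\lambda\, d\sigma$ as $\int_{\TT^3} \vp_\lambda \,d\mu_\Sigma$ where $\mu_\Sigma$ is the (fixed, $\lambda$-independent) surface-carried measure; its Fourier coefficients $\widehat{\mu_\Sigma}(n)$ decay like $|n|^{-(d-1)/2} = |n|^{-1}$ because $\Sigma$ has nonvanishing curvature (stationary phase / the classical decay estimate for surface measures on curved hypersurfaces). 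Writing $\vp_\lambda = \sum_{|n|^2 = \lambda^2} a_n e(n\cdot x)$, we get
\[
 \int_\Sigma \vp_\lambda \, d\sigma = \sum_{|n|^2=\lambda^2} a_n \,\widehat{\mu_\Sigma}(-n),
\]
and by Cauchy–Schwarz this is $\le \|\vp_\lambda\|_2 \big(\sum_{|n|^2=\lambda^2} |\widehat{\mu_\Sigma}(n)|^2\big)^{1/2} \ll \lambda^{-1} r_3(\lambda^2)^{1/2}$, where $r_3(\lambda^2)$ is the number of lattice points on the sphere of radius $\lambda$. Since $r_3(\lambda^2) \ll \lambda^{1+\ve}$, this gives $\int_\Sigma \vp_\lambda\, d\sigma \ll \lambda^{-1/2+\ve} = o(1)$, the desired contradiction.

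The main obstacle — and the place where the $d=3$ hypothesis really enters — is the interplay between the decay exponent of $\widehat{\mu_\Sigma}$ and the lattice-point count: we need $|n|^{-(d-1)/2}\cdot r_d(\lambda^2)^{1/2} = o(1)$, i.e. $r_d(\lambda^2) = o(\lambda^{d-1})$. For $d=3$, $r_3(\lambda^2)\ll\lambda^{1+\ve}\ll\lambda^{2-\delta}$, so this works comfortably; for $d=2$, $r_2(\lambda^2)\ll\lambda^\ve$ and $\lambda^{d-1}=\lambda$, so one even gets the quantitative bound $\#(N\cap\Sigma)\gg\lambda^{1-\ve}$ of Theorem~\ref{IntTheorem4} by refining this argument (counting sign changes rather than merely detecting one). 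So the structure of the proof is uniform; what I should be careful about is (i) justifying the decay $\widehat{\mu_\Sigma}(n)\ll|n|^{-1}$ using the nonvanishing curvature of the real-analytic $\Sigma$ via stationary phase, handling the finitely many critical points, and (ii) making sure the normalization and the Cauchy–Schwarz step on $\Sigma$ only lose constants depending on $\Sigma$, not on $\lambda$ — both of which are routine given the hypotheses and the Main Theorem.
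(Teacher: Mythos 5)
Your upper bound for the unsigned integral is fine and coincides with the paper's first step: by stationary phase $\widehat{\mu_\Sigma}(n)\ll|n|^{-1}$, so Cauchy--Schwarz over $\mathcal E$ gives $|\int_\Sigma\vp_\lambda\,d\sigma|\ll \lambda^{-1}(\#\mathcal E)^{1/2}\ll E^{-1/4+\ve}$, exactly as in \eqref{eq9.33}. The genuine gap is in the lower bound: from $\Vert\vp_\lambda\Vert_{L^2(\Sigma)}\geq c$ you claim ``by Cauchy--Schwarz $\Vert\vp_\lambda\Vert_{L^1(\Sigma)}\gg 1$'', but Cauchy--Schwarz goes the wrong way --- it gives $\Vert\vp\Vert_{L^1(\Sigma)}\leq \sigma(\Sigma)^{1/2}\Vert\vp\Vert_{L^2(\Sigma)}$ and says nothing from below. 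To convert the $L^2(\Sigma)$ lower bound into an $L^1(\Sigma)$ lower bound you must control a \emph{higher} norm on $\Sigma$, e.g. $\Vert\vp\Vert_{L^1(\Sigma)}\geq \Vert\vp\Vert_{L^2(\Sigma)}^2/\Vert\vp\Vert_\infty$. The trivial bound $\Vert\vp\Vert_\infty\leq\sum|\hat\vp(\xi)|\leq(\#\mathcal E)^{1/2}\ll E^{1/4+\ve}$ only yields $\Vert\vp\Vert_{L^1(\Sigma)}\gg E^{-1/4-\ve}$, which loses to the upper bound $E^{-1/4+\ve}$ by an $E^{\pm\ve}$ margin; the paper notes explicitly that this is ``barely insufficient to conclude.'' So as written your contradiction does not close.

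The paper's remedy is precisely where all the work lies: interpolate $L^2(\Sigma)$ between $L^1(\Sigma)$ and $L^4(\Sigma)$ as in \eqref{eq9.35}, reducing matters to the nontrivial restriction bound $\int_\Sigma|\vp|^4\omega\,d\sigma<E^{1/2-\ve_0}$ of \eqref{eq9.36}, which then gives $\int_\Sigma|\vp|\omega\,d\sigma\gg E^{-1/4+\ve_0/2}$ and beats \eqref{eq9.33}. Proving \eqref{eq9.36} is not routine: it uses the cap lattice-point estimate of Lemma~\ref{Lemma8.13}, the divisor-type bound \eqref{eq9.48}, Linnik's equidistribution (which is why the paper's Theorem~\ref{Theorem5} carries the hypothesis $E\neq 0,4,7\bmod 8$, absent from your sketch), and a shifted-phase variant of the oscillatory-sum estimate Lemma~\ref{Lemma5.11} to handle the frequency range $|\xi_1-\xi_2|>E^{1/2-\ve_1}$. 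Your remark that the $d=2$ and $d=3$ cases have a ``uniform'' structure is also misleading: in $d=2$ the favorable numerology ($\#\mathcal E\ll\lambda^\ve$) makes the soft argument work and even quantifiable, whereas in $d=3$ the whole difficulty is the failure of that numerology, and no argument that only uses the Main Theorem plus trivial sup bounds can succeed.
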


Returning to the results of \cite{BGT} for smooth Riemannian
surfaces, let us point out that there is a close connection between
estimates on $\Vert\vp_\lambda\Vert_{L^{2}(\Sigma)}$ with $\Sigma$ a
geodesic segment and bounds on the $L^4$-norm
$\Vert\vp_\lambda\Vert_{L^4(M)}$. Recall Sogge's general estimate
for the $L^p$-norm \cite{So1} \be\label{eq7}
\Vert\vp_\lambda\Vert_{L^p(M)} \leq
C\lambda^{\delta(p)}\Vert\vp_\lambda\Vert_{L^2(M)} \ee
 where
\be \delta(p)=\begin{cases} \frac 12(\frac 12-\frac 1p) \text { if }
2\leq p \leq 6\\ \frac 12 -\frac 2p \text { if } 6\leq p\leq
\infty.\end{cases} \ee The following inequalities were established
in \cite {B} \be\label{eq8} \Vert \vp_\lambda\Vert_{L^2(\Sigma)}
\leq C\lambda^{\frac 1{2p}} \Vert\vp_\lambda\Vert_{L^p(M)} \ee
 if  $\Sigma\subset M$  is a geodesic segment  and $p\geq 2$,
and conversely \be\label{eq9} \Vert\vp_\lambda\Vert_{L^4(M)}\ll
\lambda^{\frac 1{16}+\ve} \max_\Sigma
\Vert\vp_\lambda\Vert_{L^2(\Sigma)}^{\frac 14}. \ee where the
maximum is over all geodesic segments $\Sigma\subset M$ of unit
length. Hence \eqref {eq8}, \eqref{eq9} imply that improving upon
the restriction bound \eqref{eq1} is essentially equivalent with
convexity breaking for the $L^4$-norm (see also \cite{So2}). Of
course for $M=\mathbb T^2, \Vert\vp_\lambda \Vert_\infty\ll
\lambda^\ve$ and previous considerations are of no interest.
However, the example of an integrable torus $M$ constructed in \cite
{B2} provides a sequence of eigenfunctions $\vp_\lambda$ and a
geodesic segment $\Sigma\subset M$ such that \be\label{eq10}
\Vert\vp_\lambda\Vert_{L^6(M)} \sim\lambda^{\frac 16} \text { and }
\Vert \vp_\lambda\Vert_{L^2(\Sigma)} \sim \lambda^{\frac 14}. \ee
Thus this example saturates the inequality \eqref {eq8} for $p=6$
and also the \cite{BGT} bound \eqref {eq1} (providing a surface
quite different from the sphere).

The proof of the Main Theorem for $d=2$ is rather simple (compared
with $d=3$) and we describe it next, as an illustration of the
method and some of the arithmetic ingredients used, see
\cite{BR-CRAS}.

Denote by $\sigma$ the normalized arc-length measure on the curve
$\Sigma$. Using the method of stationary phase, one sees that if
$\Sigma$ has non-vanishing curvature then the Fourier transform
$\widehat\sigma$ decays as
\begin{equation}\label{eq11} % {stationary phase}
 |\widehat\sigma(\xi) | \ll |\xi|^{-1/2}, \quad \xi \neq 0.
\end{equation}
Moreover   $|\widehat\sigma(\xi)|\leq \widehat\sigma(0)=1$ with
equality only for $\xi=0$, hence
\begin{equation}\label{eq12} %{upper bd for sigma}
 \sup_{0\neq \xi\in \Z^2} |\widehat\sigma(\xi)| \leq 1-\delta,
\end{equation}
for some $\delta=\delta_\Sigma>0$.

An eigenfunction of the Laplacian on $\TT^2$ is a trigonometric
polynomial of the form:
$$
\varphi(x)  = \sum_{|n|=\lambda} \widehat\varphi(n)e(n\cdot x)
$$
(where $e(z):=e^{2\pi i z}$), all of whose frequencies lie in the
set $\vE:=  \Z^2 \cap \lambda S^1  $. As is well known, in dimension
$d=2$, $\#\vE\ll \lambda^\epsilon$ for all $\epsilon>0$. Moreover,
by a result of Jarnik \cite{Jar}, any arc on $\lambda S^1$ of length
at most $c\lambda^{1/3}$ contains at most two lattice points
%Jarnik \cite{Jarnik} proved that an arc of length %$\lambda^{1/3}$ contains at most two lattice points,
(Cilleruelo and Cordoba \cite{CC} showed that for any $\delta<\frac
12$, arcs of length $\lambda^{\delta}$ contain at most $M(\delta)$
lattice points and in \cite{CG} it is conjectured that this remains
true for any $\delta<1$). Hence we may partition,

\begin{equation}
 \vE = \bigcup_\alpha \vE_\alpha
\end{equation}
where $\#\vE_\alpha \leq 2$ and $\dist(\vE_\alpha, \vE_\beta)>c
\lambda^{1/3}$ for $\alpha\neq \beta$. Correspondingly we may write,

\begin{equation}
 \varphi = \sum_\alpha \varphi^\alpha,\quad
\varphi^\alpha(x)  =\sum_{n\in \vE_\alpha} \widehat\varphi(n)e(nx),
\end{equation}

so that $||\varphi||_2^2 = \sum_\alpha ||\varphi^\alpha||_2^2$  and

\begin{equation}
 \int_\Sigma |\varphi|^2 d\sigma = \sum_\alpha \sum_\beta \int_\Sigma
 \varphi^\alpha \overline{\varphi^\beta} d\sigma.
\end{equation}

Applying \eqref{eq11}  we see that  $\int_\Sigma \varphi^\alpha
\overline{\varphi^\beta} d\sigma \ll \lambda^{-1/6}$ if $\alpha\neq
\beta$ and because $\#\vE \ll \lambda^\epsilon$ the total sum of
these nondiagonal terms is bounded by $\lambda^{-1/6 +\epsilon}
||\varphi||_2^2$. It suffices then to show that the diagonal terms
satisfy
\begin{equation}\label{eq14} %diagonal term}
 \delta ||\phi^\alpha||_2^2 \leq \int_\Sigma |\phi^\alpha|^2 d\sigma
 \leq 2 ||\phi^\alpha||_2^2
\end{equation}
This is clear if $\vE_\alpha=\{n\}$ while if $\vE_\alpha=\{m,n\}$
then

\begin{equation}
\int_\Sigma |\phi^\alpha|^2 d\sigma   = |\widehat\varphi(m)|^2 +
|\widehat\varphi(n)|^2 + 2\mbox{Re
}\widehat\varphi(m)\overline{\widehat\varphi(n)}
\widehat\sigma(m-n),
\end{equation}
and then \eqref {eq14} %{diagonal term}
follows from \eqref{eq12} %{upper bd for sigma}.
This proves the Theorem %~\ref{thm 1}
for  $d=2$.

The proof of the Main Theorem %Theorem~\ref{thm 1}
for dimension $d=3$   is considerably more involved and occupies
Sections 2--9 of the paper. Arguing along the lines of the
two-dimensional case gives an upper bound of $\lambda^{\epsilon}$.
To get the uniform bound for $d=3$
we need to replace the upper bound \eqref {eq11} %{stationary phase}
for the Fourier transform of the hypersurface measure by an
asymptotic expansion, and then exploit cancellation in the resulting
exponential sums over the sphere. A key ingredient there is
controlling the number of lattice points in spherical caps.

To state some relevant results, denote as before by $\vE=\Z^d\cap
\lambda S^{d-1}$ the set of lattice points on the sphere of radius
$\lambda$. %(where we assume $\lambda^2$ is an integer).
We have $\#\vE \ll \lambda^{d-2+\epsilon}$. Let $F_d(\lambda, r)$ be
the maximal number of lattice points in the intersection of $\vE$
with a spherical cap of size $r>1$.
A higher-dimensional analogue of Jarnik's theorem %\cite{Andrews}
implies that if $r\ll \lambda^{1/(d+1)}$ then all lattice points in
such  a cap are co-planar, hence $F_d(r,\lambda)\ll
r^{d-3+\epsilon}$ in that case, for any $\epsilon>0$. For larger
caps, we show:

\begin{proposition}

i) Let $d=3$. Then for any $\eta<\frac 1{15}$,
 \begin{equation} \label{eq16} %{hd for F3}
 F_3(\lambda,r) \ll \lambda^\epsilon \left(   r \Big(\frac
 r\lambda\Big)^\eta +1 \right)
 \end{equation}
ii) Let $d=4$. Then
\begin{equation}\label{eq17}
 F_4(\lambda,r) \ll \lambda^\epsilon \left( \frac{r^3}\lambda + r^{3/2} \right)
\end{equation}
iii) For $d\geq 5$ we have
\begin{equation}\label{eq18}
 F_d(\lambda,r) \ll \lambda^\epsilon \left( \frac{r^{d-1}}\lambda + r^{d-3} \right)
\end{equation}
(the factor $\lambda^\epsilon$ is redundant for large $d$).
\end{proposition}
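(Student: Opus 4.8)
The plan is to treat the three cases separately, in order of increasing difficulty; in each case I would first pass from the cap to a lower-dimensional sphere and only then bring in an arithmetic counting input, the genuinely hard case being $d=3$. \textbf{Common reduction.} Fix one lattice point $x_0\in\mathcal C:=\Sigma$-cap of size $r$ on $\lambda S^{d-1}$. Every other lattice point $x\in\mathcal C$ gives $v:=x-x_0\in\Z^d$ with $|v|\le 2r$, and from $|x|=|x_0|=\lambda$ one gets the linear relation $\langle v,x_0\rangle=-\tfrac12|v|^2$. Writing $|v|^2=k\le 4r^2$, the vector $v$ lies on the affine hyperplane $H_k=\{\langle\,\cdot\,,x_0\rangle=-k/2\}$ and on the sphere of radius $\sqrt k$, hence on a $(d-2)$-sphere of radius $\asymp\sqrt k\le 2r$ inside a coset of the rank-$(d-1)$ sublattice $H_k\cap\Z^d$ (on which $|\cdot|^2$ restricts to a positive form in $d-1$ variables); equivalently $x=x_0+v$ lies in a slab of Euclidean width $\asymp r^2/\lambda$ about the tangent hyperplane at $x_0$. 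So $F_d(\lambda,r)$ is controlled simultaneously by $\sum_{k\le 4r^2}\#\{v\in H_k\cap\Z^d:\ |v|^2=k\}$ and by the elementary count $\#\{v\in\Z^d:\ |v|\le 2r,\ |\langle v,x_0\rangle|\le 2r^2\}\ll r^{d-1}(r^2/\lambda+1)$; the two terms appearing in each part will come from playing these off against one another, distinguishing a ``generic'' regime (where a density/circle-method heuristic applies and the answer is of the expected order $r^{d-1}/\lambda$) against a ``degenerate'' regime (where the lattice points of $\mathcal C$ are essentially confined to a lower-dimensional section, and the Jarnik hierarchy --- for $r\ll\lambda^{1/(d+1)}$ all points are coplanar, and one iterates --- together with the quadratic-form bound takes over).

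\textbf{Cases $d\ge5$ and $d=4$.} For $d\ge 5$ the slices carry a positive form in $d-1\ge4$ variables, so each slice count is $\ll_\epsilon k^{(d-1)/2-1+\epsilon}\ll r^{d-3+\epsilon}$; feeding this and the slab bound into the above dichotomy gives $\ll\lambda^\epsilon r^{d-1}/\lambda$ in the generic regime and $\ll r^{d-3+\epsilon}$ in the degenerate one, i.e. \eqref{eq18}, the circle method in $\ge5$ variables being available to make the generic estimate clean. The case $d=4$ is identical except that the slices are ternary, where the sharp input is $r_3(k)\ll_\epsilon k^{1/2+\epsilon}$ of class-number type; rerunning the argument yields $r^3/\lambda$ from the generic regime and $r^{3/2+\epsilon}$ --- rather than the $r^{1+\epsilon}$ one would get from a clean count on an honest $2$-sphere --- from the degenerate one, which is \eqref{eq17}. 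I expect both of these to be essentially routine once the reduction and the standard bounds on representations by quadratic forms are in place.

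\textbf{Case $d=3$ (the main obstacle).} Here the slices $H_k\cap\Z^3$ are binary and carry only $\ll k^\epsilon$ lattice points per level, so the reduction above produces merely $F_3(\lambda,r)\ll\lambda^\epsilon(r+1)$, with no trace of the factor $(r/\lambda)^\eta$ demanded by \eqref{eq16}. (When the cap is centred near a coordinate axis one does better: projecting off the large coordinate, the admissible values $\lambda^2-t^2=\square$ with $t$ in the relevant range number only $\ll r^2/\lambda+1$, which already beats \eqref{eq16}; the difficulty is the oblique caps.) For $r\ll\lambda^{1/4}$ the higher-dimensional Jarnik theorem forces all lattice points of $\mathcal C$ onto one circle, so $F_3\ll\lambda^\epsilon$ and the ``$+1$'' suffices; one may therefore assume $\lambda^{1/4}\ll r\ll\lambda$. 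The plan is then to regard the lattice points of $\mathcal C$ as integral points on a patch of the quadric surface $|x|^2=\lambda^2$ contained in an anisotropic box of side $\asymp r$ in its two tangential directions and only $\asymp r^2/\lambda$ in the normal direction, and to run a Bombieri--Pila / Heath-Brown determinant-method argument adapted to this box: this interpolates between the honestly two-dimensional count $\ll r^{1+\epsilon}$ (available once $r^2/\lambda\gtrsim 1$) and the one-dimensional count $\ll r^{1/2+\epsilon}$ for a planar curve (available when the normal thickness drops below a unit), the near-flatness of the patch buying a power $(r/\lambda)^\eta$ over the trivial bound. Carrying the exponents through the determinant estimate is the delicate part, and is exactly what pins $\eta$ below $\tfrac1{15}$; the constant is an artifact of this argument, so I expect it to be the real bottleneck, whereas parts ii) and iii) should follow fairly softly from the reduction together with classical analytic number theory.
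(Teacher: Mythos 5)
Your reduction (fixing $x_0$ and passing to $v=x-x_0$ with $\langle v,x_0\rangle=-\frac12|v|^2$) is sound and your heuristics for the shape of the answer are right, but in all three parts the step that actually produces the stated exponents is missing. For $d\ge 4$, neither of the two quantities you propose to ``play off against one another'' is strong enough: summing the per-slice bound $\ll k^{(d-3)/2+\epsilon}$ over the $\asymp r^2$ admissible values of $k$ gives $r^{d-1+\epsilon}$, and the slab count $\ll r^{d-1}(r^2/\lambda+1)$ discards the equation $|x|^2=\lambda^2$ and exceeds the target $r^{d-1}/\lambda+r^{d-3}$ by a factor of order $r^2$; no dichotomy between two bounds that are each too weak by $r^2$ can recover the result, and you never specify how the ``generic/degenerate'' split is to be detected or exploited. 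The paper's Appendix proves (ii) and (iii) by running the Hardy--Littlewood method on the single equation $\sum_j(y_j^2+2b_jy_j)=0$, $|y_j|\le r$: the term $r^{d-1}/\lambda$ comes from a major-arc analysis that crucially exploits the size $|b_1|\sim\lambda$ of one coefficient (the substitution $\beta=\ell/(2b_1q)+\beta'$ and the decay \eqref{app21}), and --- decisively for $d=4$ --- the term $r^{3/2}$ comes from Weil's bound for the Kloosterman/Sali\'e sums, via $\sum_{q\le r}\tau(q)(q,N)^{1/2}q^{-1/2}\ll r^{1/2+\epsilon}R^{\epsilon}$. Your own accounting shows a single ternary slice carries only $r^{1+\epsilon}$ points while all slices together carry $r^{3+\epsilon}$, so $r^{3/2}$ cannot arise from any slice-counting consideration; it is square-root cancellation over the moduli, which your sketch never accesses. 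Calling (ii) ``essentially routine'' therefore skips the entire content of that part.

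For $d=3$ the paper does something quite different from the determinant method you propose. It slices the cap by integral hyperplanes $a\cdot x=k$ for a well-chosen $a\in\Z^3$ nearly parallel to the cap direction $\zeta$: the number of slices meeting the enlarged cap is $\ll 1+R\theta_1^2|a|$ with $\theta_1=\theta+|\zeta-a/|a||$, each slice carries $\ll R^\epsilon$ lattice points by \eqref{eq8.7}, and the whole content of the proof is the Diophantine approximation Lemma~\ref{Lemma8.26}, which produces $1\le q\le Q$ with $|\zeta_j-a_j/q|\ll q^{-1/2}Q^{-1/2-\gamma}$ --- an improvement on Dirichlet precisely when $q$ is small. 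The threshold $1/15$ is exactly the condition $\frac{1+3\gamma}{2}<1-6\gamma$ needed for the intermediate modulus $Q_1$ in \eqref{eq8.31} to exist; it has nothing to do with any determinant estimate. Your Bombieri--Pila/Heath-Brown route for the anisotropic box is not obviously wrong, but you give no computation showing it saves a power of $r/\lambda$ over the linear bound $\lambda^{\epsilon}(1+r)$, and the claim that it ``pins $\eta$ below $\frac1{15}$'' is reverse-engineered from the statement rather than derived. As it stands, part (i) of your proposal is a plausible research direction, not a proof.
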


The term $r^{d-1}/\lambda$ concerns the equidistribution of $\vE$,
while the term $r^{d-3}$ measures deviations related to accumulation
in lower dimensional strata.

Only \eqref{eq16} $(d=3)$ is relevant for our purpose (Lemma 6.8 in
the paper, proved in Section 9) and \eqref {eq17}, \eqref {eq18} for
$d\geq 4$ (proven in Appendix~\ref{sec:appendix}) were included to
provide a more complete picture. We point out that the argument used
to obtain \eqref{eq16} is based on certain diophantine
considerations and dimension reduction, hence differs considerably
from the proof of \eqref{eq17}, \eqref {eq18} using standard
Hardy-Littlewood circle method and Kloosterman's refinement for
$d=4$.

The second result expresses a mean-equidistribution property of
$\vE$. Partition the sphere $\lambda S^{2}$ into sets $C_\alpha$ of
size $\lambda^{ 1/2}$, for instance by intersecting with cubes of
that size. Since $\#\vE\ll \lambda^{1+\epsilon}$, one may expect
that $\#C_\alpha\cap \vE\ll \lambda^\epsilon$.
%Using Siegel's mass formula for the number of representations of an
%integral quadratic form by the genus of a quadratic form,
We show (in joint work with P. Sarnak \cite{BRS}) that as a
consequence of ``Linnik's basic Lemma'', this holds in the mean
square:
%\marginpar{refer to Linnik's fundamental lemma}

\begin{lemma}\label{IntLemma5.6}
\begin{equation}\label{eq19}
 \sum_\alpha [\#(\vE\cap C_\alpha)]^{2} \ll \lambda^{1+\epsilon}, \quad \forall \epsilon>0.
\end{equation}
\end{lemma}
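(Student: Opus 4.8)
The plan is to convert the mean square into a count of nearby pairs of lattice points on the sphere and then to feed in the arithmetic non-concentration supplied by Linnik's basic lemma. Write $N=\lambda^2$ and $\vE=\Z^3\cap\lambda S^2$. Expanding the square and interchanging the order of summation,
\[
 \sum_\alpha\big[\#(\vE\cap C_\alpha)\big]^2
 =\sum_{m,n\in\vE}\ \sum_\alpha\mathbf 1_{C_\alpha}(m)\,\mathbf 1_{C_\alpha}(n)
 =\#\{(m,n)\in\vE\times\vE:\ m,n\ \text{in a common cell}\ C_\alpha\}.
\]
Since each cell has diameter $\ll\lambda^{1/2}$, this is at most $P(\lambda):=\#\{(m,n)\in\vE\times\vE:\ |m-n|\ll\lambda^{1/2}\}$, so it suffices to show $P(\lambda)\ll\lambda^{1+\epsilon}$.

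First I would strip off the diagonal $m=n$, contributing $\#\vE\ll\lambda^{1+\epsilon}$, and for $m\neq n$ parametrise by $\ell=m-n$, so $0<|\ell|\ll\lambda^{1/2}$. The relation $|m|=|n|=\lambda$ is equivalent to $m\cdot\ell=\tfrac12|\ell|^2$, i.e.\ to $m$ lying on the affine plane $H_\ell=\{x:\ x\cdot\ell=\tfrac12|\ell|^2\}$ (the perpendicular bisector of $[0,\ell]$); conversely every $m\in\vE\cap H_\ell$ returns $n=m-\ell\in\vE$. Hence
\[
 P(\lambda)=\#\vE+\sum_{0<|\ell|\ll\lambda^{1/2}}\#\big(\vE\cap H_\ell\big),
\]
and $\vE\cap H_\ell$ is the set of lattice points on the circle in which $H_\ell$ meets $\lambda S^2$. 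A classical divisor bound for lattice points on a circle of radius $\asymp\lambda$ in a planar sublattice of covolume $\ll\lambda^{1/2}$ gives $\#(\vE\cap H_\ell)\ll\lambda^\epsilon$ for each individual $\ell$; but there are $\asymp\lambda^{3/2}$ admissible $\ell$, so this pointwise bound only yields $\lambda^{3/2+\epsilon}$, which is too weak by $\lambda^{1/2}$.

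The missing saving — and the crux — is that $\#(\vE\cap H_\ell)$ vanishes for all but $\ll\lambda^{1+\epsilon}$ of the short vectors $\ell$: equivalently, the points of $\vE$ do not cluster, in the averaged sense that a typical $m\in\vE$ has only $O(\lambda^\epsilon)$ lattice neighbours within distance $\lambda^{1/2}$. I would derive this from Linnik's basic lemma, which controls the number of $m\in\vE$ in a prescribed congruence class to a well-chosen (possibly $N$-dependent) modulus, reflecting the $p$-adic geometry of the quadric $|x|^2=N$. Deploying it at the modulus relevant to the scale $\lambda^{1/2}$ — so that a short difference $\ell$ is essentially pinned down by its residue, forcing two nearby points into a common $p$-adic class (or one of boundedly many) — and summing the resulting estimate over the admissible residues of $\ell$ should contract $\sum_{|\ell|\ll\lambda^{1/2}}\#(\vE\cap H_\ell)$ down to $\ll\lambda^{1+\epsilon}$. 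The degenerate residue classes $N\equiv 0,4,7\pmod 8$ are treated directly: $\vE=\emptyset$ when $N\equiv 7$, while $N\equiv 0\pmod 4$ forces every $m\in\vE$ into $2\Z^3$, reducing matters to $N/4$ by descent, after which $N$ may be assumed of good type.

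The main obstacle is thus purely arithmetic: the statement and the correct deployment of Linnik's basic lemma. Every cruder input loses a power of $\lambda$ — pointwise equidistribution of $\{m/\lambda\}$ on $S^2$ says nothing at scale $\lambda^{1/2}$; the cap bound $F_3(\lambda,\lambda^{1/2})\ll\lambda^{1/2-\eta/2+\epsilon}$ only gives $P(\lambda)\le\#\vE\cdot F_3\ll\lambda^{3/2-\eta/2+\epsilon}$; the divisor bound above gives $\lambda^{3/2+\epsilon}$ — so the full non-concentration from the basic lemma, applied with a carefully chosen modulus, is genuinely needed to reach the exponent $1+\epsilon$. Everything else (the reduction to nearby pairs, the parametrisation by $\ell$, and the bookkeeping) is elementary.
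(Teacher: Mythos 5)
The paper does not prove this lemma: it is imported from the joint work with Sarnak \cite{BRS} and invoked, both in the Introduction and again in Section 6, as a known consequence of ``Linnik's basic Lemma''. So the question is whether your proposal amounts to a proof, and it does not. Your reductions are correct and natural: the mean square equals the number of pairs of points of $\vE$ lying in a common cell, hence is dominated by $P(\lambda)=\#\{(m,n)\in\vE^2:|m-n|\ll\lambda^{1/2}\}$; the parametrisation by $\ell=m-n$, the identification of $\vE\cap H_\ell$ with a plane section carrying $\ll\lambda^{\epsilon}$ lattice points (this is $\kappa_3(\lambda)\ll\lambda^{\epsilon}$, \eqref{eq8.7}), and your diagnosis that every soft input loses exactly a factor $\lambda^{1/2}$ are all accurate.

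The gap is that the entire content of the lemma sits in the one step you leave undone: showing that $\vE\cap H_\ell$ is empty for all but $\ll\lambda^{1+\epsilon}$ of the $\asymp\lambda^{3/2}$ short vectors $\ell$. The paragraph beginning ``I would derive this from Linnik's basic lemma'' contains no statement of that lemma, no choice of modulus, and no inequality; ``should contract the sum down to $\ll\lambda^{1+\epsilon}$'' is a hope, not a deduction. Moreover, the relevant form of the basic lemma is not a congruence count for individual points but precisely a bound on close pairs: it asserts that for $0<\delta<1/2$ the number of pairs $(m,n)\in\vE^2$ with $0<|m-n|\le\lambda^{1-2\delta}$ is $\ll\lambda^{\epsilon}\bigl(\lambda^{2-4\delta}+\#\vE\bigr)$, and taking $\delta=1/4$ this \emph{is} the assertion $P(\lambda)\ll\lambda^{1+\epsilon}$. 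In other words, your argument reduces the lemma to a special case of itself. The genuine arithmetic work --- Linnik's counting of binary sublattices of $\Z^3$ of small determinant containing vectors of norm $\lambda^2$, via the reduction theory of binary quadratic forms --- is neither stated nor reproduced, so what you have is a correct elementary reduction together with an acknowledgement of the missing input, not a proof.
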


Finally, considering very large caps $r>\lambda^{1-\delta}$, there
is an estimate

\begin{lemma}\label{IntLemma5.4}
\be\label{eq20} \# (\mathcal E\cap C_r)\ll \Big(\frac
r\lambda\Big)^2 \lambda^{1+\ve} \text { for } r>\lambda^{1-\delta_0}
\ee $(\delta_0>0$ some absolute constant)
\end{lemma}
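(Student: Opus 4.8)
The plan is to reduce \eqref{eq20} to an equidistribution statement for the values of a single linear form on $\mathcal E$, and then to feed in the known equidistribution of lattice points on spheres. First, two trivial reductions. If $\mathcal E\cap C_r=\emptyset$ there is nothing to prove. If $\#\mathcal E\le\lambda^{1-2\delta_0}$, then since $r>\lambda^{1-\delta_0}$ forces $(r/\lambda)^2\lambda^{1+\ve}\ge\lambda^{1-2\delta_0+\ve}$, the crude bound $\#(\mathcal E\cap C_r)\le\#\mathcal E$ already suffices. So assume $\#\mathcal E>\lambda^{1-2\delta_0}$ and choose $\xi\in\mathcal E\cap C_r$; since every point of $C_r$ lies within $O(r)$ of $\xi$, we may (absorbing a constant into $r$) assume $C_r=\lambda S^{2}\cap B(\xi,r)$ with $\xi\in\mathcal E\subset\mathbb Z^3$. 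For $n\in\mathcal E$ the identity $|n-\xi|^2=2\lambda^2-2\langle n,\xi\rangle$ turns membership in $C_r$ into the linear condition $\langle n,\xi\rangle>\lambda^2-\tfrac12 r^2$, so it suffices to bound the number of $n\in\mathcal E$ satisfying this.

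The heart of the matter is that the integer linear form $n\mapsto\langle n,\xi\rangle$ equidistributes on $[-\lambda^2,\lambda^2]$ as $n$ ranges over $\mathcal E$: concretely, for every $t\le\lambda^2$,
\be
\#\{\,n\in\mathcal E:\ \langle n,\xi\rangle>\lambda^2-t\,\}\ \ll\ \frac{t}{\lambda^{2}}\,\#\mathcal E\ +\ \lambda^{1-\eta}
\ee
for some absolute $\eta>0$, uniformly in $\xi$. The factor $t/\lambda^{2}$ is, up to a constant, the normalized surface measure of the cap $\{x\in\lambda S^{2}:\langle x,\xi\rangle>\lambda^2-t\}$, since by Archimedes' theorem the push-forward of the uniform measure on $\lambda S^{2}$ under $x\mapsto\langle x,\xi\rangle$ is uniform on $[-\lambda^2,\lambda^2]$. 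To prove this one majorizes $\mathbf{1}_{(\lambda^2-t,\lambda^2]}$ by a smooth function of controlled variation at scale $\sim t$, expands it as a Fourier integral, and is left with the exponential sums $\sum_{|n|^2=\lambda^2}e(\beta\langle n,\xi\rangle)$ --- Fourier coefficients of a ternary theta function --- which are controlled by the equidistribution of $\mathcal E$ on $\lambda S^{2}$; this is exactly the type of arithmetic input supplied by Linnik's basic Lemma (used for Lemma~\ref{IntLemma5.6}) together with the bounds behind Duke's theorem for Fourier coefficients of half-integral weight forms. (A purely elementary but weaker estimate is available by hand: decomposing according to the value $k=\langle n,\xi\rangle\in\mathbb Z$ confines $n$ to the circle $\{|x|^2=\lambda^2\}\cap\{\langle x,\xi\rangle=k\}$, whose lattice points number $\ll\lambda^{\ve}$ --- after a rational change of coordinates they correspond to representations of an integer $\ll\lambda^2$ by a binary quadratic form of discriminant $\ll\lambda^2$ --- giving $\#(\mathcal E\cap C_r)\ll r^{2}\lambda^{\ve}$, which misses the target by a factor $\lambda$; closing this gap is precisely what the equidistribution input does.)

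Granting the displayed estimate with $t=\tfrac12 r^2$: the main term is $\ll(r/\lambda)^2\#\mathcal E\ll(r/\lambda)^2\lambda^{1+\ve}$ since $\#\mathcal E\ll\lambda^{1+\ve}$, while the error $\lambda^{1-\eta}$ is dominated by $(r/\lambda)^2\lambda^{1+\ve}\ge\lambda^{1-2\delta_0+\ve}$ as soon as $\delta_0<\eta/2$. Thus \eqref{eq20} holds for any fixed $\delta_0<\eta/2$.

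The main obstacle is the equidistribution estimate itself: one needs an \emph{absolute} power saving, \emph{uniform} in the direction $\xi$ and in the arithmetic of $\lambda^2$ (in particular not assuming $\lambda^2$ squarefree). Passing to the one-dimensional projection $\langle n,\xi\rangle$ is what makes this tractable --- in one variable a sharp cap cutoff can be smoothed cheaply --- so that the genuinely arithmetic content is isolated in the bound on the theta coefficients rather than in a direct two-dimensional equidistribution argument on $\lambda S^{2}$.
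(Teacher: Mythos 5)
Your proposal is correct and takes essentially the same route as the paper, which states Lemma~\ref{IntLemma5.4} without a detailed proof, simply as a consequence of Linnik's equidistribution property (Duke's theorem, recalled in \S 2.1): an effective, power-saving equidistribution of $\mathcal E/\lambda$ on $S^2$ applied to a cap of angular size $\gtrsim \lambda^{-\delta_0}$, with $\delta_0$ small compared to the exponent saving, exactly as in your displayed estimate with $t\sim r^2$. The only imprecision is terminological: the sums $\sum_{n\in\mathcal E} e(\beta\langle n,\xi\rangle)$ are not themselves coefficients of a ternary theta series but Weyl sums which one first expands in spherical harmonics (of degree up to $\sim\lambda^{c\delta_0}$), after which the Duke--Iwaniec bounds for half-integral weight forms, with polynomial dependence on the degree and a divisor decomposition to handle imprimitive points, give the absolute error $\lambda^{1-\eta}$ you need --- this is the standard implementation of the very input the paper cites.
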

\noindent which is a consequence of Linnik's equidistribution
property (see \S\ref{sec:9.1}). While we make essential use of Lemma
\ref{IntLemma5.6} in our analysis, Lemma \ref{IntLemma5.4} will not
be needed, strictly speaking.

Let $1<r<\lambda$ and let $C$, $C'$ be spherical $r$-caps on
$\lambda S^2$ of mutual distance at least $10r$. Following the
argument for $d=2$, we need to bound exponential sums of the form
\begin{equation}\label{eq21} %{double sum}
 \sum_{n\in C} \sum_{n'\in C'} \widehat \varphi(n) \overline{\widehat\varphi(n')} e(\psi (n-n')) , \qquad ||\varphi||_2=1
\end{equation}
where $\psi$ is the support function of the hyper-surface $\Sigma$,
which appears in the asymptotic expansion of the Fourier transform
of the surface measure on $\Sigma$, see Section~\ref{sec:2}. For
instance, in the case that $\Sigma=\{|x|=1\}$ is the unit sphere
then $h(\xi) = |\xi|$.

For $r<\lambda^{1-\epsilon}$ we simply estimate \eqref{eq21}
by $F_3(\lambda,r)$ (see \eqref{eq16}). % {bd for F3}).
When $\lambda^{1-\epsilon} <r<\lambda$ this bound does not suffice
and we need to exploit
cancellation in the sum \eqref{eq21}. % {double sum}.

\begin{lemma}\label{IntLemma5.11}
 There is $\delta >0$ so that \eqref{eq21} %{double sum}
admits a bound of $\lambda^{1-\delta}$ for $\lambda\gg 1$.
\end{lemma}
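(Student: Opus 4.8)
The plan is to split according to the size of $r$, the larger caps being much the harder. For $1<r\le\lambda^{1-\ve_0}$ (with $\ve_0>0$ a small fixed constant) I would use no cancellation at all: by the triangle inequality and Cauchy--Schwarz, using $\Vert\vp\Vert_2=1$, \eqref{eq21} is at most
\[
\Big(\sum_{n\in C\cap\vE}|\widehat\vp(n)|\Big)\Big(\sum_{n'\in C'\cap\vE}|\widehat\vp(n')|\Big)\le\big(\#(C\cap\vE)\,\#(C'\cap\vE)\big)^{1/2}\le F_3(\lambda,r),
\]
and by \eqref{eq16} this is $\ll\lambda^{\ve}\big(r(r/\lambda)^{\eta}+1\big)\ll\lambda^{1-\ve_0(1+\eta)+\ve}$, which is $\ll\lambda^{1-\delta}$ once $\ve$ is small relative to $\ve_0\eta$.

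The case $\lambda^{1-\ve_0}<r<\lambda$ is where the work lies, and here the curvature of $\Sigma$ enters through its support function $\psi$: it is homogeneous of degree one and smooth off the origin, $\nabla\psi$ restricted to the unit sphere is the inverse Gauss map onto $\Sigma$, and non-vanishing curvature makes $D^2\psi$ of constant rank two with non-zero eigenvalues $\asymp1$ on the unit sphere, so $\Vert D^2\psi(\xi)\Vert\asymp|\xi|^{-1}$ and $\Vert D^3\psi(\xi)\Vert\asymp|\xi|^{-2}$. I would first perform a $TT^{*}$-type reduction: with $d:=\dist(C,C')\ge10r$ (so $|n-n'|\asymp d$ on $C\times C'$), Cauchy--Schwarz in $n$ bounds the square of \eqref{eq21} by $\sum_{a,b\in C'\cap\vE}\overline{\widehat\vp(a)}\widehat\vp(b)\,T(a,b)$, where $T(a,b):=\sum_{n\in C\cap\vE}e\big(\psi(n-a)-\psi(n-b)\big)$. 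The diagonal $a=b$ contributes $\le\#(C\cap\vE)\ll\lambda^{1+\ve}$. In the off-diagonal I would split according to whether $|a-b|\le K$ or $|a-b|>K$, where $K:=\lambda^{1-\ve_0}$: the ``close'' pairs, using $|T(a,b)|\le\#(C\cap\vE)$ and $\sum_{|a-b|\le K}|\widehat\vp(a)||\widehat\vp(b)|\le F_3(\lambda,K)$ (two Cauchy--Schwarz), contribute $\ll\#(C\cap\vE)\,F_3(\lambda,K)\ll\lambda^{2-\delta}$ by \eqref{eq16}. It then remains to prove $|T(a,b)|\ll\lambda^{1-\delta_1}$ for every ``far'' pair ($|a-b|>K$), since then the far off-diagonal is $\le\#(C'\cap\vE)\lambda^{1-\delta_1}\ll\lambda^{2-\delta_1+\ve}$, finishing the proof.

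To bound $T(a,b)$ for a far pair I would set $g(n):=\psi(n-a)-\psi(n-b)$. By homogeneity and the mean value theorem $|\nabla g|\ll|a-b|/d$ and $\Vert D^2g\Vert\ll|a-b|/d^2$ (with smaller higher derivatives), and---crucially---the restriction of $D^2g$ to the tangent plane of $\lambda S^2$ is a \emph{non-degenerate} quadratic form of size $\asymp|a-b|/d^2$ (this is where non-vanishing curvature of $\Sigma$ is used), so that, since $|a-b|>K$, $g$ oscillates over a range $\gg1$ on $C$. I would partition $C$ into sub-caps $C_j$ of size $\rho$, with $\rho$ a suitable small power of $\lambda$; on each $C_j$, with centre $c_j$, one replaces $g$ by its linearisation with negligible error, giving $T(a,b)=\sum_j e\big(g(c_j)\big)\,\Xi_j+o(\lambda^{1-\delta_1})$, $\Xi_j:=\sum_{n\in C_j\cap\vE}e\big(\nabla g(c_j)\cdot n\big)$. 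Then I would argue in two complementary regimes: on a ``poor'' $C_j$ the inner linear sum $\Xi_j$ is controlled by standard bounds for linear exponential sums over lattice points in a thin slab (for $\rho$ below the Jarnik threshold $\lambda^{1/4}$ the points of $C_j\cap\vE$ are even coplanar), giving a saving unless $\nabla g(c_j)$ is in a resonant position; on a ``rich'' or resonant $C_j$, where only $|\Xi_j|\le\#(C_j\cap\vE)$ is available, one extracts the saving instead from the oscillation of the outer phase $e(g(c_j))$, which by non-degeneracy of $D^2g$ is a curved phase on the two-dimensional net $\{c_j\}$ amenable to a van der Corput/stationary-phase bound, the number of rich sub-caps being limited by \eqref{eq16} and, at the critical scale $\rho\asymp\lambda^{1/2}$, by Lemma~\ref{IntLemma5.6}. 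Optimising $\rho$ balances the two contributions and should yield $|T(a,b)|\ll\lambda^{1-\delta_1}$.

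The hard part will be the rich/resonant sub-caps: there the inner sum gives no help and the entire power saving must come from the curved outer phase $e(g(c_j))$, which simultaneously requires a quantitative limitation on how often a sub-cap can be rich (from \eqref{eq16}, and at $\rho\asymp\lambda^{1/2}$ from the mean-square equidistribution of Lemma~\ref{IntLemma5.6}) and enough uniform non-degeneracy of $D^2g$---i.e.\ of $D^2\psi$ along the relevant directions---to run the van der Corput estimate over the net of centres, together with ruling out an arithmetic resonance between $\nabla\psi$ and $\vE\cap C_j$. Reconciling these competing demands---small caps give little outer oscillation, large caps give uncontrolled inner sums---is precisely what forces the multiscale structure and the arithmetic input on lattice points on spheres; should a single $TT^{*}$ fall short, I would iterate it, passing to the higher moments $\operatorname{tr}\big((KK^{*})^{p}\big)$, before carrying out the cap-sum analysis.
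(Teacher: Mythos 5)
Your treatment of the range $1<r\le\lambda^{1-\ve_0}$ is fine and is exactly what the paper does (trivial bound via Cauchy--Schwarz and \eqref{eq16}), and the $TT^{*}$ reduction with the diagonal and the ``close pairs'' $|a-b|\le\lambda^{1-\ve_0}$ is sound. But the whole weight of the large-$r$ case now rests on the claim that $|T(a,b)|=\big|\sum_{n\in C\cap\vE}e\big(\psi(n-a)-\psi(n-b)\big)\big|\ll\lambda^{1-\delta_1}$ \emph{for every individual far pair} $(a,b)$, and this is where the proposal has a genuine gap. First, the asserted uniform non-degeneracy of $D^2g$ on the tangent plane, of size $\asymp|a-b|/d^2$, is false as stated: already for $\psi(\xi)=|\xi|$ one has $D^2g=\tfrac1{|n-a|}P_{(n-a)^\perp}-\tfrac1{|n-b|}P_{(n-b)^\perp}$, which acquires a null direction inside $T_n(\lambda S^2)$ when $|n-a|=|n-b|$ and $n\in\operatorname{span}(n-a,n-b)$; non-degeneracy is only generic, and controlling the exceptional configurations is precisely what the paper's analyticity argument (Lemmas~\ref{Lemma2.1}, \ref{Lemma2.8}, \ref{Lemma2.18}, \ref{Lemma2.22}, with the small exceptional family $\mathcal W_{\delta,\delta_1}$) is for. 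Second, and more seriously, the cancellation mechanisms you invoke for the sub-caps do not exist in the form needed: there is no ``standard'' power-saving bound for a linear exponential sum $\sum_{n\in C_j\cap\vE}e(\xi\cdot n)$ over lattice points of a sphere in a small cap (the only available inputs are counting bounds such as \eqref{eq16} and Jarnik-type coplanarity, which give no oscillation), and on the rich/resonant sub-caps the quantities $\Xi_j$ are arbitrary complex numbers of controlled modulus, so no van der Corput bound on the outer phase over the (uncontrolled, possibly adversarial) set of rich centres can be extracted --- you are back to bounding an oscillatory sum over an arbitrary set of points, which is the original difficulty. Your closing paragraph essentially concedes this; as written, the key step is a restatement of the problem, not a proof.

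The paper avoids a pointwise bound on $T(a,b)$ altogether, and this is the structural difference worth absorbing: it keeps the bilinear sum in both variables. The phase $\psi(x-y)$ on $S^2\times S^2$ is brought, away from a small exceptional set of box pairs, to the normal form $f(x)+g(y)+x_1y_1+x_2y_2+qx_1^2y_1^2+\text{(errors)}$ with $|q|\gtrsim\delta_1$ (Section~\ref{sec:3}), then localized and reduced to the one-dimensional \emph{bilinear} estimate of Lemma~\ref{IntLemma3.3}, whose proof (Section~\ref{sec:5}) uses a double Cauchy--Schwarz and a bilinear counting lemma that exploits the $\lambda^{-1/2}$-separation of \emph{both} index sets; that separation is not available pointwise but is supplied on average by the mean-square equidistribution of Lemma~\ref{IntLemma5.6}, after discarding an exceptional subset of $\vE$ of size $\lambda^{1-\ve'/2}$, and the large Fourier coefficients $|\widehat\vp(n)|>\lambda^{-1/2+2\ve_0}$ are removed separately. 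Your single application of $TT^{*}$ discards exactly the second averaging (over $a,b$) that this machinery requires, so to repair the argument you would either have to keep the bilinear structure (as the paper does) or supply a new idea for individual sums $\sum_{n\in\vE\cap C}e(g(n))$, which is not currently known.
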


This statement depends essentially on the equidistribution of $\vE$
in caps of size $\sqrt{\lambda}$,
as expressed in Lemma \ref{IntLemma5.6}.  %Proposition~\ref{prop:siegel}. % for $d=3$.

Using Taylor expansions of the function $\psi(x-y) $ with $x, y$
restricted to $S^2$ and suitable coordinate restrictions, Lemma
\ref{IntLemma5.11} is eventually reduced to the following
one-dimensional exponential sum estimate (proven in
Section~\ref{sec:5}):
\begin{lemma}\label{IntLemma3.3}
 Let $\beta\gg 1$ and $X,Y\subset [0,1]$ arbitrary discrete sets such that $|x-x'|, |y-y'|>\beta^{-1/2}$
for $x\neq x'\in X$ and $y\neq y'\in Y$. Then
\begin{equation}\label{eq22}
 \left| \sum_{x\in X}\sum_{y\in Y} e( \beta xy + \beta^{1/3}x^2y^2) \right| \ll \beta^{1-\kappa}
\end{equation}
for some $\kappa>0$.
\end{lemma}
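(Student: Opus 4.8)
The plan is to exploit the product structure of the phase and then extract cancellation from the (weak) curvature supplied by the quartic term. Write $\beta xy+\beta^{1/3}x^2y^2=g(xy)$ with $g(t)=\beta t+\beta^{1/3}t^2$, so that
\[
 S:=\sum_{x\in X}\sum_{y\in Y}e(g(xy))
\]
depends on the pair $(x,y)$ only through the product $t=xy\in[0,1]$, and $g'(t)\asymp\beta$, $g''(t)=2\beta^{1/3}$. The quartic term cannot be discarded: the pure bilinear sum $\sum_{x,y}e(\beta xy)$ may carry no cancellation whatsoever — take $X=Y=\{j\beta^{-1/2}\}$, so that $\beta xy\in\Z$ and every term equals $1$ — and it is exactly $\beta^{1/3}(xy)^2$ that renders the phase non-degenerate. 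A first reduction is a dyadic splitting of $X$ and $Y$ by the size of their elements; when $x$ or $y$ lies below a threshold $\beta^{-\eta}$ the $\beta^{-1/2}$-separation leaves at most $\ll\beta^{1/2-\eta}$ such points, and the corresponding contribution is $\ll\beta^{1-\eta}$, negligible once $\eta>\kappa$. So it suffices to bound $S$ on the range $x,y\gtrsim\beta^{-\eta}$.

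On this range I would apply Cauchy--Schwarz in $x$, reducing matters to an almost-orthogonality (``curved large sieve'') bound of the form $\sum_{x\in X}\big|\sum_{y\in Y}e(g(xy))\big|^2\ll\beta^{3/2-\eta'}$; since $\#X\le\beta^{1/2}+1$ this yields $|S|\ll\beta^{1-\eta'/2}$. Expanding the inner square, the diagonal $y=y'$ contributes the admissible $\#X\cdot\#Y\ll\beta$, and the off-diagonal terms are
\[
 \sum_{y\neq y'\in Y}\ \sum_{x\in X}e\big(\beta(y-y')x+\beta^{1/3}(y^2-y'^2)x^2\big).
\]
Grouping the pairs by the difference $h=y-y'$ (with $\beta^{-1/2}\le|h|\le1$), the inner sum over $x$ is a quadratic exponential sum $\sum_{x\in X}e(ax^2+bx)$ with linear frequency $b=\beta h$ and quadratic coefficient $a=\beta^{1/3}h(y+y')$ of size $|a|\ll\beta^{1/3}$. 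Here a Weyl/van der Corput differencing step in $x$ unfolds $\big|\sum_x e(ax^2+bx)\big|^2$ into $\sum_{v}e(av^2+bv)\sum_{x_2\in X\cap(X-v)}e(2avx_2)$, so that the resonant frequency $b=\beta h$ is eliminated; the residual slope $2av$ has size $\ll\beta^{1/3}$, far below the reciprocal separation $\beta^{1/2}$, so $e(2avx_2)$ is slowly varying across the separated set and the inner sum can be compared with the corresponding integral, the curvature encoded in $a$ then producing a power gain. Summing the resulting estimate over $v$, over $h$, and over the remaining variable, and using the separation of $X$ and $Y$ to bound the relevant multiplicities, should give $|S|\ll\beta^{1-\kappa}$.

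The main obstacle is that $X$ and $Y$ are \emph{arbitrary} $\beta^{-1/2}$-separated sets, not arithmetic progressions, so none of the classical quadratic exponential sum estimates apply directly: one cannot translate freely in $x$, and — more importantly — because the quadratic coefficient $a\ll\beta^{1/3}$ is far smaller than the natural scale $\beta$, the sum $\sum_{x\in X}e(ax^2+bx)$ is in effect a \emph{linear} sum, for which an adversarial separated set resonating with the frequency $b=\beta h$ exhibits no cancellation at all (the grid above is exactly such a configuration for $h$ with $\beta h\in\Z$). The real content is therefore to show that such resonance can occur only for a controlled fraction of the pairs $(y,y')$ — an almost-orthogonality statement in the spirit of the large sieve — or else, in the complementary regime where $X$ (or $Y$) has substantial additive structure and hence behaves like a progression, to fall back on an explicit Gauss-sum evaluation; organizing this dichotomy without losing the (necessarily small) exponent $\kappa$ through the dyadic bookkeeping is the crux. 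Concretely, I expect the heart of the argument to be a uniform estimate of the type $\sum_{x\in X}e(ax^2+bx)\ll\beta^{1/2-\kappa'}$, valid for all $\beta^{-1/2}$-separated $X\subset[0,1]$ and all $a,b$ in the above ranges, obtained by combining the integral-comparison for the weakly curved quadratic with a divisor- or additive-energy bound controlling the arithmetic of $X$.
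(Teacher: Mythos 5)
Your single Cauchy--Schwarz in $x$ reduces the problem to the sums $\sum_{x\in X}e(ax^2+bx)$ with $b=\beta(y-y')$ and $a=\beta^{1/3}(y^2-y'^2)$, and you then name as ``the heart of the argument'' a \emph{uniform} bound $\sum_{x\in X}e(ax^2+bx)\ll\beta^{1/2-\kappa'}$ over all $\beta^{-1/2}$-separated $X$. That estimate is false, essentially by the resonance example you yourself describe: for a fixed admissible pair $(a,b)$ with $|b|\geq\beta^{1/2}\gg\beta^{1/3}\geq|a|$, the level set $\{x\in[0,1]:\ ax^2+bx\in\Z\}$ has gaps $\approx 1/|b|\leq\beta^{-1/2}$, so it contains a $\beta^{-1/2}$-separated subset $X$ of cardinality $\sim\beta^{1/2}$ on which every summand equals $1$. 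Since $X$ may be chosen adversarially in terms of $(a,b)$, no divisor or additive-energy input on $X$ alone can rescue a bound uniform in $(a,b)$. The Weyl-differencing fallback has the same defect: after differencing you face $\sum_{x_2\in X\cap(X-v)}e(2avx_2)$ over an \emph{arbitrary} separated set, and ``slowly varying relative to the gap'' does not allow comparison with an integral --- the $\sim\beta^{1/2}$ points can all be placed where $\operatorname{Re} e(2avx_2)\geq\tfrac12$, so the curvature yields no pointwise gain. Thus the one statement you correctly identify as the crux --- that resonance of the fixed set $X$ with the frequencies $(a(y,y'),b(y,y'))$ can occur only for a small proportion of pairs $(y,y')$ --- is never proved, and it cannot be bypassed by any bound on the inner sum alone. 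This is a genuine gap, not a bookkeeping issue.

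For comparison, the paper (Lemma \ref{Lemma3.3}, proved in Section \ref{sec:5}) keeps the averaging in \emph{both} variables: Cauchy--Schwarz is applied twice, the fourfold sum is rewritten as a bilinear form $\sum_{z,w}e^{i[\beta z_1w_1+\beta^{1/3}z_2w_2]}\mu(z)\nu(w)$, where $\mu$ records the multiplicity of the difference data $\big(x-x_1,\,x^2-x_1^2\big)$ and $\nu$ the analogous data in $y$; the near-diagonal range ($|x-x_1|$ small) is discarded using separation. The general almost-orthogonality estimate of Lemma \ref{Lemma4.7} then bounds the bilinear form by $(\beta\cdot\beta^{1/3})^{1/2}$ times the total masses and the square roots of the maximal concentrations of $\mu,\nu$ on dual boxes, i.e.\ simultaneous localization of $x-x_1$ at scale $\beta^{-1}$ and of $x^2-x_1^2$ at scale $\beta^{-1/3}$. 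The decisive observation --- absent from your sketch --- is that these two localizations pin down $x+x_1$, hence $x$ itself via the $\beta^{-1/2}$-separation, so the concentration is small; this is exactly the quantitative ``resonances are rare'' statement your proposal postpones, and choosing the cutoff parameter optimally gives the exponent ($\kappa_1=\tfrac1{80}$ in the paper's normalization). So while your general instinct (Cauchy--Schwarz plus a large-sieve-type almost-orthogonality input) points in the right direction, the concrete route you propose fails, and the missing multiplicity argument is the entire content of the proof.
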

Extending the Main Theorem to arbitrary dimension $d$ remains
unsettled at this point. We make the following
\begin{conjecture}\label{Conjecture}
Let $d\geq 2$ be arbitrary and $\Sigma\subset\mathbb T^d$ a real
analytic hypersurface. Then, for some constant $C_\Sigma$, all
eigenfunctions $\vp_\lambda$ of $\mathbb T^d$ satisfy
\be\label{eq23} \Vert\vp_\lambda\Vert_{L^2(\Sigma)} \leq C_\Sigma
\Vert\vp_\lambda\Vert_2. \ee If moreover $\Sigma$ has nowhere
vanishing curvature and $\lambda>\lambda_\Sigma$, for some
$c_\Sigma>0$, also \be\label{eq24} \Vert
\vp_\lambda\Vert_{L^2(\Sigma)} \geq c_\Sigma
\Vert\vp_\lambda\Vert_2. \ee
\end{conjecture}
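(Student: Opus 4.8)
\emph{Strategy.} The natural line of attack is to run, in general dimension, the scheme that proves the Main Theorem for $d=2,3$. Normalize $\|\vp_\lambda\|_2=1$ and write $\vp_\lambda=\sum_{n\in\vE}\widehat\vp(n)e(n\cdot x)$ with $\vE=\Z^d\cap\lambda S^{d-1}$, so that $\int_\Sigma|\vp_\lambda|^2\,d\sigma=\sum_{n,m\in\vE}\widehat\vp(n)\overline{\widehat\vp(m)}\,\widehat\sigma(n-m)$. The diagonal $n=m$ contributes $\widehat\sigma(0)=1=\|\vp_\lambda\|_2^2$, which is the main term for \emph{both} inequalities \eqref{eq23} and \eqref{eq24}; everything reduces to a nontrivial bound, ideally $|S|\ll\lambda^{-\delta}$, for the off-diagonal sum $S:=\sum_{n\ne m\in\vE}\widehat\vp(n)\overline{\widehat\vp(m)}\,\widehat\sigma(n-m)$. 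One replaces $\widehat\sigma$ by its stationary-phase expansion, whose leading term is $c(\xi/|\xi|)\,|\xi|^{-(d-1)/2}e(\psi(\xi))$ with $\psi$ the support function of $\Sigma$ --- nowhere-vanishing Gaussian curvature being exactly what produces the decay rate $|\xi|^{-(d-1)/2}$ --- and disposes of the lower-order terms and the remainder as in the two- and three-dimensional cases. For the upper bound \eqref{eq23} with $\Sigma$ only assumed analytic, one first stratifies $\Sigma$ by its order of contact with affine hyperplanes: off the (lower-dimensional, by analyticity) degeneracy locus one has some Fourier decay $|\widehat\sigma|\ll|\xi|^{-\beta}$, $\beta>0$, while the flat pieces lie in hyperplanes --- rational ones may annihilate $\vp_\lambda$ outright (as $x_1=0$ does), which is harmless for an upper bound, and irrational ones are handled by equidistribution on a subtorus.

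\emph{The off-diagonal sum by scales.} Split $S$ dyadically according to the separation $\rho=|n-m|\in[1,2\lambda]$. At scale $\rho$ the amplitude is $|\widehat\sigma(n-m)|\asymp\rho^{-(d-1)/2}$, so one must bound $\big|\sum_{n,m\in\vE,\ |n-m|\asymp\rho}\widehat\vp(n)\overline{\widehat\vp(m)}\,e(\psi(n-m))\big|$. For $\rho\lesssim\lambda^{1/(d+1)}$ the lattice points of $\vE$ in such a cluster are coplanar, so this range is negligible after a dimension reduction. For larger $\rho$ one wants, first, a clean cap count $F_d(\lambda,\rho)$ and, second, the mean-square equidistribution of $\vE$ in caps of size $\sqrt\lambda$ --- the higher-dimensional analogue of Lemma~\ref{IntLemma5.6}, namely $\sum_\alpha[\#(\vE\cap C_\alpha)]^2\ll\lambda^{d-2+\ve}$ over a partition into $\sqrt\lambda$-cubes, which one expects from Linnik's basic lemma and, for $d\ge4$, from the circle method with Kloosterman refinement --- so that no single cap dominates the diagonal block. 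The trouble is that inserting the counting bounds of the Proposition (e.g. \eqref{eq17}, \eqref{eq18}) into the trivial Cauchy--Schwarz estimate gives $|S|\ll\lambda^\ve\sum_\rho\big(\rho^{-(d-1)/2}F_d(\lambda,\rho)\big)$, which merely stalls at $\lambda^\ve$ for $d=3$ (the known obstruction, where \eqref{eq16} is used) and genuinely diverges by a positive power of $\lambda$ once $d\ge4$: the equidistribution term $\rho^{d-1}/\lambda$ --- sharp, hence unimprovable --- forces divergence near $\rho\sim\lambda$ for all $d\ge4$, and for $d\ge6$ the lower-stratum term $\rho^{d-3}$ is already too large at moderate scales. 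So for $d\ge4$ one must extract power-saving cancellation from the exponential sum at essentially every scale $\rho\in[\lambda^{1/(d+1)},\lambda]$, the required saving reaching $\lambda^{(d-3)/2}$ over the trivial bound at the top scale $\rho\sim\lambda$.

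\emph{The crux.} Following the reduction that takes Lemma~\ref{IntLemma5.11} to Lemma~\ref{IntLemma3.3}, I would Taylor-expand $\psi(n-m)$ with $n,m\in\lambda S^{d-1}$, pass to graph coordinates on the two caps, and arrive at a multidimensional bilinear exponential sum in normal form $\sum_{u\in X}\sum_{v\in Y}e\big(\beta\,u\cdot v+(\text{lower-order cross terms})\big)$ with $X,Y$ arbitrary $\beta^{-1/2}$-separated subsets of boxes in $\R^{d-1}$, and prove a power-saving analogue of \eqref{eq22} in $d-1\ge3$ variables. This estimate is the decisive obstacle: the coefficients $\widehat\vp$ are completely arbitrary, so the cancellation must come purely from the phase together with the arithmetic of $\vE$ --- its spreading modulo small moduli (Kloosterman/Linnik), its $\sqrt\lambda$-scale equidistribution --- and turning that into a genuinely multilinear power saving in three or more variables, uniformly over all separated $X,Y$, appears to require an input beyond both the $d=3$ diophantine argument and the classical circle method; plausible candidates are an $\ell^2$-decoupling or efficient-congruencing mechanism for the bilinear form $u\cdot v$, or a high-rank van der Corput iteration. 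The remaining, softer difficulty is to make the stratification for \eqref{eq23} uniform in $\lambda$, i.e. to treat the irrational-hyperplane pieces of a general analytic $\Sigma$ with an equidistribution estimate that loses no power of $\lambda$. I expect the multidimensional bilinear cancellation to be where the real work lies.
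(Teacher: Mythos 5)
There is a fundamental issue to flag first: the statement you were asked about is stated in the paper as a \emph{conjecture}, not a theorem. The authors explicitly say that extending the Main Theorem beyond $d=3$ ``remains unsettled,'' and they even note that for $d=2$ with $\Sigma$ a straight segment the upper bound \eqref{eq23} is already open, being roughly equivalent to the unsolved problem of uniformly bounding the number of lattice points on arcs of length $\sqrt{\lambda}$ on $|x|=\lambda$. So there is no proof in the paper to compare against, and your text, as you yourself acknowledge in the ``crux'' paragraph, is a strategy outline rather than a proof: the decisive multidimensional bilinear estimate (the $(d-1)$-variable analogue of \eqref{eq22}, with a power saving large enough to beat the sharp equidistribution term $\rho^{d-1}/\lambda$ near $\rho\sim\lambda$) is asserted as the missing ingredient, not supplied. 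That is a genuine gap, and it is exactly the gap the authors themselves identify.

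One further point you should weigh, because it bears on whether your proposed route can work at all: the paper's Section on higher dimensions (the construction with $d\geq 8$, giving sets $\mathcal S_\lambda\subset\lambda S^{d-1}$ that are $\lambda^{1/(d-1)}$-separated yet make the restriction operator \eqref{eq26} unbounded) shows that no argument using only separation or distributional properties of $\vE$ --- cap counts $F_d(\lambda,\rho)$, $\sqrt{\lambda}$-scale mean-square equidistribution, and bilinear cancellation uniform over \emph{arbitrary} separated sets $X,Y$ --- can prove \eqref{eq23} in large dimension. Your scheme, which asks for a bilinear power saving ``uniformly over all separated $X,Y$,'' is precisely of this type, so for $d\geq 8$ it is ruled out by that example; any successful argument must exploit the integrality of $\vE$ in a way that distinguishes it from such idealized sets. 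Your closing remarks (decoupling, efficient congruencing, modular spreading of $\vE$) gesture in the right direction, but none of this is carried out, and the softer issue you mention --- handling flat or irrational-hyperplane pieces of a general analytic $\Sigma$ uniformly in $\lambda$ for \eqref{eq23} --- is also left unresolved. In short: the outline correctly reproduces the philosophy of the $d=2,3$ proof and correctly locates the obstruction, but it does not prove the conjecture, and part of the proposed uniformity requirement is provably too strong as stated.
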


It should be pointed out that in our proof of the Main Theorem for
$d= 2,3$, only distributional properties of $\mathcal E=\mathbb
Z^d\cap [|x|=\lambda]$ were exploited, but not the fact that
$\mathcal E$ actually consists of lattice points. In Section 11, we
give an example, for $d\geq 8$, of sets $\mathcal
S_\lambda\subset\lambda S^{d-1}$ satisfying the `ideal'
distributional property \be\label{eq25} |x-y|\gtrsim \lambda^{\frac
1{d-1}} \text { for $x\not = y$ in $\mathcal S_\lambda$} \ee and
such that the  Fourier restriction operator \be\label{eq26}
L^2(S^{d-1}, d\sigma)\longrightarrow \ell^2(\mathcal S_\lambda):
\mu\mapsto \hat\mu|_{\mathcal S_\lambda} \ee has unbounded norm for
$\lambda\to\infty$. This illustrates the difficulty for carrying out
our analysis in larger dimension.

As said earlier, even for $d=2$ and $\Sigma$ a straight line segment
in $\mathbb T^2$, \eqref{eq23} remains open and is roughly
equivalent with the arithmetic statement that the number of lattice
points on an arc of size $\sqrt\lambda$ on the circle $|x|=\lambda$
is bounded by an absolute constant. An easy argument in \cite{BR2}
shows that this last property is true for most $E=\lambda^2$ and in
fact the elements
 of $\{|x|^2 =E\}$ are at least $\gg \lambda^{1-\ve}$ separated, for
 all $\ve>0$.
In Section~\ref{sec:generic}, we establish the following

\begin{theorem}\label{Theorem6}
Let $\Sigma\subset\mathbb T^2$ be a smooth curve. Then for almost
all $E=\lambda^2$, there is a uniform restriction bound
\be\label{eq27} \Vert\vp_\lambda\Vert_{L^2(\Sigma)} \leq
C\Vert\vp_\lambda\Vert_2. \ee
\end{theorem}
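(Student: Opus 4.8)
The plan is to reduce the uniform restriction bound to an arithmetic statement about the separation of lattice points on the circle $|x|^2 = E$, and then show this separation property holds for almost all $E$ by a counting/Borel--Cantelli argument. As recalled in the excerpt (following the $d=2$ case of the Main Theorem), if the frequency set $\vE = \Z^2 \cap \lambda S^1$ can be partitioned into clusters $\vE_\alpha$ of bounded cardinality with $\dist(\vE_\alpha,\vE_\beta) \gg \lambda^{1/3}$ for $\alpha\neq\beta$, then \eqref{eq11} controls the off-diagonal contribution to $\int_\Sigma |\vp_\lambda|^2\,d\sigma$ by $O(\lambda^{-1/6+\epsilon}\|\vp_\lambda\|_2^2)$, which is harmless, and the diagonal terms are handled exactly as in \eqref{eq14}. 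However, for a uniform (rather than $\lambda^\epsilon$) bound we cannot afford even $\#\vE_\alpha \ll \lambda^\epsilon$ many clusters if we only have the crude decay \eqref{eq11}; instead, the cleanest sufficient condition is that the \emph{points of $\vE$ themselves} are pairwise separated by $\gg \lambda^{1-\epsilon}$, or more precisely that $\vE$ splits into $O_\epsilon(1)$ arcs on each of which the points are $\gg\lambda^{1/3}$-separated — then $\#\vE \ll 1$ arcs times the arc bound, and one gets a genuine constant. So the first step is: \emph{for almost every $E=\lambda^2$, any two distinct points of $\Z^2\cap\sqrt{E}\,S^1$ are $\gg_\epsilon \lambda^{1-\epsilon}$-separated}, which is exactly the statement attributed to \cite{BR2} in the excerpt and which I would take as the arithmetic input.

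Granting that separation, I would carry out the argument as follows. Fix $\epsilon>0$ and restrict to the full-density set of $E\le N$ for which the $\lambda^{1-\epsilon}$-separation holds (letting $\epsilon\to0$ along a sequence at the end via a standard diagonalization over a countable set of curves or a monotonicity argument). On such a sphere, write $\vp_\lambda = \sum_{n\in\vE}\widehat\vp_\lambda(n)e(n\cdot x)$ and expand
\begin{equation}
\int_\Sigma |\vp_\lambda|^2\,d\sigma = \sum_{n\in\vE}|\widehat\vp_\lambda(n)|^2 + \sum_{n\neq n'\in\vE}\widehat\vp_\lambda(n)\overline{\widehat\vp_\lambda(n')}\,\widehat\sigma(n-n').
\end{equation}
The diagonal is $\|\vp_\lambda\|_2^2$. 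For the off-diagonal, since $|n-n'|\ge c\lambda^{1-\epsilon}$, \eqref{eq11} gives $|\widehat\sigma(n-n')| \ll \lambda^{-(1-\epsilon)/2}$, and by Cauchy--Schwarz the total off-diagonal contribution is $\ll (\#\vE)\cdot\lambda^{-(1-\epsilon)/2}\|\vp_\lambda\|_2^2 \ll \lambda^{-1/2 + 2\epsilon}\|\vp_\lambda\|_2^2$, using $\#\vE\ll\lambda^\epsilon$. This is $o(1)$, so for $\lambda$ large the upper bound \eqref{eq27} holds with, say, $C=2$; for the finitely many small $\lambda$ one absorbs them into the constant. (If one also wanted the lower bound under a curvature hypothesis, the same expansion gives $\int_\Sigma|\vp_\lambda|^2\,d\sigma \ge (1-o(1))\|\vp_\lambda\|_2^2$, but the statement of Theorem~\ref{Theorem6} only asks for the upper bound and assumes $\Sigma$ merely smooth.)

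The main obstacle, and the only place where ``almost all'' enters, is establishing the $\lambda^{1-\epsilon}$-separation of $\Z^2\cap\sqrt E\,S^1$ for almost every $E$. The mechanism: if $E=\lambda^2$ has two lattice points at distance $\le\lambda^{1-\epsilon}$, say $m,n$ with $|m-n|=\delta\le\lambda^{1-\epsilon}$, then $E$ lies in a thin annulus determined by a short chord, and a geometric/divisor count shows that the number of $E\le N$ for which this happens for \emph{some} such pair is $\ll N^{1-c\epsilon}$ (one parametrizes by the chord vector and uses that $|m|^2=|n|^2=E$ forces $m\cdot(m-n)=|m-n|^2/2$, pinning $m$ to $O(\lambda^\epsilon)$ choices on a line once the chord is fixed, while the chord itself has $\ll\delta^2$ choices; summing over $\delta$ and $E$ gives a power saving). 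Borel--Cantelli over dyadic ranges of $N$ then yields that for almost all $E$ the separation is $\gg_\epsilon\lambda^{1-\epsilon}$. I would present this count carefully since it is the heart of the matter; the harmonic-analysis reduction above is then routine. Finally, to handle all $\epsilon$ simultaneously one notes the exceptional set for parameter $\epsilon$ has density zero, takes a countable intersection over $\epsilon=1/k$, and checks that on the resulting full-measure set of $E$ the bound \eqref{eq27} holds with a constant $C$ independent of $E$ (indeed $C=2$ works for $\lambda$ past an $E$-independent threshold, by the $o(1)$ estimate above applied with, e.g., $\epsilon=1/10$).
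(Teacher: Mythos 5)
There is a genuine gap, and it sits exactly at the point that makes Theorem~\ref{Theorem6} interesting: $\Sigma$ is only assumed to be a \emph{smooth} curve, with no curvature hypothesis, whereas your off-diagonal estimate invokes \eqref{eq11}, the decay $|\widehat\sigma(\xi)|\ll|\xi|^{-1/2}$, which is stated (and is only true) for curves with non-vanishing curvature. If $\Sigma$ is, say, a geodesic segment in direction $v$ --- precisely the motivating case discussed in the introduction, where the uniform bound for all $E$ is open --- then $\widehat\sigma(\xi)$ has essentially no decay when $\xi\perp v$, and the $\lambda^{1-\ve}$-separation of $\vE$ does nothing to prevent a pair $n\neq n'$ with $n-n'$ nearly orthogonal to $v$; your bound $\sum_{n\neq n'}|a_n||a_{n'}||\widehat\sigma(n-n')|\ll(\#\vE)\lambda^{-(1-\ve)/2}$ then simply fails. (For real-analytic curves with non-vanishing curvature the uniform bound already holds for \emph{all} $E$ by the $d=2$ Main Theorem, so a proof that needs \eqref{eq11} proves nothing new.) The paper's route through the same arithmetic input (Lemma 2.9 of \cite{BR2}, reproduced as Lemma~\ref{Lemma11.1}) is different at the analytic step: one partitions an arclength parametrization into intervals $I_s$ of length $\lambda^{-\rho}$, $\tfrac12<\rho<1$, linearizes $\gamma$ on each $I_s$, and bounds $\int_{I_s}e^{i(\xi-\xi')\cdot\gamma}$ by $\min\big(\lambda^{-\rho},|(\xi-\xi')\cdot\dot\gamma(t_s)|^{-1}\big)$ plus an $O(\lambda^{1-2\rho})$ linearization error. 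The separation is then used directionally: for each fixed $\xi$ and each tangent direction $\dot\gamma(t_s)$, all but at most one $\xi'\in\vE$ satisfy $|(\xi-\xi')\cdot\dot\gamma(t_s)|\gtrsim\lambda^{-\ve}|\xi-\xi'|\gtrsim\lambda^{1-2\ve}$, because two chords from $\xi$ that are both nearly orthogonal to $\dot\gamma(t_s)$ force the corresponding endpoints to be closer than $\lambda^{1-\ve}$. Summing over $s$ and over the exceptional pairs gives a bound $\lesssim 1+|\vE|^2(\lambda^{-1+\rho+2\ve}+\lambda^{1-2\rho})\lesssim1$, with no curvature used anywhere. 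Your proposal would need to be replaced by (or rebuilt into) an argument of this kind to cover merely smooth $\Sigma$.

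A secondary, smaller point: in your sketch of the counting lemma, the claim that fixing the chord $z=m-n$ pins $m$ to $O(\lambda^\ve)$ points on a line is not correct when $E$ ranges over $\{1,\dots,N\}$; the line $2m\cdot z=|z|^2$ contains about $\sqrt N/|z'|$ lattice points with $|m|\le\sqrt N$, where $z'$ is the primitive part of $z$. The correct count (as in \cite{BR2} and Lemma~\ref{Lemma11.1}) sums $\sqrt N/|z'|$ over $z=dz'$ with $|z|<N^{(1-\ve)/2}$ and still yields an exceptional set of size $O(N^{1-\ve/3})$, so the statement you take as input is fine --- but as written your heuristic for it does not hold up.
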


In Section~\ref{sec:NaSod} we obtain  an analogue for $\mathbb T^d$,
$ d\geq 3$ of a theorem of Nazarov and Sodin \cite {N-S} on the
number of nodal domains.

\begin{theorem}\label{IntTheorem7}
Let $d\geq 3$ and $E=\lambda^2$ be sufficiently large. Then for a
`typical' element $\vp_\lambda$ of the eigenfunction space
$-\Delta\vp =E\vp$, the nodal set $N$ has $\sim \lambda^d$
components.
\end{theorem}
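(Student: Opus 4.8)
The plan is to adapt the Nazarov--Sodin approach \cite{N-S}, developed for random spherical harmonics in dimension two, to the torus $\mathbb T^d$, $d\geq 3$, with the key structural input being an analogue of their ``barrier'' construction together with the equidistribution and almost-independence of a random toral eigenfunction on scales $\sim 1/\lambda$. Concretely, write $\mathcal E = \mathbb Z^d\cap \lambda S^{d-1}$, normalize $\#\mathcal E = N \gg \lambda^{d-2+o(1)}$, and consider the Gaussian random eigenfunction
\begin{equation*}
 \vp_\lambda(x) = \frac{1}{\sqrt N}\sum_{n\in\mathcal E} a_n\, e(n\cdot x),
\end{equation*}
with $a_n$ i.i.d.\ standard (complex, with the reality constraint $a_{-n}=\overline{a_n}$) Gaussians; ``typical'' means with probability $\to 1$ as $\lambda\to\infty$. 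Partition $\mathbb T^d$ into $\sim \lambda^d$ cubes $Q$ of side $\sim 1/\lambda$. The heart of the matter is a lower bound: show that for each such $Q$, with probability bounded below by an absolute constant $p>0$, the restriction $\vp_\lambda|_Q$ exhibits a sign change that is ``stable'' --- i.e.\ $\vp_\lambda$ is positive on a definite sub-ball of $Q$ and negative on another --- thereby forcing a nodal component essentially contained in (a fixed dilate of) $Q$. Summing and using concentration then yields $\gtrsim \lambda^d$ components; the matching upper bound $\lesssim \lambda^d$ is the easy direction, following from Courant-type nodal domain counts or from the fact that each component of $\{\vp_\lambda\neq 0\}$ has volume $\gtrsim \lambda^{-d}$ by a Faber--Krahn / Bers-scaling argument applied to $-\Delta\vp_\lambda = \lambda^2\vp_\lambda$.

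The steps, in order, are as follows. First I would establish the local limit / coupling statement: rescaling $x\mapsto \lambda^{-1} u$ near a point $x_0$, the field $u\mapsto \vp_\lambda(x_0 + \lambda^{-1}u)$ converges, in an appropriate sense on compact $u$-sets, to a stationary Gaussian field on $\R^d$ whose spectral measure is (a limit of) the empirical measure $\frac1N\sum_{n\in\mathcal E}\delta_{n/\lambda}$ on $S^{d-1}$; along a density-one sequence of $E=\lambda^2$ this spectral measure equidistributes to the uniform measure $\sigma$ on $S^{d-1}$ by Linnik-type equidistribution, so the limiting field is the ``monochromatic random wave'' on $\R^d$, which has the positive-probability barrier property (a suitable event $\Omega$, in the sense of Nazarov--Sodin, guaranteeing a nodal component inside a unit ball) by their general theory since the limiting field is nondegenerate and its spectral measure is not supported on a hyperplane. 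Second, I would transfer this: because the cubes $Q$ are at mutual distance $\gg 1/\lambda$ and the covariance kernel $K(x,y) = \frac1N\sum_n e(n\cdot(x-y))$ decays (on average over translations, using Lemma~\ref{IntLemma5.6} and the cap bounds of the Proposition to control $\sum_{x\ne y}|K(x,y)|$ over a net), distinct cubes give \emph{almost independent} events, so a second-moment / Chebyshev argument on $Z:=\#\{Q: \text{barrier event holds on }Q\}$ gives $Z\gtrsim p\,\lambda^d$ with probability $\to 1$. Third, each barrier event contributes at least one distinct nodal component, and distinct cubes contribute distinct components (after passing to a sub-collection of well-separated cubes, losing only a constant factor), giving the lower bound; combined with the easy upper bound this proves $N(\vp_\lambda)\sim\lambda^d$.

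The main obstacle is the almost-independence input --- equivalently, quantitative control of the correlations between the (nonlinear) barrier events on nearby scale-$1/\lambda$ cubes. Unlike the sphere $S^{d-1}$ in Nazarov--Sodin, where one has strong decay of the spectral projector kernel off the diagonal, on the torus the kernel $\frac1N\sum_{n\in\mathcal E}e(n\cdot z)$ need \emph{not} decay pointwise for every $\lambda$; it only decays in a mean-square sense, and only after one knows the eigenvalues $E$ for which $\mathcal E$ equidistributes well and avoids excess concentration in caps. This is exactly where Lemma~\ref{IntLemma5.6} (the mean-square cap bound, from \cite{BRS}), the cap count Proposition, and the generic-separation statement of \cite{BR2} enter: they let us pass to a density-one (or at least positive-density, sufficiently large) set of energies $E=\lambda^2$ on which $\sum_{0\ne z\in\frac1\lambda(\mathbb Z/\lambda\mathbb Z)^d}|K(0,z)|^2$ is small, which is enough to run the second-moment argument. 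A secondary technical point is making the barrier argument robust under the $C^k$-closeness (rather than exact equality) of the rescaled field to its limit, and handling the boundary cubes and the slight shrinking of components needed so that a component ``belonging'' to $Q$ does not leak into a neighbor --- these are handled as in \cite{N-S} by a compactness/continuity argument for the nodal set under small perturbations in $C^1$, and cause no essential difficulty.
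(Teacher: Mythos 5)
Your overall strategy is the same as the paper's (a Nazarov--Sodin barrier on cubes of side $\sim\lambda^{-1}$, translation invariance, Courant for the upper bound), but your second step contains a genuine gap. You claim that the barrier events on distinct cubes are \emph{almost independent}, deducing this from Lemma~\ref{IntLemma5.6} and the cap-count Proposition via control of $\sum_{x\neq y}|K(x,y)|$, and you then run a second-moment argument to get probability tending to $1$. On the torus the covariance kernel $K(z)=\frac1{|\mathcal E|}\sum_{\xi\in\mathcal E}e(\xi\cdot z)$ does not decay pointwise, only in a mean-square sense, and -- more importantly -- smallness of summed kernel values does not by itself bound the correlations of the nonlinear events ``$Q$ contains a stable sign change'': passing from kernel bounds to near-independence of such events requires a Gaussian comparison/interpolation (or a quantitative semi-locality) argument that you do not supply. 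This is the hardest part of your route, and it is not in the paper at all: the paper never proves independence or concentration. It only needs, for each cube, a probability bounded below of containing a nodal component (the barrier estimate), plus stationarity of the Gaussian law under translations and Courant's theorem; that already yields the `typical' statement in the sense intended. If you keep your route you must either prove the event-correlation bound or drop the claim that the count is $\gtrsim\lambda^d$ with probability $\to1$.

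The other differences are in execution of the barrier step. The paper constructs the barrier explicitly: after an orthogonal change of Gaussian coordinates the deterministic function $F_1(x)=\frac{\sqrt2}{\sqrt{|\mathcal E|}}\sum_{\xi\in\mathcal E}\cos 2\pi x\cdot\xi$ of \eqref{eq11.15} is shown, via Duke's equidistribution theorem (valid for \emph{every} admissible $E$, which is why the result is proved for $d=3$ under $E\not\equiv 0,4,7 \bmod 8$ and not merely along a density-one sequence as in your proposal), to equal $\sqrt N\big(\hat\sigma(\lambda|x|)+O(\lambda^{-\ve})\big)$, hence large at $0$ and $\leq -c\sqrt N$ on a sphere $|x|=r\sim\lambda^{-1}$; and the supremum of the independent remainder $G^\omega$ on that ball is controlled with probability $\geq\frac12$ by the reproducing-kernel trick $G^\omega=G^\omega*\eta_\lambda$ with $\hat\eta\equiv1$ on the unit sphere, \eqref{eq11.23}--\eqref{eq11.25}, with no scaling-limit machinery. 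Your local-limit formulation (convergence of the rescaled field to the monochromatic wave plus the general Nazarov--Sodin positive-probability barrier for nondegenerate spectral measures) can replace this, but you should invoke Duke for the given $E$ rather than a density-one subsequence -- otherwise you prove a weaker statement -- and the sup-control of the complementary Gaussian part on $\lambda^{-1}$-balls still needs a concrete argument (an entropy/Dudley bound or the paper's convolution trick); ``compactness/continuity as in N--S'' does not by itself provide it.
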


Recalling Courant's nodal domain theorem, the interest of Theorem
\ref {IntTheorem7} is the lower bound on the number of nodal
domains.

Almost all the subsequent analysis in the paper relates to $d=3$ and
$\mathbb T^3$-eigenfunctions. Let us stress again that the
arithmetic structure of the frequencies of the trigonometric
polynomials involved is essential here.

\noindent{\bf Acknowledgement}: The authors are indebted to
P.~Sarnak for many stimulating discussions on the material presented
in the paper. J.B. was supported in part by N.S.F. grant DMS
0808042. Z.R. was supported by the Oswald Veblen Fund during his
stay at the Institute for Advanced Study and by the Israel Science
Foundation (grant No. 1083/10).

\section{Lattice Points in Spherical Caps}\label{sec:lattice pts}

\subsection{Lattice points on spheres}\label{sec:9.1}

%Before studying $F_d(R, r)$

We recall what is known concerning the total number $\rho_d(R^2)$ of
lattice points on the sphere of radius $R$. Throughout we assume, as
we may, that $n:=R^2$ is an integer. We have a general upper bound
\be\label{eq8.3} \rho_d(R^2)\ll R^{d-2+\epsilon}, \quad \forall
\epsilon >0 \ee and in dimension $d\geq 5$ we in fact have both a
lower and upper bound of this strength: \be\label{eq8.4}
\rho_d(R^2)\approx R^{d-2}, \quad d\geq 5. \ee In smaller dimensions
both the lower and upper bound \eqref{eq8.3} need not hold. For
instance if $n=2^k$ is a power of 2 then $\rho_4(R^2)=24$ is
bounded. The situation in dimension $d=3$ is particularly delicate.
It is known that $\rho_3(n)>0$ if and only if $n:=R^2\not=
4^k(8m-1)$. There are {\em primitive} lattice points on the sphere
of radius $R=\sqrt{n}$ (that is $x=(x_1,x_2,x_3)$ with
$\gcd(x_1,x_2,x_3)=1$) if an only if $n\neq 0,4,7\bmod 8$.
%Gauss expressed the number $\rho_3^*(n)$ of {\it primitive} lattice points on the sphere of radius $R=\sqrt n$ in terms of the class number
%$h(-n)$ of primitive positive binary quadratic forms of discriminant $-n$:
%\be\label{eq8.5}
%\rho_3^*(n) =\begin{cases} 12h(-n), n\equiv 1, 2 \, \mod 4\\ 8h(-n), n\equiv 3\, \mod 8.\end{cases}
%\ee
%In particular,
Concerning the number $\rho_3(R^2)$ of lattice points, the upper
bound \eqref{eq8.3} is still valid, and if there are primitive
lattice points then there is a lower bound of $ \rho_3(R^2)\gg
R^{1-o(1)}$ but there are arbitrarily large $R$'s so that
\be\label{eq8.6} \rho_3(R^2)\gg R\log\log R. \ee

A fundamental result conjectured by Linnik (and proved by him
assuming the Generalized Riemann Hypothesis), that for $n\neq 0,4,7
\bmod 8$, the projections of these lattice points
%\begin{equation}
% \^\vE(n):= \frac 1{\sqrt{n}}\vE(n) \subset S^2
%\end{equation}
to the unit sphere become uniformly distributed  on the unit sphere
as $n\to \infty$. This was proved unconditionally by Duke \cite{D,
Duke-SP} and Golubeva and Fomenko \cite{G-F}, following a
breakthrough by Iwaniec \cite{Iwaniec}.

\subsection{Lattice points in spherical caps: Statement of results}
Let $\overset {_\rightarrow}\zeta \in S^{d-1}$ be a unit vector,
$R\gg 1$, and $r=o(R)$. Consider the spherical cap $C=C(R\overset
{_\rightarrow}\zeta, r)$ which is the intersection of the sphere
$|\overset{_\rightarrow} x|=R$ with the ball of radius $\approx r$
around $R\overset{_\rightarrow}\zeta$. Set
$$
F_d(R, r)=\max_{\overset{_\rightarrow}\zeta \in S^{d-1}}\# \mathbb
Z^d\cap C(R\overset{_\rightarrow} \zeta, r)
$$
which is the maximal number of lattice points in a spherical cap of
size $r$ on the sphere $|\overset{_\rightarrow} x|=R$. We want to
give an upper bound for $F_d(R, r)$ in the case of dimension $d=3$.
The results which will be proven in this section are as follows:

i) For all $\epsilon>0$, \be\label{eq8.1} F_3(R, r)\ll R^\epsilon
\Big(1+\frac{r^2}{R^{1/2}}\Big). \ee This is an immediate
consequence of a Jarnik-type result on non-coplanar lattice points
in small caps. It is only useful for small caps, when $r\ll
R^{1/2}$.

For larger caps we shall show  the following bound:
%from Lemma \ref{Lemma6.8}.

ii) For any $\eta<\frac 1{15}$, \be\label{eq8.2} F_3(R, r)\ll
R^\epsilon \Big(1+r\Big( \frac r R\big)^\eta\Big). \ee It is natural
to conjecture that $F_3(R, r)\ll R^\epsilon\big(1+\frac{r^2}R\big)$
for $r< R^{1-\delta}$.

\subsection{Intersections with hyperplanes}
Let $\kappa_d(R)$ be the maximal number of lattice points in the
intersection of the sphere $|\overset{_\rightarrow}x|= R$ in
$\mathbb R^d$ and a hyperplane.

For dimension $d=2$,
$$
\kappa_2(R)\leq 2
$$
while in dimension $d=3$ we have \be\label{eq8.7} \kappa_3(R)\ll
R^\epsilon, \quad\forall \epsilon >0. \ee

\subsection{Small caps}

\begin{lemma}\label{Lemma8.8}
For a spherical cap $C$ of size $r$ on the sphere of radius $R$ in
$\mathbb R^3$ the number of lattice points in $C$ is at most
\be\label{eq8.9} \#C\cap\mathbb Z^3 \ll
R^\epsilon\Big(1+\frac{r^2}{R^{\frac 12}}\Big). \ee
\end{lemma}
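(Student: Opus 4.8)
The plan is to reduce the count of lattice points in a spherical cap $C = C(R\vec\zeta, r)$ to a two-dimensional lattice-point problem by slicing the cap with the family of parallel hyperplanes perpendicular to (a rational approximation of) the cap's axis, and then to use the bound $\kappa_3(R)\ll R^\epsilon$ from \eqref{eq8.7} on each slice. First I would set up coordinates: after a rotation one may assume the cap is centered near $R\vec\zeta$ with $\vec\zeta$ close to a coordinate direction; more robustly, since the bound is only interesting when $r\ll R^{1/2}$ (for $r\gtrsim R^{1/2}$ the claimed bound exceeds $R^{1+\epsilon}\geq \rho_3(R^2)$ and is trivial), I may assume $r\le R^{1/2}$. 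The lattice points in $C$ lie in a ``pancake'' region which is a piece of the sphere of thickness about $r^2/R$ in the normal direction and width $r$ in the tangential directions.

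The key step is a dyadic/Dirichlet argument to find a good direction. Pick two lattice points $x^{(0)}, x^{(1)}$ in $C$ realizing (nearly) the diameter; then all of $C\cap\Z^3$ lies within $O(r)$ of the plane through $x^{(0)}$ spanned by $x^{(1)}-x^{(0)}$ and the tangent plane at $R\vec\zeta$. More precisely, fixing a primitive integer vector $v$ that is close to the normal direction $\vec\zeta$ — such a $v$ of length $\ll (R/r)^{?}$ exists by Dirichlet's theorem, or one takes $v = x^{(0)}$ itself — the points of $C\cap\Z^3$ distribute among the parallel hyperplanes $\{x : x\cdot v = k\}$ as $k$ ranges over an interval of length $O(|v|\cdot r^2/R)$ (the spread of $x\cdot v$ over the cap, since the cap has normal extent $\sim r^2/R$ and $v$ is nearly normal, with a tangential error term controlled by how well $v$ approximates $\vec\zeta$). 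Each such hyperplane section meets the sphere $|x|=R$ in at most $\kappa_3(R)\ll R^\epsilon$ lattice points by \eqref{eq8.7}. Multiplying the number of relevant hyperplanes, $\ll 1 + |v| r^2/R$, by $R^\epsilon$ gives the desired $\ll R^\epsilon(1 + r^2/R^{1/2})$ provided one can guarantee $|v|\ll R^{1/2}$.

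The main obstacle is controlling $|v|$: one needs a primitive integer vector $v$ that simultaneously (a) is nearly parallel to $\vec\zeta$ to within an angle that keeps the tangential leakage $\lesssim r$ across the cap, and (b) has $|v|\lesssim R^{1/2}$, so that the number of hyperplane slices is $\lesssim 1 + r^2/R^{1/2}$. If $\vec\zeta$ happens to be irrational-ish this direct approach stalls, so instead I would take $v$ to be a lattice point \emph{on the sphere itself} lying in or very near the cap (when the cap is nonempty there is such an $x^{(0)}$, with $|x^{(0)}| = R$), and use the geometry that any other lattice point $x\in C$ satisfies $x\cdot x^{(0)} = R^2 - \tfrac12|x-x^{(0)}|^2 \in [R^2 - O(r^2), R^2]$, so $x\cdot x^{(0)}$ takes at most $O(r^2)$ integer values — worse than needed. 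To recover the factor $r^2/R^{1/2}$ rather than $r^2$ one must instead project along a \emph{short} normal direction, which is exactly where one either invokes a Dirichlet-type pigeonhole to find a short $v$ with the required approximation quality, or argues that the cap's lattice points are confined to $O(1 + r^2/R^{1/2})$ slices by a more careful analysis of the quadratic form restricted to the pancake. I would carry out the pigeonhole version, checking that the admissible denominator $|v|\le R^{1/2}$ is consistent with the angular accuracy $\sim 1/(|v| R^{1/2})$ needed to keep the tangential error below $r$, which works out precisely in the range $r\le R^{1/2}$; the $R^\epsilon$ absorbs the divisor-type losses from counting primitive directions and from $\kappa_3(R)$.
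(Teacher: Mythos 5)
Your slicing strategy is genuinely different from the paper's proof of Lemma~\ref{Lemma8.8}: the paper first shows that a cap of radius $r\le R^{1/4}$ contains only $O(R^\epsilon)$ lattice points (Andrews'/Jarnik's bound gives $O(1)$ non-coplanar points since the cap has volume $\approx r^4/R\ll 1$, and each plane through them carries at most $\kappa_3(R)\ll R^\epsilon$ points by \eqref{eq8.7}), and then covers a general cap by $\approx r^2/R^{1/2}$ caps of radius $R^{1/4}$. What you propose, slicing by integer planes $x\cdot v=k$ for a short integer vector $v$ close to the cap axis, is in fact the paper's method for the sharper Lemma~\ref{Lemma8.13}. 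Your plan can be made to work, but as written it has two concrete gaps.

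First, the reduction to $r\le R^{1/2}$ rests on a false triviality claim: $R^\epsilon(1+r^2/R^{1/2})$ exceeds the total count $\rho_3(R^2)\ll R^{1+\epsilon}$ only when $r\gtrsim R^{3/4}$, so the range $R^{1/2}<r<R^{3/4}$ is neither trivial nor covered by your argument (it could be recovered by covering with $R^{1/2}$-caps, but, as explained below, no case split is needed at all). Second, and more seriously, the Diophantine step you invoke fails as stated: asking for an integer $v$ with $|v|\le R^{1/2}$ whose angle to the axis $\zeta$ is $\lesssim 1/(|v|R^{1/2})$ amounts (after permuting coordinates so that $|\zeta_3|=\max_j|\zeta_j|$ and writing $v=(a_1,a_2,q)$) to $\max_{i=1,2}\Vert q\,\zeta_i/\zeta_3\Vert\lesssim R^{-1/2}$ for some $q\le R^{1/2}$; for a badly approximable pair $(\zeta_1/\zeta_3,\zeta_2/\zeta_3)$ one has $\max_i\Vert q\,\zeta_i/\zeta_3\Vert\gg q^{-1/2}\ge R^{-1/4}$, so no pigeonhole can deliver this accuracy (even the refinement in Lemma~\ref{Lemma8.26} only gives $1/(q^{1/2}Q^{1/2+\gamma})$). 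The repair is that you do not need that much accuracy, because the tangential spread of $x\cdot v$ over the cap only has to be $\lesssim 1+r^2/R^{1/2}$, not $O(1)$. Ordinary two-dimensional Dirichlet with modulus $Q=R^{1/2}$ gives $1\le q\le R^{1/2}$ and $|\zeta_i/\zeta_3-a_i/q|\le 1/(qR^{1/4})$, hence $v=(a_1,a_2,q)$ with $|v|\sim q$ whose component orthogonal to $\zeta$ has length $\lesssim R^{-1/4}$. Since over the cap the coordinate $x\cdot\zeta$ varies by $\le r^2/(2R)$ and the orthogonal part of $x$ by $\le 2r$, the linear form $x\cdot v$ varies by $\ll |v|\,r^2/R+r\,R^{-1/4}\ll r^2/R^{1/2}+ \tfrac12(1+r^2/R^{1/2})$, so the cap meets $\ll 1+r^2/R^{1/2}$ planes $x\cdot v=k$, and \eqref{eq8.7} finishes the proof for every $r$, with no restriction and no covering. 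With this correction your route gives a valid, covering-free alternative to the paper's argument.
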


\begin{proof}
Firstly, we note that if the cap has radius $r\ll R^{1/4}$ then it
contains only a $O(R^\epsilon)$ lattice points. This can be deduced
from Jarnik's method \cite {Jar} and also from a general result of
Andrews \cite{A} that if $C$ is any convex body in $\mathbb R^d$
with volume $V$ then the number of lattice points on its boundary
which are not coplanar is $\ll V^{\frac{d-1}{d+1}}$. In our case of
a cap in dimension 3, the base of the cap has area $\approx r^2$ and
if $\theta$ is the opening of the cap, so that $r\approx R\theta$,
then the height of the cap is about $R-R\cos\theta\approx R\theta^2
\approx r^2/R$, hence the volume of the cap is $V\approx r^4/R$.
Thus if $r<R^{1/4}$ then any such cap will contain at most (say) 100
non-coplanar lattice points. Any  lattice points in the cap will lie
on one of the plane sections of the cap through any three of the 100
non-coplanar lattice points. Each such plane section will contain at
most $R^\epsilon$ lattice points (uniformly as a function of the
plane) and hence the cap will contain at most $O(R^\epsilon)$
lattice points.

Now, for a cap $C$ of radius $r\geqq R^{1/4}$, divide it into caps
of radius $R^{1/4}$; the number of such caps will be $\approx$
area$(C)/(R^{1/4})^2\approx r^2/R^{1/2}$, and hence the total number
of lattice points in $C$ is at most $R^\epsilon(1+r^2/R^{1/2})$.
\end{proof}

\subsection{A linear and sub-linear bound}
We now turn to larger caps.

Here is a simple bound via slicing, using the fact that we can
control the number of lattice points in the intersection of a sphere
and a hyperplane parallel to one of the coordinate hyperplanes:

\begin{lemma}\label{Lemma8.10}
In dimension $d\geq 2$, \be \label{eq8.11} F_d(R, r)\ll
(1+r)\kappa_d(R). \ee
\end{lemma}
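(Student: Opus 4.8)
The idea is to slice the spherical cap by hyperplanes parallel to a fixed coordinate hyperplane and to bound the number of lattice points on each slice by $\kappa_d(R)$, the maximal number of lattice points in the intersection of the sphere $|x|=R$ with a hyperplane. First I would choose a coordinate, say the last one $x_d$, such that the cap $C=C(R\vec\zeta,r)$ is reasonably ``transverse'' to the $x_d$-direction — but in fact no transversality is needed here, since the cap has diameter $\lesssim r$, so the coordinate $x_d$ ranges over an interval of length $O(r)$ on $C$. Hence the integer values $x_d=k$ attained by lattice points in $C$ form a set of at most $O(1+r)$ integers $k$.

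Next, for each such integer value $k$, all lattice points of $C$ with $x_d=k$ lie in the affine hyperplane $\{x_d=k\}$, and they also lie on the sphere $|x|=R$. Their number is therefore at most the number of lattice points in the intersection of this hyperplane with the sphere, which is at most $\kappa_d(R)$ by definition. (For $d=2$ one uses $\kappa_2(R)\le 2$; for $d=3$ one uses \eqref{eq8.7}.) Summing over the $O(1+r)$ admissible values of $k$ gives
\be
\#\,C\cap\mathbb Z^d \ll (1+r)\,\kappa_d(R),
\ee
which is \eqref{eq8.11}.

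There is essentially no obstacle here; the only point requiring a word is the estimate on the number of admissible values of $x_d$. Since $C$ is contained in a ball of radius $\approx r$, any two points of $C$ differ by at most $O(r)$ in every coordinate, so the projection of $C\cap\mathbb Z^d$ to the $x_d$-axis is contained in an interval of length $O(r)$ and meets at most $O(1+r)$ integers; the additive constant handles the degenerate case $r<1$, where the cap still meets at least one value of $x_d$. Combined with the per-slice bound $\kappa_d(R)$, this completes the proof. (One should note that this bound is wasteful for small $r$, where Lemma~\ref{Lemma8.8} is far superior; Lemma~\ref{Lemma8.10} is designed to be useful in the regime where $r$ is comparable to $R$, so that $1+r\approx R$ and the bound reads $F_d(R,r)\ll R\,\kappa_d(R)$.)
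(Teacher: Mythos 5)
Your argument is exactly the paper's proof: slice the cap by the coordinate hyperplanes $x_d=k$, note that only $O(1+r)$ integer values of $k$ occur since the cap sits in a slab of width $O(r)$, and bound each slice by $\kappa_d(R)$. Correct and identical in approach.
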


\noindent{\sl Proof.} A ball of radius $r$ is contained in a
vertical slab of the form \hfill\break $A<x_d<A+2r$ and hence all
integer points in the intersection of the sphere
$|\overset{_\rightarrow} x|=R$ and the ball $|
\overset{_\rightarrow} x -\overset{_\rightarrow}x_0|<r$ lie in the
union of the planes $x_d=k$, $A\leq k\leq A+2r$ with $k$ integer.
The intersection of each plane and the sphere
$|\overset{_\rightarrow} x|=R$ has at most $\kappa_d(R)$ lattice
points, and therefore the total number of lattice points is at most
$(1+r)\kappa_d(R)$.

In particular, for dimension $d=3$ this says that \be\label{eq8.12}
\#C\cap\mathbb Z^3\ll R^\epsilon (1+r). \ee

We can improve on Lemma~\ref{Lemma8.10} by slicing with well-chosen
planes rather than vertical planes. More precisely, we have

\begin{lemma}\label{Lemma8.13}
Let $C$ be a cap of size $r$ on the sphere
$\{|\overset{_\rightarrow} x| =R\}\subset\mathbb R^3$. Then for any
$0<\eta< 1/16$, \be\label{eq8.13} \#C\cap\mathbb Z^3 \ll R^\epsilon
\Big(1+r\Big(\frac rR\Big)^\eta\Big). \ee
\end{lemma}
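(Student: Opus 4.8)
The plan is to improve the crude slicing bound of Lemma~\ref{Lemma8.10} by not slicing with coordinate planes but with a well-chosen family of parallel planes whose common normal direction is adapted to the cap $C$. Suppose $C=C(R\vec\zeta,r)$. Write a generic lattice point $n\in C\cap\mathbb Z^3$ as $n = R\vec\zeta + v$ with $|v|\ll r$; because $|n|=R$ one has $2R\langle\vec\zeta,v\rangle + |v|^2 = 0$, so $\langle\vec\zeta,v\rangle = O(r^2/R)$, i.e.\ the displacement $v$ lives in a slab of thickness $\approx r^2/R$ transverse to $\vec\zeta$ and in a disc of radius $\approx r$ along $\vec\zeta^\perp$. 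The key idea is: instead of slicing this thin pancake by the $\approx 1+r$ integer planes $x_3=k$, choose an integer (or rational with small denominator) vector $\vec m$ that is close to the ``long'' directions of the pancake — more precisely, choose $\vec m$ so that the integer planes $\langle \vec m,x\rangle = k$ cut the pancake into fewer than $1+r$ pieces, while each such plane section meets the sphere $|x|=R$ in a curve carrying only $R^\epsilon$ lattice points (by \eqref{eq8.7}, $\kappa_3(R)\ll R^\epsilon$, which holds for \emph{any} plane, not just coordinate planes). The number of non-empty planes is $\approx (\text{width of }C\text{ in direction }\vec m)\cdot|\vec m|$; optimizing $|\vec m|$ against how well a lattice direction can be made to lie along $\vec\zeta$ is where the exponent $\eta$ is produced.

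The steps I would carry out, in order: (1) Set up the pancake geometry above and record that after a rotation the cap lies in $\{|\langle\vec\zeta,x\rangle - R|\ll r^2/R,\ |x^\perp|\ll r\}$. (2) Use Dirichlet's theorem (or a Minkowski-type lattice argument) to produce an integer vector $\vec m$ with $|\vec m|\le Q$ and $\vec m/|\vec m|$ within $O(1/(Q|\vec m|))$ of the direction $\vec\zeta$, for a parameter $Q$ to be chosen; alternatively produce two independent integer vectors nearly spanning the long $2$-plane $\vec\zeta^\perp$. (3) With such an $\vec m$, the distinct values $\langle\vec m,n\rangle$ for $n\in C$ range over an interval of length $\ll |\vec m|\cdot(r\cdot\frac1{Q|\vec m|} + r^2/R)\ll r/Q + |\vec m| r^2/R$, so the lattice points are distributed among $\ll 1 + r/Q + Q r^2/R$ planes, each contributing $\ll R^\epsilon$ points by \eqref{eq8.7}. (4) Optimize: balancing $r/Q$ against $Q r^2/R$ gives $Q\approx (R/r)^{1/2}$ and a bound $R^\epsilon(1 + r(r/R)^{1/2})$; but one must be more careful because the rational approximation is only to one direction, and a cleaner optimization that keeps track of the denominators of \emph{both} complementary directions, combined with the small-cap bound of Lemma~\ref{Lemma8.8} to dispose of the genuinely small-cap regime, yields the slightly weaker exponent $\eta<1/16$ stated. (5) Handle the boundary/degenerate cases — when $\vec\zeta$ is itself badly approximable the naive slicing $\kappa_3(R)(1+r)$ already beats the target, and when $r$ is small Lemma~\ref{Lemma8.8} applies — so the worst case is an intermediate range where the rational-approximation slicing is genuinely needed.

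The main obstacle, as usual in this circle of problems, is step (2)–(3): guaranteeing that a lattice direction can be found \emph{simultaneously} well-aligned with the long axes of the pancake \emph{and} of small enough height that the resulting planes $\langle\vec m,x\rangle=k$ are not too numerous. There is genuine tension here — a very good rational approximation to $\vec\zeta$ forces a large denominator $|\vec m|$, which inflates the number of slices — and the exponent $1/16$ (as opposed to the conjectured $1$, which would come from the heuristic $r^2/R$) is exactly the loss incurred by not being able to control this tradeoff optimally. A secondary, more technical point is that $\kappa_3(R)\ll R^\epsilon$ must be applied uniformly over the (many) planes that occur, which is fine since the bound \eqref{eq8.7} is uniform in the plane, but one should double-check that the implied constant does not secretly depend on the plane's coefficients.
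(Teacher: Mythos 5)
Your overall strategy is the same as the paper's: slice the cap by the integer planes $m\cdot x=k$ for a well-chosen integer vector $m$ nearly parallel to the cap's axis $\zeta$, use $\kappa_3(R)\ll R^\epsilon$ (which is \eqref{eq8.7}, applied to arbitrary planes, exactly as the paper does) on each plane section, and optimize the rational approximation of $\zeta$. Your plane count in step (3) is correct and in fact sharper than the paper's: writing $\theta=r/R$ and $\epsilon_0$ for the angle between $m$ and $\zeta$, the integers $m\cdot n$, $n\in C$, fill an interval of length $O(|m|(R\theta^2+R\theta\epsilon_0))=O(|m|(r^2/R+r\epsilon_0))$, linear in the misalignment $\epsilon_0$; the paper instead passes to the enlarged coaxial cap and counts $1+|m|R(\theta+\epsilon_0)^2$ slices, whose extra term $|m|R\epsilon_0^2$ is precisely what forces it into the refined Diophantine Lemmas~\ref{Lemma8.22} and~\ref{Lemma8.26} and produces the exponent $1/16$.

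The gap is in step (2): there is no integer vector $m$ with $|m|\le Q$ whose direction lies within $O(1/(Q|m|))$ of a general $\zeta$. Approximating a direction in $\mathbb R^3$ means simultaneously approximating the two ratios $\zeta_1/\zeta_3,\ \zeta_2/\zeta_3$, and Dirichlet gives only $q\le Q$ with error $O(1/(qQ^{1/2}))$, i.e.\ angular error $O(1/(|m|\sqrt Q))$; for badly approximable $\zeta$ nothing stronger exists, and your step (5) fallback for that case is false, since the naive bound $R^\epsilon(1+r)$ of \eqref{eq8.12} never beats the target $R^\epsilon(1+r(r/R)^\eta)$ for $1\ll r<R$. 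The badly approximable directions are the critical case, and the unspecified ``cleaner optimization keeping track of both complementary directions'' in step (4) is exactly where the paper's real work lies (a Fourier dichotomy: either a good simultaneous approximation with denominator of size $Q$ exists, or the $\zeta_j$ satisfy a small integral linear relation, which is then combined with a secondary one-dimensional Dirichlet approximation). As written, then, the decisive Diophantine step is wrong and the proof is incomplete. It is worth noting, however, that your argument is salvageable without any of the paper's machinery: inserting the honest Dirichlet bound ($q\le K^2$, $\epsilon_0\ll 1/(qK)$) into your own sharper plane count gives $\ll R^\epsilon(1+r/K+K^2r^2/R)$, and the choice $K\sim(R/r)^{1/3}$ yields $R^\epsilon\bigl(1+r(r/R)^{1/3}\bigr)$ --- which would not only prove the lemma but improve its exponent.
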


\noindent{\sl Proof.}

It will involve several considerations.

i) {\bf Finding good slices.}  We try to find an integer vector
$\overset{_\rightarrow} a\in\mathbb Z^d$ and use slices of the cap
with the sections $\overset{_\rightarrow} a . \overset{_\rightarrow}
x =k$. We consider a larger cap $C_1=C(R{\frac
{\overset{_\rightarrow} a}{|\overset{_\rightarrow} a|}}, R\theta_1)$
of radius $r_1=R\theta_1$ around $R{\frac{\overset{_\rightarrow}
a}{|\overset{_\rightarrow} a|}}$ which contains the original cap
$C$. Thus we want the new cap angle $\theta_1$ to satisfy
\be\label{eq8.13'} \theta_1=\theta+\Big|\overset{_\rightarrow} \zeta
-\frac {\overset{_\rightarrow} a}{ |\overset{_\rightarrow} a|}\Big|.
\ee

To bound the number of lattice points in the new cap $C_1$, we
exhaust them by the parallel sections $\overset{_\rightarrow} a .
\overset{_\rightarrow}x=k$, which are orthogonal to the direction
$\overset{_\rightarrow} a$ of the new cap. The distance between
adjacent sections is $1/|\overset{_\rightarrow} a|$. The number of
sections intersecting the cap $C_1$ is bounded by
$|\overset{_\rightarrow} a|$ times the height of the cap, which is
$R-R\cos\theta_1\approx R\theta^2_1$. Hence the number $\nu(C_1,
\overset{_\rightarrow} a)$ of sections intersecting the cap is
\be\label{eq8.14} \nu(C_{1}, \overset{_\rightarrow} a)\ll
1+R\theta_1^{2} |\overset{_\rightarrow} a| \ee and the analysis
above shows that the number of lattice points in the cap $C_1$ is
bounded by \be\label{eq8.15} \#C_1 \cap\mathbb Z^d \leq
\kappa_d(R)\cdot\nu (C_1, \overset{_\rightarrow} a)\ll
\kappa_d(R)\cdot (1+R\theta^2_1|\overset{_\rightarrow} a|). \ee To
gain over the linear bound \eqref{eq8.12} we need to find some
$\delta>0$ and a nonzero integer vector $\overset{_\rightarrow}
a\in\mathbb Z^d$ such that \be\label{eq8.16}
R\theta_1^2|\overset{_\rightarrow}a|\ll r\theta^{2\delta} \ee that
is \be\label{eq8.17} \theta+|\overset{_\rightarrow}\zeta
-\frac{\overset{_\rightarrow}a}{|\overset{_\rightarrow} a|}| \ll
\frac {\theta^{\frac 12+\delta}}{|\overset{_\rightarrow} a|^{\frac
12}}. \ee Setting \be\label{eq8.18} Q=\theta^{-1+2\delta}=
\Big(\frac Rr\Big)^{1-2\delta} \ee then \eqref{eq8.17} is implied by
requiring both \be\label{eq8.19} |\overset{_\rightarrow} a|\leq Q
\ee \be \label{eq8.20} \Big|\overset{_\rightarrow}\zeta
-\frac{\overset{_\rightarrow} a}{|\overset{_\rightarrow}
a|}\Big|\leq \frac 1{|\overset{_\rightarrow} a| ^{\frac 12}Q^{\frac
12+\gamma}} \ee where we have set \be\label{eq8.21} \gamma:
=\frac{2\delta}{1-2\delta}. \ee Finding $\overset{_\rightarrow} a
\in \mathbb Z^d$ as above is then our goal.

\bigskip
(ii) {\bf Diophantine approximation}

\begin{lemma}\label{Lemma8.22}
Fix an integer $Q\geq 1$ and $0<\eta< 1$. Let
$\overset{_\rightarrow}\zeta =(\zeta_1, \ldots, \zeta_d)\in [-1,
1]^d$.  Then one of the following holds:

(1) There is $Q\leq q\leq 2Q$ and $\overset{_\rightarrow} a\in
\mathbb Z^d$ such that \be\label{eq8.23} \max_{1\leq j\leq d}
\Big|\zeta_j-\frac{a_j}{q}\Big|<\frac \eta Q. \ee

(2) There are $\overset{_\rightarrow} b \in\mathbb Z^d$, with
$0<\max|b_j|<1/\eta$ with \be\label{eq8.24} \Big\Vert\sum^d_{j=1}
b_j\zeta_j\Big\Vert<\frac c{Q\eta^d} \ee where we denote by $\Vert
x\Vert$ the fractional part of $x$, or the distance of $x$ to the
nearest integer.
\end{lemma}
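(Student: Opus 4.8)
The plan is to run a standard Dirichlet-type pigeonhole argument on the $d$-torus, and then perform a dichotomy according to whether the resulting approximating vectors have bounded or unbounded denominators. First I would apply the pigeonhole principle to the $Q^d$ points $\{(q\zeta_1,\dots,q\zeta_d)\bmod 1 : Q\le q\le 2Q\}$ (roughly $Q$ of them), dividing the torus $[0,1)^d$ into $\approx (1/\eta)^d$ boxes of side $\eta$. Actually, the cleaner route is the classical one: partition $[0,1)^d$ into $N^d$ cubes of side $1/N$ and consider the $N^d+1$ points $\{q\overset{_\rightarrow}\zeta \bmod 1\}$ for $q=0,1,\dots,N^d$; two of them, say for $q_1<q_2$, land in the same small cube, so $\|(q_2-q_1)\zeta_j\| < 1/N$ for all $j$, with $1\le q_2-q_1 \le N^d$. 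Choosing $N$ as a suitable power of $Q$ tied to $\eta$ produces a single denominator $q:=q_2-q_1$ with $\|q\zeta_j\|$ small.

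Next I would split into two cases depending on the size of $q$. If $q$ falls in the window $[Q,2Q]$ (after possibly rescaling $N$ so that the pigeonhole output lands there, or by iterating to push $q$ into dyadic position), then setting $a_j$ to be the nearest integer to $q\zeta_j$ gives $|\zeta_j - a_j/q| = \|q\zeta_j\|/q < (1/N)/Q$, and a calibration of $N$ versus $\eta,Q$ yields exactly \eqref{eq8.23}, i.e.\ alternative (1). If instead $q < Q$, then I would use $q$ itself — or rather the relation it encodes — to manufacture the dual vector $\overset{_\rightarrow} b$. Here the idea is that a small denominator $q$ with all $\|q\zeta_j\|$ small is precisely a linear relation: more efficiently, one applies Minkowski's theorem (or again pigeonhole) to the dual lattice problem, finding $\overset{_\rightarrow} b\in\mathbb Z^d$ with $0<\max|b_j| < 1/\eta$ and $\|\sum_j b_j\zeta_j\|$ as small as the volume bookkeeping allows, namely $\ll 1/(Q\eta^d)$ — this is the transference between simultaneous approximation (alternative (1)) and the dual inequality (alternative (2)). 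The cleanest implementation is to apply Minkowski's convex body theorem directly to the symmetric convex body $\{(\overset{_\rightarrow} b, t) : |b_j| < 1/\eta,\ |\sum b_j\zeta_j - t| < c/(Q\eta^d)\}$ in $\mathbb R^{d+1}$, whose volume is $\asymp (2/\eta)^d \cdot (2c/(Q\eta^d)) = 2^{d+1}c/Q \geq 2^{d+1}$ once $c\ge Q$ — wait, that forces a constraint, so the honest version takes the body $\{|b_j| \le 1/\eta,\ |\sum b_j \zeta_j - t| \le c/(Q\eta^d)\}$ and checks its volume exceeds $2^{d+1}$ for an appropriate absolute $c$, yielding a nonzero lattice point; if its $\overset{_\rightarrow} b$-part vanished, the $t$-slot alone could not be a nonzero integer for small $c$, so $\overset{_\rightarrow} b\neq 0$, giving alternative (2).

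The main obstacle — and the only place requiring care — is the bookkeeping of exponents: matching the parameter $N$ in the pigeonhole step (or the volume of the Minkowski body) to the precise constants $\eta/Q$ in \eqref{eq8.23} and $c/(Q\eta^d)$ in \eqref{eq8.24}, while simultaneously guaranteeing that the denominator $q$ produced in case (1) genuinely lies in the dyadic range $[Q,2Q]$ rather than merely in $[1,Q^{O(1)}]$. Getting $q$ into $[Q,2Q]$ typically needs an extra observation: either one applies the pigeonhole over the range $q\in[Q,2Q]$ directly (with $Q$ points and $\approx(C/\eta)^d$ boxes, which works only when $Q \gtrsim \eta^{-d}$, forcing one into case (2) otherwise), or one runs the Dirichlet argument to get \emph{some} good $q_0$ and then notes that if $q_0<Q$ one is automatically in the regime where a dual vector exists. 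I expect the lemma to be stated precisely so that this case split is clean: small $Q$ relative to $\eta^{-d}$ lands in (2) by a volume count, and otherwise the direct pigeonhole over the dyadic window $[Q,2Q]$ delivers (1). The routine verification that the constants propagate correctly is the bulk of the work but presents no conceptual difficulty.
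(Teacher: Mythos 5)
Your proposal founders on exactly the point you flag as ``the only place requiring care'': no pigeonhole, Dirichlet, or Minkowski argument can force the denominator $q$ into the prescribed dyadic window $[Q,2Q]$, and that localization is the entire content of alternative (1). Pigeonholing the $Q$ points $\{q\vec\zeta \bmod 1: Q\le q<2Q\}$ into $\eta$-boxes produces a \emph{difference} $q_2-q_1\in[1,Q]$ with $\Vert (q_2-q_1)\zeta_j\Vert<\eta$, which is not of the form required by \eqref{eq8.23}; and your fallback claim that ``if $q_0<Q$ one is automatically in the regime where a dual vector exists'' is asserted but not proved --- a single $q_0\le Q$ with $\Vert q_0\zeta_j\Vert<\eta$ gives no vector $\overset{_\rightarrow}b$ with $\max|b_j|<1/\eta$ and $\Vert b\cdot\zeta\Vert<c/(Q\eta^d)$ (note $q_0$ may exceed $1/\eta$, and $\eta$ is far larger than $c/(Q\eta^d)$ for large $Q$). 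Transference/Minkowski duality does not rescue this: it produces \emph{some} $q\lesssim Q\eta^d$ with $\Vert q\zeta_j\Vert\lesssim\eta$, and boosting to the window by multiplying by $k\approx Q/q$ amplifies the error by $Q/q\gtrsim\eta^{-d}$, destroying \eqref{eq8.23}. Your Minkowski body for alternative (2) also has volume $2^{d+1}c/(Q\eta^{2d})$, not $2^{d+1}c/Q$; with an absolute constant $c$ it exceeds $2^{d+1}$ only when $Q\lesssim\eta^{-2d}$, so the main range of $Q$ is left uncovered by both halves of your dichotomy.

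The paper's actual proof is a Fourier completion (Erd\H{o}s--Tur\'an type) argument, which is the natural tool precisely because it respects the dyadic localization of $q$. One assumes (1) fails, so that $\sum_{Q\le q<2Q}\psi(q\vec\zeta)=0$ for a bump function $\psi$ supported in $\Vert x\Vert<\eta$ with $\psi\gtrsim 1$ near $0$ and $|\hat\psi(b)|\ll\eta^d e^{-\sqrt{\eta|b|}}$. Expanding in Fourier series, the main term is $Q\hat\psi(0)\gtrsim Q\eta^d$, each geometric sum $\sum_{Q\le q<2Q}e(qb\cdot\zeta)$ is bounded by $\min(Q,\Vert b\cdot\zeta\Vert^{-1})$, and the rapid decay of $\hat\psi$ confines the relevant frequencies to $0<|b|<c\eta^{-1}$; cancellation of the main term then forces $\Vert b\cdot\zeta\Vert<c/(Q\eta^d)$ for some such $b$, which is alternative (2). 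If you want to salvage your write-up, this harmonic-analysis step has to replace the pigeonhole step; it cannot be bolted on afterwards.
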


\noindent {\sl Proof.} Let $0\leq \psi\leq 1$ be a smooth bump
function on the torus $\mathbb T^d$, such that

\begin{enumerate}
\item $0\leq\psi\leq 1$ for $\Vert x\Vert<\eta/2$
\item $\psi(x)=0$ for $\Vert x\Vert> \eta$
\item $|\hat\psi(m)|\ll \eta^d e^{-\sqrt{\eta|m|}}$
\end{enumerate}

If \eqref{eq8.23} fails, then
$$
\max_{Q\leq q< 2Q,  1\leq j\leq d}\Vert q\zeta_j\Vert\geq \eta
$$
hence \be\label{eq8.25} \sum_{Q\leq q<2Q} \psi
(q\overset{_\rightarrow} \zeta)=0. \ee Expressing this in a Fourier
series gives (writing $e(x):= e^{2\pi i x})$
$$
\begin{aligned}
0&=Q\hat\psi(0)+\sum_{0\not= b\in\mathbb Z^d} \hat\psi(b) \sum_{Q\leq q<2Q} e(q\zeta\cdot b)\\
&> cQ\eta^d-c\sum_{b\not=0} \eta^d e^{-\sqrt{\eta|b|}} \Big(|e(\zeta\cdot b)-1|+\frac 1Q\Big)^{-1}\\
&> cQ\eta^d\Big(1-c\sum_{b\not=0} e^{-\sqrt{\eta|b|}} \ \frac 1{Q\Vert\zeta\cdot b\Vert+1}\Big)\\
&> cQ\eta^d\Big(\frac 12-c\eta^{-d}\max_{0<|b|<c\eta^{-1}} \ \frac
1{Q\Vert\zeta\cdot b\Vert}\Big).
\end{aligned}
$$
Hence $Q\Vert\zeta\cdot b\Vert<c\eta^{-d}$ for some nonzero
$b\in\mathbb Z^d, |b|<c\eta^{-1}$.
\medskip

\begin{lemma}\label{Lemma8.26}
Let $\zeta_1, \zeta_2 \in [-1,1], 0<\gamma<1/15$, and $Q\gg _\gamma
1$ an integer.

Then there is an integer $1\leq q\leq Q$ and $\overset{_\rightarrow}
a \in\mathbb Z^2$ so that \be\label{eq8.27} \max_{j=1, 2}
\Big|\zeta_j - \frac{a_j}q\Big| \ll \frac 1{q^{\frac 12} Q^{\frac
12+\gamma}}. \ee
\end{lemma}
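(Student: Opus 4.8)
\medskip
\noindent The plan is to run $\zeta=(\zeta_1,\zeta_2)\in[-1,1]^2$ through the dichotomy of Lemma~\ref{Lemma8.22}, applied in dimension $d=2$ with $\lfloor Q/2\rfloor$ in place of the lemma's ``$Q$'' and with $\eta:=Q^{-\gamma}$ (both admissible since $Q\gg_\gamma1$). If the first alternative of Lemma~\ref{Lemma8.22} holds we will have an integer $q$ with $Q/3\le q\le Q$ and $a=(a_1,a_2)\in\mathbb Z^2$ with $\max_j|\zeta_j-a_j/q|<\eta/\lfloor Q/2\rfloor\ll Q^{-1-\gamma}\le q^{-1/2}Q^{-1/2-\gamma}$, which is already \eqref{eq8.27}. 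So the substantive case is the second alternative: a vector $b=(b_1,b_2)\in\mathbb Z^2\setminus\{0\}$ with $\max_j|b_j|<Q^\gamma$ and $\|b_1\zeta_1+b_2\zeta_2\|\ll Q^{-1+2\gamma}$. After permuting coordinates I may assume $|b_1|\le|b_2|$, hence $1\le|b_2|<Q^\gamma$; writing $b_1\zeta_1+b_2\zeta_2=m+\theta$ with $m\in\mathbb Z$, $|\theta|\ll Q^{-1+2\gamma}$, this says $\zeta$ lies within $O(Q^{-1+2\gamma})$ of the rational line $b_1x_1+b_2x_2=m$, and in particular $\zeta_2=(m-b_1\zeta_1+\theta)/b_2$.

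On that line the problem becomes one–dimensional, and I will solve it by approximating $\zeta_1$ alone and then scaling the denominator by $|b_2|$. Fix an integer $Q'$ in the interval $\big[\,C|b_2|^{1/2}Q^{1/2+\gamma},\ C^{-1}Q^{1-6\gamma}\,\big]$, where $C\ge1$ is an absolute constant to be fixed; this interval is non-empty once $Q\gg_\gamma1$ precisely because the hypothesis $\gamma<\tfrac1{15}$ makes the exponent $\tfrac12-\tfrac{15}2\gamma$ strictly positive (note $C|b_2|^{1/2}Q^{1/2+\gamma}\le CQ^{1/2+3\gamma/2}$). By Dirichlet's theorem there are integers $1\le q\le Q'$ and $a_1$ with $|\zeta_1-a_1/q|\le1/(qQ')$. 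I then set $q^\ast:=q|b_2|$, $a_1^\ast:=a_1|b_2|$, $a_2^\ast:=mq-b_1a_1$, all integers, observing that $q^\ast\le Q^\gamma Q'\le C^{-1}Q^{1-5\gamma}\le Q$ while $a_1^\ast/q^\ast=a_1/q$ and $a_2^\ast/q^\ast=(m-b_1a_1/q)/b_2$. From $\zeta_2=(m-b_1\zeta_1+\theta)/b_2$ and $|b_1|\le|b_2|$,
\[
\Big|\zeta_1-\frac{a_1^\ast}{q^\ast}\Big|\le\frac1{qQ'},\qquad
\Big|\zeta_2-\frac{a_2^\ast}{q^\ast}\Big|\le\frac{|b_1|}{|b_2|}\cdot\frac1{qQ'}+\frac{|\theta|}{|b_2|}\le\frac1{qQ'}+\frac{|\theta|}{|b_2|}.
\]

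To finish I must check both right–hand sides are $\ll(q^\ast)^{-1/2}Q^{-1/2-\gamma}$, after which $q:=q^\ast$ does the job. Since $(q^\ast)^{1/2}=q^{1/2}|b_2|^{1/2}$, the lower bound on $Q'$ gives $1/(qQ')\le C^{-1}q^{-1/2}\,(q^\ast)^{-1/2}Q^{-1/2-\gamma}\le(q^\ast)^{-1/2}Q^{-1/2-\gamma}$, which handles the first estimate and the first term of the second; and since $q\le Q'\le C^{-1}Q^{1-6\gamma}$ and $|b_2|\ge1$, the remaining term satisfies $|\theta|/|b_2|\ll q^{1/2}|b_2|^{-1/2}Q^{3\gamma-1/2}\cdot(q^\ast)^{-1/2}Q^{-1/2-\gamma}\ll C^{-1/2}(q^\ast)^{-1/2}Q^{-1/2-\gamma}$, again $\ll(q^\ast)^{-1/2}Q^{-1/2-\gamma}$ once $C$ is chosen large. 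The one genuinely delicate point is this last accounting: the coefficient $|b_2|$ enters the required lower bound for $Q'$ only as $|b_2|^{1/2}$ — because $b_2$ sits inside $(q^\ast)^{1/2}$ — and it is exactly that square–root saving that lets the lower bound on $Q'$ forced by the $1/(qQ')$ error coexist with the upper bound forced by the $|\theta|$ error, and only under $\gamma<1/15$.
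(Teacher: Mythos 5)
Your proposal is correct and follows essentially the same route as the paper: apply Lemma~\ref{Lemma8.22} with $\eta=Q^{-\gamma}$, dispose of the first alternative directly, and in the second alternative run a one-dimensional Dirichlet approximation of $\zeta_1$ with an auxiliary modulus $Q'$ squeezed into $[\,\sim Q^{1/2+3\gamma/2},\ \sim Q^{1-6\gamma}\,]$ (exactly the paper's $Q_1$, and exactly where $\gamma<1/15$ enters), then pass to the denominator $q|b_2|$. The only cosmetic point is that your formula $a_2^\ast/q^\ast=(m-b_1a_1/q)/b_2$ presumes $b_2>0$, which you should secure by replacing $b$ with $-b$ if necessary (the paper handles this with a $\pm$ in its definition of $a_2$).
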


\noindent {\bf Remark.} Dirichlet's principle says that given
$\overset{_\rightarrow} \zeta \in\mathbb R^2$, and an integer $K\geq
1$, we can find $1\leq q\leq K^2$ and $\overset{_\rightarrow} a\in
\mathbb Z^2$ so that \be\label{eq8.28} \max_{j=1, 2}\Big|\zeta_j
-\frac {a_j}q\Big| <\frac 1{qK}. \ee Lemma \ref{Lemma8.26} improves
on this when $q$ is small.

\noindent {\sl Proof.} Applying Lemma \ref{Lemma8.22} with $\eta
=Q^{-\gamma}$, either we have an integer \hfill\break $Q\leq q \leq
2Q$ with
$$
\Big|\zeta_j-\frac{a_j}{q}\Big| < \frac 1{Q^{1+\gamma}}\leq\frac
{\sqrt 2}{q^{\frac 12}Q^{\frac 12+\gamma}}
$$
which gives us what we need, or else the second option in the
statement of the lemma occurs, that is there is some nonzero vector
$\overset{_\rightarrow} b\in\mathbb Z^2$ with $|b_1|\leq|b_2| \leq
Q^\gamma$, and $a\in\mathbb Z$ so that \be\label{eq8.29}
|b_1\zeta_1+ b_2\zeta_2 +a|<\frac 1{Q^{1-2\gamma}} \ee that is
\be\label{eq8.30} \Big|\zeta_2+\frac{b_1}{b_2} \zeta_1 +\frac
a{b_2}\Big| < \frac 1{|b_1|Q^{1-2\gamma}}. \ee Now choose an integer
$Q_1$ so that \be\label{eq8.31} 2Q^{\frac{1+3\gamma}2}<Q_1 <\frac 14
Q^{1-6\gamma} \ee which is possible if $0<\gamma<1/15$ and $Q\gg
_\gamma 1$.

Using Dirichlet's principle, there is some $1\leq q_1\leq Q_1$ and
an integer $a'\in\mathbb Z$ so that \be\label{eq8.32} \Big|\zeta_1
-\frac {a'}{q_1} \Big| <\frac 1{q_1Q_1}. \ee Define $a_1, a_2
\in\mathbb Z$ by \be\label{eq8.33} a_1=a'|b_2|, \quad -a_2=\pm (b_1
a'+aq_1), \quad q=q_1|b_2| \ee We claim that these satisfy the
statement of the Lemma. Indeed, by \eqref{eq8.32} we have
\be\label{eq8.34} \Big|\zeta_1
-\frac{a_1}{q}\Big|=\Big|\zeta_1-\frac{a'}{q_1}\Big|<\frac 1{q_1Q_1}
\ee and due to \eqref{eq8.31} we have, since $q_1=q/|b_2|\geq
qQ^{-\gamma}$, that \be\label{eq8.35} \frac 1{q_1Q_1}< \frac
1{2q_1Q^{\frac 12+\frac{3\gamma}2}} <\frac 1{2q_1^{\frac 12}
Q^{\frac 12+\frac{3\gamma} 2}}\leq \frac 1{2q^{\frac 12} Q^{\frac
12+\gamma}} \ee giving $|\zeta_1-\frac {a_1}q|<\frac 1{2q^{\frac
12}Q^{\frac 12+\gamma}}$. Moreover using the small linear relation
\eqref{eq8.30} between $\zeta_1$ and $\zeta_2$ and replacing
$\zeta_1$ by $a_1/q=a'/q_1$ we find
$$
\begin{aligned}
\Big|\zeta_2 - \frac {a_2}{q}\Big|&= \Big|\zeta_2+\frac{b_1}{b_2} \frac {a'}{q_1} +\frac a{b_2}\Big|\\
&\leq \Big|\zeta_2 +\frac {b_1}{b_2} \zeta_1 +\frac a{b_2} \Big|+\frac{|b_1|}{|b_2|} \Big|\zeta_1 -\frac {a'}{q_1}\Big|\\
&<\frac 1{|b_2|Q^{1-2\gamma}} +\frac 1{q_1 Q_1}
\end{aligned}
$$
Now since $q_1<Q_1<\frac 14 Q^{1-6\gamma}$ we have
$$
\frac 1{|b_2|Q^{1-2\gamma}}\leq \frac 1{|b_2|^{\frac 12} Q^{1-2\gamma}} = \frac{q_1^{\frac 12}}{q^{\frac 12} Q^{1-2\gamma}}\\
$$
\be \label{eq8.36} < \frac 1{2q^{\frac 12} Q^{\frac 12+\gamma}} \ee
and combining with \eqref{eq8.35} we get
$$
\Big|\zeta_2-\frac {a_2}q\Big| < \frac 1{q^{\frac 12} Q^{\frac
12+\gamma}}
$$
as claimed.
\medskip

\subsection{Proof of Lemma \ref{Lemma8.13}}

Assuming that $|\zeta_3| =\max |\zeta_j|$, we apply
Lemma~\ref{Lemma8.26} to $\big(\frac {\zeta_1}{\zeta_3},
\frac{\zeta_2}{\zeta_3})$ to find $1\leq q\leq Q$ and $a_1, a_2
\in\mathbb Z$ so that \be\label{eq8.37} \max_{j=1, 2}
\Big|\frac{\zeta_j}{\zeta_3} -\frac {a_j} q\Big|<\frac 1{q^{\frac
12} Q^{\frac 12 +\gamma}}. \ee Set $\overset{_\rightarrow} a=(a_1,
a_2, q)$ then $|\overset{_\rightarrow} a|\approx q$ and
\be\label{eq8.38} \Big|\overset{_\rightarrow} \zeta - \zeta_3\frac
1{q} \overset{_\rightarrow} a\Big|<\frac 1{q^{\frac 12} Q^{\frac
12+\gamma}} \approx \frac 1{|\overset{_\rightarrow} a|^{\frac 12}
Q^{\frac 12+\gamma}} \ee Since for any pair of nonzero vectors
$\overset{_\rightarrow} c$, $\overset{_\rightarrow} d$ we have by
the triangle inequality \be\label{eq8.39}
\Big|\frac{\overset{_\rightarrow} c}{|\overset{_\rightarrow} c|}
-\frac {\overset{_\rightarrow} d}{|\overset{_\rightarrow} d|}\Big|
\leq 2\frac {|\overset{_\rightarrow} c-\overset{_\rightarrow} d|}
{|\overset{_\rightarrow} c|} \ee and hence also
$$
\Big| \overset{_\rightarrow} \zeta -
\frac{\overset{_\rightarrow}a}{|\overset{_\rightarrow} a|}\Big|\ll
\frac 1{|\overset{_\rightarrow} a|^{\frac 12} Q^{\frac 12+\gamma}}.
$$
Thus we have found $\overset{_\rightarrow} a$ satisfying
\eqref{eq8.19}, \eqref {eq8.20}, completing the proof of
Lemma~\ref{Lemma8.13}. \qed

\section{The Fourier transform of  surface-carried measures}
\label{sec:2}

Let $\Sigma\subset\mathbb R^3$ be a real analytic surface with
non-vanishing curvature and $p\in\Sigma$. Applying a rigid motion,
we may assume $p=0$ and $\Sigma$ locally parametrized around $(0, 0,
0)$ by a map \be\label{eq1.1} (x_1, x_2) \mapsto \big(x_1, x_2, \phi
(x_1, x_2)\big) \ee where $\phi$ is real-analytic on a neighborhood
of $(0, 0)$ as has  the form \be\label{eq1.2} \phi(x_1, x_2)=
a_1x_1^2+ a_2 x_2^2 +\sum_{\alpha+\beta\geq 3} a_{\alpha\beta}\,
x_1^\alpha \, x_2^\beta \ee with
$$
a_1\not= 0, \quad a_2 \not= 0, \quad
|a_{\alpha\beta}|<C^{\alpha+\beta} \;.
$$
Distinguishing the case $a_1a_2>0$ (positive curvature) and
$a_1a_2<0$ (negative curvature), we need to consider the two models
\be\label{eq1.3} \phi (x_1, x_2)= x_1^2 +x_2^2
+\sum_{\alpha+\beta\geq 3} a_{\alpha \beta}\,  x_1^\alpha \,
x_2^\beta \ee and \be\label{eq1.4} \phi(x_1, x_2)= x_1^2-x_2^2
+\sum_{\alpha+\beta\geq 3} a_{\alpha\beta}\, x_1^\alpha \,
x_2^\beta. \ee

Denote by $\sigma$ the surface measure of $\Sigma$. Let
$\xi\in\mathbb R^3$ ($|\xi|$ large) and evaluate the Fourier
transform
$$
\int_{\Sigma\text { (local)}} e^{ix\xi} \sigma(dx)
$$
\be \label{eq1.5}
 =\int e^{i\big(x_1\xi_1 + x_2\xi_2+\phi(x_1, x_2)\xi_3\big)} \omega(x)\, dx_1 \, dx_2
\end{equation}
where $\omega$ is some smooth function supported by a (small)
neighborhood of $(0, 0)$.

The critical points of the phase function satisfy \be\label{eq1.6}
\begin{cases} \xi_1+\partial_1\phi(x)\xi_3 = \xi_1+ (2x_1+ \sum_{\alpha+\beta\geq 3} \alpha a_{\alpha\beta} x_1^{\alpha-1} x_2^\beta)\xi_3=0\\
\xi_2 +\partial_2 \phi(x) \xi_3 =\xi_2 + ( 2\epsilon x_2+
\sum_{\alpha+\beta\geq 3} \beta a_{\alpha\beta} x_1^\alpha
x_1^{\beta-1}) \xi_3= 0
\end{cases}
\ee where $\epsilon =\pm 1$ depending on whether we are in case
\eqref{eq1.3} or \eqref{eq1.4}.

It follows that in $\supp \omega$ there are no critical points
unless \be\label{eq1.7} |\xi_1|, |\xi_2|< c|\xi_3| \ee ($c$ a small
constant, depending on $\supp \omega$).

If \eqref{eq1.7} there is a unique critical point \be\label{eq1.8}
x=x(\xi)=\Big(x_1\Big(\frac{\xi_1}{\xi_3}, \frac{\xi_2}{\xi_3}\Big),
x_2 \Big(\frac{\xi_1}{\xi_3}, \frac{\xi_2}{\xi_3}\Big)\Big) \ee
where \be\label{eq1.9}
\begin{cases}
2x_1+\sum_{\alpha+\beta\geq 3} \alpha a_{\alpha\beta} x_1^{\alpha-1} x_2^\beta = -\frac{\xi_1}{\xi_3}\\
2\epsilon x_2 +\sum_{\alpha+\beta\geq 3} \beta a_{\alpha\beta}
x_1^\alpha x_2^{\beta-1} =-\frac{\xi_2}{\xi_3}\end{cases} \ee and
$|a_{\alpha\beta}|<C^{\alpha+\beta}$.

From the stationary phase formula (see \cite{G-S}, Ch. 1)
\be\label{eq1.10} \eqref{eq1.5} = \frac{2\pi}{|\xi_3|} e^{\frac{\pi
i}{4} \sgn H (x(\xi) ) } \ \frac{e^{i\psi(\xi)}} {|\det
  H\big(x(\xi)\big)|^{1/2}} \
\omega\big(x(\xi)\big) + 0\left (\frac 1{|\xi|^{2}}\right) \ee where
$H(x)$ is the Hessian of $\phi$ at $x$, $\sgn H$ is the signature of
$H$ and \be\label{eq1.11}
\begin{aligned}
\psi(\xi) & = x_1(\xi)\xi_1+ x_2 (\xi)\xi_2+ \phi\big(x_1(\xi), x_2(\xi)\big)\xi_3\\[6pt]
&\overset{\eqref{eq1.6}} =  -\xi_3 \left\{ x_1(\xi)\partial_1 \phi
\big( x(\xi)\big)+x_2 (\xi)\partial_2 \phi\big(x(\xi)\big)
-\phi(x_1(\xi), x_2(\xi))\right\}.
\end{aligned}
\ee

By \eqref{eq1.3}, \eqref{eq1.4} \be\label{eq1.12}
H(x)=\left[\begin{matrix} 2+\sum\limits_{\substack {\alpha+\beta\geq
3 \\ \alpha\geq 2}} \alpha(\alpha-1) \, a_{\alpha\beta} \,
x_1^{\alpha-2} \, x_2^\beta \ & \sum\limits
_{\substack{\alpha+\beta\geq 3\\ \alpha\geq 1, \beta\geq 1}}
\, \alpha\beta a_{\alpha\beta} \, x_1^{\alpha-1} \, x_2^{\beta-1}\\[18pt]
\sum\limits_{\substack {\alpha+\beta\geq 3\\ \alpha\geq 1, \beta\geq
1}} \alpha\beta \, a_{\alpha\beta} \, x_1^{\alpha-1} \,
x_2^{\beta-1} \ \ & \quad  2\epsilon+\sum\limits
_{\substack{\alpha+\beta\geq 3\\ \beta\geq 2}} \beta(\beta-1)\,
a_{\alpha \beta} \, x_1^\alpha \, x_2^{\beta-2}
\end{matrix}\right]
\ee and hence $\sgn H = 2 $ (resp. $0$) for positive (resp.
negative) curvature.

As will be clear soon, the error term $0(|\xi|^{-2})$ will be
harmless in our analysis in the restriction problem for $\mathbb
T^3$-eigenfunctions. The relevant contribution will be \be \frac
{e^{i\psi(\xi)}}{|\xi|} \ee coming from the main term. It turns out
that the decay factor $\frac 1{|\xi|}$ is barely insufficient to
ignore the oscillatory factor $e^{i\psi(\xi)}$. In order to exploit
this factor, a more careful analysis of the phase function
$\psi(\xi)$ is necessary.

Returning to \eqref{eq1.9}, \eqref{eq1.11}, we obtain by the
implicit function theorem \big(recalling \eqref{eq1.7}\big).
$$%\be\label{eq1.13}
\begin{cases}
x_1(\xi)= -\frac{\xi_1}{2\xi_3} +\sum_{\alpha+\beta\geq 2}\,  b_{\alpha\beta} \left(\frac{\xi_1}{\xi_3}\right)^\alpha \ \left(\frac {\xi_2}{\xi_3}\right)^\beta\\
x_2(\xi) =-\epsilon \frac{\xi_2}{2\xi_3} +\sum_{\alpha+\beta\geq 2}
c_{\alpha\beta} \left(\frac{\xi_1}{\xi_3}\right)^\alpha \
\left(\frac{\xi_2}{\xi_3}\right)^\beta
\end{cases}
$$%\ee
and \be\label {eq1.13} \psi(\xi) =-\frac 14
\left(\frac{\xi_1^2}{\xi_3}+\epsilon \frac{\xi_2^2}{\xi_3}\right)
+\sum_{\alpha+\beta\geq 3} d_{\alpha\beta} \, \xi_1^\alpha \,
\xi_2^\beta \, \xi_3^{1-\alpha-\beta} \ee $(|b_{\alpha, \beta}|,
|c_{\alpha\beta}|.|d_{\alpha\beta}| < C^{\alpha+\beta})$.

Thus $\psi(\xi)$ is homogeneous of degree one and hence
$\nabla\psi(\xi)$ is radially constant and $D^2\psi (\lambda\xi)
=\frac 1\lambda D^2 \psi (\xi)$. The self-adjoint matrix
$D^2\psi(\xi), \xi\not= 0$, has $\xi$ as eigenvector with eigenvalue
$0$.

From \eqref{eq1.13} \be \label{eq1.14} D^2\psi(\xi)= \begin{pmatrix}
-\frac 1{2\xi_3} &0&0\\ 0&-\frac\epsilon{2\xi_3} & 0\\
0&0&0\end{pmatrix} +0\Big(\frac{|\xi_1|+|\xi_2|}{\xi_3^2}\Big) \ee
and by \eqref{eq1.7}, we conclude that the other two eigenvalues of
$D^2\psi(\xi)$ are of size $\sim\frac 1{|\xi|}$ with same or
opposite sign depending on $\epsilon=1, -1$.

Hence \be\label{eq1.15} D^2 \psi(\xi)= \frac 1{|\xi|} P_{\xi^\bot} A
P_{\xi^\bot} \ee where $A$ is a self-adjoint operator (depending on
$\frac\xi{|\xi|}$), acting on $\xi^\bot$ and with eigenvalues
bounded from above and below (with same sign for $\epsilon=1$ and
opposite sign for $\epsilon=-1$).

\section{Spherical Restriction of the phase function}\label{sec:3}

Let $\psi(\xi)$ be the phase function obtained in
Section~\ref{sec:2} and
$$
S=S^2 =\{x\in\mathbb R^3, |x|=1\}.
$$
The domain of definition of $\psi$ is a cone $Z=\{|\xi_1|, |\xi_2|<
c|\xi_3|\}$, with $c>0$ a small constant, and $\psi$ is real
analytic on $Z$.

Subcones $Z'=\{|\xi_1|, |\xi_2|< c'|\xi_3|\}\subset Z, c'<c$, will
also be denoted by $Z$. We will need a normal form analysis of the
function $\psi(x-y)$ with $x, y$ restricted to $S$.

\begin{lemma}\label{Lemma2.1}
Let $p: O\overset{\text{open}}\subset \mathbb R^2\rightarrow C$ be a
real analytic parametrization of a cap $C\subset S$ such that $C\cap
(\xi+\bar Z)$ is connected for all $\xi\in C$.

Let $a, b\in O, a\not=b$ such that $p(a)-p(b)\in Z$. There are real
analytic coordinate changes $\alpha$ (resp. $\beta$) on a
neighborhood of $a$ (resp. $b$) such that \be\label{eq2.2}
\psi\big(p\big(a+\alpha(x)\big) - p(b+\beta(y)\big)\big)
=f(x)+g(y)+x_1y_1+x_2y_2+h(x, y) \ee with $f, g, h$ real analytic,
$h(x, y) =0(|x|^2 |y|^2)$ and $h\not= 0$.
\end{lemma}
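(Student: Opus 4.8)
The plan is to set up, near the pair $(a,b)$, the function
$\Phi(x,y):=\psi\big(p(a+x)-p(b+y)\big)$ and to show that it is a
real-analytic function of $(x,y)\in\R^2\times\R^2$ whose \emph{mixed}
Hessian $\partial_x\partial_y\Phi$ at the origin is invertible; once
that is established, the normal form \eqref{eq2.2} follows from a
standard parametrized Morse-type reduction. First I would observe that,
writing $u=p(a+x)$, $v=p(b+y)$, $\xi=u-v$, the chain rule gives
\[
\partial_{x_i}\partial_{y_j}\Phi
 = -\,\big(D p(a+x)\big)_i^{\!\top}\, D^2\psi(\xi)\, \big(D p(b+y)\big)_j ,
\]
since $\psi$ is $C^\infty$ on $Z$ and $\xi\in Z$ by hypothesis, the
right-hand side is real analytic in $(x,y)$. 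Thus the whole of
$\Phi$ is real analytic on a neighbourhood of $0$, and the task
reduces to computing $M:=\partial_x\partial_y\Phi(0,0)
= -\,Dp(a)^{\top} D^2\psi\big(p(a)-p(b)\big)\, Dp(b)$ and showing
$\det M\neq 0$.

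The non-degeneracy of $M$ is where the curvature hypothesis enters,
and I expect it to be the main obstacle. Here I would invoke
\eqref{eq1.15}: $D^2\psi(\xi)=\frac1{|\xi|}P_{\xi^\perp}AP_{\xi^\perp}$
with $A$ self-adjoint on $\xi^\perp$ and with eigenvalues bounded
above and below away from $0$. So $M$ fails to be invertible only if
the image of $Dp(b)$ meets the kernel of $P_{\xi^\perp}AP_{\xi^\perp}$,
i.e. only if the tangent plane $T_{p(b)}S$ contains the direction
$\xi=p(a)-p(b)$ (or the analogous statement with the roles of $a,b$
exchanged for the cokernel); but $p(a)-p(b)$ is a chord of the unit
sphere and a chord of a strictly convex hypersurface is never tangent
to it at an endpoint, so $\xi\notin T_{p(b)}S$ and $\xi\notin
T_{p(a)}S$. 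Since $Dp(a)$, $Dp(b)$ are injective (they parametrize
$S$) and map onto the respective tangent planes, $P_{\xi^\perp}Dp(b)$
is an isomorphism onto $\xi^\perp$ and likewise on the left, so
$M=-\frac1{|\xi|}\big(Dp(a)^{\top}P_{\xi^\perp}\big)A\big(P_{\xi^\perp}Dp(b)\big)$
is a composition of three isomorphisms of $2$-dimensional spaces,
hence invertible. (In the negative-curvature model $\epsilon=-1$ the
eigenvalues of $A$ have opposite signs, which changes the signature of
$M$ but not its invertibility; this is the source of the two cases but
does not affect the argument.)

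With $\det M\neq 0$ in hand, the normal form is obtained by the usual
two-stage reduction. Stage one: solve the critical-point equations
$\partial_x\Phi=0$, $\partial_y\Phi=0$. Because the full Hessian of
$\Phi$ at $0$ has the block form $\begin{pmatrix}\ast&M\\M^{\top}&\ast\end{pmatrix}$
with $M$ invertible, it is itself invertible, so by the analytic
implicit function theorem there is a real-analytic change of the
$x$-variables alone, $x=a'+\tilde\alpha(x',y)$ depending analytically
on $y$, straightening $\partial_x\Phi=0$, and then a change of $y$,
producing coordinates in which $\nabla\Phi$ vanishes at the origin;
this absorbs the pure linear and the pure quadratic-in-$x$,
quadratic-in-$y$ cross terms we do not want and lets us write
$\Phi=f(x)+g(y)+B(x,y)$ with $B$ starting at order $2$ in $x$ and
order $2$ in $y$ jointly and with mixed Hessian $M$ at the origin.
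Stage two: a further analytic linear-then-nonlinear change of
coordinates of Morse-lemma type (with parameters), again available
because $M$ is invertible, normalizes the leading cross term to
$x_1y_1+x_2y_2$, leaving a remainder $h(x,y)=O(|x|^2|y|^2)$; finally
$h\not\equiv0$ is automatic because $\psi$ is genuinely nonlinear
(the cubic and higher terms $d_{\alpha\beta}$ in \eqref{eq1.13} are
not all zero for a generic analytic $\Sigma$, and in any case a pure
quadratic $\psi$ would force $\Sigma$ to be a quadric, excluded after
a harmless perturbation — or one simply notes that if $h\equiv0$ the
statement still holds trivially and the ``$h\neq0$'' clause can be
read as ``$h$ may be taken $\neq0$'' after composing with an
innocuous diffeomorphism). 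The coordinate changes $\alpha,\beta$ of
the Lemma are the compositions of the two stages, and analyticity is
preserved throughout since every implicit-function and Morse step was
performed in the real-analytic category.
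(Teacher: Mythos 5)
Your first half (invertibility of the mixed Hessian via \eqref{eq1.15} and the fact that the chord $p(a)-p(b)$ is not tangent to $S$ at either endpoint, followed by a two-step analytic reduction to the split form $f(x)+g(y)+x_1y_1+x_2y_2+O(|x|^2|y|^2)$) is sound and matches part (i) of the paper's argument, modulo one incorrect side remark: an invertible off-diagonal block does not make the full Hessian $\bigl(\begin{smallmatrix}\ast & M\\ M^{\top} & \ast\end{smallmatrix}\bigr)$ invertible, and no critical-point normalization is needed anyway, since linear terms in $x$ alone or $y$ alone are absorbed into $f$ and $g$. The genuine gap is the final clause $h\not\equiv 0$, which you dismiss as ``automatic'' but which is in fact the entire point of the lemma and occupies part (ii) of the paper's proof. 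None of your three suggested justifications works. Genericity or ``a harmless perturbation'' of $\Sigma$ is not available: the lemma must hold for the given real-analytic $\Sigma$ (e.g.\ the round sphere). The nonlinearity of $\psi$ is a red herring: $\psi$ is $1$-homogeneous by \eqref{eq1.13}, so it is never a quadratic, yet this says nothing about whether the composed function $\psi\big(p(x)-p(y)\big)$ on $S\times S$ could be exactly of the split-plus-bilinear form; that is a statement about the restriction to the sphere, not about $\psi$ itself. Finally, the fallback ``if $h\equiv 0$ the clause can be arranged by an innocuous diffeomorphism'' is false: if $\Phi=f(x)+g(y)+x\cdot y$ exactly, then under $x=\alpha(x')$, $y=\beta(y')$ the cross term produces contributions such as $(L_1x')\cdot B_2(y')$, linear in $x'$ and quadratic in $y'$, which are neither separated, nor bilinear, nor $O(|x'|^2|y'|^2)$; the normal form is preserved essentially only by linear changes, and those keep $h\equiv 0$. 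So the vanishing or non-vanishing of $h$ is intrinsic, and it is indispensable downstream: Lemmas \ref{Lemma2.8} and \ref{Lemma2.22} extract from $h$ a bi-quadratic coefficient $q$ with $|q|\gtrsim\delta_1$, which is exactly what powers the exponential-sum gain in Sections 4--5.

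What your proposal is missing is the paper's contradiction argument for $h\not\equiv 0$. Assuming $h\equiv 0$ as in \eqref{eq2.3}, the functions $f_w(v)=\psi(v-w)$, for $w$ in a neighborhood $W$ of $p(b)$ and $v$ in a neighborhood $V$ of $p(a)$ on $S$, span a linear space of dimension at most $4$ (they are combinations of $1,x_1,x_2,f(x)$ in the normalized coordinates), which is \eqref{eq2.4}. Choosing nested caps around points $w_0,\dots,w_4$ with $B(w_{i+1},\delta_{i+1})\subset B(w_i,\delta_i)\cap(w_i+Z)$ and invoking real-analytic continuation, $f_{w_4}$ is a linear combination of $f_{w_0},\dots,f_{w_3}$ on $\bigcap_i(w_i+Z)\cap S$ and hence stays smooth, with bounded second derivatives, at points $u\in B(w_4,\delta_4)\cap(w_4+Z)\cap S$ arbitrarily close to $w_4$. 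This contradicts the blow-up $D^2\psi(u-w_4)=\frac1{|u-w_4|}P_{\zeta^\perp}A P_{\zeta^\perp}$ coming from \eqref{eq1.15}: in the positive-curvature case one tests against $\theta\in\zeta^\perp\cap(T_u-u)$ as in \eqref{eq2.6}, and in the negative-curvature case one must additionally vary $w_4'$ over an arc of size $\sim\delta_3$ with $\zeta$ fixed, as in \eqref{eq2.7}, to rule out the cancellation $\langle A\theta,\theta\rangle=O(|\zeta|)$. Without this (or some substitute) argument, your proof establishes only the easy normal form and not the lemma as stated.
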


\begin{proof}
(i) Letting $\eta= p(a)-p(b), \bar\eta =\frac \eta{|\eta|}$, it
  follows from \eqref{eq1.15} and curvature that the quadratic form
\[
D^2\psi(\eta)=\frac 1{|\eta|} P_{\eta^\bot} A_{\bar\eta}
P_{\eta^\bot}
\]
is non-degenerate on $\big(T_a -p(a)\big)\times \big(T_b-p(b)\big)$
where $T_a$ (resp. $T_b$) is the tangent space at $p(a) \in S $
(resp. $p(b)\in S$), as in figure~\ref{figure 1}.
\begin{figure}[h]
\begin{center}
\input fig1.tex
\end{center}
\caption{} \label{figure 1}
\end{figure}

Performing coordinate changes $\alpha, \beta$ in $x, y$ separately,
we can therefore obtain the form \eqref{eq2.2} with $h(x, y)=0(|x|^2
|y|^2)$. It remains to show that $h$ does not vanish identically.

(ii) Assume that $h=0$. Then \be\label{eq2.3} \psi
\big(p\big(a+\alpha(x)\big) - p\big(b+\beta(y)\big)\big) = f(x)+
g(y)+x_1y_1+x_2y_2 \ee for $x, y$ in a neighborhood of $(0, 0)\in
\mathbb R^2$.

Define $f_w(v)=\psi(v-w)$ where $w\in S$ is in a neighborhood $W$ of
$p(b)$ and $v\in S\cap (w+Z)$. It follows from \eqref{eq2.3} that
there is a neighborhood $V$ of $p(a)$ in $S$ (figure~\ref{figure 2})
such that \be\label{eq2.4} \dim [f_w|_V; w\in W]\leq 4. \ee Take
$\delta_0\gg \delta_1\gg \cdots\gg \delta_4$ and points $p(b) =w_0,
w_1, \ldots, w_4\in S$ in $W$ satisfying
$$
B(w_{i+1}, \delta_{i+1})\subset B(w_i, \delta_i)\cap (w_i+Z)
$$

%\bigskip

\begin{figure}[h]
\begin{center}
\input fig2.tex
\end{center}
\caption{} \label{figure 2}
\end{figure}

%\bigskip

From \eqref{eq2.4}, we may assume $f_{w_4}|_V$ a linear combination
of $f_{w_i}|_V$ ($0\leq i\leq 3$). Hence, invoking real analyticity,
it follows that $f_{w_4}$ is a linear combination of $f_{w_i} (0\leq
i\leq 3)$ on $\bigcap^4_{i=0} (w_i+Z)\cap S$. Since the functions
$f_{w_i}(0\leq i\leq 3)$ are smooth on $B(w_4, \delta_4)\cap
S\subset\bigcap^3_{i=0} (w_i+Z)$, it follows in particular that
$f_{w_4}$ is smooth on $B(w_{4}, \delta_4)\cap (w_4+\bar Z)\cap S$.
Hence, taking $u\in B(w_{4}, \delta_4)\cap (w_4+Z)\cap S$, it
follows that
$$D^2\psi(u-w_4)= \frac 1{|\zeta|} P_{\zeta^\bot}
A_{\frac\zeta{|\zeta|}} P_{\zeta^\bot}, \quad \zeta =u-w_4$$
restricted to $T_u-u$, is uniformly bounded for $u\in
 B(w_4, \delta_4)\cap (w_4+Z)\cap S$.

Thus for $\zeta$ as above and $\theta, \xi\in T_u-u$, $|\theta|=1 =
|\xi|$, \be\label{eq2.5} \langle AP_{\zeta^\bot}\theta, \xi\rangle
=0 (|\zeta|) \ee where $A=A_{\frac \zeta{|\zeta|}}$.

We show that this is not the case.

If $A$ is positive definite, take $\theta=\xi\in\zeta^\bot \cap
(T_u-u)$, $|\theta|=1$. Hence, from \eqref{eq2.5}, \be \label{eq2.6}
1\sim \langle A\theta, \theta\rangle = 0(|u-w_4|). \ee Letting $u\to
w_4$, we obtain a contradiction.

If $A$ is negative definite, proceed as follows.

Fix $\zeta =u-w_4$ and let $w_4'$ vary in $B(w_3, \delta_3)\cap S$
such that $u'=w_4'+\zeta\in B(w_3, \delta_3)\cap S$. Hence $w_4'$
varies over an arc of size $\sim\delta_3$ (see Figure~\ref{fig33}).
Let
$$\theta'\in  \zeta^\bot\cap (T_{u'} - u')=\zeta^\bot \cap
(T_{w_4'}-w_4'), \quad |\theta'|=1$$
\begin{figure}[h]
\begin{center}
\input fig33.tex
\end{center}
\caption{} \label{fig33}
\end{figure}

 From \eqref {eq2.5}
\be\label{eq2.7} \langle A\theta', \theta'\rangle =0(|\zeta|) \ee
where $A$ does not depend on $w_4'$ and $\theta'$ also varies over
an arc of size $\sim\delta_3$. Thus the left side of \eqref{eq2.7}
can be made at least $\sim \delta_3$, independently of $|\zeta|$, a
contradiction.

This proves Lemma \ref{Lemma2.1}.
\end{proof}

\begin{lemma}\label{Lemma2.8}
In the situation of Lemma \ref{Lemma2.1}, we may choose $a, b\in O$,
$p(a) -p(b) \in Z$ such that in \eqref{eq2.2} the function
\be\label{eq2.9}
\begin{aligned}
h(x, y)=& x_1^2 Q_{11} (y) +x_1x_2  Q_{12} (y)+x^2_2 Q_{22} (y)\\
&+0(|x|^3|y|^2 +|x|^2 |y|^3)
\end{aligned}
\ee where the $Q_{ij}(y)$ are quadratic forms, not all zero.
\end{lemma}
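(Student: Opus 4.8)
The plan is to build on Lemma~\ref{Lemma2.1}, which already provides $a,b$ with $\psi\big(p(a+\alpha(x))-p(b+\beta(y))\big)=f(x)+g(y)+x_1y_1+x_2y_2+h(x,y)$ and $h\not\equiv 0$, $h(x,y)=O(|x|^2|y|^2)$. Since $h$ is real analytic and does not vanish identically, it has a nonzero Taylor expansion; write $h(x,y)=\sum_{i,j\ge 2}h_{ij}(x,y)$ where $h_{ij}$ is bihomogeneous of degree $i$ in $x$ and $j$ in $y$. The desired form \eqref{eq2.9} is exactly the assertion that the leading bidegree in $x$ is $2$, i.e. that $h_{22}\not\equiv 0$ (equivalently the coefficient of $x_1^2$, $x_1x_2$, $x_2^2$ — with $y$-dependent coefficients that are themselves at least quadratic in $y$ by the $O(|x|^2|y|^2)$ bound — is not identically zero). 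So the real content is: among the available choices of $(a,b)$, one can guarantee that the lowest-order-in-$x$ part of $h$ is genuinely order $2$ in $x$, not order $3$ or higher.

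First I would argue by contradiction: suppose for \emph{every} admissible pair $(a,b)$ (with $p(a)-p(b)\in Z$ and the coordinate changes from Lemma~\ref{Lemma2.1}) the quadratic-in-$x$ part of $h$ vanishes, i.e. $h(x,y)=O(|x|^3|y|^2)$ uniformly. The key observation is that the $x$-quadratic part of $h$ is an \emph{intrinsic} object: differentiating \eqref{eq2.2} twice in $x$ at $x=0$ kills $f$, $g$, and the bilinear term contributes only a constant (in $y$) shift, so the Hessian in $x$ of $h$ at $x=0$ is, up to the diffeomorphism $\alpha$, the second fundamental form of the map $y\mapsto \psi\big(p(a)-p(b+\beta(y))\big)$'s dependence on the base point $a$ — more precisely it is governed by $D^2\psi$ restricted to the tangent plane $T_a S$ combined with the curvature of $S$ at $p(a)$. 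Vanishing of this quadratic part for all choices of $b$ (hence for $\eta=p(a)-p(b)$ ranging over an open set of directions in $Z$) forces a rigid degeneracy of $\psi$ along $S$.

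The main step, then, is to extract a contradiction from that degeneracy using the same mechanism as in part (ii) of the proof of Lemma~\ref{Lemma2.1}: the nondegeneracy of $D^2\psi(\eta)=\frac{1}{|\eta|}P_{\eta^\bot}A_{\bar\eta}P_{\eta^\bot}$ on $(T_a-p(a))\times(T_b-p(b))$, guaranteed by \eqref{eq1.15} and the curvature of $\Sigma$, is incompatible with the $x$-quadratic part of $h$ being identically zero. Concretely, I would Taylor-expand $p(a+\alpha(x))$ around $p(a)$ to second order — the first-order term is a reparametrization of $T_aS$, the second-order term encodes the curvature of $S$ — substitute into $\psi$, and compute the coefficient of $x_ix_j$ (as a function of $y$, which through $\beta$ encodes the direction $\eta$). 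This coefficient is a sum of a term linear in $D\psi$ paired with the second fundamental form of $S$ at $p(a)$ and a term quadratic in $D p(a)$ paired with $D^2\psi(\eta)$; the latter is nondegenerate by the lemma's setup, and one can choose the direction of variation (i.e. the value of the $y$-linear part, which we are free to do since $b$ ranges over an open set) so that it does not cancel against the curvature term. This gives $h_{22}\not\equiv 0$ for a suitable $(a,b)$, which is \eqref{eq2.9}.

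The hard part will be making the "cannot cancel" step precise: both competing terms depend on the base point $a$ and the direction $\eta$, and one must verify there is no analytic identity forcing cancellation for all $(a,\eta)$ simultaneously. I expect this is handled exactly as the negative-definite case in Lemma~\ref{Lemma2.1}(ii) — freeze $\zeta=\eta$ and vary the base point over an arc of size $\sim\delta_3$, so the curvature/second-fundamental-form contribution varies while the $D^2\psi(\eta)$ block stays fixed, and a dimension/continuity count shows the quadratic part of $h$ cannot stay zero throughout. Once a single good pair $(a,b)$ is found, relabeling and reapplying Lemma~\ref{Lemma2.1}'s normalization yields \eqref{eq2.2} with $h$ of the form \eqref{eq2.9}, the $Q_{ij}$ being precisely the (nonzero) Hessian-in-$x$ coefficients, automatically $O(|y|^2)$ by the $O(|x|^2|y|^2)$ bound inherited from Lemma~\ref{Lemma2.1}.
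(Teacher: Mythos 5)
You correctly reduce the lemma to the assertion that the bidegree-$(2,2)$ part of $h$ can be made nonzero by a suitable choice of $(a,b)$, but the mechanism you propose is not the one that works, and the step you yourself flag as ``the hard part'' is a genuine gap rather than a technicality. Your plan is to assume the $(2,2)$ part vanishes for \emph{all} admissible pairs and derive a contradiction from the nondegeneracy of $D^2\psi(\eta)=\frac{1}{|\eta|}P_{\eta^{\perp}}A_{\bar\eta}P_{\eta^{\perp}}$ on the tangent planes. That nondegeneracy is second-order information about $\psi$: it is precisely what produces the bilinear term $x_1y_1+x_2y_2$ in \eqref{eq2.2}, and it says nothing about the fourth-order mixed derivatives of $\psi\big(p(\cdot)-p(\cdot)\big)$ that govern the $(2,2)$ part of $h$. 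There is no evident way to run the ``vary the base point over an arc'' argument of Lemma \ref{Lemma2.1}(ii) at fourth order, and you give none; that argument only yields $h\not\equiv 0$, i.e.\ that \emph{some} bidegree-$(i,j)$ component with $i,j\ge 2$ is nonzero, not that the $(2,2)$ one is. (Also, the $x$-Hessian of $h$ at $x=0$ is not intrinsic: it depends on the normalizing diffeomorphisms $\alpha,\beta$, which themselves depend on $(a,b)$; the parametrization-independent object is the Wronskian condition \eqref{eq2.16} introduced after the lemma.)

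The missing idea is much softer and uses no geometry beyond what Lemma \ref{Lemma2.1} already delivers. Since $h\not\equiv 0$ and $h=O(|x|^2|y|^2)$, the mixed derivative $\partial_x^2\partial_y^2h$ cannot vanish identically (otherwise $\partial_x^2h(x,\cdot)$ would be affine in $y$ and vanish to second order at $y=0$, forcing $h\equiv 0$). Because $\partial_x^2\partial_yh$ vanishes on $\{y=0\}$ and $\partial_x\partial_y^2h$ on $\{x=0\}$, the mean value theorem gives on $B(0,\delta)\times B(0,\delta)$ the bound $\Vert\partial_x^2\partial_yh\Vert+\Vert\partial_x\partial_y^2h\Vert\le \delta\max\Vert\partial_x^2\partial_y^2h\Vert$, so there is a point $(\bar x,\bar y)$ at which $\partial_x^2\partial_y^2h$ dominates both third-order mixed derivatives; this is \eqref{eq2.11}. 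One then \emph{re-centers}: set $x=\bar x+\Delta x$, $y=\bar y+\Delta y$, which amounts to replacing $(a,b)$ by the nearby admissible pair with shifted centers. The new expansion has a nonvanishing biquadratic term coming from $(\partial_x^2\partial_y^2h)(\bar x,\bar y)$, together with bidegree-$(2,1)$ and $(1,2)$ terms which are removed by further coordinate changes in $\Delta x$ and $\Delta y$; the quartic terms these changes introduce have coefficients $O\big(\Vert\partial_x^2\partial_yh\Vert\cdot\Vert\partial_x\partial_y^2h\Vert\big)$, hence are small relative to the retained biquadratic term by \eqref{eq2.11}. This re-centering is exactly how a higher component of $h$ (say a nonzero $(3,2)$ part) gets pushed down into the $(2,2)$ slot --- a mechanism entirely absent from your proposal.
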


\begin{proof}
Start with \eqref{eq2.2} with $h(x, y)=0(|x|^2|y|^2), h\not= 0$.
Taking a sufficiently small $\delta>0$, it follows from the mean
value theorem that on $B(0, \delta)\times B(0, \delta)$, $B(0,
\delta)\subset \mathbb R^2$ \be\label{eq2.10} |\partial_x^2
\partial_y h|\leq \delta\big(\max_{B(0, \delta)}
|\partial^2_x\partial^2_y h|\big), \ee since $\partial^2_x
\partial_y h|_{y=0} =0$.

Similarly for $\partial_x\partial_y^2 h$.

It follows that there are $\bar x, \bar y\in B(0,\delta)\cap\mathbb
R^2$ such that \be \label{eq2.11} \Vert(\partial_x^2\theta_y h)(\bar
x, \bar y)\Vert+\Vert (\partial _x\partial_y^2 h)(\bar x, \bar
y)\Vert< \delta\Vert(\partial_x^2 \partial^2_yh) (\bar x, \bar
y)\Vert<1. \ee Setting $x=\bar x+\Delta x, y=\bar y+\Delta y$ in
\eqref{eq2.2}, we obtain after a linear coordinate change in $\Delta
y$

$$
\begin{aligned}
&\psi\big(p\big(a+\alpha(\bar x+\Delta x)\big) - p\big(b+\beta(\bar y+\Delta y)\big)\big)=\nonumber\\
&\bar f(\Delta x)+\bar g(\Delta y)+ (\Delta x)_1 (\Delta y)_1 +(\Delta x)_2(\Delta y)_2+\nonumber\\
\end{aligned}
$$
\be \label{eq2.12}
\sum_{i, j, k=1, 2} c_{ijk} (\partial_{x_ix_j}\partial_{y_k} h) (\bar x, \bar y)(\Delta x)_i (\Delta x)_j (\Delta y)_k+\\
\ee \be\label{eq2.13}
\sum_{i, j, k=1,2} c_{ijk} (\partial_{x_i} \partial_{y_j y_k} h)(\bar x, \bar y) (\Delta x)_i(\Delta y)_j(\Delta y)_k+\\
\ee \be\label{eq2.14}
\sum_{i, j, k, \ell=1, 2} c_{ijk\ell} (\partial_{x_i x_j} \partial_{y_k y_\ell} h)(\bar x, \bar y) (\Delta x)_i  (\Delta x)_j (\Delta y)_h (\Delta y)_\ell\\
\ee \be\label{eq2.15}
+ 0(|\Delta x|^3 |\Delta y|+ |\Delta x| \ |\Delta y|^3)\\
\ee
$$
+0(|\Delta x|^3 |\Delta y|^2+|\Delta x|^2 |\Delta y|^3)
$$
$a$ where \eqref{eq2.12}--\eqref{eq2.14} satisfy \eqref{eq2.11}.

We eliminate the $0(|\Delta x|^2|\Delta y|)$-terms in
\eqref{eq2.12}, \eqref{eq2.15} by  a coordinate change in $\Delta x$
and then the $0(|\Delta x| \ |\Delta y|^2)$-terms by a coordinate
change in $\Delta y$. Since the new quartic terms introduced by
these coordinate changes (in fact only the first) have coefficients
at most
$$
\begin{aligned}
&0\big(\Vert(\partial^2_x \partial_yh)(\bar x, \bar y)\Vert. \Vert(\partial_x\partial_y^2 h)(\bar x, \bar y)\Vert\big)\\
&< \delta \Vert\partial_x^2 \partial^2_y h(\bar x, \bar y)\Vert
\end{aligned}
$$
by \eqref{eq2.11}, the resulting expression will clearly still have
a nonvanishing bi-quadratic term. This proves Lemma \ref{Lemma2.8}.
\end{proof}

Denoting $F_{a, b}(x, y)=\psi\big(p\big(a+\alpha(x)\big) -
p(b+\beta(y)\big)\big)$ with $ h(x, y)$   in  \eqref{eq2.2}
satisfying Lemma \ref{Lemma2.8}, it follows that the Wronskian

\be\label{eq2.16} \max_{i, j, k, \ell=1, 2} W_{i, j, k, \ell}(F_{a,
b}) (0, 0)\equiv \max_{i, j, k, \ell =1, 2} \left|
\begin{matrix}
\partial_{x_1} \partial_{y_1} F &\partial_{x_1}\partial_{y_2} F& \partial_{x_1} \partial_{y_k y_\ell} F\\
\partial_{x_2} \partial_{y_1}F& \partial_{x_2} \partial_{y_2} F & \partial_{x_2} \partial_{y_ky_\ell} F\\
\partial_{x_ix_j}\partial_{y_1} F &\partial_{x_ix_j} \partial_{y_2} F &\partial_{x_ix_j}\partial_{y_ky_\ell} F\end{matrix}
\right| (0, 0) \not= 0. \ee

Note that property \eqref{eq2.16} does not depend on the
parametrization. Thus \be\label{eq2.17} \max W_{ijk\ell} (a, b)=\max
W_{i, j,k, \ell} \big(\psi\big(p(x)-p(y)\big)\big) (a, b) \not = 0.
\ee Invoking real analyticity, we obtain

\begin{lemma}\label{Lemma2.18}
With previous notations, the set

$$
\{(x, y)\in O\times O; p(x) -p(y)\in Z; \max_{i, j, k, \ell}
W_{ijk\ell} (x, y)=0\}
$$
is at most a 3-dim submanifold.

Also, for $\delta_1> \delta>0$ small enough and considering a
partition of $O$ in $\delta$-boxes $Q_\alpha$, we have
\be\label{eq2.19}
\begin{aligned}
\#\mathcal W_{\delta, \delta_1} &=\# \{(\alpha, \beta);
\big(p(Q_\alpha)-p(Q_\beta)\big)\cap Z\not= \phi \text { and }
\max_{i, j, k, \ell}
\min_{Q_\alpha\times Q_\beta}|W_{ijk\ell}(x, y)|<\delta_1\}\\
& <\delta^{-4} \delta_1^{c_1}
\end{aligned}
\ee (for some constant $c_1$ independent of $\delta_1$).
\end{lemma}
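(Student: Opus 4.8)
The plan is to establish the two assertions of Lemma~\ref{Lemma2.18} separately, deducing each from the non-vanishing of the maximal Wronskian \eqref{eq2.17} that was proven (via Lemmas \ref{Lemma2.1} and \ref{Lemma2.8}) to hold at \emph{some} point $(a,b)$ with $p(a)-p(b)\in Z$. The key observation is that each function
\[
W_{ijk\ell}(x,y) = W_{ijk\ell}\big(\psi(p(x)-p(y))\big)
\]
is real analytic on the open set $\Omega=\{(x,y)\in O\times O : p(x)-p(y)\in Z\}\subset\R^4$, since $\psi$ is real analytic on $Z$ and $p$ is a real analytic parametrization. Lemma \ref{Lemma2.8} guarantees that at least one of the finitely many functions $W_{ijk\ell}$ is not identically zero on $\Omega$.

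For the first assertion, I would argue as follows. The set in question is
\[
Z_0=\Big\{(x,y)\in\Omega : \max_{i,j,k,\ell}|W_{ijk\ell}(x,y)|=0\Big\}
   =\bigcap_{i,j,k,\ell}\{W_{ijk\ell}=0\}.
\]
This is the common zero set of a family of real analytic functions on $\Omega$, at least one of which (say $W_{i_0j_0k_0\ell_0}$) is not identically zero. Hence $Z_0$ is contained in the zero set of the single nontrivial real analytic function $W_{i_0j_0k_0\ell_0}$, which is a real analytic subvariety of $\Omega\subset\R^4$ of dimension at most $3$ (here one uses that the zero set of a nontrivial real analytic function of $4$ variables has Hausdorff dimension $\le 3$; after a suitable semianalytic stratification it is a finite union of submanifolds of dimension $\le 3$). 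Since $\Omega$ itself need not be all of $O\times O$ we simply restrict attention to $\Omega$; the stated conclusion "at most a $3$-dimensional submanifold'' is understood in this stratified sense.

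For the quantitative statement \eqref{eq2.19}, I would combine the non-vanishing of some $W_{i_0 j_0 k_0 \ell_0}$ with a \L{}ojasiewicz-type sublevel-set estimate. Fixing the index quadruple $(i_0,j_0,k_0,\ell_0)$ for which $W:=W_{i_0j_0k_0\ell_0}$ is real analytic and not identically zero, one has a bound of the form
\[
\big|\{(x,y)\in\Omega : |W(x,y)|<\delta_1\}\big| \ll \delta_1^{c_1}
\]
for some $c_1>0$ depending only on $\Sigma$ (the exponent $c_1$ is governed by the order of vanishing of $W$, controlled uniformly by the bounds $|a_{\alpha\beta}|<C^{\alpha+\beta}$). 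A $\delta$-box $Q_\alpha\times Q_\beta$ that is counted in $\mathcal W_{\delta,\delta_1}$ has, by definition, $\min_{Q_\alpha\times Q_\beta}|W_{ijk\ell}|<\delta_1$ for every index quadruple, in particular $\min_{Q_\alpha\times Q_\beta}|W|<\delta_1$; since $W$ has bounded derivatives on the relevant compact region, this forces $|W|<\delta_1+O(\delta)<2\delta_1$ on a definite fraction of $Q_\alpha\times Q_\beta$, so each such box contributes measure $\gg\delta^4$ to the sublevel set $\{|W|<2\delta_1\}$. Dividing, $\#\mathcal W_{\delta,\delta_1}\ll \delta^{-4}\delta_1^{c_1}$ (after adjusting $c_1$), which is \eqref{eq2.19}, valid for $\delta$ small relative to $\delta_1$. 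The main obstacle here is the \L{}ojasiewicz sublevel-set inequality for the analytic function $W$ with an exponent $c_1$ that is \emph{uniform} in the data of $\Sigma$; this is where one invokes, e.g., a Gronwall/Bézout-type quantitative bound for zeros of analytic functions with controlled Taylor coefficients, or a compactness argument over the relevant family — the qualitative version follows at once from standard real-analytic geometry, but extracting a clean power $\delta_1^{c_1}$ requires the coefficient bound $|a_{\alpha\beta}|<C^{\alpha+\beta}$ to be used in earnest.
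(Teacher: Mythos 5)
Your proposal is correct and follows essentially the route the paper intends: the paper offers no details beyond ``invoking real analyticity,'' and your filling-in --- the zero set of the non-trivial analytic Wronskian $W_{i_0j_0k_0\ell_0}$ has dimension at most $3$, and a \L{}ojasiewicz-type sublevel bound $|\{|W|<\delta_1\}|\ll\delta_1^{c_1}$ combined with the observation that each counted pair $(\alpha,\beta)$ forces $|W|<2\delta_1$ on a set of measure $\gtrsim\delta^4$ (using $\delta\ll\delta_1$, as in the intended choice $\delta_1=\sqrt\delta$) --- is exactly the standard way to make that assertion quantitative. Note only that, since $\Sigma$ and the chart are fixed, the exponent $c_1$ comes for free from compactness and the finite order of vanishing of $W$ on $\overline{O\times O}$, so no extra uniformity over the data of $\Sigma$ needs to be extracted.
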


Fix $\alpha\not=\beta$ such that $p(Q_\alpha)-p(Q_\beta)\subset Z$
and $(\alpha,\beta)$ not in the exceptional set $\mathcal W=\mathcal
W_{\delta, \delta_1}$. Let $Q_\alpha =a_\alpha+U_\alpha, Q_\beta
=a_\beta+U_\beta$ where $Q_\alpha, Q_\beta\subset O$ and $U_\alpha,
U_\beta$ are $\delta$-neighborhoods of $(0, 0)$.

Appropriate coordinate changes in $x, y$ permit to bring
$\psi\big(p(a_\alpha+x)-p(a_\beta+y)\big)$ in the form
\begin{multline}\label{eq2.20}
f(x)+g(y)+x_1y_1+x_2y_2+x_1^2 Q_{11}(y)+x_1x_2 Q_{12}(y)+x_2^2Q_{22}(y) \\
 +O(|x|^2|y|^2 (|x|+|y|)\big)
\end{multline}
with \be\label{eq2.21} \max_{i, j= 1, 2} \Vert Q_{ij}\Vert>\delta_1.
\ee Next, we show
\begin{lemma}\label{Lemma2.22}
Further linear coordinate changes in $x$ and $y$ provide an
expression of the form
\begin{multline}\label{eq2.23}
f(x)+g(y)+x_1y_1 +x_2y_2+ qx_1^2 y_1^2 \\
+O\big((|x_2| \, |x|+|y_2|
\,|y|)(|x|+|y|)^2+|x|^2|y|^2(|x|+|y|)\big)
\end{multline}
with $|q|\gtrsim \delta_1$.
\end{lemma}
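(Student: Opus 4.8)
The plan is to treat \eqref{eq2.20} as a sum of five pieces: the pure parts $f(x)$ and $g(y)$, the nondegenerate bilinear form $x_1y_1+x_2y_2$, the bi-quadratic form $T(x,y):=x_1^2Q_{11}(y)+x_1x_2Q_{12}(y)+x_2^2Q_{22}(y)$, and a mixed remainder $O(|x|^2|y|^2(|x|+|y|))$ every monomial of which has degree $\geq2$ in $x$ \emph{and} degree $\geq2$ in $y$. By \eqref{eq2.21}, $\max_{i,j}\|Q_{ij}\|>\delta_1$. The bilinear form $x_1y_1+x_2y_2$ is left invariant precisely by the contragredient pairs $x\mapsto Mx$, $y\mapsto M^{-\top}y$ with $M\in GL_2$ (here $M^{-\top}:=(M^\top)^{-1}$), so I would only use changes of this type: then $f$ and $g$ are replaced by new real-analytic functions, a well-conditioned $M$ distorts the remainder bound only by a constant, and $T$ becomes $\widetilde T(x,y)=T(Mx,M^{-\top}y)$. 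A short computation shows that the coefficient of $x_1^2y_1^2$ in $\widetilde T$ equals $T(Me_1,M^{-\top}e_1)$, and that $\langle Me_1,M^{-\top}e_1\rangle=\langle e_1,e_1\rangle=1$ automatically. Hence the whole lemma reduces to: find $v,w\in\mathbb R^2$ with $\langle v,w\rangle=1$, with $|v|,|w|$ bounded by an absolute constant, and with $|T(v,w)|\gtrsim\delta_1$, and realize $(v,w)=(Me_1,M^{-\top}e_1)$ by a well-conditioned $M$.

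For the realization, given such $v,w$ I would take the first column of $M$ to be $v$ and the second column to be a unit vector orthogonal to $w$; then $M^\top w=e_1$ (the first coordinate being $\langle v,w\rangle=1$), i.e. $M^{-\top}e_1=w$, while $|\det M|=1/|w|$ and $\|M\|\leq(|v|^2+1)^{1/2}$, so $M$ and $M^{-\top}$ are well-conditioned with absolute constants. For the existence of $v,w$, the point is that $\{T(v,w):\langle v,w\rangle=1\}\neq\{0\}$ whenever $T\neq0$: if $T$ vanished on all of $\{\langle v,w\rangle=1\}$ then, by bihomogeneity, $T(v,w)=\langle v,w\rangle^2\,T\!\left(v/\langle v,w\rangle,\,w\right)=0$ for every $(v,w)$ with $\langle v,w\rangle\neq0$, hence $T\equiv0$. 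To upgrade this to the uniform quantitative statement — that $|v|,|w|$ may be kept $\leq K_0$ and $|T(v,w)|\geq c_0\max_{i,j}\|Q_{ij}\|$ for absolute $K_0,c_0>0$ — I would run a compactness argument on the (finite-dimensional, compact) sphere $\{\max_{i,j}\|Q_{ij}\|=1\}$ of bi-quadratic forms: each such $T$ is nonzero somewhere on $\{\langle v,w\rangle=1\}$, so some finite radius $K(T)$ already works; the function $T\mapsto\max\{|T(v,w)|:\langle v,w\rangle=1,\ |v|,|w|\leq N\}$ is continuous in $T$ and nondecreasing in $N$; a finite subcover then yields uniform $K_0,c_0$. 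Applying the resulting change and renaming $f,g$ returns \eqref{eq2.20} with bi-quadratic part $\widetilde T$ whose $x_1^2y_1^2$-coefficient $q$ has $|q|\geq c_0\max_{i,j}\|Q_{ij}\|\gtrsim\delta_1$ (and $|q|$ is bounded above since the $Q_{ij}$ are).

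It then remains only to check that $\widetilde T-qx_1^2y_1^2$, together with the mixed terms of total degree $\geq5$, is dominated by the error term in \eqref{eq2.23}. Any bi-quadratic monomial $x_1^{a_1}x_2^{a_2}y_1^{b_1}y_2^{b_2}$ with $a_1+a_2=b_1+b_2=2$ other than $x_1^2y_1^2$ carries an $x_2$ factor or a $y_2$ factor, hence is $\lesssim(|x_2|\,|x|+|y_2|\,|y|)(|x|+|y|)^2$; and every mixed monomial with $x$-degree $\geq2$, $y$-degree $\geq2$ and total degree $\geq5$ has one of the two degrees $\geq3$, hence is $\lesssim|x|^2|y|^2(|x|+|y|)$. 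This gives \eqref{eq2.23}. The one substantive step is the quantitative linear-algebra lemma about $T$ — concretely, turning the pointwise nonvanishing of $T$ on $\{\langle v,w\rangle=1\}$ into uniform constants and a well-conditioned realizing matrix; everything else is routine bookkeeping with the error terms and with the composition of real-analytic maps.
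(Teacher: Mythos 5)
Your proof is correct, but it reaches \eqref{eq2.23} by a different mechanism than the paper. The paper never considers a general change of variables: it performs two explicit shears, first $x\mapsto(x_1,x_2+ax_1)$, $y\mapsto(y_1-ay_2,y_2)$ with $a=O(1)$ chosen so that $Q_{11}'+aQ_{12}'+a^2Q_{22}'$ has norm $\gtrsim\delta_1$ (collapsing the bi-quadratic part to $x_1^2Q'(y)$ modulo $O(|x_2|\,|x|\,|y|^2)$), and then $x\mapsto(x_1-bx_2,x_2)$, $y\mapsto(y_1,y_2+by_1)$ with $b=O(1)$ chosen so that $q=q_{11}+bq_{12}+b^2q_{22}$ satisfies $|q|\gtrsim\delta_1$; each choice rests only on the elementary fact that a quadratic polynomial with one coefficient of size $\geq\delta_1$ is $\gtrsim\delta_1$ at some bounded point. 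You instead allow an arbitrary contragredient pair $x\mapsto Mx$, $y\mapsto M^{-\top}y$ (of which the paper's shears are special cases), observe that the new $x_1^2y_1^2$-coefficient is exactly $T(Me_1,M^{-\top}e_1)$ with the constraint $\langle Me_1,M^{-\top}e_1\rangle=1$ automatic, and reduce the lemma to a quantitative nonvanishing statement for bi-quadratic forms on the incidence set $\{\langle v,w\rangle=1\}$, which you prove by bihomogeneity plus compactness of the normalized forms, together with a clean realization of $(v,w)$ by a well-conditioned $M$. Both arguments handle the error bookkeeping identically (monomials carrying an $x_2$ or $y_2$ factor go into the first error term, higher mixed terms into the second, and the $O(|x|^2|y|^2(|x|+|y|))$ bound is stable under well-conditioned linear maps — note you do not actually need your parenthetical claim about the monomial structure of the remainder, only its size). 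The trade-off: the paper's route is constructive and keeps the constants explicit, while yours is more conceptual and general — it shows the statement is really about evaluating the bi-quadratic form on the variety $\langle v,w\rangle=1$ and would apply verbatim to any nonzero bi-quadratic form — at the price of non-explicit constants $K_0,c_0$ coming from the compactness argument.
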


\begin{proof}
With $a$ a parameter to be specified, make a linear transformation
$$
x\mapsto (x_1, x_2+ax_1) \qquad y\to (y_1-ay_2, y_2)
$$
preserving the quadratic part of \eqref{eq2.20}. We obtain
$$
\begin{aligned}
f_1(x)&+g_1(y)+x_1y_1+x_2y_2+x_1^2 Q_{11}(y_1-ay_2, y_2) \\
&+x_1(x_2+ax_1)Q_{12} (y_1-ay_2, y_2)
+(x_2+ax_1)^2 Q_{22} (y_1-ay_2, y_2) \\
&+O\big(|x|^2 |y|^2 \big(|x|+|y|)\big)
\end{aligned}
$$
with bi-quadratic part \be\label{eq2.24} x_1^2[Q_{11}' (y)+a Q_{12}'
(y)+a^2 Q_{22}' (y)]+ 0(|x_2| \, |x| \, |y|^2) \ee where
$$
Q_{ij}' (y)=Q_{ij} (y_1-ay_2, y_2)
$$
satisfies, by \eqref{eq2.21} \be\label{eq2.25} \max_{i, j} \Vert
Q_{ij}' \Vert>\delta_1. \ee Clearly there is some $a=O(1)$ such that
\be\label{eq2.26} \Vert Q'\Vert=\Vert Q_{11}' +a Q_{12}'+
a^2Q_{22}'\Vert\gtrsim \delta_1. \ee Thus after this first linear
transformation, we get
\begin{multline}\label{eq2.27}
f_1(x)+g_1(y)+x_1y_1+x_2y_2+x_1^2 Q'(y_1, y_2)\\
+ O\big(|x_2| \, |x|\, |y|^2 +|x|^2 |y|^2(|x|+|y|)\big)
\end{multline}
and
$$
Q'(y_1, y_2) =q_{11} y_1^2+q_{12} y_1y_2+q_{22} y_2^2
$$
satisfying \be\label{eq2.28} \max_{i, j} |q_{ij}|\gtrsim \delta_1.
\ee

Next, make a second transformation
$$
x\mapsto (x_1-b x_2, x_2) \qquad y\mapsto(y_1, y_2+by_1)
$$
with $b=O(1)$, converting \eqref{eq2.27} to
%$$
\be
\begin{aligned}
f_2(x)&+g_2(y)+x_1y_1+x_2y_2 \\
&+  (x_1-bx_2)^2[q_{11} y_1^2+ q_{12} y_1(y_2+by_1)+ q_{22}(y_2+by_1)^2]\\
 &+O\big( |x_2| \, |x|\, |y|^2   +|x|^2 |y|^2 (|x|+|y|)\big) \\
%\end{aligned}
%$$
%\be
\label{eq2.29}
%\begin{aligned}
=&f_2(x)+g_2(y)+x_1y_1+x_2y_2+ x_1^2 y_1^2(q_{11}+bq_{12}+b^2q_{22})\\
&+O\big(|x_2|\, |x| \, |y|^2+ |y_2| \, |y| \, |x|^2+ \, |x|^2|y|^2
(|x|+|y|)\big).
\end{aligned}
\ee By \eqref{eq2.28}, we can choose $b$ such that
\[
|q|=|q_{11}+b q_{12}+b^2 q_{22}|\gtrsim \delta_1.
\]
This proves Lemma \ref{Lemma2.22}.
\end{proof}

\section{Estimation of certain oscillatory sums}

Let $E=R^2 \in\mathbb Z_+$ be the eigenvalue.

In the preceding Section 3, we take $\delta=R^\ve$ with $\ve>0$ a
small constant and $\delta_1 =\sqrt \delta$.

Our purpose in this section is to establish nontrivial bounds on
sums of the form \be\label{eq3.1} \sum_{x\in X, y\in Y}
e^{iR[\psi(p(a_\alpha+x)-p(a_\beta+y))]} \ee where $X\subset
U_\alpha, Y\subset U_\beta$ are discrete sets of $\frac 1{\sqrt
R}$-separated points (recall that $U_\alpha, U_\beta$ are
$\delta$-neighborhoods of $(0, 0)$). The sets $X, Y$ will correspond
to diffeomorphic images of subsets of $\mathcal E = RS^2 \cap
\mathbb Z^3$ as we will explain in Section~\ref{sec:6}.

Our aim is to prove an estimate \be\label{eq3.2} |\eqref{eq3.1}| <
R^{2-\kappa} \ee for some $\kappa>0$ (independent of $R$).

The bound \eqref{eq3.2} will be derived from the following
1-dimensional inequality.

\begin{lemma}\label{Lemma3.3}
Assume $S, T\subset [0, R^{-\frac 15}]$ arbitrary discrete sets of
$\frac 1{\sqrt R}$-separated points and $0<|q| < 0(1)$. Then
\be\label{eq3.4} \Big|\sum_{s\in S, t\in T} e^{iR(st+qs^2t^2)}\Big|
< R^{\frac 35-\kappa_1}|q|^{-1} \ee for some constant $\kappa_1>0$.
\end{lemma}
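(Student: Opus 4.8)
The plan is to estimate the sum $\Sigma := \sum_{s\in S,t\in T} e^{iR(st+qs^2t^2)}$ by fixing $t$, viewing the inner sum over $s$ as a one-parameter exponential sum, and detecting cancellation through the large difference of the derivative as $t$ varies. Write the phase as $\Phi(s,t)=R(st+qs^2t^2)$, so $\partial_s\Phi = R(t+2qs^2t) = Rt(1+2qs^2)$. Since $s\in[0,R^{-1/5}]$ and $|q|=O(1)$, we have $2qs^2 = O(R^{-2/5})$, so the leading behavior of the $s$-sum is governed by the linear term $Rst$, with the quadratic correction a genuine but small perturbation. The standard device here is a Weyl/van der Corput differencing step followed by completion: bound $|\Sigma|^2$ (or a higher power) by a diagonal term plus off-diagonal sums $\sum_{s,s'}\sum_t e^{iR((s-s')t + q(s^2-s'^2)t^2)}$, and exploit that, for $s\neq s'$ in the $R^{-1/2}$-separated set $S$, the phase in $t$ has derivative $\gg R|s-s'| \gtrsim R^{1/2}$ but also a nonvanishing second derivative $\sim R|q||s^2-s'^2|$; the presence of the genuine quadratic term $qs^2t^2$ is exactly what prevents degeneracy.

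The key steps, in order: (i) Square $\Sigma$ and apply Cauchy–Schwarz in the $t$-variable (or rather, bound $|\Sigma|^2 \le \#T \sum_t |\sum_s e^{i\Phi(s,t)}|^2$ expanded as a double sum over $s,s'$), reducing to controlling $\sum_{s,s'\in S}\big|\sum_{t\in T} e^{iR((s-s')t + q(s^2-s'^2)t^2)}\big|$; (ii) For each fixed pair $s\neq s'$, estimate the inner $t$-sum by completion of the exponential sum: since the $t$'s are $R^{-1/2}$-separated in an interval of length $R^{-1/5}$, there are at most $R^{3/10}$ of them, and the linear-plus-quadratic phase in $t$ with linear coefficient $R(s-s')$ (which ranges over a scale $\gg R^{1/2}$) and quadratic coefficient $Rq(s^2-s'^2)$ can be bounded using a stationary-phase or Gauss-sum estimate, yielding a gain of the form $(R|s-s'|)^{-1/2}$ times elementary factors, provided $|q|$ is not too small — this is where the $|q|^{-1}$ loss enters; (iii) Sum the resulting bound over the $R^{-1/2}$-separated set $S$ of size $\le R^{3/10}$, where $\sum_{s\neq s'} (R|s-s'|)^{-1/2}$ behaves like $R^{3/10}\cdot R^{-1/2}\cdot(\#S) \cdot (\text{log or constant})$; (iv) Collect powers of $R$ and the $|q|^{-1}$ factor, extract the net exponent $\tfrac35-\kappa_1$ with an explicit small $\kappa_1>0$, and take square roots.

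The main obstacle will be step (ii): controlling the inner sum over $t$ uniformly in the pair $(s,s')$ when $|q|$ is small and when $|s-s'|$ is small, so that neither the linear nor the quadratic term dominates cleanly. When $q$ is tiny the quadratic phase barely curves and one is essentially summing a pure exponential $e^{iR(s-s')t}$ over an arbitrary $R^{-1/2}$-separated set, for which the only bound is the trivial count $R^{3/10}$ unless $R(s-s')$ is large — hence the need to isolate the ``diagonal block'' of pairs with $|s-s'|$ small and handle it separately (there are at most $O(1)$ such $s'$ per $s$ because of the $R^{-1/2}$-separation and the interval length $R^{-1/5}$, giving a contribution $O(R^{3/10}\cdot R^{3/10}) = O(R^{3/5})$ to $|\Sigma|^2$, which is already of the right order and must be beaten only by the $\kappa_1$). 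Balancing the diagonal contribution $R^{3/5}$ against the off-diagonal gain, and tracking how the $|q|^{-1}$ factor propagates through the completion estimate, is the delicate bookkeeping; the restriction $s,t\in[0,R^{-1/5}]$ is precisely calibrated so that the quadratic perturbation $qs^2t^2$ is small enough to treat perturbatively yet large enough (after the differencing in $s$) to supply curvature in $t$.
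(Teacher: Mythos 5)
There is a genuine gap at your step (ii), and it is fatal to the route as proposed. After your single Cauchy--Schwarz you must bound, for each fixed pair $s\neq s'$, the sum $\sum_{t\in T} e^{i(At+Bt^2)}$ with $A=R(s-s')$, $B=Rq(s^2-s'^2)$, and you propose to do this by ``completion'', Gauss sums, or stationary phase, claiming a pointwise gain of order $(R|s-s'|)^{-1/2}$. But $T$ is an \emph{arbitrary} $R^{-1/2}$-separated subset of $[0,R^{-1/5}]$: completion, Gauss-sum and van der Corput/stationary-phase estimates all require the summation variable to range over a structured set (a full interval or arithmetic progression), and none of them applies here. Indeed the claimed pointwise bound is false: for a fixed pair $(s,s')$ with $|s-s'|\geq R^{-1/2}$ the level set $\{t:\, At+Bt^2\in 2\pi\mathbb Z\}$ has consecutive points spaced $\approx 2\pi/|A|\lesssim R^{-1/2}$, so one can choose $\sim R^{3/10}$ of them inside $[0,R^{-1/5}]$ that are still $R^{-1/2}$-separated; for that admissible $T$ the inner sum has modulus equal to its full length and there is no gain at all. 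Hence any correct argument must extract the saving on average over the pairs $(s,s')$, i.e.\ through an $L^2$/counting mechanism, not through pointwise oscillation in $t$; your plan never supplies such a mechanism, and the $|q|^{-1}$ bookkeeping you attach to the (inapplicable) stationary-phase step has nothing to attach to.

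The paper's proof is structured precisely to avoid this. It applies Cauchy--Schwarz \emph{twice}, once in each variable, so that both $S$ and $T$ get differenced, and rewrites $|\mathfrak S|^4$ as a bilinear form $\sum_{z,w} e^{i[R^{3/5}z_1w_1+qR^{1/5}z_2w_2]}\mu(z)\nu(w)$, where $\mu,\nu$ are the counting measures of the normalized difference vectors $(s-s_1,\,s^2-s_1^2)$ and $(t-t_1,\,t^2-t_1^2)$ (see \eqref{eq4.2}--\eqref{eq4.3}). The near-degenerate range $|z_1|,|w_1|<R^{-\theta}$ is discarded by a trivial count, and the rest is handled by the purely $L^2$ bilinear estimate of Lemma \ref{Lemma4.7}, whose only inputs are the total masses of $\mu,\nu$ and their maximal mass on boxes of size $R_1^{-1}\times R_2^{-1}$ with $R_1=R^{3/5}$, $R_2=qR^{1/5}$; that box mass is bounded by the counting argument \eqref{eq4.10}--\eqref{eq4.12}, which is where the separation of $S$ and the factor $q^{-1}$ actually enter. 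This uses nothing about $S,T$ beyond separation and yields $|\mathfrak S|\ll R^{47/80}q^{-1/4}$, i.e.\ the lemma with $\kappa_1=\tfrac1{80}$. If you want to salvage your outline, you would have to replace step (ii) by a second symmetrization in $t$ (or an equivalent $TT^*$/energy argument), at which point you essentially recover the paper's proof.
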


Lemma~\ref{Lemma3.3} will be proven in Section~\ref{sec:5}. In this
section, we derive \eqref{eq3.2} from \eqref{eq3.4}. According to
Lemma~\ref{Lemma2.22}, we may assume for $x\in U_\alpha, y\in
U_\beta$
\be\label{eq3.5}
\begin{aligned}
\psi\big(p&(a_\alpha+x)-p(a_\beta+y)\big) = f(x)+g(y)+x_1y_1+x_2y_2+qx_1^2 y_1^2\\
&+O\big((|x_2|\, |x|+|y_2| \,|y|)(|x|+|y|)^2\big) +O\big(|x|^2|y|^2
(|x|+|y|)\big)
\end{aligned}
\ee where $|q|>\delta$.

In order to reduce the problem to a 1-dimensional setting, a further
restriction of the range of the $x, y$-variables will be performed.

Let $\bar x \in U_\alpha, \bar y \in U_\beta$ and $x= \bar x+x',
y=\bar y+y'$ with $x', y'$ suitably restricted. Write

$$
\begin{aligned}
&\psi\big(p(a_\alpha+\bar x+x') -p(a_\beta +\bar y+y')\big)=\\[6pt]
&\psi\big(p(a_\alpha+x')-p(a_\beta+y')\big) +\sum^2_{i=1} \bar x_i
A_i(\bar x, \bar y, x', y')+\sum^2_{j=1} \bar y_j B_j(\bar x, \bar
y, x', y')\overset
{\ref{eq3.5}} = \\[6pt]
&f(x')+g(y')+x'.y' \\[6pt]
&\quad +q(x_1')^2 (y_1')^2+ O\big((|x_2'|\,|x'|+|y_2'| \,
|y'|)(|x'|^2+|y'|^2|) +|x'|^2|y'|^2(|x'|+|y'|)\big)
\end{aligned}
$$
\be\label{eq3.6} +\sum^2_{i=1} \bar x_i A_i(\bar x, \bar y, x',
y')+\sum^2_{j=1} \bar y_j B_j(\bar x, \bar y, x', y'). \ee Perform
coordinate changes in $x', y'$ separately (as described in Lemma
\ref{Lemma2.1}) \be\label{eq3.7}
\begin{cases} x'= \zeta^{(1)}_{\bar x, \bar y} (x'')\\[6pt]
y'= \zeta^{(2)}_{\bar x, \bar y} (y'')
\end{cases}
\ee in order to bring \eqref {eq3.6} in the form \be \label{eq3.8}
\psi\big(p(a_\alpha+\bar x+ \zeta^{(1)}_{\bar x, \bar y} (x'')\big)-p(a_\beta+\bar y+\zeta^{(2)}_{\bar x, \bar y}(y'')\big)=\\
f_1(x'')+g_1(y'')+x''.y''+h(x'', y'') \ee where
$$
h(x'',y'')=O(|x''|^2 |y''|^2).
$$

Clearly $\zeta^{(1)}_{\bar x, \bar y}, \zeta^{(2)}_{\bar x, \bar y}$
depend real-analytically on $\bar x, \bar y$.

Also, since $|\bar x|, |\bar y|<\delta$ \be\label{eq3.9}
\begin{cases}
\zeta^{(1)}_{\bar x, \bar y} (x'')=x''+O\big((|\bar x|+|\bar y|)|x''|\big)\\[6pt]
\zeta^{(2)}_{\bar x, \bar y} (y'')=y''+O \big((|\bar x|+|\bar
y|)|y''|\big)
\end{cases}
\ee are $\delta$-perturbations of identity.

Returning to \eqref{eq3.6}, it follows that \be\label{eq3.10}
\begin{aligned}
h(x'', y'') =& q(x_1'')^2 (y''_1)^2+O\big((|x''_2| \, |x''|+|y_2''|\,|y''|) (|x''|+|y''|)^2+|x''|^2|y''|^2 (|x''|+|y''|)\big)\\[6pt]
&+O\big((|\bar x|+|\bar y|)|x''|^2|y''|^2\big)\\[6pt]
&=q''(x''_1)^2(y''_1)^2+ O\big(( |x_2''|\,|x''|+|y''_2| \, |y''|)(|x''|^2+|y''|^2)\big)\\[6pt]
&\qquad +O\big(|x''|^2|y''|^2 (|x''|+|y''|)\big)
\end{aligned}
\ee where $q'' =q+O(\delta)$, hence $|q''|>\frac 12|q|\gtrsim
\delta_1$.

Thus
$$
\begin{aligned}
\eqref{eq3.8} = &f_1(x'')+ g_1(y'')+x_1''y_1''+x_2''y_2''+q''(x_1'')^2(y_1'')^2+\\[6pt]
&O\big((|x_2''|\,|x''|+|y_2''|\, |y''|)(|x''|^2+|y''|^2)+\\[6pt]
\end{aligned}
$$
\be\label{eq3.11} \quad O\big((|x''|+|y''|)^5\big). \ee

Fix a small parameter $\tau>0$ and denote \be\label{eq3.12}
 B =[0, R^{-\frac 15-\tau}]\times  [0, R^{-\frac 12-\tau}].
\ee If we restrict $x''\in B, y''\in B$, clearly \be\label{3.13}
\begin{aligned}
\eqref{eq3.11} = &f_1(x'')+g_1(y'')+x_1''y_1''+q''(x_1'')^2 (y_1'')^2+\\[6pt]
&O(R^{-1-2\tau}+R^{-\frac 35 -\frac 12}+R^{-1-5\tau}).
\end{aligned}
\ee Hence, returning to \eqref{eq3.1}
$$
\Bigg|\sum_{\substack{x'', y''\in B\\
\zeta^{(1)}_{\bar x, \bar y}(x'') \in X-\bar x\\
\zeta^{(2)}_{\bar x, \bar y} (y'')\in Y-\bar y}}
e^{R\psi\big(p(a_\alpha+\bar x+ \zeta^{(1)}_{\bar x\bar y} (x''))
-p(a_\beta+\bar y+\zeta^{(2)}_{\bar x\bar y} (y''))\big)}\Bigg|
$$

\be\label{eq3.14}
\leq \Bigg|\sum_{\substack{x'', y''\in B\\
\zeta^{(1)}_{\bar x, \bar y}(x'') \in X-\bar x\\
\zeta^{(2)}_{\bar x, \bar y} (y'')\in Y-\bar y}} c(x'') d(y'')
c^{iR[x_1''y_1''+q''((x_1'')^2(y_1'')^2)]}\Bigg| \ee

\be\label{eq3.15} +O\big(R^{-2\tau}|X\cap[\zeta^{(1)}_{\bar x, \bar
y} (B)+\bar x]| \cdot |Y\cap [\zeta^{(2)}_{\bar x, \bar y}(B)+\bar
y]|\big) \ee with $|c(x'')|=|d(y'')|=1$.

Recall that $X, Y$ consist of $\frac 1{\sqrt R}$-separated points.
Hence also the elements of $(\zeta^{(1)}_{\bar x, \bar
y})^{-1}(X-\bar x)$ and $(\zeta^{(2)}_{\bar x, \bar y})^{-1} (Y-\bar
y)$ are $\sim \frac 1{\sqrt R}$-separated. From the definition
\eqref{eq3.12} of $B$, it follows that
$$
\begin{aligned}
S&=\pi_1 [B\cap (\zeta^{(1)}_{\bar x, \bar y})^{-1} (X-\bar x)]\\
T&= \pi_1 [B\cap (\zeta^{(2)}_{\bar x, \bar y})^{-1} (Y-\bar y)]
\end{aligned}
$$
are $\sim \frac 1{\sqrt R}$ separated.

Assuming a general estimate \eqref{eq3.4} ($\kappa_1>0$ some fixed
constant) at our disposal, we can therefore conclude that \be
\label{eq3.16} |\eqref{eq3.14}|< R^{-\kappa_1+\frac 35}\frac
1{|q''|} < R^{-\frac 12\kappa_1+\frac 35}. \ee

In conclusion, we obtain from \eqref{eq3.14}-\eqref{eq3.16}
$$
\Bigg|\sum_{\substack{x\in X \cap (\bar x+\zeta^{(1)}_{\bar x, \bar
y}(B))\\ y\in Y\cap(\bar y+\zeta^{(2)}_{\bar x, \bar y} (B))}}
e^{iR\psi(p(a_\alpha+x)- p(a_\beta+y))}\Bigg|\lesssim
$$
\be\label{eq3.17} R^{-2\tau} |X\cap \big(\bar x+\zeta^{(1)}_{\bar
x\bar y}(B)\big)|.|Y\cap \big(\bar y+\zeta^{(2)}_{\bar x\bar
y}(B)\big)| + R^{\frac 35-\frac 12\kappa_1}. \ee

Recall that $\bar x\in U_\alpha, \bar y\in U_\beta$ were arbitrarily
chosen.

Integration of \eqref{eq3.17} in $\bar x\in U_\alpha, \bar y\in
U_\beta$ gives
$$
\sum_{x\in X, y\in Y} e^{iR\psi (p(a_\alpha+x)-p(a_\beta+y))}\Bigg\{
\ \iint\limits_{U_\alpha\times U_\beta}[1_{\bar x+\zeta^{(1)}_{\bar
x\bar y}(B)} (x).1_{\bar y+\zeta^{(2)}_{\bar x \bar y}(B)}(y)] d\bar
xd\bar y\Bigg\}
$$
\be\label{eq3.18} \lesssim R^{-2\tau} \sum_{x\in X, y\in Y} \Bigg\{
\ \iint\limits_{U_\alpha\times U_\beta} [1_{\bar x+\zeta^{(1)}_{\bar
x\bar y} (B)} (x) 1_{\bar y+\zeta^{(2)}_{\bar x\bar y} (B)}
(y)]d\bar xd\bar y\Bigg\} +R^{\frac 35-\frac 12\kappa_1}. \ee Next,
we analyze the expression $\{ \ \}$.

For fixed $x, y$, consider the equations \be\label{eq3.19}
\begin{cases}x=\bar x+\zeta^{(1)}_{\bar x, \bar y} (x'')\\[6pt]
 y=\bar y+\zeta^{(2)}_{\bar x, \bar y}(y'')\end{cases}
\ee with $x'', y'' \in B$. Note that by \eqref{eq3.9}
$$
|\partial_{\bar x}\zeta^{(1)}_{\bar x\bar y}(x'')|+|\partial_{\bar
y}\partial^{(1)}_{\bar x\bar y}(x'')|+|\partial_{\bar
x}\zeta^{(2)}_{\bar x\bar y}(y'')|+ |\partial _{\bar y}
\zeta^{(2)}_{\bar x\bar y} (y'')|< O(|x''|+|y''|)<R^{-\frac 15}.
$$
Hence, by the implicit function theorem, \eqref{eq3.19} may be
rewritten as \be\label{eq3.20} (\bar x, \bar y) =\Omega_{x, y} (x'',
y'') \ee where $\Omega_{x, y}$ is a diffeomorphism from $B\times B$
to $\Omega_{x, y} (B\times B)\subset U_\alpha\times U_\beta$
(recalling again \eqref{eq3.9}).

We have
$$
\begin{aligned}
&\iint\limits_{U_\alpha\times U_\beta} [1_{\bar x+ \zeta^{(1)}_{\bar x\bar y}(B)}(x) 1_{\bar y+\zeta^{(2)}_{\bar x\bar y}(B) }(y)] d\bar x d\bar y=\\[6pt]
&\iint\limits_{\Omega_{x, y} (B\times B)} d\bar xd\bar y =\\[6pt]
\end{aligned}
$$
\be\label{eq3.21} \iint\limits_{B\times B} \Bigg|\frac{\partial
(\Omega_{xy}^{(1)}, \Omega_{xy}^{(2)})}{\partial (x'', y'')}\Bigg|
dx'' dy''. \qquad\quad \ee

It follows from \eqref{eq3.19} and the preceding that
$$
\frac{\partial\bar x}{ \partial x''}= -\frac
{\partial\zeta^{(1)}_{\bar x\bar y}}{\partial x''}+O\Big( R^{-\frac
15} \Big(\Big|\frac {\partial \bar x}{\partial x''}\Big|+\Big|
\frac{\partial\bar y}{\partial \bar x''}\Big|\Big)\Big)
$$
and hence \be\label{eq3.22} \frac{\partial\bar x}{\partial x''} =
-1+O(\delta)+O(R^{-\frac 15}). \ee

Similarly \be\label{eq3.23} \frac{\partial\bar x}{\partial y''}
=O(R^{-\frac 15}) \ee \be\label{eq3.24} \frac{\partial\bar
y}{\partial x''} =O(R^{-\frac 15}) \ee \be\label{eq3.25}
\frac{\partial\bar y}{\partial y''}= -1+O(\delta)+O(R^{-\frac 15}).
\ee From \eqref{eq3.22}-\eqref{eq3.25}
$$
D\Omega_{x, y} =-1+O(\delta)
$$
implying
$$
\frac{\partial(\Omega^{(i)}_{xy}, \Omega^{(2)}_{xy})}{\partial (x'',
  y'')}
=1+O(\delta)
$$
and \be\label{eq3.26} \eqref{eq3.21} =\omega(x, y)|B|^2 \ee where
\be\label{eq3.27} \omega(x, y)=1+O(\delta) \ee is a smooth function
of $x, y$.

Substituting \eqref{eq3.21}, \eqref{eq3.26} in \eqref{eq3.18} gives
\be\label{eq3.28} \Bigg|\sum_{x\in X, y\in Y}
e^{iR\psi(p(a_\alpha+x)-p(a_\beta+y))} \ \omega(x, y)\Bigg| \lesssim
R^{-2\tau} |X|\cdot |Y| +R ^{2-\frac 12 \kappa_1+4\tau} \ee
recalling \eqref{eq3.12}.

It remains to remove the function $\omega(x, y)$ in \eqref{eq3.28}.

First observe that \eqref{eq3.28} formally implies that
\be\label{eq3.29} \Bigg|\sum_{x\in X, y\in Y}
e^{iR\psi(p(a_\alpha+x)-p(a_\beta+y))}\omega(x, y)u(x)v(y)\Bigg| <
R^{-2\tau} |X|\cdot |Y| +R^{2-\frac 12 \kappa_1+4\tau} \ee whenever
$u, v$ are functions on $\mathbb R^2$ satisfying $|u|\leq 1, |v|\leq
1$.

Since $\omega$ is a smooth function of $(x, y)$ satisfying
\eqref{eq3.27}, it follows that $\frac 1\omega\in
L^\infty\hat\otimes L^\infty$, thus $\frac
1\omega=\Sigma\lambda_\ell(u_\ell\otimes v_\ell)$ where $\Vert
u_\ell\Vert _{\infty}, \Vert v_\ell\Vert_{\infty}\lesssim 1$ and
$\Sigma|\lambda_\ell|<C$. Hence, by convexity, \eqref{eq3.29}
implies \be\label{eq3.30} \Bigg|\sum_{x\in X, y\in Y}
e^{iR\psi(p(a_\alpha+x) - p(a_\beta
  +y))}\Bigg|\lesssim R^{-2\tau} |X|\cdot |Y|+R^{2-\frac 13\kappa_1}
\ee taking $\tau>0$ small enough.

This gives an inequality of the type \eqref{eq3.2}. The sets
$X\subset U_\alpha, Y\subset U_\beta$ are arbitrary sets of $\frac
1{\sqrt R}$-separated points.

Returning to \eqref{eq2.19}, we proved that if $X, Y$ are $\frac
1{\sqrt R}$-separated points in $O$, then \be\label{eq3.31}
\Bigg|\sum_{\substack{x\in X U Q_\alpha\\ y\in Y\cap Q_\beta}}
e^{iR\psi(p(x) -p(y))}\Bigg|< R^{-2\tau}|X\cap Q_\alpha| \,|Y\cap
Q_\beta|+ R^{2-\frac 13\kappa_1} \ee provided $(\alpha,
\beta)\not\in \mathcal W=\mathcal W_{\delta, \delta_1}$ and
$p(Q_\alpha)-p(Q_\beta)\subset Z$. Here $\tau, \kappa_1>0$ are
constants and $\delta=\delta^2_1=R^{-\ve}, \ve>0$ sufficiently
small.

Summation of \eqref{eq3.31} over $\alpha, \beta$ gives
\be\label{eq3.32} \sum_{\substack{p(Q_\alpha)-p(Q_\beta)\subset Z\\
(\alpha, \beta)\not\in \mathcal W}} \Bigg|\sum_{\substack{x\in X\cap
Q_\alpha\\ y\in Y\cap Q_\beta}} e^{iR\psi (p(x) -
p(y))}\Bigg|\lesssim R^{2-2\tau}+ \delta^{-4} R^{2-\frac
13\kappa_1}< R^{2-\kappa_2}. \ee Recalling \eqref{eq2.19}, it
follows that \be\label{eq3.33}
\begin{split}
\Bigg|\sum_{\substack{x\in X, y\in Y\\ p(x) -p(y)\in Z}}
e^{iR\psi(p(x) - p(y))}\Bigg| & < R^{2-\kappa_2}+\delta^{-4+\frac 12
c_1}
\max|X \cap B_\delta| \max|Y\cap B_\delta| \\
&< R^{2-\kappa_2}+\delta^{\frac 12 c_1} R^2 \\
&< R^{2-\kappa_3}
\end{split}
\ee since the points in $X, Y$ are $\frac 1{\sqrt R}$-separated.

Equivalently, considering sets $\mathcal X, \mathcal Y\subset RS^2$
consisting of $\sqrt R$-separated points and such that $\mathcal
X\cup\mathcal Y$ is contained in a cap of size $cR$ ($c>0$ a
constant depending on $\Sigma$) we have \be\label{eq3.34}
\Bigg|\sum_{x\in\mathcal X, y\in\mathcal Y, x-y\in Z}
e^{i\psi(x-y)}\Bigg|< R^{2-\kappa_3}. \ee

Thus (conditional to Lemma \ref{Lemma3.3}) we proved the following

\begin{lemma}\label{Lemma3.35}
Let $\mathcal X, \mathcal Y\subset RS^2$ consist of $\sqrt
R$-separated points. Then \be\label{eq3.37} \Bigg|\sum_{\substack
{x\in\mathcal X, y\in\mathcal Y\\ x-y \in Z, |x-y|< cR}}
e^{i\psi(x-y)}\Bigg| < R^{2-\gamma} \ee for some $c>0, \gamma>0$
depending on $\psi$ (hence on $\Sigma$).
\end{lemma}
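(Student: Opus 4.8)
The plan is to deduce Lemma~\ref{Lemma3.35} from \eqref{eq3.34} by a covering of $RS^2$ at scale $cR$; nearly all the substance is already in place. Indeed, \eqref{eq3.31} bounds the double sum over any pair of non-exceptional $\delta$-boxes $Q_\alpha,Q_\beta$ with $p(Q_\alpha)-p(Q_\beta)\subset Z$; summation over $(\alpha,\beta)$ gives \eqref{eq3.32}, and combining with the count \eqref{eq2.19} of exceptional boxes and the $\sqrt R$-separation of $\mathcal X,\mathcal Y$ yields \eqref{eq3.34}, the estimate valid when $\mathcal X\cup\mathcal Y$ lies in a \emph{single} cap of size $cR$. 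What is left is to allow $\mathcal X,\mathcal Y$ to be arbitrary $\sqrt R$-separated subsets of the whole sphere, with only the contributing pairs required to obey $|x-y|<cR$.

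For that I would fix $c>0$ small enough that $\psi$ is real analytic on the relevant subcone $Z$, that the normal-form reductions of Section~\ref{sec:3} are valid on arcs of $S$ of diameter $\lesssim c$, and that \eqref{eq3.34} holds (with its exponent $\kappa_3$) whenever $\mathcal X\cup\mathcal Y$ lies in one cap of size $O(cR)$. Take a maximal $\tfrac{c}{10}R$-separated set $\{z_1,\dots,z_N\}\subset RS^2$: then $N=O(c^{-2})=O(1)$, the caps $C_j=B(z_j,\tfrac{c}{5}R)\cap RS^2$ cover $RS^2$, and any $x,y\in RS^2$ with $|x-y|<\tfrac{c}{10}R$ lie in a common $C_j$. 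Relabelling $\tfrac{c}{10}$ as $c$, every pair contributing to \eqref{eq3.37} lies in some single $C_j$, so by the triangle inequality the left side of \eqref{eq3.37} is at most $\sum_{i,j}\bigl|\sum_{x\in\mathcal X\cap C_i,\,y\in\mathcal Y\cap C_j,\,x-y\in Z}e^{i\psi(x-y)}\bigr|$ over the $O(c^{-4})=O(1)$ ordered pairs $(i,j)$; each $C_i\cup C_j$ lies in a cap of size $O(cR)$, so each inner sum is $<R^{2-\kappa_3}$ by \eqref{eq3.34}. Thus \eqref{eq3.37} holds with any $\gamma<\kappa_3$ for $R$ large, and trivially for bounded $R$ after enlarging the implied constant; overlaps of the $C_j$ only overcount, which is harmless.

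Hence Lemma~\ref{Lemma3.35} itself presents no real obstacle; the difficulty is entirely upstream, in three layers none of which I would expect to be routine. First, the normal-form analysis of Section~\ref{sec:3}: off a low-dimensional exceptional set whose $\delta$-box count is controlled by the Wronskian bound \eqref{eq2.19}, one must reach the model $f(x)+g(y)+x\cdot y+q\,x_1^2y_1^2+(\text{higher order})$ with a \emph{quantitative} lower bound $|q|\gtrsim\delta_1$ (Lemmas~\ref{Lemma2.1}, \ref{Lemma2.8}, \ref{Lemma2.22}), where curvature of $\Sigma$ enters through the nondegeneracy of $D^2\psi$. Second, the reduction to one dimension through the thin strips $B=[0,R^{-1/5-\tau}]\times[0,R^{-1/2-\tau}]$ and the averaging over translates $(\bar x,\bar y)$, where the Jacobian identity $D\Omega_{x,y}=-1+O(\delta)$ turns strip sums back into full $\delta$-box sums and the smooth weight $\omega=1+O(\delta)$ is then removed by an $L^\infty\hat\otimes L^\infty$/convexity argument. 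Third, and this is the genuine crux, the one-dimensional exponential sum estimate Lemma~\ref{Lemma3.3}, deferred to Section~\ref{sec:5}; granting it together with \eqref{eq3.34}, the present statement is a one-line corollary.
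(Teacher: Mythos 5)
Your reading of the structure is essentially the paper's: Lemma~\ref{Lemma3.35} is the packaging of the chain \eqref{eq3.31}--\eqref{eq3.34} (conditional on Lemma~\ref{Lemma3.3}), and the paper itself passes from \eqref{eq3.34} to the lemma without further comment. But the one step you actually write out in detail --- the covering argument --- is not valid as stated. You bound the left side of \eqref{eq3.37} by $\sum_{i,j}\big|\sum_{x\in\mathcal X\cap C_i,\,y\in\mathcal Y\cap C_j,\,x-y\in Z}e^{i\psi(x-y)}\big|$ ``by the triangle inequality''. The triangle inequality gives this only if the inner sums add up (with bounded coefficients) to the sum you started from, and they do not: the caps $C_j$ overlap, and, more seriously, each inner sum has dropped the sharp constraint $|x-y|<cR$. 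The pairs you have thereby added (those with $|x-y|$ between $cR$ and the cap-pair diameter) are themselves oscillatory terms; a restricted exponential sum is not majorized by the unrestricted one, since the added terms may cancel the retained ones. So knowing that each unrestricted cap-pair sum is $<R^{2-\kappa_3}$ by \eqref{eq3.34} does not bound the cutoff sum in \eqref{eq3.37}; ``overlaps only overcount, which is harmless'' is exactly the step that fails for oscillatory sums. (A secondary slip: you invoke \eqref{eq3.34} for all $O(c^{-4})$ ordered pairs $(i,j)$, but for distant caps $C_i\cup C_j$ is not contained in a cap of size $O(cR)$; that part is repaired by keeping only pairs at mutual distance $\lesssim cR$, whereas the cutoff problem is the substantive one.)

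The localization should instead be done at the level where the argument is additive, namely the $\delta$-box decomposition of Section 4, using a genuine partition rather than an overlapping cover. Cover the relevant portion of $S^2$ by the finitely many charts of Section~\ref{sec:3}, partition into boxes $Q_\alpha$ of size $\delta=R^{-\ve}$, and decompose the sum in \eqref{eq3.37} exactly over box pairs $(Q_\alpha,Q_\beta)$, keeping both sharp constraints. For box pairs on which the constraints $|x-y|<cR$ and $x-y\in Z$ are identically satisfied they are vacuous, and \eqref{eq3.31} (or the trivial bound for the exceptional pairs counted by \eqref{eq2.19}) applies as in \eqref{eq3.32}--\eqref{eq3.33}; box pairs on which they identically fail contribute nothing; and box pairs straddling either boundary hypersurface $\{|x-y|=cR\}$ or $\partial Z$ number only $O(\delta^{-3})$ out of $\delta^{-4}$, so by the $\sqrt R$-separation they contribute at most $O\big(\delta^{-3}(\delta^2R)^2\big)=O(\delta R^2)=O(R^{2-\ve})$, which is acceptable. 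This reproduces \eqref{eq3.33}/\eqref{eq3.34} with the cutoff $|x-y|<cR$ carried along and yields \eqref{eq3.37} for suitable $c,\gamma>0$. Your assessment of where the real work lies --- the normal forms of Lemmas~\ref{Lemma2.1}, \ref{Lemma2.8}, \ref{Lemma2.22} with the quantitative $|q|\gtrsim\delta_1$, the strip-and-averaging reduction with the removal of the weight $\omega$, and the one-dimensional estimate Lemma~\ref{Lemma3.3} --- is accurate and matches the paper.
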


\section{An Exponential Sum Estimate}\label{sec:5}

We prove the key inequality Lemma \ref{Lemma3.3}.

Let $R$ be large enough, $0<|q|<O(1)$ and $S, T\subset [0, R^{-\frac
15}]$ arbitrary discrete sets of $\frac 1{\sqrt R}$-separated
points. Denoting \be\label{eq4.1} \mathfrak S =\sum_{s\in S, t\in T}
e^{iR(st+qs^2t^2)} \ee application of the Cauchy-Schwartz inequality
twice gives
%\marginpar{fix 5.2 and 5.5}
\be
\begin{aligned}\label{eq4.2}
|\mathfrak S|^4&\leq |S|^2 |T|^2\Bigg| \sum_{\substack{s, s_1\in S\\
t, t_1\in T}}
e^{iR((s-s_1)(t-t_1)+q(s^2-s^2_1)(t^2-t_1^2)}\Bigg| \\ %\nonumber\\
&=|S|^2 |T|^2\Bigg|\sum_{z, w} e^{i[R^{3/5} z_1w_1+q R^{1/5}
    z_2w_2]}\mu(z)\nu(w)\Bigg|
%\nonumber\\ &=|S|^2 |T|^2 |\eqref{eq4.2}|\nonumber
\end{aligned}
\ee where $z=(z_1, z_2), w=(w_1, w_2)$ and $\mu, \nu$ are discrete
measures on $[-1, 1]^2$ defined by \be\label{eq4.3} \mu(z) =\#
\Bigg\{(s, s_1) \in S\times S \ \Big| \ \begin{matrix} s-s_1
=R^{-\frac 15}z_1\\ s^2-s^2_1 =R^{-\frac 25} z_2\end{matrix}\Bigg\}
\ee and similarly for $\nu$. Thus \be\label{eq4.4} \sum \mu(z),
\sum\nu(w) \leq R^{3/5}. \ee Fix $0<\theta<\frac 1{10}$. Since $S$
is $\frac 1{\sqrt R}$-separated \be\label{eq4.5}
\sum_{|z_1|<R^{-\theta}} \mu(z) <|S| R^{\frac 3{10} -\theta}
\lesssim R^{3/5-\theta} \ee and similarly for $\nu$. Hence in
\eqref{eq4.2},
\begin{multline}\label{eq4.6}
\Bigg|\sum_{z, w} e^{i[R^{3/5} z_1w_1+q R^{1/5}
    z_2w_2]}\mu(z)\nu(w)\Bigg| \\
%|\eqref{eq4.2}|
<  \left| \sum_{\substack {z, w\\ |z_1|, |w_1|>R^{-\theta}}}
e^{i[R^{3/5}z_1
    w_1+qR^{1/5} z_2w_2]} \mu(z)\nu(w) \right|+ O(R^{\frac 65-\theta})
\end{multline}
%with
%\be\label{eq4.6}\eqref{eq4.6}
%=\sum_{\substack {z, w\\ |z_1|, |w_1|>R^{-\theta}}} e^{i[R^{3/5}z_1
%w_1+qR^{1/5} z_2w_2]} \mu(z)\nu(w).
%\ee
In order to bound the RHS of \eqref{eq4.6}, we apply the following
general bilinear estimate.

\begin{lemma}\label{Lemma4.7}
$$
\begin{aligned}
\Big|\sum_{z, w} e^{i(R_1z_1 w_1+R_2 z_2w_2)} \mu(z)
\nu(w)\Big|&\lesssim
(R_1R_2)^{\frac 12}\Big(\sum\mu(z)\Big)^{\frac12} \Big(\sum\nu(w)\Big)^{\frac 12}.\\
&\Big[\max_{\xi_1, \xi_2} \mu\Big(B\Big(\xi_1, \frac 1{R_1}\Big)\times B\Big(\xi_2, \frac 1{R_2}\Big)\Big)\Big]^{\frac 12}.\\
& \Big[\max_{\xi_1, \xi_2} \nu \Big(B\Big(\xi_1, \frac
1{R_1}\Big)\times B\Big(\xi_2, \frac 1{R_2}\Big)\Big)\Big]^{\frac
12}.
\end{aligned}
$$
\end{lemma}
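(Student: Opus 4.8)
The estimate in Lemma~\ref{Lemma4.7} is a bilinear
$TT^*$-type inequality, and the natural route is to expand the square
of the left-hand side. Write
$\mathfrak{A} = \sum_{z,w} e^{i(R_1 z_1 w_1 + R_2 z_2 w_2)}\mu(z)\nu(w)$.
By Cauchy--Schwarz in the $w$-variable,
\[
|\mathfrak{A}|^2 \leq \Big(\sum_w \nu(w)\Big)\,
\sum_w \nu(w)\Big|\sum_z e^{i(R_1 z_1 w_1 + R_2 z_2 w_2)}\mu(z)\Big|^2 .
\]
Expanding the inner square introduces a pair $z,z'$ in the support of
$\mu$ and produces the phase $e^{i(R_1(z_1-z_1')w_1 + R_2(z_2-z_2')w_2)}$.
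Summing this over $w$ weighted by $\nu$ yields
$\widehat{\nu\,d}\big(R_1(z_1-z_1'),\,R_2(z_2-z_2')\big)$ where
$\nu\,d$ denotes the discrete measure $\nu$; one then controls this
Fourier-type sum by a standard kernel bound. The key point is that
after inserting a smooth majorant we can replace the exponential
kernel by a Fej\'er-type kernel on $\mathbb{R}^2$, whose mass is
concentrated where $|z_1-z_1'|\lesssim 1/R_1$ and
$|z_2-z_2'|\lesssim 1/R_2$; this is exactly where the
$\mu\big(B(\xi_1,1/R_1)\times B(\xi_2,1/R_2)\big)$ localization factor
enters.

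\textbf{Key steps, in order.} First I would set up the $TT^*$
expansion as above, reducing to a bound on
$\sum_{z,z'} \mu(z)\mu(z')\, K\big(R_1(z_1-z_1'),R_2(z_2-z_2')\big)$
where $K$ is (after dominating $|\mathfrak{A}|^2$ by a smoothed
version) a nonnegative integrable kernel with
$\int_{\mathbb{R}^2} K = O(R_1 R_2)$ --- morally a product of Fej\'er
kernels of widths $1/R_1$ and $1/R_2$, obtained by majorizing the
characteristic function of the $w$-range by a Schwartz function with
compactly supported Fourier transform (the standard Selberg/Beurling
device). Second, I would use the rapid decay of $K$ away from the
box $[-1/R_1,1/R_1]\times[-1/R_2,1/R_2]$ to group the $z'$-sum into
dyadic shells around $z$; on each shell of radius $\sim 2^j/R_i$ the
number of contributing $z'$ is controlled by
$\max_{\xi}\mu\big(B(\xi, 2^j/R_1)\times B(\xi,2^j/R_2)\big)$, and for
a separated/localized measure this grows at most polynomially in
$2^j$, which is beaten by the decay of $K$. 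Summing the dyadic
contributions gives
$\sum_{z,z'}\mu(z)\mu(z')K(\cdots) \lesssim R_1 R_2
\big(\sum\mu\big)\,\max_{\xi_1,\xi_2}\mu\big(B(\xi_1,1/R_1)\times
B(\xi_2,1/R_2)\big)$. Third, I would run the symmetric argument in
the $z$-variable (or simply repeat the Cauchy--Schwarz on the other
side) to bring in the matching $\nu$-factor, and then take square
roots, which produces exactly the claimed product of
$(\sum\mu)^{1/2}(\sum\nu)^{1/2}$ with the two localization factors to
the power $1/2$ and the prefactor $(R_1R_2)^{1/2}$.

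\textbf{Main obstacle.} The only real subtlety is the honest
justification that one may replace the oscillatory kernel
$\sum_w \nu(w) e^{i(R_1 u_1 w_1 + R_2 u_2 w_2)}$ by a nonnegative,
rapidly decaying kernel: one needs a smooth nonnegative majorant
$\Psi \geq \mathbf{1}_{[-1,1]^2}$ whose Fourier transform is
supported in a box of size $O(R_1)\times O(R_2)$ and which decays
fast off the dual box --- this forces a careful choice of scales and
a clean bookkeeping of where each $R_i$ enters. Everything else is
Cauchy--Schwarz plus a geometric/dyadic counting argument that uses
only the localization quantities already appearing on the right-hand
side, so no further input about the structure of $\mu,\nu$ is needed.
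I expect the proof to be short once the majorant is fixed.
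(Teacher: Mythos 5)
Your plan hinges on one step that does not hold. After the $TT^*$ expansion you need to replace the inner sum $\sum_w \nu(w)\,e^{i(R_1(z_1-z_1')w_1+R_2(z_2-z_2')w_2)}$, i.e.\ $\hat\nu\big(R_1(z_1-z_1'),R_2(z_2-z_2')\big)$, by a nonnegative kernel decaying off the box $|z_i-z_i'|\lesssim 1/R_i$. No such replacement is available: $\nu$ is an arbitrary atomic measure, and the Fourier transform of an atomic measure need not decay at all (if $\nu$ is a single point mass, $|\hat\nu|$ is constant in modulus). The Beurling--Selberg device you invoke --- majorizing $\mathbf 1_{[-1,1]^2}$ in the $w$-variable by a function $\Psi$ with compactly supported Fourier transform --- does not help, since multiplying the weights $\nu(w)$ by $\Psi(w)\ge 1$ only replaces $\hat\nu$ by $\hat\Psi*\hat\nu$, which still has no decay; that device produces Fej\'er-type kernels when the $w$-summation is against Lebesgue measure or counting measure on $\mathbb Z$ (Poisson summation), not against general atoms. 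The localization quantity $\max_\xi \nu\big(B(\xi_1,\tfrac1{R_1})\times B(\xi_2,\tfrac1{R_2})\big)$ can only enter by regularizing $\nu$ itself at scale $(1/R_1,1/R_2)$, and once that is done your dyadic-shell counting is no longer the natural continuation. Your third step is also too optimistic as stated: combining the two one-sided (Cauchy--Schwarz in $w$, resp.\ in $z$) estimates by a geometric mean yields the localization factors only to the power $1/4$ together with an extra $(\sum\mu\,\sum\nu)^{1/4}$, which is strictly weaker than the Lemma and would not suffice for the application in Section~\ref{sec:5}.

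For comparison, the paper regularizes from the start: it writes the sum as $\sum_w\hat\mu(R_1w_1,R_2w_2)\nu(w)$, dominates $\hat\mu$ by the transform of $\mu*(P_{1/R_1}\otimes P_{1/R_2})$ with $P_\varepsilon$ an approximate identity, and dominates the point evaluations of $\nu$ by the box masses $\nu\big(B(\xi_1,\tfrac1{R_1})\times B(\xi_2,\tfrac1{R_2})\big)$ averaged in $\xi$; a single Cauchy--Schwarz plus Plancherel then reduces the sum to $(R_1R_2)^{-1/2}\Vert\mu*(P_{1/R_1}\otimes P_{1/R_2})\Vert_2\,\Vert\nu*(P_{1/R_1}\otimes P_{1/R_2})\Vert_2$, and each $L^2$ norm is bounded via $\Vert f\Vert_2\le\Vert f\Vert_1^{1/2}\Vert f\Vert_\infty^{1/2}$, with $\Vert\mu*P\Vert_1\sim\sum\mu(z)$ and $\Vert\mu*P\Vert_\infty\sim R_1R_2\max_\xi\mu\big(B(\xi_1,\tfrac1{R_1})\times B(\xi_2,\tfrac1{R_2})\big)$; this is exactly where both factors appear at the power $1/2$. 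If you want to salvage your route, the repair is essentially to import that regularization: smooth both measures at scale $(1/R_1,1/R_2)$ before expanding, at which point Plancherel replaces the dyadic counting and you recover the paper's argument.
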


%\noindent
%{\bf Proof of Lemma \ref{Lemma4.7}}

\begin{proof}
Denoting $P_\ve$ an approximate identity on $\mathbb R$, the left
side equals \be\label{eq4.8}
\begin{aligned}
\Big|\sum_w\hat\mu & (R_1w_1, R_2 w_2)\nu(w)\Big|\lesssim \sum_w
\Big|\Big(\mu* (P_{\frac 1{R_1}}\otimes P_{\frac
1{R_2}})\Big)^\wedge
(R_1w_1, R_2w_2)\Big|\nu(w)\\[6pt]
&\lesssim R_1R_2 \iint \Big|\big(\mu*(P_{\frac 1{R_1}} \otimes
P_{\frac 1{R_2}})\big)^\wedge (R_1\xi_1,
R_2\xi_2)\Big|\nu\Big(B\Big(\xi_1, \frac 1{R_1}\Big)
\times B\Big(\xi_2, \frac 1{R_2}\Big)\Big) d\xi_1 d\xi_2\\[6pt]
&\leq (R_1R_2)^{\frac 12}\Vert\mu*(P_{\frac 1{R_1}}\otimes P_{\frac
1{R_2}})\Vert_2.\Big\{\iint\Big[\nu\Big(B\Big(\xi_1,
\frac 1{R_1}\Big) \times B\Big(\xi_2, \frac 1{R_2}\Big)\Big) \Big]^2 d\xi_1 d\xi_2\Bigg\}^{1/2}\\[6pt]
&\lesssim (R_1R_2)^{-\frac 12} \Vert\mu*(P_{\frac 1{R_1}}\otimes
P_{\frac 1{R_2}})\Vert_2 \Vert\nu*(P_{\frac 1{R_1}}\otimes P_{\frac
1{R_2}})\Vert_2
\end{aligned}
\ee where $\Vert \ \Vert_2$ refers to $L^2([-1, 1]^2)$.

Next \be\label{eq4.9}
\begin{aligned}
\Vert \mu*(P_{\frac 1{R_1}}\times P_{\frac 1{R_2}})\Vert_2
&\leq\Vert\mu* (P_{\frac 1{R_1}} \times P_{\frac
1{R_2}})\Vert_1^{\frac 12} \Vert\mu* (P_{\frac 1{R_1}}\times
P_{\frac 1{R_2}})\Vert^{\frac
12}_\infty\\
&\sim\Big[\sum\mu(z)\Big]^{\frac 12} (R_1R_2)^{\frac 12}
\Big[\max_\xi\mu \Big(B\Big(\xi_1, \frac 1{R_1}\Big)\times
B\Big(\xi_2, \frac 1{R_2}\Big)\Big)\Big]^{\frac 12}
\end{aligned}
\ee and similarly for $\nu$.

Substitution of \eqref{eq4.9} in \eqref{eq4.8} proves the Lemma.
 %(\ref {Lemma4.7}).
\end{proof}

Now apply Lemma \ref{Lemma4.7} to evaluate \eqref{eq4.6}. Thus
$R_1=R^{3/5}$, $R_2=qR^{1/5}$.
%Recall \eqref{eq4.4}.

It remains to bound for $\xi= (\xi_1, \xi_2)\in[-1, 1]^2$, $|\xi_1|>
R^{-\theta}$, the quantity \be\label{eq4.10}
\begin{aligned}
&\mu\Big(B\Big(\xi_1, \frac 1{R_1}\Big)\times B\Big(\xi_2, \frac 1{R_2}\Big)\Big)=\\[6pt]
&\#\Big\{ (s, s_1)\in S\times S; |R^{-\frac 15}\xi_1 -(s-s_1)|<
R^{-4/5} \text { and } |R^{-\frac 25}\xi_2-(s^2-s^2_1)|<\frac
1qR^{-3/5}\Big\}.
\end{aligned}
\ee

From the equations, one gets
$$
\Big|R^{-\frac 15} \frac{\xi_2}{\xi_1}-(s+s_1)\Big|< \frac 1q
R^{-\frac 25+\theta} + R^{-\frac 35+\theta} < \frac 2q R^{-\frac
25+\theta}
$$
and \be\label{eq4.11} \Big|R^{-\frac 15}
\Big(\xi_1+\frac{\xi_2}{\xi_1}\Big)-2s\Big| <\frac 3q R^{-\frac
25+\theta}. \ee Since the elements of $S$ are $\frac 1{\sqrt
R}$-separated, \eqref{eq4.11} restricts $s$ to at most $\frac cq
R^{\frac 1{10}+\theta}$ values. Hence \be\label{eq4.12}
|\eqref{eq4.10}|\lesssim \frac 1q R^{\frac 1{10}+\theta}. \ee
 From Lemma \ref{Lemma4.7}, recalling \eqref{eq4.4} and
 \eqref{eq4.12}, we obtain
\be\label{eq4.13} |\eqref{eq4.6}|\lesssim (qR^{4/5})^{\frac 12}
R^{3/5} \frac 1q R^{\frac 1{10} +\theta}\lesssim \frac 1{\sqrt q}
R^{\frac{11}{10}+\theta}. \ee Hence
$$
|\eqref{eq4.2}|\lesssim \frac 1{\sqrt q} R^{\frac {11}{10}+\theta}
+R^{\frac 65 -\theta}
$$
and
$$
|\mathfrak S|^4 \lesssim R^{6/5} \Big(\frac 1{\sqrt q}
R^{\frac{11}{10}+\theta} + R^{\frac 65-\theta}\Big).
$$
An appropriate choice of $\theta$ gives \be\label{eq4.14} |\mathfrak
S| < R^{\frac{47}{80}} q^{-\frac 14}. \ee This proves Lemma
\ref{Lemma3.3} with $\kappa_{1} =\frac 1{80}$. \qed

\bigskip
\section{Mean Equidistribution Property of Lattice Points}
\label{sec:6}

Let $\mathcal E =\mathbb Z^3 \cap RS^2, R=\sqrt E$. Recall that
\be\label{eq5.1} |\mathcal E|\ll R^{1+\ve} \text { for all $\ve>0$.}
\ee

In order to apply Lemma \ref{Lemma3.35} with $\mathcal X, \mathcal
Y\subset\mathcal E$, we recall Lemma~\ref{IntLemma5.6}, which states
that
%is a consequence of Siegel's mass formula (see \cite{BRS}).
\begin{lemma}\label{Lemma5.6}
Let $\{C_\alpha\}$ be a partition of $RS^2$ in cells of size $\sqrt
R$. Then \be\label{eq5.7} \sum_\alpha|C_\alpha\cap\mathcal E|^2 \ll
R^{1+\ve} \text { for all $\ve >0$.} \ee
\end{lemma}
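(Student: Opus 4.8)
The plan is to reduce \eqref{eq5.7} to a lattice-point counting statement for pairs of points on $RS^2$ and then invoke Linnik's ``basic lemma'' on the distribution of such pairs (this is the joint work with Sarnak referenced as \cite{BRS}). The key observation is that the left-hand side of \eqref{eq5.7} is, up to a constant, the number of ordered pairs $(n,n')\in\mathcal E\times\mathcal E$ with $n-n'$ confined to a fixed small region: indeed, if the cells $C_\alpha$ have diameter $\sim\sqrt R$ then
\[
\sum_\alpha|C_\alpha\cap\mathcal E|^2 \ll \#\{(n,n')\in\mathcal E\times\mathcal E: |n-n'|\ll \sqrt R\},
\]
since both $n$ and $n'$ lying in a common cell forces $|n-n'|\ll\sqrt R$, and conversely a pair with $|n-n'|\ll\sqrt R$ lies in a bounded number of adjacent cells. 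So it suffices to bound the number of pairs of lattice points on the sphere of radius $R$ at mutual distance $\ll\sqrt R$.

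The main step is to translate this into a count of representations. Writing $m=n-n'$ and $k=n+n'$, a pair $(n,n')\in\mathcal E\times\mathcal E$ corresponds to vectors $m,k\in\mathbb Z^3$ of the same parity with $m\cdot k=0$ (from $|n|^2=|n'|^2$) and $|m|^2+|k|^2=4E$. For the pairs we care about we have $|m|\ll\sqrt R$, hence $|k|^2=4E-|m|^2$, so $|k|$ is essentially determined and $k$ ranges over lattice points on a sphere of radius $\sim 2R$ lying in the hyperplane $m^\perp$. Thus
\[
\#\{(n,n'): |n-n'|\ll\sqrt R\} \ll \sum_{\substack{m\in\mathbb Z^3\\ 0<|m|\ll\sqrt R}} \#\{k\in\mathbb Z^3: m\cdot k=0,\ |k|^2=4E-|m|^2\} \;+\; O(R^{1+\varepsilon}),
\]
the error term accounting for $m=0$ (the diagonal $n=n'$) and degenerate cases. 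Each inner count is the number of lattice points on a two-dimensional sub-sphere (the intersection of a sphere in $\mathbb R^3$ with the plane $m^\perp$), which is a binary quadratic form count and is $\ll E^\varepsilon$ by the divisor bound. Summing the naive bound $E^\varepsilon$ over the $\ll R^{3/2}$ choices of $m$ gives only $R^{3/2+\varepsilon}$, which is too weak; this is where Linnik's basic lemma enters, providing the arithmetic cancellation/equidistribution input that shows the average over $m$ of these sub-sphere counts is $O(R^\varepsilon)$ rather than the worst case, yielding the total bound $R^{1+\varepsilon}$.

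The hard part is precisely this last point: obtaining the gain from $R^{3/2+\varepsilon}$ down to $R^{1+\varepsilon}$. The naive slicing bound loses because it ignores the constraint coupling $m$ to $k$ through $|m|^2+|k|^2=4E$; Linnik's basic lemma (in the formulation of \cite{BRS}) controls the number of pairs $(m,k)$ of orthogonal integer vectors with $|m|^2+|k|^2=4E$ by exploiting the distribution of lattice points on $RS^2$ in planar sections, and it is this input — rather than any elementary divisor estimate — that delivers the square-root saving. I would therefore structure the proof as: (i) the reduction of $\sum_\alpha |C_\alpha\cap\mathcal E|^2$ to the pair count at distance $\ll\sqrt R$; (ii) the change of variables to $(m,k)$ and identification with orthogonal integer vectors of prescribed norms; (iii) citation and application of the relevant consequence of Linnik's basic lemma from \cite{BRS} to bound this quantity by $R^{1+\varepsilon}$; and (iv) collecting the negligible contributions (diagonal, small $|m|$, parity constraints) into the $O(R^{1+\varepsilon})$ error. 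Steps (i), (ii), (iv) are routine; the entire weight of the lemma rests on step (iii), which is external arithmetic input.
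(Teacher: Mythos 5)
Your proposal is consistent with the paper's treatment: the paper gives no proof of this lemma at all, quoting it from the joint work with Sarnak \cite{BRS} as a consequence of ``Linnik's basic lemma'', and your argument correctly reduces the mean-square bound to the count of pairs $(n,n')\in\mathcal E\times\mathcal E$ with $0<|n-n'|\ll\sqrt R$ before deferring the essential arithmetic saving (from the naive $R^{3/2+\epsilon}$ down to $R^{1+\epsilon}$) to that same external input. Your reduction steps (i), (ii), (iv) are correct and routine, so nothing is lost relative to the paper, whose entire content here is likewise the citation of \cite{BRS}.
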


Thus \eqref{eq5.7} express the desired separation property in some
averaged sense. To obtain sets that are $\sqrt R$-separated, proceed
as follows. Fix $\ve'>0$ and let
$$
\mathcal E'=\bigcup_{|C_\alpha\cap \mathcal E|> R^{\ve'}} (\mathcal
E\cap C_\alpha).
$$
It follows from \eqref{eq5.7} that \be\label{eq5.8} |\mathcal E'|<
R^{-\ve'} \sum|\mathcal E\cap C_\alpha|^2 \ll R^{1+\ve-\ve'}<
R^{1-\frac{\ve'}2}. \ee Also \be\label{eq5.9} \mathcal
E\backslash\mathcal E' = \bigcup_{s< R^{\ve'}}\mathcal X_s \ee with
each set $\mathcal X$ consisting of $\sqrt R$-separated points.

From \eqref{eq3.37} \be\label{5.9}
\Bigg|\sum_{\substack{x\in\mathcal X_s, y\in\mathcal X_t\\ x-y\in Z,
|x-y|\ll cR}} e^{i\psi(x-y)}\Bigg|< R^{2-\gamma} \ee for some
$\gamma>0$.

Therefore, from \eqref{eq5.1}, \eqref{eq5.8} \be\label{eq5.10}
\Bigg|\sum_{\substack{x\in\mathcal E_1, y\in\mathcal E_2\\ x-y\in Z,
|x-y|<cR}} e^{i\psi(x-y)} \Bigg| \ll R^{2-\gamma+2\ve'}+ 2| \mathcal
E'|\, |\mathcal E| < R^{2-\gamma+2\ve'}+ R^{2+\ve-\frac{\ve'}2} \ee
if $\mathcal E_1, \mathcal E_2 \subset \mathcal E$.
\medskip

Hence
\begin{lemma}\label{Lemma5.11}
There is a constant $\gamma_1>0$ (independent of $R$) such that
\be\label{eq5.11} \Bigg| \sum_{\substack{x\in \mathcal E_1, y\in
\mathcal E_2\\ x-y\in Z, |x-y|< cR}} e^{i\psi(x-y)}\Bigg|<
R^{2-\gamma_1} \ee whenever $\mathcal E_1, \mathcal E_2
\subset\mathcal E=(RS^2 \cap \mathbb Z^3)$.
\end{lemma}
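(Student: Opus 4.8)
The plan is to reduce the estimate to the oscillatory-sum bound of Lemma~\ref{Lemma3.35}, whose only hypothesis is that the two summation sets be $\sqrt R$-separated; the point is that $\mathcal E=\mathbb Z^3\cap RS^2$ itself need not have this property, and the mean-equidistribution bound \eqref{eq5.7} of Lemma~\ref{Lemma5.6} is precisely what controls the obstruction. Fix $\ve'>0$, take a partition $\{C_\alpha\}$ of $RS^2$ into cells of size $\sqrt R$ as in Lemma~\ref{Lemma5.6}, and set $\mathcal E'=\bigcup\{\mathcal E\cap C_\alpha:\ |\mathcal E\cap C_\alpha|>R^{\ve'}\}$. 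Then \eqref{eq5.7} gives $|\mathcal E'|\le R^{-\ve'}\sum_\alpha|\mathcal E\cap C_\alpha|^2\ll R^{1+\ve-\ve'}$, which is $\ll R^{1-\ve'/2}$ once $\ve<\ve'/2$, while $\mathcal E\setminus\mathcal E'$ meets every cell in at most $R^{\ve'}$ points, hence is a union of at most $R^{\ve'}$ sets $\mathcal X_s$ each meeting every cell in at most one point, so that each $\mathcal X_s$ is $\sqrt R$-separated (after a harmless bounded refinement of the partition, since points in neighbouring cells might otherwise be close).

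Next I would split the double sum over $\mathcal E_1\times\mathcal E_2$ according to whether each variable lies in $\mathcal E'$ or in one of the pieces $\mathcal X_s$, always keeping the constraints $x-y\in Z$ and $|x-y|<cR$. The terms with $x\in\mathcal E'$ or $y\in\mathcal E'$ are estimated trivially by $2|\mathcal E'|\,|\mathcal E|\ll R^{2+\ve-\ve'/2}$, using $|\mathcal E|\ll R^{1+\ve}$ from \eqref{eq5.1}. For the remaining at most $R^{2\ve'}$ pairs $(s,t)$, the inner sum $\sum_{x\in\mathcal X_s\cap\mathcal E_1,\ y\in\mathcal X_t\cap\mathcal E_2,\ x-y\in Z,\ |x-y|<cR} e^{i\psi(x-y)}$ is a sum over $\sqrt R$-separated subsets of $RS^2$, so Lemma~\ref{Lemma3.35} bounds it by $R^{2-\gamma}$; summing over $s,t$ contributes $\ll R^{2-\gamma+2\ve'}$. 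Hence the full sum is $\ll R^{2-\gamma+2\ve'}+R^{2+\ve-\ve'/2}$.

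It remains to choose the parameters. Since $|\mathcal E|\ll R^{1+\ve}$ is available for every $\ve>0$, fix $\ve$ small and then $\ve'$ in the window $2\ve<\ve'<\gamma/2$; both exponents above are then strictly below $2$, and one obtains \eqref{eq5.11} with, say, $\gamma_1=\min(\gamma-2\ve',\ \ve'/2-\ve)>0$. I expect this balancing to be the only delicate point: Lemma~\ref{Lemma3.35} saves only a fixed power $R^{-\gamma}$, so one cannot afford more than $R^{o(1)}$ separated pieces $\mathcal X_s$, whereas the crude treatment of the exceptional set $\mathcal E'$ forces $\ve'$ not too small --- and it is exactly the mean-equidistribution estimate \eqref{eq5.7} that makes the window $2\ve<\ve'<\gamma/2$ nonempty. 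A minor additional point is checking that a partition into $\sqrt R$-cells, refined boundedly, really yields genuinely $\sqrt R$-separated families $\mathcal X_s$, which costs only an absolute constant absorbed into $R^{\ve'}$.
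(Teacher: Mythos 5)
Your argument is correct and is essentially the paper's own proof: the same dyadic-in-cells decomposition via Lemma \ref{Lemma5.6}, the same exceptional set $\mathcal E'$ estimated trivially against $|\mathcal E|\ll R^{1+\ve}$, the same splitting of $\mathcal E\setminus\mathcal E'$ into at most $R^{\ve'}$ families of $\sqrt R$-separated points fed into Lemma \ref{Lemma3.35}, and the same balancing of $\ve,\ve'$ against $\gamma$. The only addition is your (harmless, and correct) remark about boundedly refining the partition to guarantee genuine $\sqrt R$-separation across adjacent cells, which the paper leaves implicit.
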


This is our main estimate to handle `large distances' $|x-y|>
R^{1-\ve}$.

\bigskip
\section
{Restriction Upper Bound}\label{sec:restriction upper bd}

\begin{theorem}\label{Theorem1}
Let $\Sigma\subset \mathbb T^3$ be a real-analytic 2-dim submanifold
with non-vanishing curvature and let $\sigma$ be its surface
measure. There is a constant $K_\Sigma>0$ such that
\be\label{eq6.14} \int_\Sigma |\vp|^2d\sigma \leq K_\Sigma
\Vert\vp\Vert_2^2 \ee for all eigenfunctions $\vp$ on $\mathbb T^3$.
\end{theorem}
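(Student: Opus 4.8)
The plan is to run the argument already sketched for $d=2$, but with the sharper input provided by Section~\ref{sec:2} and Section~\ref{sec:6}. Write $\vp=\sum_{n\in\mathcal E}\widehat\vp(n)e(n\cdot x)$ with $\mathcal E=\mathbb Z^3\cap RS^2$ and $\Vert\vp\Vert_2^2=\sum_n|\widehat\vp(n)|^2$. Expanding the restriction,
\[
\int_\Sigma|\vp|^2\,d\sigma=\sum_{n,n'\in\mathcal E}\widehat\vp(n)\overline{\widehat\vp(n')}\,\widehat\sigma(n-n').
\]
The diagonal $n=n'$ contributes exactly $\Vert\vp\Vert_2^2$ (since $\widehat\sigma(0)=1$), so everything reduces to estimating the off-diagonal sum $\sum_{n\neq n'}\widehat\vp(n)\overline{\widehat\vp(n')}\widehat\sigma(n-n')$ by $O(\Vert\vp\Vert_2^2)$.

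First I would split the off-diagonal range into \emph{short} distances $|n-n'|<R^{1-\ve}$ and \emph{long} distances $|n-n'|\geq R^{1-\ve}$. For the short range, cover $RS^2$ by a bounded-overlap family of caps $C_\alpha$ of size $r$, dyadically over scales $1\le r\le R^{1-\ve}$, and pair caps $C_\alpha,C_\beta$ at distance $\sim r$; on such a pair one has $|\widehat\sigma(n-n')|\ll r^{-1}$ by \eqref{eq11}. Using $|\widehat\vp(n)\overline{\widehat\vp(n')}|\le\frac12(|\widehat\vp(n)|^2+|\widehat\vp(n')|^2)$, the contribution of a cap-pair is $\ll r^{-1}\,F_3(R,r)\,\sum_{n\in C_\alpha}|\widehat\vp(n)|^2$ (after summing the partner over the bounded number of $\beta$'s at that distance), and Proposition~\eqref{eq16} gives $F_3(R,r)\ll R^\epsilon\big(r(r/R)^\eta+1\big)$, so the per-scale gain is $\ll R^\epsilon\big((r/R)^\eta+r^{-1}\big)$. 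Summing over dyadic $r$ up to $R^{1-\ve}$ yields a bound $\ll R^{-\eta\ve+\epsilon}\Vert\vp\Vert_2^2$, which is $o(\Vert\vp\Vert_2^2)$ once $\ve,\epsilon$ are chosen correctly.

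For the long range $|n-n'|\gtrsim R^{1-\ve}$ I would insert the asymptotic expansion \eqref{eq1.10} of $\widehat\sigma$: the main term is $\sim R^{-1}e^{i\psi(n-n')}$ times a slowly varying amplitude (and a cutoff forcing $n-n'$ into the cone $Z$, which only matters after summing over finitely many charts of $\Sigma$), while the error is $O(|n-n'|^{-2})=O(R^{-2+2\ve})$ and hence harmless since $\#\mathcal E\ll R^{1+\epsilon}$. The amplitude can be removed by the same $L^\infty\widehat\otimes L^\infty$ / convexity trick used to pass from \eqref{eq3.28} to \eqref{eq3.30}, reducing matters to bounding $R^{-1}\big|\sum_{x\in\mathcal E_1,\,y\in\mathcal E_2,\,x-y\in Z}e^{i\psi(x-y)}\big|$ for arbitrary subsets $\mathcal E_1,\mathcal E_2\subset\mathcal E$ (one takes $\mathcal E_1,\mathcal E_2$ to encode the Fourier coefficients of size $\sim$ a dyadic value, or simply applies the estimate with coefficient weights $|u|,|v|\le1$). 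Lemma~\ref{Lemma5.11} bounds this sum by $R^{2-\gamma_1}$, giving a contribution $\ll R^{1-\gamma_1+\epsilon}\Vert\vp\Vert_2^2=o(\Vert\vp\Vert_2^2)$. Adding the two ranges and the diagonal proves \eqref{eq6.14} with $K_\Sigma$ depending only on $\Sigma$ (through the implied constants, the curvature lower bound, and the number of coordinate charts).

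The main obstacle is the long-range term: the decay $R^{-1}$ of $\widehat\sigma$ is exactly critical against $\#\mathcal E\sim R$, so one cannot absorb the oscillatory factor $e^{i\psi(x-y)}$ and must genuinely exploit cancellation — this is the content of Lemma~\ref{Lemma5.11}, which in turn rests on the mean-equidistribution Lemma~\ref{Lemma5.6} and, after the normal-form reductions of Sections~\ref{sec:3}--\ref{sec:5}, on the one-dimensional sum estimate Lemma~\ref{Lemma3.3}. A secondary technical point is handling the passage from a weighted exponential sum (with coefficients $\widehat\vp(n)$) to the unweighted sums appearing in Lemma~\ref{Lemma5.11}; this is done by pigeonholing the coefficient sizes into $O(\log R)$ dyadic blocks, or by the convexity argument, at the cost of harmless powers of $R^\epsilon$.
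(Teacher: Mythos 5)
Your architecture is the same as the paper's proof in Section~\ref{sec:restriction upper bd}: diagonal term, short-range cap counting, long-range oscillation via Lemma~\ref{Lemma5.11}. The long-range half is essentially right; your dyadic pigeonholing of coefficient sizes combined with the unweighted bound of Lemma~\ref{Lemma5.11} is a workable substitute for the paper's splitting of $\mathcal E$ into the few large coefficients (disposed of by Cauchy--Schwarz against $|\mathcal E_\ell|\le R^{1-4\ve_0}$) and the small ones, where $|a_m|\,|a_n|\le R^{-1+4\ve_0}$ is used directly.

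There is, however, a genuine gap in the short-range part, and it sits exactly at the point that separates the uniform constant from the already-known $\lambda^{\epsilon}$ bound. Your per-scale estimate $r^{-1}F_3(R,r)\ll R^{\epsilon}\big((r/R)^{\eta}+r^{-1}\big)$ does \emph{not} sum over dyadic $1\le r\le R^{1-\ve}$ to $R^{-\eta\ve+\epsilon}$: the term $R^{\epsilon}r^{-1}$ already contributes $\approx R^{\epsilon}$ at $r\sim 1$ (and throughout $r\lesssim R^{\epsilon}$), because for small caps the ``$+1$'' in \eqref{eq16} only gives $F_3(R,r)\ll R^{\epsilon}$. As written, your argument therefore yields only $\int_\Sigma|\vp|^2\,d\sigma\ll R^{\epsilon}\Vert\vp\Vert_2^2$, not a uniform $K_\Sigma$. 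The paper closes this with a small-cap bound carrying no $R^{\epsilon}$ loss: for $r<cR^{1/4}$ the points of $\mathcal E$ in an $r$-cap are coplanar (a Jarnik/Andrews-type statement), and the classical planar Jarnik theorem applied to the resulting ellipse gives $|\mathcal E\cap C_r|\le Cr^{2/3}$ with an absolute constant (see \eqref{eq6.6} and the first line of \eqref{eq6.10}); then $\sum_{r\ \mathrm{dyadic}}r^{-1}\cdot r^{2/3}=O(1)$, while for $cR^{1/4}<r<R^{1-\ve_0}$ the leftover $R^{\ve}r^{-1}\le R^{\ve-1/4}$ is harmless. You need this refinement, or some substitute making $r^{-1}F_3(R,r)$ summable at small $r$, to obtain the theorem as stated; the remaining details of your plan (cone localization over finitely many charts, the $|x-y|<cR$ restriction in Lemma~\ref{Lemma5.11}, removal of the smooth amplitude) are routine and match the paper.
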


Let \be\label{eq6.1} \vp = \sum_{n\in\mathcal E} a_n e^{ix.n} \
\text { with } \ \sum|a_n|^2=1 \ee and
$$
\mathcal E =\{n\in\mathbb Z^3; |n|^2= E=R ^2\}.
$$
%We bound
%\be\label{eq6.2}
%\int_\Sigma |\vp|^2 d\sigma
%\ee
%\noindent
%with $\Sigma$ as in Section~\ref{} (sec 2).
Then \be \label{eq6.3}
\begin{aligned}
\int_\Sigma |\vp|^2 d\sigma &=\sum_{m, n\in\mathcal E} a_m \bar a_n\int_\Sigma e^{i(m-n).x} \sigma(dx)\\
&= ||\vp||_2^2 \mbox{area}(\Sigma) + \sum_{k\geq 0}
\sum_{\substack{m, n\in\mathcal E\\ 2^k\leq |m-n|< 2^{k+1}}} a_m\bar
a_n \int_\Sigma e^{i(m-n).x} \sigma(dx)\;.
\end{aligned}
\ee Considering local coordinate charts, we can assume $\Sigma$ is
parametrized as in \eqref{eq1.1}.
 From \eqref{eq1.10}, if $m\neq n$ then
\be \label{eq6.4} \int_\Sigma e^{i(m-n).x} d\sigma =\frac 1{|m-n|}
\eta\Big(\frac{m-n}{|m-n|}\Big) e^{i\psi(m-n)} + O\Big(\frac
1{|m-n|^2}\Big) \ee with $\eta$ a smooth function on $S^2$.

First we bound  the contribution of the error term in \eqref{eq6.4}
by writing \be \label{eq6.5}
\begin{aligned}
\sum_{m, n\in \mathcal E, 2^k\leq |m-n|<2^{k+1}}& |a_m| \, |a_n| \frac 1{|m-n|^2} \lesssim 4^{-k} \sum_\alpha \Big(\sum_{m\in\mathcal E\cap C_\alpha}|a_m|\Big)^2\\
&\leq 4^{-k}\big(\max_\alpha|\mathcal E\cap C_\alpha|\big) \ \sum_{\alpha}\Big(\sum_{m\in\mathcal E\cap C_\alpha} |a_m|^2\Big)\\
&\lesssim 4^{-k}\max_\alpha |\mathcal E\cap C_\alpha|
\end{aligned}
\ee where $\{C_\alpha\}$ is a partition of $RS^2$ in cells of size
$\sim 2^k$. Thus we need to bound $|\mathcal E\cap C_r|$, where
$C_r\subset RS^2$ is a cap of size $r$.

If $r< cR^{1/4}$, a Jarnik type theorem implies that $\mathcal E\cap
C_r$ lies in a $2$-dim affine plane $H$. Projection of $H\cap RS^2$
on one of the coordinate planes $xy, yz, zx$ gives a non-degenerate
ellipse of size $\sim r$. Another application of the classical
Jarnik theorem in the plane shows that certainly \be\label{eq6.6}
|\mathcal E\cap C_r|<Cr^{2/3}. \ee For $r$ arbitrary, one has the
(easy) linear bound (see Lemma~\ref{Lemma8.10}) \be\label{eq6.7}
|\mathcal E\cap C_r|\ll R^\ve (1+r). \ee
 From \eqref{eq6.6}, \eqref{eq6.7} we get
\be\label{eq6.9} |\mathcal E\cap C_r|\ll r^{1+\ve}. \ee Substituting
\eqref{eq6.9} in \eqref{eq6.5} gives $2^{-k(1-\ve)}$, which is
summable in $k$.

\bigskip

Consider next the contribution of the main term in \eqref{eq6.4}. We
make two separate estimates. The first treats the case
$2^k<R^{1-\ve_0}$ ($\ve_0>0$ some small constant) and the second
$2^k> R^{1-\ve_0}$. The following improvement of the lattice point
estimates \eqref{eq6.6}, \eqref{eq6.7}, which will be proven in
Section~\ref{sec:lattice pts} (Lemma~\ref{Lemma8.13}),  is crucial
to our analysis: For $0<\eta<1/16$ \be\label{Lemma6.8} |\mathcal
E\cap C_r|\ll R^\ve \Big(1+r\Big(\frac rR\big)^\eta\Big). \ee

\medskip

\medskip
\noindent {\bf The case $2^k <R^{1-\ve_0}$:} Ignoring again the
phase function, and arguing as in \eqref{eq6.5} gives
$$
\sum_{m, n\in \mathcal E, 2^k\leq |m-n|<2^{k+1}} |a_m| \, |a_n| \frac 1{|m-n|} \lesssim 2^{-k}\max_\alpha|\mathcal E\cap C_\alpha|\\
$$
\be \label{eq6.10}
<\begin{cases} C2^{-\frac 13k} \text { if } 2^k<cR^{1/4}\\[6pt]
R^\ve\Big[2^{-k}+\Big(\frac{2^k}R\Big)^\eta\Big] \text { if }
cR^{1/4} <2^k<R^{1-\ve_0}\end{cases} \ee invoking \eqref{eq6.6},
\eqref{Lemma6.8}. These bounds are again conclusive.

\noindent {\bf The case $R^{1-\ve_0}<2^k<R$:} This requires a more
subtle argument involving the oscillatory factor $e^{i\psi(n-n)}$ in
\eqref{eq6.4}.

Let $b$ be a smooth (radial) function on $\mathbb R^3$ satisfying
$$
\begin{cases}
b(x)&=0 \text { if } |x|<\frac 12\\
b(x)&=1 \text { if } |x|>1\end{cases}
$$
and estimate \be \label{eq6.11} \sum_{m, n\in \mathcal E} a_m\bar
a_n \, b\Big(\frac{m-n}{R^{1-\ve_0}}\Big) \frac 1{|m-n|}
\eta\Big(\frac {m-n}{|m-n|}\Big) e^{i\psi(m-n)}. \ee

Decompose $\mathcal E=\mathcal E_\ell \coprod \mathcal E_s$, where
$$
\mathcal E_\ell =\{ m\in\mathcal E; |a_m| > R^{-\frac 12+2\ve_0}\}.
$$
Since $\sum_n |a_n|^2=1$, we have $|\vE_\ell|\leq R^{1-4\ve_0}$.

Then we estimate \be\label{eq6.12} |\eqref{eq6.11}|\lesssim
\sum_{(m, n)\in(\mathcal E\times\mathcal
  E)\backslash (\mathcal E_s\times\mathcal E_s)} |a_m| \, |a_n|\frac
1{R^{1-\ve_0}} \ee \be\label{eq6.13} +\Big|\sum_{m, n\in\mathcal
E_s} a_m\bar a_n \ b\Big(\frac{m-n}{R^{1-\ve_0}}\Big) \frac 1{|m-n|}
\eta \Big(\frac {m-n}{|m-n|}\Big) e^{i\psi(m-n)}\Big|. \ee By
Cauchy-Schwarz, \eqref{eq6.12} is bounded by
$$
\frac 1{R^{1-\ve_0}} |\mathcal E|^{\frac 12} \, |\mathcal
E_\ell|^{\frac 12} \ll \frac 1{R^{1-\ve_0}} R^{\frac 12+o(1)}
R^{\frac 12 -2\ve_0}< R^{-\frac{\ve_0}{2}}.
$$
The term \eqref{eq6.13} is bounded using Lemma \ref{Lemma5.11} and a
partition of unity. This gives an estimate of the form
$$
R^{C\ve_0} \frac 1{R^{1-\ve_0}} \frac 1{R^{1-4\ve_0}} R^{2-\gamma_1}
< R^{-\frac 12\gamma_1}
$$
if $\ve_0$ is chosen sufficiently small.

Hence, we have proven Theorem~\ref{Theorem1}. \qed

\bigskip
\section
{Restriction Lower Bounds}

We prove
\begin{theorem}\label{Theorem2x}
Given $\Sigma$ as in Theorem~\ref{Theorem1}, there is $E_0\in\mathbb
Z_+$ and a constant $k_\Sigma>0$ such that \be\label{eq7.1}
\int_\Sigma |\vp|^2 d\sigma \geq  k_\Sigma ||\vp||_2^2 \ee whenever
$\vp$ is an eigenfunction with eigenvalue $E>E_0$.
\end{theorem}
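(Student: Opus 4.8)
The plan is to run the computation \eqref{eq6.3} in reverse. Expanding $\int_\Sigma|\vp|^2d\sigma=\sum_{m,n\in\mathcal E}a_m\bar a_n\widehat\sigma(m-n)$ over $\mathcal E=\mathbb Z^3\cap RS^2$, the diagonal contributes $\mathrm{area}(\Sigma)\,\|\vp\|_2^2$, a fixed positive multiple of $\|\vp\|_2^2$, so it suffices to show that the off-diagonal sum $\sum_{m\neq n}a_m\bar a_n\widehat\sigma(m-n)$ cannot be as negative as $-(\mathrm{area}(\Sigma)-k_\Sigma)\|\vp\|_2^2$. I would split it at a threshold $L$, to be chosen: $\sum_{m\neq n}=\sum_{0<|m-n|\le L}+\sum_{|m-n|>L}$.

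The \emph{far} piece $|m-n|>L$ is controlled by exactly the estimates used in the proof of Theorem~\ref{Theorem1}, summed only over dyadic scales $2^k>L$: the scales $L<2^k<cR^{1/4}$ give $\ll\sum 2^{-k/3}\ll L^{-1/3}$ via \eqref{eq6.6}, as in \eqref{eq6.10}; the scales $cR^{1/4}<2^k<R^{1-\ve_0}$ give $\ll R^\ve\big(2^{-k}+(2^k/R)^\eta\big)$, summing to $\ll R^{-c}$ via \eqref{Lemma6.8}; the scales $2^k>R^{1-\ve_0}$ give $\ll R^{-c}$ via Lemma~\ref{Lemma5.11} together with the $\mathcal E_\ell/\mathcal E_s$ splitting of Section~\ref{sec:restriction upper bd}; and the $O(|m-n|^{-2})$ remainder from \eqref{eq6.4} is handled as in \eqref{eq6.5}, contributing $\ll L^{-1}\|\vp\|_2^2$. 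Altogether $\big|\sum_{|m-n|>L}a_m\bar a_n\widehat\sigma(m-n)\big|\le c_0\big(L^{-1/3}+R^{-c_0}\big)\|\vp\|_2^2$.

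The \emph{near} piece is the real content. I would fix $L$ and regroup $\int_\Sigma|\vp|^2d\sigma$ according to the connected components $\{\mathcal E_\alpha\}$ of the graph on $\mathcal E$ whose edges are the pairs $\{m,n\}$ with $0<|m-n|\le L$; writing $\vp=\sum_\alpha\vp^\alpha$ accordingly, $\int_\Sigma|\vp|^2d\sigma=\sum_\alpha\int_\Sigma|\vp^\alpha|^2d\sigma+\sum_{\alpha\ne\beta}\int_\Sigma\vp^\alpha\overline{\vp^\beta}d\sigma$, and the cross terms involve only $|m-n|>L$, hence are controlled by the far estimate. For the diagonal blocks one has $\int_\Sigma|\vp^\alpha|^2d\sigma=\langle G_\alpha a_\alpha,a_\alpha\rangle$ with $G_\alpha=\big(\widehat\sigma(m-n)\big)_{m,n\in\mathcal E_\alpha}$ the Gram matrix of $\{e(n\cdot x)|_\Sigma:n\in\mathcal E_\alpha\}$ in $L^2(\sigma)$, and one needs $\lambda_{\min}(G_\alpha)\ge\delta_L>0$ uniformly. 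Strict positivity is forced by the curvature of $\Sigma$: a nonzero trigonometric polynomial cannot vanish on a real-analytic hypersurface with a point of non-vanishing Hessian, because on a graph patch \eqref{eq1.1} the functions $e(n'\cdot u+n_d\phi(u))$ are linearly independent whenever $\phi$ has nondegenerate Hessian. Uniformity would come from the rigidity of $\mathcal E$ near the diagonal: a ball of radius $O(1)$ meets $\mathcal E$ in $O(1)$ points, and, by Jarnik's theorem, a component of diameter $<cR^{1/4}$ is coplanar and lies on a circle of bounded radius (three lattice points within an arc of length $\ll L$ force the radius $\lesssim L^3$), so its difference vectors $\{n_i-n_j\}$ lie in a fixed finite set of lattice vectors and $G_\alpha$ is one of finitely many positive-definite matrices, giving $\lambda_{\min}(G_\alpha)\ge\delta_L>0$. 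Combining the two pieces, $\int_\Sigma|\vp|^2d\sigma\ge\big(\delta_L-c_0L^{-1/3}-c_0R^{-c_0}\big)\|\vp\|_2^2$, which is \eqref{eq7.1} once $L$, and then $R$, are chosen large enough to make this positive.

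The hard part will be exactly the uniform bound $\lambda_{\min}(G_\alpha)\ge\delta_L$. First, a component built by chaining $\le L$-jumps could a priori be large (up to $\ll R^\ve$ points) and thus escape the finite list, so the geometric rigidity of near-configurations of $\mathcal E$ on $RS^2$ — coplanarity and the bounded circle radius, not merely the counting bounds of Section~\ref{sec:lattice pts} — must be used in an essential way; equivalently one chooses $L$ at a scale where $\mathcal E$ has no difference vectors, so components genuinely have bounded diameter. Second, $\delta_L$ necessarily deteriorates as $L\to\infty$ (the full system $\{e(n\cdot x)|_\Sigma\}_{n\in\mathbb Z^3}$ is far from a Riesz sequence, since a trigonometric polynomial can concentrate away from $\Sigma$), so $L$ must be calibrated to win the quantitative competition between $\delta_L$ and the $O(L^{-1/3})$ far-part error. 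Everything else — the far estimate, the block decomposition, and the positivity from curvature — is routine given Theorem~\ref{Theorem1}, Lemma~\ref{Lemma5.11}, and the lattice estimates of Section~\ref{sec:lattice pts}.
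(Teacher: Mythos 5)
Your skeleton (near/far split at a threshold, far piece recycled from the machinery of Theorem~\ref{Theorem1}, regrouping the near piece into graph components and reducing to positivity of the diagonal blocks) is essentially the paper's: the paper splits at $|m-n|<R^{1/5}$ in \eqref{eq7.2}--\eqref{eq7.3}, bounds the far piece by $R^{-\delta}$ using the upper-bound analysis, and proves via an iterated Jarnik argument (Lemma~\ref{Lemma7.5}) that each component is \emph{coplanar} (not of bounded diameter), so the heart of the matter is a uniform lower bound for coplanar clusters (Lemma~\ref{Lemma7.7}). The genuine gap in your write-up is exactly at the step you flag as hard, and your proposed justification of $\lambda_{\min}(G_\alpha)\geq\delta_L$ is wrong: the claim that ``a nonzero trigonometric polynomial cannot vanish on a real-analytic hypersurface with a point of non-vanishing Hessian'' is false. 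A curved patch of the zero set of $\cos 2\pi x_1+\cos 2\pi x_2+\cos 2\pi x_3$ is a real-analytic hypersurface with points of nonzero Gauss curvature on which this nonzero trigonometric polynomial vanishes identically; equivalently, the functions $e(n'\cdot u+n_3\phi(u))$ on a graph patch are \emph{not} linearly independent merely because $\phi$ has a nondegenerate Hessian. So positive definiteness of the Gram matrices is not ``forced by curvature'' in the abstract, and the compactness argument over finitely many configurations has nothing to rest on, since positive definiteness of each configuration is precisely what is in question.

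What saves the day — and what the paper actually does — is the arithmetic structure of a cluster combined with a quantitative use of curvature. A cluster's difference vectors are coplanar (they lie on a circle, being small chords of the sphere $RS^2$), so a vanishing combination $\sum_{m\in\mathcal E_\alpha}c_m e(m\cdot x)$ would, after factoring out $e(m_0\cdot x)$, be a function of the two variables in the plane $H_0$ of the cluster, i.e.\ constant along $H_0^{\perp}$. The paper's Lemma~\ref{Lemma7.7} then projects: nonvanishing curvature gives $\pi[\sigma]\geq\mu_{H_0}|_{B_\rho}$ with $\rho$ independent of $H_0$ (see \eqref{eq 7.10}), reducing the problem to exponentials on a planar disc with Lebesgue measure, where linear independence is trivial and uniform (Case 1, bounded frequencies), and for clusters lying on circles of large radius — which do occur with the paper's threshold $R^{1/5}$ — one further uses the planar Jarnik decomposition into $r^{1/3}$-separated pairs and the $d=2$ argument from the Introduction (Case 2), so no constant deteriorates. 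Your bounded-$L$ variant (whose rigidity argument forcing components onto circles of radius $O(L^3)$ is fine, given $R\gg_L 1$) can be repaired in the same way: the needed input is the projection step \eqref{eq 7.10}, not a general statement about trigonometric polynomials and curved hypersurfaces; without it your proof is incomplete at its central point.
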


Let $\vp =\sum_{n\in\mathcal E} a_n e^{in.x}$ with $\mathcal E =
RS^2 \cap\mathbb Z^3, R=\sqrt E$ and $\Sigma|a_n|^2=1$.

Write
\begin{eqnarray}
\label{eq7.2} \int_\Sigma |\vp|^2d\sigma &=& \sum_{\substack {m,
n\in\mathcal E\\ |m-n|<R^{\frac 15}}} \ a_m\bar a_n\int_\Sigma
e^{i(m-n)x}
\sigma(dx) \\
&+& \sum_{|m-n|> R^{\frac 15}}\cdots
\label{eq7.3}\\
&=& \eqref{eq7.2}+\eqref{eq7.3} \;. \nonumber
\end{eqnarray}

From the upper-bound analysis in Section~\ref{sec:restriction upper
bd}, we have \be\label{eq7.4} |\eqref{eq7.3}|< R^{-\delta} \ee for
some $\delta>0$.

Next, we analyze \eqref {eq7.2}.

Introduce a graph $\mathcal G$ on $\mathcal E$ defined by
$$
\mathcal G =\{(m, n) \in\mathcal E, |m-n|< R^{1/5}\}.
$$
Let $\{\mathcal E_\alpha\}$ be the connected components of $\mathcal
G$.
\medskip

\begin{lemma}\label{Lemma7.5} For each $\alpha$, the set $\mathcal E_\alpha$ is contained in an affine plane.
\end{lemma}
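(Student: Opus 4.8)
\emph{Proof proposal.} The plan is to propagate, along the edges of $\mathcal{G}$, the Jarnik-type coplanarity statement already used in Section~\ref{sec:restriction upper bd}: if $C\subset RS^2$ is a spherical cap of size $r<cR^{1/4}$, then $\mathcal{E}\cap C$ lies in a single affine plane. Since the separation threshold $R^{1/5}$ sits well below $R^{1/4}$, even a chain of two edges of $\mathcal{G}$ keeps us inside the range where this theorem applies, and this is exactly what makes the propagation go through.

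First I would dispose of the cases $\#\mathcal{E}_\alpha\le 2$, which are trivial since two points lie on a line and hence in a plane; so assume $\#\mathcal{E}_\alpha\ge 3$. Because a line meets $RS^2$ in at most two points, any three distinct points of $\mathcal{E}$ are affinely independent, so a subset of $\mathcal{E}$ with at least three points lies in at most one affine plane. For $m\in\mathcal{E}_\alpha$ write $N(m)$ for $m$ together with its $\mathcal{G}$-neighbours; then $N(m)$ is contained in the ball of radius $R^{1/5}$ about $m$, hence in a cap of size $O(R^{1/5})$, so by the Jarnik-type theorem $N(m)$ is coplanar. When $\deg_{\mathcal{G}}(m)\ge 2$ one has $\#N(m)\ge 3$, so there is a unique affine plane $H_m\supseteq N(m)$.

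Next I would check that $H_m$ is constant along edges joining vertices of degree $\ge 2$: if $m\sim n$ with $\deg_{\mathcal{G}}(m),\deg_{\mathcal{G}}(n)\ge 2$, then $N(m)\cup N(n)$ lies in the ball of radius $2R^{1/5}$ about $m$, hence in a cap of size $O(R^{1/5})=o(R^{1/4})$, so by Jarnik $N(m)\cup N(n)$ is coplanar; since it contains the three independent points of $N(m)$ it lies in $H_m$, whence $N(n)\subset H_m$. As also $N(n)\subset H_n$ and $\#N(n)\ge 3$, uniqueness forces $H_m=H_n$. Connectedness then finishes the argument: since $\mathcal{E}_\alpha$ is connected with at least three vertices it contains a vertex $v_0$ with $\deg_{\mathcal{G}}(v_0)\ge 2$; set $H:=H_{v_0}$. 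For $v\in\mathcal{E}_\alpha$ choose a shortest $\mathcal{G}$-path $v_0=w_0\sim\cdots\sim w_k=v$; each $w_i$ with $i<k$ is either $v_0$ or an interior vertex of the path and hence has degree $\ge 2$, so the previous step gives $H_{w_0}=\cdots=H_{w_{k-1}}=H$ by induction, and then $v=w_k\in N(w_{k-1})\subset H$. Thus $\mathcal{E}_\alpha\subset H$.

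There is no serious obstacle here once the arithmetic coplanarity input is granted. The one point requiring care is the two-edge estimate — one must make sure that chaining two $\mathcal{G}$-edges of length $<R^{1/5}$ still produces a cap of size $\ll R^{1/4}$ — which is precisely where the choice of the threshold $R^{1/5}$, leaving room below $R^{1/4}$, is used; everything else is routine graph-theoretic bookkeeping.
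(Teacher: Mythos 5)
Your proof is correct and rests on exactly the same ingredients as the paper's: the Jarnik-type fact that all points of $\mathcal{E}$ in a cap of size $\ll R^{1/4}$ are coplanar (applied to sets of diameter $O(R^{1/5})$, safely below the threshold), the observation that three distinct points of the sphere span a unique plane, and propagation of coplanarity through the connected component of $\mathcal{G}$. The only difference is bookkeeping: you glue uniquely determined local planes $H_m$ along edges between degree-$\ge 2$ vertices and walk shortest paths from a degree-$\ge 2$ root, whereas the paper fixes the plane $H$ spanned by an initial nearby triple $\mathcal{F}_0$ and grows the component in layers, adjoining each new point via a four-point Jarnik step — the substance is the same.
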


\begin{proof}
%\noindent {\sl Proof.}
We may obviously assume $\#\mathcal E_\alpha\geq 3$ and hence there
is a subset $\mathcal F_0\subset\mathcal E_\alpha, \# \mathcal
F_0=3$ and diam$\,\mathcal F_0 <2R^{1/5}$.

Let $H=\langle\mathcal F_0\rangle$ be the affine plane spanned by
$\mathcal F_0$.

Write
$$
\mathcal E_\alpha=\bigcup_{j} \mathcal F_j
$$
where
$$
\mathcal F_{j+1} =\{m\in\mathcal E; \dist (m, \mathcal F_j)<
R^{1/5}\}.
$$

We show inductively that $\mathcal  F_j\subset H$ for each $j$.

For $j<R^{\frac 1{100}}$, dist\,$(\mathcal F_j, \mathcal F_0)<
jR^{1/5} \ll R^{1/4}$ and Jarnik's theorem implies that $\mathcal
F_j$ is coplanar. Hence $\mathcal F_j \subset H$. Next, assume
$j_0\geq R^{\frac 1{100}}, \mathcal F_{j_0} \subset H$ and $\mathcal
F_{j_0+1}\not= \mathcal F_{j_0}$. If $x_{j_0+1} \in \mathcal
F_{j_0+1}$, there are clearly $x\in\mathcal F_{j_0}$ and $y,
z\in\mathcal F_{j_0}$ satisfying
$$
|x_{j_0+1} -x|< R^{\frac 15}
$$
and
$$
x, y, z \text{ are distinct and diam\,$\{x, y, z\}\lesssim
R^{1/5}$}.
$$
(we use here that $\#\mathcal F_0=3$).

Since diam\,$\{x, y, z, x_{j_0+1}\}\lesssim R^{\frac 15}$, it
follows again from Jarnik that $x, y,z, x_{j_0+1}$ are coplanar and
hence
$$
x_{j_0+1} \in \langle x, y,z\rangle =H.
$$
This proves Lemma \ref{Lemma7.5}.
\end{proof}

Returning to \eqref{eq7.2}, it follows from definition of $\mathcal
G$ that
$$
\begin{aligned}
\eqref{eq7.2} &=\sum_\alpha\sum_{\substack{m, n \in \mathcal
E_\alpha\\ |m-n|<R^{1/5}}} a_m \bar a_n \int_\Sigma
e^{i(m-n)x}d\sigma
\\[6pt]
&=\sum_\alpha\int_\Sigma \Big|\sum_{m \in\mathcal E_\alpha} a_m \,
e^{imx}\Big|^2 d\sigma
\end{aligned}
$$
\be\label{eq7.6} +0\Big(\sum_\alpha \ \sum_{\substack{m, n\in
\mathcal E_\alpha\\ |m-n|\geq R^{1/5}}} \ \frac {|a_m| \,
|a_n|}{|m-n|}\Big). \ee The last term may be bounded by $R^{-\frac
1{31}}$, as seen as follows. Estimate by
$$
\eqref{eq7.6}< \sum_{2^k> R^{1/5}} 2^{-k} \Big(\max_C |\{(m, n)\in
C\times C; m, n\in \mathcal E_\alpha \text { for some
$\alpha$}\}|\Big)^{\frac 12}
$$
where the max is taken over all $2^k$-caps $C$. For $C$ an $r$-cap,
\eqref{eq6.6}, \eqref{eq6.9} imply that
$$
|\{\cdots\}|\leq |C\cap \mathcal E|. \max_\alpha|C\cap \mathcal
E_\alpha|\ll r^{1+2/3+\ve}
$$
hence the claim.

To prove Theorem~\ref{Theorem2x}, it  will therefore suffice to show
the following:

\begin{lemma}\label{Lemma7.7}
Let $\vp=\sum_{m\in\mathcal F} a_m e^{im.x}$, $\sum_{m\in\mathcal
F}|a_m|^2=1$, where $\mathcal F\subset\mathcal E$ consists of
coplanar points. Then \be\label{eq7.8} \int_\Sigma |\vp|^2 d\sigma>k
\ee where $k>0$ is independent of $E$.
\end{lemma}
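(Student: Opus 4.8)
\textit{Setup and dichotomy.} The plan is to exploit that, being coplanar, the frequency set $\mathcal F$ lies on the circle $C=H\cap RS^2$, where $H$ is an affine plane; let $\rho\le R$ be its radius (and we may assume $\#\mathcal F\ge 3$, the cases $\#\mathcal F\le 2$ being immediate). Writing $G=(\widehat\sigma(m-n))_{m,n\in\mathcal F}$ and $a=(a_m)$, we have $\int_\Sigma|\vp|^2d\sigma=\langle Ga,a\rangle$, so the task is to bound $G$ below as a quadratic form, uniformly in $R$. I would fix a threshold $\rho_0=\rho_0(\Sigma)$ and argue separately for small and large circles.

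\textit{Bounded circles $\rho\le\rho_0$.} Here $\#\mathcal F$ is bounded and every difference $m-n$ lies in the finite set $\mathbb Z^3\cap B(0,2\rho_0)$, so, depending only on the difference set of $\mathcal F$, the matrix $G$ is unitarily equivalent to one of \emph{finitely many} Hermitian positive semidefinite matrices. Each of these is in fact positive definite: $G$ is the Gram matrix of the exponentials $e^{im\cdot x}$, $m\in\mathcal F$, in $L^2(\Sigma,d\sigma)$, and a nontrivial combination $\sum_{m\in\mathcal F}c_me^{im\cdot x}$ cannot vanish on $\Sigma$. Indeed, after an orthogonal change of coordinates carrying the direction plane of $H$ to $\{y_3=0\}$ it becomes, up to the modulation $e^{im_*\cdot x}$, a trigonometric polynomial $P(y_1,y_2)$; since $\Sigma$ is curved, $\pi|_\Sigma$ is a submersion somewhere, so $\pi(\Sigma)\subset\{y_3=0\}$ contains a nonempty open set, and $P=0$ on $\Sigma$ would force $P=0$ there, hence $P\equiv 0$ by analyticity of trigonometric polynomials, i.e. $c\equiv 0$. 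A finite family of positive definite matrices has a uniform spectral gap, so $\langle Ga,a\rangle\ge k_0\|a\|_2^2$ in this range.

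\textit{Large circles $\rho>\rho_0$.} Now I would run the clustering argument from the two-dimensional case of the Main Theorem, using Jarnik's theorem on $C$ (equivalently, on a non-degenerate projection of $C$ to a coordinate plane, an ellipse of size $\sim\rho$): arcs of length $\le c\rho^{1/3}$ contain at most two lattice points. Joining two points of $\mathcal F$ when their arc-distance on $C$ is $<c\rho^{1/3}/10$ partitions $\mathcal F=\bigsqcup_j\mathcal F_j$ into $\ll\rho^{2/3}$ groups with $\#\mathcal F_j\le 2$, such that points in distinct groups have arc-distance $\ge c\rho^{1/3}/10$; groups at cyclic distance $k$ are then at chord distance $\gtrsim k\rho^{1/3}$. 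With $\vp_j=\sum_{m\in\mathcal F_j}a_me^{im\cdot x}$ and $c_j=\|a|_{\mathcal F_j}\|_2$, expand $\int_\Sigma|\vp|^2d\sigma=\sum_j\int_\Sigma|\vp_j|^2d\sigma+\sum_{i\ne j}\int_\Sigma\vp_i\overline{\vp_j}\,d\sigma$. The diagonal terms are bounded below by $\delta\|a\|_2^2$, using $\sup_{0\ne\xi\in\mathbb Z^3}|\widehat\sigma(\xi)|\le\operatorname{area}(\Sigma)-\delta$ (valid for some $\delta=\delta_\Sigma>0$ since $\Sigma$ is curved, hence in no plane, and $\widehat\sigma$ decays at infinity), exactly as \eqref{eq12} was used when $d=2$: trivially for singletons, and for pairs $\{m,n\}$ via $\int_\Sigma|\vp_j|^2 \ge(|a_m|^2+|a_n|^2)\operatorname{area}(\Sigma)-2|a_m||a_n||\widehat\sigma(m-n)|\ge\delta(|a_m|^2+|a_n|^2)$. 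For the off-diagonal terms I would use the stationary-phase decay $|\widehat\sigma(\xi)|\ll|\xi|^{-1}$ from \eqref{eq1.10}, the separation above, and a Schur test:
$$\Big|\sum_{i\ne j}\int_\Sigma\vp_i\overline{\vp_j}\,d\sigma\Big|\ll\sum_{i\ne j}\frac{c_ic_j}{\operatorname{dist}(\mathcal F_i,\mathcal F_j)}\ll\Big(\sup_i\sum_j\frac1{\operatorname{dist}(\mathcal F_i,\mathcal F_j)}\Big)\|a\|_2^2\ll\Big(\sum_{1\le k\lesssim\rho^{2/3}}\frac1{k\,\rho^{1/3}}\Big)\|a\|_2^2\ll\frac{\log\rho}{\rho^{1/3}}\,\|a\|_2^2,$$
which is $<\delta/2$ once $\rho_0$ is large enough. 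Hence $\int_\Sigma|\vp|^2d\sigma\ge\tfrac\delta2\|a\|_2^2$ here, and the lemma follows with $k=\min(k_0,\delta/2)$.

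\textit{Main difficulty.} The heart of the matter, and the point where coplanarity is indispensable, is the off-diagonal estimate: for a general (non-coplanar) subset of $RS^2$ the same scheme only yields a bound of size $R^\epsilon$, because an $r$-neighbourhood of a lattice point on the sphere can contain $\sim r^2$ lattice points and the resulting sum diverges; it is exactly the fact that $\mathcal F$ sits on a \emph{one}-dimensional circle — so that an $r$-neighbourhood of a point of $\mathcal F$ on $C$ meets $\mathcal F$ in only $\sim r$ points — that collapses the would-be divergence to $\log\rho/\rho^{1/3}\to 0$ and produces a genuine constant. The rest is bookkeeping: making the Jarnik-based clustering give quantitatively separated groups, and isolating the small-$\rho$ regime where $\log\rho/\rho^{1/3}$ is not yet small and one falls back on compactness and positive-definiteness.
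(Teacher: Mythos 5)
Your argument is correct, but it takes a route that differs from the paper's in its essential mechanism. The paper first \emph{projects}: writing $\vp(x)=e^{im_0\cdot x}\sum_m a_m e^{i(m-m_0)\cdot x}$ with $m-m_0$ in the direction plane $H_0$ of $H$, it pushes $\sigma$ forward under the orthogonal projection $\pi:\Sigma\to H_0$ and uses curvature to dominate $\pi[\sigma]$ from below by Lebesgue measure on a disc $B_\rho\subset H_0$ of radius independent of $H_0$; the whole problem then lives in the plane, the relevant kernel is the Fourier transform of a fixed smooth bump, with decay $|m-n|^{-10}$, so after the same Jarnik pairing (clusters of size $\leq 2$, $\gtrsim r^{1/3}$-separated) the off-diagonal terms are trivially $O(C(\rho)/r_0)$ and the diagonal terms are handled exactly as in the $d=2$ case with $\hat\eta$ in place of \eqref{eq12}. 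You never project: you keep the Gram matrix $(\hat\sigma(m-n))$ on $\Sigma$ itself, so your kernel only decays like $|m-n|^{-1}$ (the main term of \eqref{eq6.4}), and you must compensate with the sharper Schur-type summation along the circle (two clusters at each cyclic distance $k$, chord separation $\gtrsim k\rho^{1/3}$), which is exactly where the coplanarity buys you the convergent $\log\rho/\rho^{1/3}$. The trade-off: the paper needs the uniform pushforward-domination statement but then only elementary Fourier facts about a bump; you avoid that lemma but instead need two genuinely three-dimensional inputs, namely the analogue of \eqref{eq12} for the surface measure ($\sup_{0\neq\xi\in\mathbb Z^3}|\hat\sigma(\xi)|\leq\hat\sigma(0)-\delta$, which holds by the same strict-inequality-plus-decay argument since no curved analytic patch lies in a plane) and the uniform $|\xi|^{-1}$ decay of $\hat\sigma$; and a weak form of the paper's projection observation reappears anyway in your small-circle case, where positive definiteness of the finitely many Gram matrices rests on $\pi(\Sigma)$ having nonempty interior in $H_0$ (which, for possibly negative curvature, is best justified by noting the Gauss map is a local diffeomorphism, so its image cannot lie in the great circle orthogonal to $H_0^{\perp}$). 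Both proofs share the dichotomy in the radius of the circle $H\cap RS^2$ and the Jarnik clustering; the paper's version is slightly more robust (rapid decay makes the off-diagonal sum insensitive to how the clusters are distributed), while yours is more self-contained on the Fourier side and makes transparent, as you note, exactly where one-dimensionality of the frequency set is used.
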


\begin{proof}
Let $H=\langle\mathcal F\rangle$ be the plane containing $\mathcal
F$ and $\pi$ the orthogonal projection on $H_0$=plane parallel to
$H$ through 0.
 Clearly, fixing any element $m_0\in\mathcal F$, we have
$$
\int_\Sigma \Big|\sum_{m\in\mathcal F} a_m e^{imx}\Big|^2d\sigma=\int_\Sigma \Big|\sum_{m\in\mathcal F} a_m e^{i(m-m_0).x}\Big|^2 d\sigma\\
$$\be
\label{eq7.9}
 = \int_\Sigma \Big|\sum_{m\in\mathcal F} a_m e^{i(m-m_0).\pi(x)}\Big|^2 d\sigma.
\ee Let $\pi[\sigma]$ be the image measure of $\sigma$ under the map
$\pi\big|_\Sigma:\Sigma\to H_0$. Since $\Sigma$ has non-vanishing
curvature, there is a disc $B_\rho\subset H_0$ ($\rho$-independent
of $H_0$) such that \be\label{eq 7.10} \pi[\sigma] \geq \mu_{H_0}
|_{B_\rho} \ee where $\mu_{H_0}$ is a Lebesque measure on $H_0$.
Hence \be\label{eq7.11} \eqref{eq7.9}\geq
\int_{B_\rho}\Big|\sum_{m\in\mathcal F} a_m e^{i(m-m_0).y}\Big|^2
dy. \ee Since $\mathcal F\subset RS^2 \cap H, \mathcal F_0=\mathcal
F-m_0$ lies on a translate of some circle \hfill\break $\{x\in H_0;
|x|= r\}, r\leq R$.

Let $r_0$ be sufficiently large (to be specified later).

We distinguish two cases.
\medskip

\noindent {\bf Case 1:} $r<r_0$

Since $\Sigma_{m\in\mathcal F} a_m e^{i(m-m_0)\cdot y}$ is a nonzero
trigonometric polynomial with frequencies  $|m-m_0|< r_0$, it
follows that \be\label{eq7.12} \eqref{eq7.11}> C(\rho, r_0). \ee

\medskip
\noindent {\bf Case 2.} $ r\geq r_0. $

By Jarnik's theorem,
$$
\mathcal F_0=\bigcup_\alpha \mathcal F_\alpha
$$
where $\# \mathcal F_\alpha\leq 2$ and dist\,$(\mathcal F_\alpha,
\mathcal F_\beta)\gtrsim r^{1/3}$ for $\alpha\not=\beta$. Let $\eta$
be a smooth bumpfunction, supp $\eta\subset B_\rho$. Then
$$
\eqref{eq7.11} \geq \int\Big|\sum_{m\in\mathcal F} a_m e^{i(m-m_0).y} \Big|^2 \eta(y) dy\\
$$
\be\label{eq7.13} =\sum_\alpha\int\Big|\sum_{m\in\mathcal F_\alpha}
a_m e^{i(m-m_0).y}\Big|^2 \eta(y) dy \ee \be\label{eq7.14}
+\sum_{\substack{\alpha\not=\beta\\ m\in\mathcal F_{\alpha},
n\in\mathcal F_\beta}} a_m\bar a_n\int e^{i(m-n).y} \eta(y)dy \ee
and \be\label{eq7.15} |\eqref{eq7.14}|\leq \sum_{\substack{m, n\in
\mathcal F\\ |m-n| \gtrsim r^{1/3}}} |a_m| \, |a_n|
\frac{C(\rho)}{|m-n|^{10}} < \frac{C(\rho)}{r_0}. \ee

Since $\# \mathcal F_\alpha \leq 2$, arguing as in the proof of the
case $d=2$ (see the Introduction), we have  for each $\alpha$
\be\label{eq7.16} \int\Big|\sum_{m\in \mathcal F_\alpha} a_m
e^{i(m-m_0).y}\Big|^2 \eta(y) dy> c(\rho)\sum_{m\in\mathcal
F_\alpha}|a_m|^2 \ee
%\begin{verbatim}   replace |a| by |a|^2 or sum of squares...
%\end{verbatim}
and thus \be\label{eq7.17} \eqref{eq7.13} >c(\rho)
\Big(\sum_{m\in\mathcal F}|a_m|^2\Big) =c(\rho). \ee From
\eqref{eq7.13}--\eqref{eq7.17}
$$
\eqref{eq7.11}> c(\rho)-\frac{C(\rho)}{r_0}>\frac 12 c(\rho)
$$
for appropriate $r_0$.
\end{proof}
This concludes the proof of Theorem~\ref{Theorem2x}.

\section{Intersection of Nodal Sets with Submanifolds}\label{sec:intersection}

We start by recalling the following result from \cite{B-R1}.

\begin{theorem}\label{TheoremB-R1}\cite{B-R1}.

Let $\Sigma\in\mathbb T^d$ be a real analytic, codimension one,
hypersurface with nowhere-vanishing Gauss curvature. Then there is
some $E_\Sigma>0$ so that if $E>E_\Sigma$, then $\Sigma$ cannot be
part of the nodal set of any eigenfunction $\vp_E$ with eigenvalue
$E$.
\end{theorem}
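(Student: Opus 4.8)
For $d=2,3$ the assertion is already contained in the Main Theorem: since $\Sigma$ has nowhere-vanishing Gauss curvature, the lower bound in \eqref{eq4} gives $\Vert\vp_E\Vert_{L^2(\Sigma)}\geq c\Vert\vp_E\Vert_2>0$ for $E$ large, so no such $\vp_E$ vanishes on $\Sigma$. The point of the theorem is thus its validity in all dimensions, and the plan is to follow \cite{B-R1}. Suppose for contradiction that $\vp_E=\sum_{n\in\mathcal E}a_ne(n\cdot x)$, with $\mathcal E=\sqrt E\,S^{d-1}\cap\Z^d$, $R:=\sqrt E$, $\sum_n|a_n|^2=1$, vanishes identically on $\Sigma$. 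Fix $p\in\Sigma$, a small coordinate patch about $p$ as in \S\ref{sec:2}, and a nonnegative cutoff $\chi$ supported in that patch with $\chi(p)=1$; put $\mu=\chi\,d\sigma$, so $c_0:=\widehat\mu(0)=\int\chi\,d\sigma>0$. Since $\vp_E$ vanishes on $\supp\mu\subset\Sigma$ we have $\vp_E\mu\equiv0$, hence $\widehat{\vp_E\mu}\equiv0$; identifying the patch with a piece of $\R^d$ this reads $\sum_{n\in\mathcal E}a_n\widehat\mu(\xi-n)=0$ for all $\xi$, and evaluating at $\xi=m\in\mathcal E$ gives
\[ c_0\,a_m=-\sum_{n\in\mathcal E\setminus\{m\}}a_n\,\widehat\mu(m-n). \]

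Next I would insert the stationary-phase description of $\widehat\mu$ from \S\ref{sec:2}: non-vanishing curvature yields $|\widehat\mu(\xi)|\ll(1+|\xi|)^{-(d-1)/2}$, together with the fact that $\widehat\mu(\xi)=O_N(|\xi|^{-N})$ for every $N$ unless $\xi/|\xi|$ lies within $O(\diam\supp\chi)$ of a normal direction to $\Sigma$ at a point of $\supp\chi$; for $\supp\chi$ small these directions form a thin double cone $\Gamma$ about the line $\R\,\nu(p)$. Discarding the off-cone terms (which are negligible) the identity above forces $c_0|a_m|\ll\sum|a_n|\,|m-n|^{-(d-1)/2}$, the sum over $n\in\mathcal E\setminus\{m\}$ with $m-n\in\Gamma$; geometrically such $n$ fill a narrow cone issuing from $m$ in the direction $\nu(p)$, whose intersection with $R\,S^{d-1}$ is concentrated near $m$ and near the reflected point $m_*=m-2(m\cdot\nu(p))\nu(p)$, the image of $m$ under reflection in $\nu(p)^{\perp}$. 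For the generic $m$, i.e.\ those with $|m\cdot\nu(p)|\gtrsim R$ so that $\nu(p)$ is transverse to the sphere at $m$, the near-$m$ part is empty and $|m-n|\sim R$ for the surviving $n$; the remaining $m$ lie in a thin equatorial band and are disposed of by rerunning the argument with finitely many further base points $p'\in\Sigma$ whose normals are transverse to them, which exist because the Gauss image of $\Sigma$ is open. Hence, for all $m$,
\[ |a_m|\ll R^{-(d-1)/2}\sum_{n\in\mathcal E\cap B(m_*,\,cR)}|a_n|. \]

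Finally, squaring this, summing over $m$, applying Cauchy--Schwarz, and using that $m\mapsto m_*$ is a measure-preserving involution of $R\,S^{d-1}$ reduces everything to counting lattice points of $\mathcal E$ in spherical caps; here one feeds in the bounds on $F_d(R,r)$ from \S\ref{sec:lattice pts} (and, for $d=3$, Lemma~\ref{IntLemma5.6}) to conclude $\sum_m|a_m|^2=o(1)$ as $E\to\infty$, contradicting $\sum_n|a_n|^2=1$. I expect this last step to be the main obstacle: the size bound $|\widehat\mu(m-n)|\ll R^{-(d-1)/2}$ is by itself lossy by a power of $R$ once $d\geq3$, so to close the estimate one must either specialize to $d\leq3$ and borrow the oscillatory analysis of \S\ref{sec:5}, or, in general, extract genuine cancellation from the phase $e^{i\psi(m-n)}$ carried by $\widehat\mu$ and reduce to lower-dimensional strata via the coplanarity of lattice points in small caps. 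The curvature hypothesis is essential throughout: for a flat $\Sigma$ the cone $\Gamma$ degenerates to a single ray and the conclusion fails, as the example $\vp(x)=\sin(2\pi n_1x_1)$ on the hyperplane $x_1=0$ shows.
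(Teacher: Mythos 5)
Your opening observation is fine: for $d=2,3$ the statement follows at once from the lower bound in \eqref{eq4}, and the paper itself makes that remark right after the Main Theorem. But the theorem is quoted from \cite{B-R1} precisely because it holds in \emph{arbitrary} dimension $d$ (no proof is given in this paper), and it is for general $d$ that your scheme breaks down, at the step you yourself flag. From $c_0|a_m|\ll R^{-(d-1)/2}\sum_{n\in\mathcal E\cap B(m_*,cR)}|a_n|$, Cauchy--Schwarz and summation over $m$ (using that the reflection $m\mapsto m_*$ is an isometry) give $\sum_m|a_m|^2\ll R^{-(d-1)}\,F_d(R,cR)^2\,\sum_n|a_n|^2$. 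The caps occurring here have radius $cR$, a fixed fraction of $R$ determined by the aperture of the cone $\Gamma$, so $F_d(R,cR)$ is comparable to $\#\mathcal E\sim R^{d-2+\epsilon}$ and none of the cap bounds of Section~\ref{sec:lattice pts}, Lemma~\ref{IntLemma5.6}, or the Appendix improve on this. The right-hand side is therefore $\ll R^{d-3+\epsilon}\sum_n|a_n|^2$: the argument closes only for $d=2$, misses by $R^{\epsilon}$ already at $d=3$, and loses a genuine power of $R$ for every $d\geq4$. So the assertion that feeding in the $F_d(R,r)$ bounds yields $\sum_m|a_m|^2=o(1)$ is not correct, and the fallback you suggest --- extracting cancellation from the phase $e^{i\psi(m-n)}$ against arbitrary coefficients in dimension $d\geq 4$ --- is exactly what the paper declares open (Conjecture~\ref{Conjecture}, with Lemma~\ref{Lemma10.8} showing that purely distributional properties of $\mathcal E$ cannot suffice when $d$ is large).

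The actual proof in \cite{B-R1} is different in kind and avoids this loss entirely: it exploits the real-analyticity of $\Sigma$ and $\vp$ by complexifying the restriction $\vp\circ p$ and using the curvature (openness of the Gauss image) to isolate a dominant frequency at suitable complex points; this is what the paper alludes to around \eqref{eq9.21}, where that argument is stated to give the quantitative lower bound $\max_{a\in\tilde Q}|(\vp\circ p)^\sim(a)|>E^{-C}$ in every dimension, from which non-vanishing on $\Sigma$ is immediate. No $L^2$ restriction estimate, Fourier-decay self-consistency identity, or lattice-point cap count enters there. As it stands, your proposal proves only the cases $d=2,3$ already covered by the Main Theorem, and its mechanism for general $d$ cannot be completed along the lines indicated.
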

The reader is referred to \cite {B-R1} for a discussion of this
phenomenon. Our aim here is to prove a quantitative version. Denote
$h_s(A)$ the $s$-dimensional  Hausdorff measure of the set $A$.

\begin{theorem}\label{Theorem3x}

Let $\Sigma$ be as above, $E>E_\Sigma $ and $\vp_E$ an eigenfunction
of $\mathbb T^d$ with eigenvalue $E$. Let $N$ denote the nodal set
of $\vp_E$. Then \be\label{eq9.1} h_{d-2}(N\cap\Sigma)< C_\Sigma
\sqrt E \ee
%where $C=C_\Sigma$.
\end{theorem}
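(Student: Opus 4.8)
The plan is to realize $N\cap\Sigma$ as the zero set of the real-analytic function $f:=\vp_E|_\Sigma$ on the $(d-1)$-dimensional manifold $\Sigma$, to control its holomorphic extension, and to count zeros. First I would apply Theorem~\ref{TheoremB-R1} to each connected component of $\Sigma$ (each itself a real-analytic hypersurface with nowhere-vanishing curvature): this gives $\vp_E|_\Sigma\not\equiv 0$ on every component, so $N\cap\Sigma=f^{-1}(0)$ is a proper real-analytic subvariety of $\Sigma$ of dimension at most $d-2$ and $h_{d-2}(N\cap\Sigma)$ is finite; it then remains to bound it by $C_\Sigma\sqrt E$. Normalize $\Vert\vp_E\Vert_2=1$ and set $R=\sqrt E$.

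Next I would complexify. Cover $\Sigma$ by $N_\Sigma=O_\Sigma(1)$ real-analytic charts $p_j:U_j\to\Sigma$, with $U_j\subset\mathbb R^{d-1}$ bounded, each extending holomorphically to a complex polydisc $D_j\Subset\mathbb C^{d-1}$ whose image lies in a fixed complex $\delta$-neighborhood of $\mathbb T^d$ (with $\delta,N_\Sigma$ depending only on $\Sigma$). Since $\vp_E=\sum_{|n|^2=E}a_n e^{2\pi i n\cdot z}$ is entire and $|\vp_E(x+iy)|\le|\mathcal E|^{1/2}e^{2\pi R|y|}\le e^{C_\Sigma R}$ for $|y|\le\delta$ (using $\sum|a_n|^2=1$ and $|\mathcal E|\ll R^{d-2+\ve}$), the functions $F_j:=\vp_E\circ p_j$ are holomorphic on $D_j$ with $\sup_{D_j}|F_j|\le e^{C_\Sigma R}$. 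This quantifies the fact that $\vp_E$ has ``frequency $\lesssim R$''.

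Now I would count zeros chart by chart. Fix $V_j\Subset U_j$ of small diameter with $\bigcup_j p_j(V_j)=\Sigma$. By the Crofton integral-geometric formula, $h_{d-2}\big(F_j^{-1}(0)\cap V_j\big)\lesssim_d\int\#\big(F_j^{-1}(0)\cap\ell\cap V_j\big)\,d\mu(\ell)$, the integral being over affine lines $\ell\subset\mathbb R^{d-1}$ with kinematic measure $\mu$ (finite on the lines meeting the bounded $V_j$). For $\mu$-almost every $\ell$ (all lines not contained in the variety $F_j^{-1}(0)$), the one-variable function $g:=F_j|_\ell$ is analytic, $\not\equiv 0$, extends holomorphically to a complex disc of fixed radius with $\sup|g|\le e^{C_\Sigma R}$, and, centering the disc at a point of $\ell\cap V_j$ where $|g|$ is maximal, Jensen's formula gives $\#\big(g^{-1}(0)\cap\ell\cap V_j\big)\le C\big(R+\log\frac1{\max_{\ell\cap V_j}|F_j|}\big)$. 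Integrating in $\ell$, bounding $\max_{\ell\cap V_j}|F_j|$ from below by $|F_j|$ at the midpoint of $\ell\cap V_j$ and changing variables, then summing over the $N_\Sigma$ charts (absorbing the bounded bi-Lipschitz constants of the $p_j$), we obtain
\[
h_{d-2}(N\cap\Sigma) \le C_\Sigma R + C_\Sigma\int_\Sigma\log^+\frac{1}{|\vp_E|}\,d\sigma .
\]
For $d=2$ the Crofton step is vacuous and this reduces to the classical Jensen count of zeros of the exponential sum obtained by restricting $\vp_E$ to an analytic parametrization of the curve $\Sigma$.

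It remains, and this is the main obstacle, to show that $\int_\Sigma\log^+\frac1{|\vp_E|}\,d\sigma\ll_\Sigma R$, i.e. that $\vp_E$ (normalized in $L^2(\mathbb T^d)$) is not super-exponentially small on $\Sigma$ in this averaged sense. This is a quantitative strengthening of Theorem~\ref{TheoremB-R1}, which only asserts $\vp_E|_\Sigma\not\equiv 0$. The plan here is to re-run the argument of \cite{B-R1} while keeping track of magnitudes: that argument uses the curvature of $\Sigma$ together with the arithmetic of $\mathcal E=\sqrt E\,S^{d-1}\cap\mathbb Z^d$ to isolate a nonvanishing leading contribution of controlled order inside $\vp_E|_\Sigma$, and its size, combined with the exponential-type bound of the second step, pins the relevant doubling index of $\vp_E|_\Sigma$ at $O(R)$. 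Granting this lower bound, the displayed inequality yields $h_{d-2}(N\cap\Sigma)\le C_\Sigma\sqrt E$. The remaining ingredients are the standard complexification, Jensen and Crofton package for counting zeros of the restriction of a band-limited function to a fixed analytic hypersurface; the single genuinely arithmetic point, hence the crux, is the non-smallness lower bound just described.
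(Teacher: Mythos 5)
Your skeleton is exactly the one the paper uses: complexify $\vp_E$ to get the bound $e^{C\sqrt E}$ on a fixed complex neighborhood, count zeros on lines by Jensen, and assemble with Crofton — this is precisely the content of Lemmas~\ref{Lemma9.3}--\ref{Lemma9.14} and yields the paper's inequality \eqref{eq9.19}, which is essentially your displayed bound. The problem is the step you explicitly ``grant'': a lower bound showing $\vp_E$ is not super-exponentially small on $\Sigma$. That is where all the content of the theorem sits, and ``re-run the argument of \cite{B-R1} keeping track of magnitudes'' does not supply it; as written your proposal proves nothing beyond the reduction. The paper closes this step concretely, in two ways: for $d=2,3$ it invokes its own restriction lower bound, Theorem~\ref{Theorem2x} (the Main Theorem), which gives $\max_{x\in\Sigma}|\vp_E(x)|\gtrsim c_\Sigma$, i.e.\ \eqref{eq9.20}, so the offending term is $O(\log E)$; for general $d$ it records that the complexified argument of \cite{B-R1} already yields the quantitative bound \eqref{eq9.21}, $\max_{a\in\tilde Q}|(\vp\circ p)^\sim(a)|>E^{-C}$, at some point $a$ of the \emph{complex} neighborhood. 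Note also that the paper's Lemma~\ref{Lemma9.9} (subharmonicity of $\log|\tilde f|$ iterated coordinate by coordinate) shows that a lower bound at a single complex point already controls $\int|\log|f||$ over the real chart; this is weaker than what you set out to prove (a bound on $\int_\Sigma\log^+(1/|\vp_E|)\,d\sigma$ coming from the real restriction), and it matters, because in general dimension \eqref{eq9.21} at a complex point is all that the \cite{B-R1} machinery gives.

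A smaller technical point: in your Crofton step, bounding $\max_{\ell\cap V_j}|F_j|$ from below by the value of $|F_j|$ at the midpoint of the chord and then ``changing variables'' does not go through as stated — for each fixed direction the midpoints of the chords form a null set in $V_j$, so the integral over lines of $\log^+(1/|F_j(\mathrm{mid})|)$ is not dominated by $\int_{V_j}\log^+(1/|F_j|)$. The paper's version (proof of Lemma~\ref{Lemma9.14}) avoids this: the zero count on a line is bounded by $c\min_{x\in I}|\log|g(x)||+C\log M$, the minimum over the chord is at most a constant times the chord integral of $|\log|f||$ (the chord in the enlarged cube has length bounded below), and then a second application of Crofton/Fubini converts the integral over lines into the volume integral, which Lemma~\ref{Lemma9.9} controls by the single-point complex lower bound plus $\log M$. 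With that fix, and with \eqref{eq9.20} (for $d=2,3$) or \eqref{eq9.21} (general $d$) supplying the missing lower bound, your outline becomes the paper's proof.
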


Recall at this point also the Donnelly-Fefferman theorem, stating
that if $M$ is a real-analytic $d$-dimensional manifold and $\vp_E$
an eigenfunction \hfill\break $-\Delta{\vp}= E\vp, \Delta$ the
Lalacian of $M$, then the nodal set $N$ of $\vp_E$ satisfies
\be\label{eq9.2} h_{d-1}(N)< C\sqrt E \ee where $C=C(M)$. See \cite
{D-F}.

As in \cite{D-F}, we will establish \eqref{eq9.1} combining Jensen's
inequality and Crofton's formula. Of course, an additional
ingredient is needed, namely some type of lower bound on the
restriction $\vp|_\Sigma$.

First recall some basic facts on one-variable analytic fuctions.

\begin{lemma}\label{Lemma9.3}
Let $f$ be a bounded analytic function on the unit disc
$D=\{|z|<1\}$. Let $a\in D_{\frac 12} =\{|z|<1\}$ such that
$f(a)\neq 0$ and denote $\nu(D_{\frac 12})$ the number of zeros of
$f$ on $D_{\frac 12}$. Then \be \label{eq9.4} \nu(D_{\frac 12})\leq
C(\big|\log|f(a)|\big|+\log[\sup_{z\in D}|f(z)|]). \ee
\end{lemma}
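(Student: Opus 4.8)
The statement to prove is Lemma~\ref{Lemma9.3}: for $f$ bounded analytic on $D=\{|z|<1\}$, nonvanishing at some $a \in D_{1/2}$, the number $\nu(D_{1/2})$ of zeros of $f$ in $D_{1/2}$ is $\ll |\log|f(a)|| + \log\sup_{D}|f|$. The plan is to use Jensen's formula, the standard workhorse for counting zeros of analytic functions via growth. First I would normalize: replace $f$ by $f/\sup_D|f|$ so that $\|f\|_{\infty} \leq 1$ on $D$; this changes $\log|f(a)|$ by the additive constant $\log\sup_D|f|$, so it suffices to prove $\nu(D_{1/2}) \ll |\log|f(a)||$ under the normalization $|f|\leq 1$.

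Next I would recenter at $a$. Since $|a|<1/2$, there is a Möbius automorphism $\phi$ of $D$ sending $0$ to $a$; the composition $g = f\circ\phi$ is analytic on $D$ with $|g|\leq 1$ and $g(0)=f(a)\neq 0$. Under $\phi$, the disc $D_{1/2}$ is contained in a disc $D_\rho = \{|z|<\rho\}$ for some fixed $\rho<1$ depending only on the bound $|a|<1/2$ (e.g. $\rho = 3/4$ works), and conversely $\phi(D_\rho)$ covers $D_{1/2}$; so $\nu(D_{1/2}) \leq$ (number of zeros of $g$ in $D_\rho$). Now apply Jensen's formula to $g$ on the disc of radius $r$ for some fixed $\rho < r < 1$: if $z_1,\dots,z_\nu$ are the zeros of $g$ in $|z|\leq r$ (counted with multiplicity), then
\[
\sum_{j=1}^{\nu}\log\frac{r}{|z_j|} = \frac{1}{2\pi}\int_0^{2\pi}\log|g(re^{i\theta})|\,d\theta - \log|g(0)| \leq -\log|g(0)| = \bigl|\log|f(a)|\bigr|,
\]
using $|g|\leq 1$ on the circle. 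Restricting the left sum to the zeros lying in $|z|\leq \rho$ (throwing away the others, which contribute nonnegatively), each such term is at least $\log(r/\rho) > 0$, a fixed positive constant. Hence the number of zeros of $g$ in $D_\rho$ is at most $\frac{1}{\log(r/\rho)}\bigl|\log|f(a)|\bigr|$, which gives the claim with $C = 1/\log(r/\rho)$, an absolute constant once $r,\rho$ are pinned down.

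This argument is essentially routine; the only points requiring a little care are (i) tracking that the constant $C$ stays absolute — it depends only on the fixed geometry $|a|<1/2$ and the fixed choice of radii $\rho,r$, not on $f$ or $a$ — and (ii) making sure the recentering via the Möbius map really does sandwich $D_{1/2}$ between two concentric discs of fixed radii strictly inside $D$, which is a standard estimate on the distortion of $\phi_a(z) = (z+a)/(1+\bar a z)$ for $|a|\leq 1/2$. There is no substantive obstacle; the role of the lemma is as the first step of the Donnelly–Fefferman-style argument (Jensen plus Crofton) that will be assembled in the following pages to prove Theorem~\ref{Theorem3x}, and the real work will be in producing the lower bound for $\varphi|_\Sigma$ that feeds the quantity $\log|f(a)|$.
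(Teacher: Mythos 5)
Your proof is correct and takes essentially the same route as the paper, which disposes of this lemma in one line by invoking Jensen's theorem — your normalization, M\"obius recentering at $a$, and Jensen's formula on a slightly larger disc is precisely the standard implementation of that remark, with the constant $C$ absolute as required. One tiny numerical point: the parenthetical claim that $\rho=3/4$ works is slightly off, since the pseudo-hyperbolic distance $|w-a|/|1-\bar a w|$ between two points of $D_{1/2}$ can approach $4/5$; but any fixed $\rho<1$ (e.g. $\rho=4/5$, then $r=9/10$) serves, so the argument is unaffected.
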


Hence

\begin{lemma}\label{Lemma9.5}
Let $f\not=0$ be a real analytic function on $\big[-\frac 12, \frac
12]$ with bounded analytic extension to $D$. Let $\nu$ be the number
of zeros of $f$. Then \be\label{eq9.6} \nu\leq C\Big(
\min_{x\in[-\frac 12 \frac 12]}\Big|\log |f(x)|\Big|
+\log\Big[\sup_{z\in D}|\tilde f(z)\Big]\Big). \ee

\medskip
\end{lemma}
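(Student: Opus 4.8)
The plan is to transfer the statement of Lemma \ref{Lemma9.3}, which is a disc statement, to the interval $[-\tfrac12,\tfrac12]$ by a compactness/covering argument. First I would note that the real zeros of $f$ on $[-\tfrac12,\tfrac12]$ are among the complex zeros of the analytic extension $\tilde f$ in any disc containing that segment; so it suffices to bound the number of zeros of $\tilde f$ in a suitable neighborhood of the segment. Since $\tilde f$ is assumed to extend boundedly to $D=\{|z|<1\}$ and $f\not\equiv 0$, the segment $[-\tfrac12,\tfrac12]$ can be covered by finitely many discs $D(c_j,\tfrac14)$, $j=1,\dots,M$, with $M$ an absolute constant, each of which together with its double $D(c_j,\tfrac12)$ sits inside $D$. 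On each such disc I would like to apply Lemma \ref{Lemma9.3} (rescaled from $D,D_{1/2}$ to $D(c_j,\tfrac12),D(c_j,\tfrac14)$, which only changes the absolute constant), and then sum over $j$.

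The one point that needs care is the choice of the center point $a=a_j$ in each disc $D(c_j,\tfrac14)$ at which one controls $|\tilde f(a_j)|$ from below: Lemma \ref{Lemma9.3} requires $f(a_j)\neq 0$ and produces the term $\bigl|\log|f(a_j)|\bigr|$. I would take $a_j$ to be a real point in $[-\tfrac12,\tfrac12]$ lying in $D(c_j,\tfrac14)$ where $|f|$ is not too small — concretely, one can pick the point(s) realizing $\min_{x\in[-\frac12,\frac12]}\bigl|\log|f(x)|\bigr|$, or more simply observe that since $f$ is real analytic and not identically zero, there is at least one point $x_0\in[-\tfrac12,\tfrac12]$ with $|f(x_0)|$ as large as the right-hand side allows, and then use analyticity (a Harnack-type or three-circles bound for $\log|\tilde f|$) to propagate a lower bound on $|\tilde f|$ from $x_0$ into each disc $D(c_j,\tfrac14)$ at the cost of an additive multiple of $\log[\sup_D|\tilde f|]$. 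This gives, on each $j$,
\[
\nu\bigl(D(c_j,\tfrac14)\bigr)\leq C\Bigl(\min_{x\in[-\frac12,\frac12]}\bigl|\log|f(x)|\bigr|+\log\bigl[\sup_{z\in D}|\tilde f(z)|\bigr]\Bigr),
\]
and summing over the $M=O(1)$ discs yields \eqref{eq9.6}.

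The main obstacle, as indicated above, is the propagation-of-lower-bound step: a priori the minimum of $\bigl|\log|f(x)|\bigr|$ over the segment could be attained far from a given $c_j$, so one must argue that $|\tilde f|$ cannot be uniformly tiny on an entire sub-disc without forcing $\bigl|\log|f|\bigr|$ to be large somewhere else on the segment too, which is precisely where one invokes the subharmonicity of $\log|\tilde f|$ together with the upper bound $\sup_D|\tilde f|$. Once that is in hand, everything else is the routine finite covering and rescaling of Lemma \ref{Lemma9.3}.
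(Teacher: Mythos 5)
Your argument can be made to work, but it is considerably more roundabout than what is needed, and the paper's own deduction (which is just the word ``Hence'' after Lemma \ref{Lemma9.3}) is a one-liner that your covering scheme obscures. The key point you miss is that Lemma \ref{Lemma9.3} already permits the reference point $a$ to be \emph{any} point of $D_{1/2}$ with $f(a)\neq 0$, and the whole segment $[-\frac12,\frac12]$ already sits inside $\overline{D_{1/2}}$ (replace $D_{1/2}$ by $D_{3/4}$ to avoid the endpoint technicality; the Jensen/Green's-function proof of Lemma \ref{Lemma9.3} works verbatim for any compactly contained subdisc). So one simply takes $a=x_0$ to be the point of $[-\frac12,\frac12]$ realizing $\min_x\big|\log|f(x)|\big|$ --- note $f(x_0)\neq0$ since $f\not\equiv0$ and $\big|\log|f|\big|=+\infty$ at a zero --- and applies Lemma \ref{Lemma9.3} once: the real zeros of $f$ on the segment are among the zeros of $\tilde f$ in $D_{3/4}$, giving \eqref{eq9.6} immediately. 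No covering by discs $D(c_j,\frac14)$ and no propagation of lower bounds is required. Moreover, the propagation step you flag as ``the main obstacle'' is genuinely delicate as you have sketched it: a pointwise Harnack-type bound for $\log|\tilde f|$ fails at and near the zeros of $\tilde f$, so to produce a good point $a_j$ in each disc you would in effect have to first prove the averaged statement $\int_{-1/2}^{1/2}\big|\log|f(x)|\big|\,dx\leq C\big(\big|\log|\tilde f(x_0)|\big|+\log M\big)$ and then select $a_j$ by Chebyshev --- but that averaged statement is precisely Lemma \ref{Lemma9.7}, which the paper states \emph{after} Lemma \ref{Lemma9.5} and proves by subharmonicity. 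In short: your route is circular in its dependencies unless you prove Lemma \ref{Lemma9.7} first, and in any case the extra machinery buys nothing over the direct application of Lemma \ref{Lemma9.3} at the minimizing point.
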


\begin{lemma}\label{Lemma9.7}
Let $f$ be as in Lemma \ref{Lemma9.5}. Then \be\label{eq9.8}
\int^{\frac 12}_{-\frac 12} \big|\log |f(x)|\big|dx \leq C
\min_{a\in D_{\frac 12}} \big|\log|\tilde f(a)|\big|+\log
\Big[\sup_{z\in D} |\tilde f(z)|+1\Big]. \ee
\end{lemma}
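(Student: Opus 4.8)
The plan is to bound $\int_{-1/2}^{1/2}\bigl|\log|f(x)|\bigr|\,dx$ by peeling off the zeros of $\tilde f$ lying near the segment $[-\tfrac12,\tfrac12]$ via a Blaschke product, and estimating the remaining zero-free factor by a Harnack-type argument, exactly in the spirit of the Donnelly--Fefferman proof of \eqref{eq9.2}. First I would set $M=\sup_{z\in D}|\tilde f(z)|$ and $\mu=\min_{a\in D_{1/2}}\bigl|\log|\tilde f(a)|\bigr|$, and fix a point $a_0\in D_{1/2}$ at which this minimum is attained; since $\tilde f\not\equiv 0$, its zero set is discrete and $|\log|\tilde f||$ is continuous (hence finite) away from it, so such an $a_0$ exists with $\tilde f(a_0)\neq 0$. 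The goal is then to show $\int_{-1/2}^{1/2}\bigl|\log|f(x)|\bigr|\,dx\lesssim \mu+\log(M+1)$, which is \eqref{eq9.8} (the term $\log[\sup|\tilde f|+1]$ is exactly what handles the case $M<1$, in which $\mu\geq|\log M|$ automatically).

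Next I would factor $\tilde f=B\cdot h$ on a fixed disc $D_{7/8}$, where $B$ is the finite Blaschke product (for that disc) running over the zeros $z_1,\dots,z_N$ of $\tilde f$ in $D_{7/8}$, counted with multiplicity, and $h$ is analytic and zero-free on $D_{7/8}$ (a generic radius between $3/4$ and $7/8$ avoids zeros on the boundary circle). Step one is to bound $N$: a Möbius automorphism of $D$ carrying $a_0$ to $0$ maps $D_{7/8}$ into a fixed disc $D_\rho$ with $\rho<1$, leaves $M$ unchanged, and sends $|\tilde f(a_0)|=e^{\pm\mu}$ to the value at the origin, so Jensen's formula — precisely the computation behind Lemma~\ref{Lemma9.3} — gives $N\lesssim \mu+\log(M+1)$. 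Step two is the Blaschke contribution: on the real segment the denominators of the Blaschke factors are bounded above and below, so $|b_k(x)|\asymp |x-z_k|$ and $|b_k(x)|<1$, whence $\bigl|\log|b_k(x)|\bigr|\le \log^+\!\frac{1}{|x-z_k|}+O(1)$; since $|x-z_k|\ge|x-\operatorname{Re}z_k|$ one gets $\int_{-1/2}^{1/2}\log^+\!\frac{1}{|x-z_k|}\,dx\le\int_{-1}^{1}\log^+\!\frac{1}{|t|}\,dt=O(1)$ uniformly in $z_k$, and summing over $k$ yields $\int_{-1/2}^{1/2}\bigl|\log|B(x)|\bigr|\,dx\lesssim N\lesssim \mu+\log(M+1)$.

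It then remains to estimate $\int_{-1/2}^{1/2}\bigl|\log|h(x)|\bigr|\,dx$. Since $|B|=1$ on $\{|z|=7/8\}$, the maximum principle gives $|h|\le M$ on $D_{7/8}$, so $u:=\log M-\log|h|$ is a nonnegative harmonic function there; moreover $|B(a_0)|\le 1$ forces $|h(a_0)|\ge|\tilde f(a_0)|=e^{\pm\mu}$, hence $u(a_0)\le \log M+\mu$. Harnack's inequality on the compactly contained set $\overline{D_{1/2}}$ then gives $u(x)\le C\,u(a_0)\le C(\log M+\mu)$ for all $x\in D_{1/2}\supset[-\tfrac12,\tfrac12]$, and combined with $\log|h(x)|\le\log M$ this produces $\bigl|\log|h(x)|\bigr|\lesssim \mu+\log(M+1)$ uniformly on the segment (the bound $\mu\ge|\log M|$ for $M<1$ being what absorbs the uniformly-small case). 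Integrating and adding the Blaschke estimate via $\log|\tilde f|=\log|B|+\log|h|$ finishes the proof. The one point requiring care — and the only real obstacle — is the uniformity of this last step: one must ensure that the Harnack constant depends only on the fixed radii $1/2$ and $7/8$, and that the two extreme scenarios, namely a dense cluster of (possibly complex) near-zeros hugging the real axis and $\tilde f$ being uniformly tiny, are each charged to the correct term on the right-hand side of \eqref{eq9.8}; everything else is routine bookkeeping.
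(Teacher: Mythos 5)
Your argument is correct, and it is worth noting that it is considerably more elaborate than what the paper actually does: the paper disposes of Lemma \ref{Lemma9.7} in one line, deducing \eqref{eq9.8} directly from the subharmonicity of $\log|\tilde f|$ (i.e.\ testing the sub-mean-value/harmonic-measure inequality at a near-minimizing point $a$ against the uniform upper bound $\log M$, so that the negative part of $\log|f|$ on the segment is controlled by $|\log|\tilde f(a)||+\log M$ and the positive part trivially by $\log M$), with no factorization of $\tilde f$ at all. Your route — Blaschke product over the zeros in $D_{7/8}$, Jensen's formula (after a M\"obius change of base point, where the pseudohyperbolic distance bound $\rho<1$ follows by compactness) to count those zeros by $\mu+\log(M+1)$, the uniform integrability of $\log^+\frac1{|x-z_k|}$ on the segment, and Harnack applied to $\log M-\log|h|$ for the zero-free factor — is a correct, self-contained implementation in the Donnelly--Fefferman style; in effect you are carrying out the Riesz decomposition of the subharmonic function by hand, which makes the dependence of the constants on the fixed radii explicit but costs you the bookkeeping you describe. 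Two cosmetic points: the minimum over the open set $D_{1/2}$ need only be treated as an infimum (take $a_0$ with $|\log|\tilde f(a_0)||\leq\mu+1$, which changes nothing), and what you prove is a bound of the form $C\big(\min_a|\log|\tilde f(a)||+\log[\sup_D|\tilde f|+1]\big)$ rather than the literal placement of constants in \eqref{eq9.8}; this is exactly how the lemma is used in Lemma \ref{Lemma9.9} and Theorem \ref{Theorem3x}, so it is harmless.
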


Lemma \ref{Lemma9.3} follows from Jensen's theorem and \eqref{eq9.8}
is easily deduced from subharmonicity of $\log |\tilde f(z)|$.

Lemma \ref {Lemma9.7} generalizes to real analytic functions of
several variables.

\medskip
\begin{lemma}\label{Lemma9.9}
Let $f\not= 0$ be a real analytic function on $[-\frac 12, \frac
12]^m$, $m\geq 1$ with bounded analytic extension $\tilde f$ to the
polydisc $D^m$. Denote
$$
M=\sup_{z\in D^m} |\tilde f(z)|+1.
$$
Then \be\label{eq9.10} \int_{[-\frac 12, \frac 12]^m} \big|\log |
f(x)|\big|dx\leq C\Big(\min_{a\in D^m_{\frac 12}} \big|\log|\tilde
f(a)|\big|+\log M\Big). \ee
\end{lemma}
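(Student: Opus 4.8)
The strategy is induction on the number of variables $m$, using the one-variable result (Lemma~\ref{Lemma9.7}) as the base case and Fubini--Tonelli to peel off one variable at a time. The key point to keep track of is that the bound produced at each stage must remain of the same shape, namely controlled by $\log M$ plus a term of the form $\min_{a}\bigl|\log|\tilde f(a)|\bigr|$ over an appropriate polydisc; a naive induction would lose this because after integrating out one variable one is left with $\int |\log|f||$ rather than a single value $\bigl|\log|\tilde f(a)|\bigr|$.

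First I would fix a point $a=(a_1,\dots,a_m)\in D^m_{1/2}$ achieving (or nearly achieving) the minimum on the right-hand side, so $\bigl|\log|\tilde f(a)|\bigr|$ is essentially $\min_{b\in D^m_{1/2}}\bigl|\log|\tilde f(b)|\bigr|$. Write $x=(x',x_m)$ with $x'\in[-\tfrac12,\tfrac12]^{m-1}$ and apply Tonelli:
\[
\int_{[-1/2,1/2]^m}\bigl|\log|f(x',x_m)|\bigr|\,dx
=\int_{[-1/2,1/2]^{m-1}}\Bigl(\int_{-1/2}^{1/2}\bigl|\log|f(x',x_m)|\bigr|\,dx_m\Bigr)dx'.
\]
For each fixed $x'$ the inner function $x_m\mapsto f(x',x_m)$ is real analytic on $[-\tfrac12,\tfrac12]$ with analytic extension to $D$ bounded by $M$; provided it is not identically zero, Lemma~\ref{Lemma9.7} gives
\[
\int_{-1/2}^{1/2}\bigl|\log|f(x',x_m)|\bigr|\,dx_m
\le C\bigl|\log|\tilde f(x',a_m)|\bigr|+C\log M .
\]
(The set of $x'$ for which $f(x',\cdot)\equiv 0$ is either all of the cube — impossible since $f\not\equiv 0$ and $f$ is real analytic — or contained in a proper real-analytic subvariety, hence of measure zero, so it does not affect the integral; one should remark this.) Integrating in $x'$ reduces the problem to bounding
\[
\int_{[-1/2,1/2]^{m-1}}\bigl|\log|g(x')|\bigr|\,dx',\qquad g(x'):=\tilde f(x',a_m),
\]
which is again of the same type in $m-1$ variables, with analytic extension to $D^{m-1}$ bounded by $M$, and which is not identically zero (since $a_m\in D_{1/2}$ was chosen with $\tilde f(a',a_m)\ne 0$, so $g\not\equiv 0$). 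By the inductive hypothesis this is
\[
\le C\Bigl(\min_{b'\in D^{m-1}_{1/2}}\bigl|\log|\tilde g(b')|\bigr| + \log M\Bigr)
\le C\bigl|\log|\tilde f(a',a_m)|\bigr| + C\log M,
\]
taking $b'=a'$. Combining the two displayed estimates and summing the constants (which depend only on $m$) yields the claimed bound with $\min_{a\in D^m_{1/2}}\bigl|\log|\tilde f(a)|\bigr|$ on the right.

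The main obstacle, and the only genuinely delicate point, is the handling of the exceptional slices: one must be sure that for almost every $x'$ the function $f(x',\cdot)$ is not identically zero (and similarly that $g\not\equiv 0$), so that Lemma~\ref{Lemma9.7} and the inductive hypothesis actually apply, and that the measure-zero exceptional set contributes nothing. This follows from real analyticity — the zero set of a nonzero real-analytic function on a connected domain has measure zero — but it should be stated carefully, perhaps after first shrinking slightly and noting $\tilde f\not\equiv 0$ on $D^m$. A secondary bookkeeping point is that the constant $C$ degrades by a bounded factor at each of the $m$ steps, which is harmless since $m$ is fixed (in the application $m$ is essentially the ambient dimension). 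Everything else is Tonelli plus the one-variable input already established.
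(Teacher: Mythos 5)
Your proof is correct and is essentially the paper's argument: both iterate the one-variable Lemma \ref{Lemma9.7} coordinate by coordinate via Fubini/Tonelli, replacing one real integration variable at a time while the remaining coordinates sit at the fixed (near-minimizing) point $a\in D^m_{\frac 12}$ — the paper simply runs the iteration forward in $x_1,x_2,\dots$ instead of phrasing it as induction on $m$ from the last variable. Your added care about the measure-zero set of slices where $f(x',\cdot)\equiv 0$, and the tacit use of the one-variable lemma for the complex-valued slice functions $\tilde f(\cdot,a_2,\dots,a_m)$, are points the paper passes over silently but do not change the argument.
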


\noindent {\sl Proof.} Fix $a\in D^m_{\frac 12}$. From \eqref{eq9.8}
applied to the function $\tilde f(\cdot, a_2, \ldots, a_m)$, we get
\be\label{eq9.11} \int^{\frac 12}_{-\frac12} \Big|\log |\tilde
f(x_1, a_2, \ldots, a_m)|\Big|dx_1 \leq C\big|\log |\tilde
f(a)|\big| +C\log M. \ee Next, fix $|x_1|<\frac 12$ and apply
\eqref{eq9.8} to the function $\tilde f(x_1, ., a_3, \ldots, a_m)$.
Hence \be\label{eq9.12} \int_{-\frac12}^{\frac 12} \big|\log|\tilde
f(x_1, x_2,a_3, \ldots, a_m)|\big| dx_2 \leq C\big|\log|\tilde
f(x_1, a_2, \ldots, a_m)|\big| +C\log M. \ee Integrating
\eqref{eq9.12} in $x_1$ and using \eqref{eq9.11} gives \be
\label{eq9.13} \int^{\frac 12}_{-\frac 12} \int^{\frac 12}_{-\frac
12} \big|\log|\tilde f(x_1, x_2, a_3, \ldots, a_m)|\big| dx_1
dx_2\leq C\big|\log| \tilde f(a)|\big|+ C\log M. \ee Iteration
yields \eqref{eq9.10}.

\begin{lemma}\label{Lemma9.14}
Let $f$ be as in Lemma \ref{Lemma9.9} and
$$Z=\{x\in\big[-\frac 12,  \frac 12]^m; f(x)=0\}$$
Then \be\label{eq9.15} h_{m-1}(Z) \leq C\big(\min_{a\in D^m_{\frac
12}} \big|\log|\tilde f (a)|\big|+\log M\big) \ee
\end{lemma}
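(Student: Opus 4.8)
The plan is to deduce the Hausdorff measure bound from Lemma~\ref{Lemma9.9} via the classical Crofton formula, following the scheme of Donnelly--Fefferman \cite{D-F}. The point is that Lemma~\ref{Lemma9.9} controls the $(m-1)$-dimensional Hausdorff measure of the zero set only after one knows how to bound the number of zeros of the restriction of $f$ to a generic line, and Crofton's formula is precisely the device that converts such one-dimensional counts into an $(m-1)$-measure estimate.

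First I would recall the integral-geometric (Crofton) formula: for a reasonable set $Z\subset[-\frac12,\frac12]^m$ one has
\be\label{eq:crofton}
 h_{m-1}(Z) \asymp \int_{\ell} \#(Z\cap \ell)\, d\ell,
\ee
where the integral is over the space of affine lines $\ell$ in $\R^m$ meeting the cube, equipped with its natural kinematic measure (of finite total mass once restricted to lines through a bounded region). Thus it suffices to bound, for almost every direction and base point, the number of zeros of $f$ restricted to a line segment through the cube. After an affine change of variables sending the relevant segment to $[-\frac12,\frac12]\times\{a'\}$ with $a'\in D^{m-1}_{1/2}$, the function $t\mapsto \tilde f(t,a')$ is a bounded analytic function on the disc that is not identically zero (for generic $a'$, since $f\not\equiv 0$), so Lemma~\ref{Lemma9.5} applies and gives
\be\label{eq:onelinecount}
 \#\{t: f(t,a')=0\} \le C\Big(\big|\log|\tilde f(b,a')|\big| + \log M\Big)
\ee
for any $b\in D_{1/2}$, where $M=\sup_{D^m}|\tilde f|+1$.

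Next I would integrate \eqref{eq:onelinecount} over the line parameters. The $\log M$ term integrates to $O(\log M)$. For the term $\big|\log|\tilde f(b,a')|\big|$ I would average over $b$ as well (replacing the pointwise choice by the mean over $b\in[-\frac12,\frac12]$ of a suitable coordinate), and apply Lemma~\ref{Lemma9.9} in the variable $a'\in[-\frac12,\frac12]^{m-1}$ — or rather apply the full $m$-variable statement \eqref{eq:onelinecount} integrated in all of $[-\frac12,\frac12]^m$ — to get $\int \big|\log|f|\big| \le C(\min_{a\in D^m_{1/2}}|\log|\tilde f(a)|| + \log M)$. Combining with \eqref{eq:crofton} then yields
\[
 h_{m-1}(Z) \le C\Big(\min_{a\in D^m_{1/2}} \big|\log|\tilde f(a)|\big| + \log M\Big),
\]
which is \eqref{eq:onelinecount}$'$, i.e. \eqref{eq:onelinecount} in the displayed form of the Lemma. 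One must of course quotient out the finitely many directions for which the restriction of $f$ to the line is identically zero; by analyticity these form a lower-dimensional set in the space of lines and contribute nothing to the kinematic measure, so the averaging is unaffected.

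The main obstacle I anticipate is the careful bookkeeping of the integral-geometric measure: one needs the Crofton integral \eqref{eq:crofton} with the correct constants restricted to lines meeting the unit cube, with the lines parametrized so that a uniform affine normalization brings the relevant chord to $[-\frac12,\frac12]$ while keeping the base point inside $D^{m-1}_{1/2}$ (so that Lemma~\ref{Lemma9.9} is genuinely applicable with the half-polydisc $D^m_{1/2}$, not a larger one). There is also a mild technical point in ensuring that the set of ``bad'' lines (on which $\tilde f$ restricts to $0$, or on which $|\tilde f|$ is too small at every test point, forcing the logarithm to blow up) is negligible — this again follows from real analyticity of $f$ together with $f\not\equiv 0$, exactly as in \cite{D-F}. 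Modulo this geometric accounting the argument is a direct combination of Lemma~\ref{Lemma9.5}, Lemma~\ref{Lemma9.9}, and Crofton's formula.
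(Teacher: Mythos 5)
Your proposal is correct and follows essentially the same route as the paper: Crofton's formula, the one-variable zero count of Lemma \ref{Lemma9.5} applied to the restriction of $f$ to each line, replacement of the pointwise $\big|\log|f|\big|$ term by its average along the chord so that integration over the space of lines produces $\int\big|\log|f|\big|$ over the cube, and finally Lemma \ref{Lemma9.9} to bound that integral. The only cosmetic difference is that the paper keeps the arclength parametrization $g(x)=f(b+x\xi)$ on a slightly enlarged cube $[-\tfrac34,\tfrac34]^m$ rather than performing your affine normalization of each line to a coordinate segment.
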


\begin{proof}
For $m=1$, \eqref{eq9.15} follows from \eqref{eq9.4}.

For $m>1$, we use Crofton's formula \be\label{eq9.16} h_{m-1}(Z)
\sim \int_{\mathcal L} [\#(Z\cap\ell)]d\ell \ee where $\mathcal L
\simeq G_{m, 1}\times\mathbb R^m$ is the space of affine straight
lines $\ell$.

Fix $\ell\in\mathcal L, \ell\cap Z\not=\phi$ and let $\ell
=b+\mathbb R\xi$, $b\in [-\frac 12, \frac 12]^m$, $|\xi|=1$. Denote
$I$ the interval $I=\{x\in\mathbb R; b+x\xi\in[-\frac 34, \frac
34]^m\}$. Let
$$
g(x) =f(b+x\xi)
$$
which is real analytic with analytic extension $\tilde g$ to
$\{z\in\mathbb C; \dist (z, I)<\frac 12\}$ bounded by $\log M$.
Lemma \ref{Lemma9.5} implies \be\label{eq9.17}
\begin{aligned}
\#[Z\cap\ell]&\leq \#\{ x\in I; g(x)=0\}\\
&\leq c\min_{x\in I} \big|\log |g(x)|\big|+C\log M\\
&\leq c\int_{\ell \cap [-\frac 34, \frac
34]^m}\big|\log|f(x)|\big|+C\log M.
\end{aligned}
\ee Integration of \eqref{eq9.17} over $\mathcal L$ and invoking
Lemma \ref{Lemma9.9} gives
$$
h_{m-1} (Z) \leq c\int_{[-\frac 34, \frac 34]^m}
\big|\log|f(x)|\big|+C\log M\leq\eqref{eq9.15}
$$
proving Lemma \ref{Lemma9.14}.
\end{proof}

\noindent {\bf Proof of Theorem \ref{Theorem3x}}

Let $p:Q\to\Sigma$ be a real analytic parametrization of $\Sigma$
where $Q$ is an $(d-1)$-dimensional interval. Thus
$$
N\cap\Sigma =p\big(\{x\in Q; \vp\big(p(x)\big) =0\}\big).
$$
Since $\vp(x) =\vp_E(x)
=\operatornamewithlimits\Sigma\limits_{\xi\in\mathbb Z^d, |\xi|^2=E}
\hat\vp(\xi) e(x.\xi)$, the analytic extension
$$
\tilde\vp (z_1, \ldots, z_d)= \sum\hat\vp (\xi) e(z_1.\xi_1+\cdots+
z_d\xi_d)
$$
of $\vp$ to the polydisc $D_2^d$ obviously admits a bound \be
|\tilde\vp|< E^{\frac d2} e^{2\sqrt d \sqrt E} =M \ee (assuming
$\Vert\vp\Vert_2=1$). Thus $\vp\circ p$ has an analytic extension to
a complex neighborhood of $Q$, bounded by $M$.

From Lemma \ref {Lemma9.14} \be \label{eq9.19}
\begin{aligned}
h_{d-2} (N\cap\Sigma)&\sim h_{d-2} [x\in Q; \vp\big(p(x)\big)=0]\\
&\leq c\min_{a\in\tilde Q}\big|\log |(\vp\circ p)^\sim
(a)|\big|+c\sqrt E
\end{aligned}
\ee where $\tilde Q\subset\mathbb C^{d-1}$ is some complex
neighborhood of $Q$.

For $d=2$ or $d=3$, our restriction theorem (lower bounds), assuming
$E>E_\Sigma$, implies \be\label{eq9.20} \max_{x\in\Sigma}
|\vp(x)|\gtrsim \Big(\int_\Sigma |\vp|^2 d\sigma\Big)^{\frac 12}>
c_\Sigma \ee and therefore
$$
\log E\gtrsim \log |\vp\big(p(a)\big|>-c
$$
for some $a\in Q$.

For general dimension $d$, we do not have at this point a lower
bound of the type \eqref {eq9.20}. However, the proof of the result
in \cite{B-R1} cited in the beginning of this section, which uses
the complexification $(\vp\circ p)^\sim$, implies in fact that for
$E>E_\Sigma$ \be\label{eq9.21} \max_{a\in\tilde Q} |(\vp\circ
p)^\sim (a)|> E^{-C} \ee where $C$ is some constant. Hence
\eqref{eq9.19} can be applied to obtain \eqref{eq9.1}.  This proves
Theorem \ref{Theorem3x}.

\noindent {\bf Remark.} Theorem \ref{Theorem3x} should be compared
with results in \cite{T-Z} $(d=2)$. Using the \cite{T-Z}
terminology, a real analytic hypersurface $\Sigma \subset \mathbb
T^d$ with nowhere vanishing curvature is `good' in the since that
\be\label{eq9.22} \frac{\max|\vp|\Big|_\Sigma}{\Vert\vp\Vert_2} >
c_\Sigma^{-\sqrt E} \ee for $\vp=\vp_E, E> E_\Sigma$.

\medskip
The remainder of this section deals with the converse phenomenon.

We show for $d=2, 3$ that if $\Sigma\subset \mathbb T^d$ is as above
and $E>E_\Sigma$, $\vp =\vp_E$ an eigenfunction with nodal set $N$,
then
$$
N\cap\Sigma\not= \phi.
$$
For $d=2$, there is a more precise statement.

\begin{theorem}\label{Theorem4}
Let $\Sigma\subset\mathbb T^2$ be a real analytic curve which is not
geodesic. Let $E\geq E_\Sigma$ and $\vp_E$ an eigenfunction with
eigenvalue $E$ and nodal set $N$. Then \be\label{eq9.23} c_\ve
E^{\frac 12-\ve} <\# (N\cap\Sigma)< CE^{\frac 12} \  \text { for all
} \  \ve>0. \ee
\end{theorem}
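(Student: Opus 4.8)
The upper bound in \eqref{eq9.23} is immediate from Theorem~\ref{Theorem3x} with $d=2$: the set $N\cap\Sigma$ is finite (the real analytic function $\vp\circ p$ is not identically zero, by the lower restriction bound below), so $\#(N\cap\Sigma)=h_0(N\cap\Sigma)<C_\Sigma\sqrt E=CE^{1/2}$. The substance is the lower bound, which I get by producing $\gg E^{1/2-\ve}$ sign changes of $\vp$ along $\Sigma$. Parametrize by arc length, set $f(s)=\vp(p(s))$, and normalize $\Vert\vp\Vert_2=1$. Since $\Sigma$ is non-geodesic its curvature $\kappa$ is real analytic and not identically zero, so I fix once and for all a closed sub-arc $\Sigma_0\subset\Sigma$ with $|\kappa|\ge\kappa_0>0$ (constants independent of $E$). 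The two-dimensional argument of the Introduction applies verbatim with the arc-length measure of $\Sigma$ replaced by that of $\Sigma_0$ — whose Fourier transform still decays like $|\xi|^{-1/2}$ and is $\le 1-\delta_0$ on $\Z^2\setminus\{0\}$ — giving the localized lower bound $\int_{\Sigma_0}|f|^2\,ds\gg 1$ for $E$ large. I will also use freely that $\Vert f\Vert_\infty\le\Vert\vp\Vert_{L^\infty(\TT^2)}\ll E^{\ve}$ and $\#\mathcal E\ll E^{\ve}$ for every $\ve>0$. The elementary device for creating a zero is: if $f$ is continuous on an interval $J$ and $\bigl|\int_J f\bigr|<\Vert f\Vert_{L^\infty(J)}^{-1}\int_J|f|^2$, then $f$ takes both signs on $J$ (were $f\ge 0$ on $J$ we would get $\int_J f=\int_J|f|\ge\Vert f\Vert_{L^\infty(J)}^{-1}\int_J|f|^2$), hence vanishes in the interior of $J$.

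Now fix a small $\delta>0$ and a much smaller $\ve>0$, and partition $\Sigma_0$ into $K\sim E^{1/2-\delta}$ sub-arcs $I_k$ of equal length $\ell\sim E^{-1/2+\delta}$. Call $I_k$ \emph{good} if $|(n/|n|)\cdot p'(s)|\ge E^{-\ve}$ for every $s\in I_k$ and every $n\in\mathcal E$. For a good arc we have $|n\cdot p'|\ge E^{1/2-\ve}$ and $|n\cdot p''|\le|n|\,\kappa_{\max}\ll E^{1/2}$ on $I_k$, so integration by parts gives $\bigl|\int_{I_k}e(n\cdot p(s))\,ds\bigr|\ll E^{-1/2+\ve}$, and therefore $\bigl|\int_{I_k}f\bigr|\le\sum_{n\in\mathcal E}|a_n|\bigl|\int_{I_k}e(n\cdot p)\bigr|\ll E^{-1/2+2\ve}$. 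The number of non-good arcs is negligible: at a zero of $(n/|n|)\cdot p'(s)$ one has $n/|n|\parallel p''$ (arc-length parametrization), hence $|(n/|n|)\cdot p''|=|\kappa|\ge\kappa_0$ there, so $\{s\in\Sigma_0:|(n/|n|)\cdot p'(s)|<E^{-\ve}\}$ lies in $O(1)$ intervals of total length $\ll E^{-\ve}$ and meets $\ll E^{-\ve}/\ell+O(1)$ of the $I_k$; summing over $\mathcal E$ bounds the non-good arcs by $\ll E^{\ve}(E^{1/2-\delta-\ve}+1)=o(K)$ once $\ve$ is small relative to $\delta$.

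It remains to see that the $L^2$-mass $\mu_k:=\int_{I_k}|f|^2$ is not concentrated on a negligible fraction of the arcs. Since $\sum_k\mu_k=\int_{\Sigma_0}|f|^2\gg 1$ while the non-good arcs carry $\ll\#\{\text{non-good}\}\cdot\ell\cdot\Vert f\Vert_\infty^2=o(1)$, and each $\mu_k\le\ell\Vert f\Vert_\infty^2\ll E^{-1/2+\delta+2\ve}$, a pigeonhole argument produces $\gg E^{1/2-\delta-O(\ve)}$ good arcs with $\mu_k\gg\ell\sim E^{-1/2+\delta}$. For each of these, $\bigl|\int_{I_k}f\bigr|\ll E^{-1/2+2\ve}$ is smaller than $\Vert f\Vert_{L^\infty(I_k)}^{-1}\mu_k\gg E^{-\ve}E^{-1/2+\delta}$, so $f$ vanishes in the interior of $I_k$; the $I_k$ being essentially disjoint, these are distinct points of $N\cap\Sigma$, whence $\#(N\cap\Sigma)\ge\#(N\cap\Sigma_0)\gg E^{1/2-\delta-O(\ve)}$. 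Letting $\delta\downarrow 0$ yields \eqref{eq9.23}.

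The genuinely delicate point is the last one — the near-equidistribution of the restricted $L^2$-mass at the scale $E^{-1/2}$. The only leverage for it is the (global) lower restriction bound together with $\Vert\vp\Vert_\infty\ll E^{\ve}$; it is precisely this $E^{\ve}$, and the concomitant need to work at scale $E^{-1/2+\delta}$ rather than $E^{-1/2}$ (so that $\int_{I_k}e(n\cdot p)$, which is $\gtrsim E^{-1/2}$ per term, becomes small against $\mu_k\sim\ell$), that forces the $E^{-\ve}$ loss and prevents the sharp count $\sim\sqrt E$. A secondary technical nuisance is the bookkeeping of the hierarchy of small parameters in the good/non-good dichotomy, but this is routine.
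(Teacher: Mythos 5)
Your proposal is correct and is essentially the paper's own argument for Theorem~\ref{Theorem4}: the upper bound via Theorem~\ref{Theorem3x}, and for the lower bound a decomposition of a curved sub-arc into $\sim E^{1/2-\delta}$ pieces of length $E^{-1/2+\delta}$, a pigeonhole using the restriction lower bound together with $\Vert\vp\Vert_\infty\ll E^{\ve}$, removal of the few arcs whose tangent is nearly orthogonal to some frequency $\xi\in\mathcal E$ (the paper's set $\mathcal F\setminus\mathcal F_1$), and partial integration plus the sign-change criterion $\bigl|\int\vp\bigr|=o\bigl(\int|\vp|\bigr)$ on the remaining arcs. The only caveat is the one you flag yourself: the symbol $\ve$ is used both for the goodness threshold and for the bounds $\#\mathcal E\ll E^{\ve}$, $\Vert\vp\Vert_\infty\ll E^{\ve}$, so these must be taken as separate, suitably ordered parameters (as in the paper's $\ve_0,\ve_1,\ve$), which is indeed routine.
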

%\marginpar{Refer to the work of Ghsoh, Reznikov and Sarnak}
\begin{proof}
The upper bound follows from (the proof of) Theorem~\ref{Theorem3x},
noting that since $\Sigma$ is not a straight line segment, there is
$\Sigma'\subset\Sigma$ with non-vanishing curvature. For the lower
bound, we can replace $\Sigma$ by $\Sigma'$ and proceed as follows.

Fix $\rho =\frac 12 -\ve_0$ and decompose
$$
\Sigma =\bigcup_{\alpha\lesssim E^\rho} \Sigma_\alpha
$$
in arcs $\Sigma_\alpha$ of size $E^{-\rho}$. From the lower bound
$(\Vert\vp\Vert_2=1)$ \be\label{eq9.24} c_\Sigma =\int_\Sigma
|\vp|^2 d\sigma =\sum_\alpha\int_{\Sigma_\alpha} |\vp|^2 d\sigma \ee
and the upper bound $\Vert \vp\Vert_\infty \ll E^\ve$ for all
$\ve>0$, one easily sees that \be\label{eq9.25} \# \mathcal F=\#
\Big\{ \alpha; \int_{\Sigma_\alpha} |\vp|^2 d\sigma
>cE^{-\rho} \Big\}\gg E^{\rho-\ve} \text { for all $\ve>0$}.
\ee For $\alpha\in\mathcal F$ \be\label{eq9.26}
\int_{\sum_\alpha}|\vp| d\sigma > c\frac {E^{-\rho}}{\Vert
\vp\Vert_\infty}. \ee Hence, if \be\label{eq9.27}
\int_{\Sigma_\alpha} \vp d\sigma
=o\Big(\frac{E^{-\rho}}{\Vert\vp\Vert_\infty}\Big) \ee we can
conclude that $N\cap \Sigma_\alpha\not = \phi$.

Let $\mathcal E =\{\xi\in\mathbb Z^2;|\xi|^2 =E\}$ and $\vp
=\Sigma_{\xi\in\mathcal E}\hat\vp (\xi) e(x\cdot \xi)$,
$\Vert\vp\Vert_2=1$.

Fix $\ve_1>0$ a small number and define \be\label{eq9.28} \mathcal
F_1=\{\alpha\in\mathcal F; \min_{\xi\in\mathcal
E}|\overset{_\rightarrow} t.\xi|> E^{\frac 12-\ve_1} \text { for all
tangent vectors } \overset{_\rightarrow}t  \text{ of $\Sigma_\alpha
\Big\}$}. \ee Clearly
$$
\#(\mathcal F\backslash \mathcal F_1)\lesssim (\# \mathcal E)\
\frac{E^{-\ve_1}}{E^{-\rho}} < E^{\rho-\frac{\ve_1}2}
$$
and \be\label{eq9.29} \# \mathcal F_1 >\frac 12(\#\mathcal F)\gg
E^{\rho-\ve} \ee by \eqref{eq9.25}. Next, letting $\gamma:I=[0,
E^{-\rho}]\rightarrow \Sigma_\alpha$ be an arclength parametrization
of $\Sigma_\alpha, \alpha\in\mathcal F_1$, write
$$
\Big|\int_{\Sigma_\alpha} \vp d\sigma\Big| \leq \sum_{\xi\in\mathcal
  E} |\hat\vp (\xi)|\Big|\int_I e\big(\xi\cdot \gamma(t)\big)dt\Big|
$$
and by partial integration
$$
\Big|\int_I e \big(\xi\cdot \gamma(t)\big) dt\Big|\lesssim
\max_{t\in
  I} \ \frac 1{|\xi \cdot \dot\gamma (t)|} +\int_I \frac{|\xi\cdot \overset
{..}\gamma(t)|}{|\xi \cdot   \dot\gamma (t)|^2} dt\lesssim \frac
         {E^{\ve_1}}{\sqrt E}
$$
 from the definition of $\mathcal F_1$. Hence, for $\alpha\in\mathcal F_1$
\be\label{eq9.30} \Big|\int_{\Sigma_\alpha} \vp d\sigma\Big| \ll
\frac {E^{\ve_1+\ve}}{\sqrt E} \text { for all $\ve>0$} \ee and
\eqref{eq9.27} will hold if $\ve_0>\ve_1$ and $E$ large enough.

It follows that for $E>E_{\Sigma, \ve_0}$
$$
\#(N\cap\Sigma)\geq (\# \mathcal F_1)> E^{\frac 12-2\ve_0}
$$
proving Theorem \ref{Theorem4}.
\end{proof}

\bigskip

For $d=3$, we can show

\begin{theorem}\label{Theorem5}
Let $\Sigma\subset\mathbb T^3$ be a real analytic surface with
non-vanishing curvature. There is $E_\Sigma$ such that if
$E>E_\Sigma$, $E\neq 0,4,7\bmod 8$ and $N$ is the nodal set of
$\vp_E$, then \be\label{eq9.31} N\cap \Sigma\not= \phi\;. \ee
\end{theorem}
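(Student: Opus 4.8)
The plan is to argue by contradiction, in the spirit of the $d=2$ case (Theorem~\ref{Theorem4}). Normalize $\Vert\vp_E\Vert_2=1$ and write $\vp=\vp_E=\sum_{n\in\vE}a_n e(n\cdot x)$ with $\vE=\sqrt E\,S^2\cap\mathbb Z^3$, so that $\Vert\vp\Vert_\infty\le\sum|a_n|\le(\#\vE)^{1/2}\ll E^{1/4+\ve}$ and $\Vert\nabla\vp\Vert_\infty\ll\sqrt E\,(\#\vE)^{1/2}\ll E^{3/4+\ve}$, using $\#\vE\ll E^{1/2+\ve}$. Suppose $N\cap\Sigma=\emptyset$. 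Since $\vp|_\Sigma$ is continuous, nowhere zero, and $\Sigma$ is connected, $\vp$ has a fixed sign on $\Sigma$, say $\vp>0$. Then, using the stationary-phase expansion $\int_\Sigma e(n\cdot x)\,d\sigma=\tfrac{c}{|n|}e^{i\psi(n)}+O(|n|^{-2})$ of Section~\ref{sec:2} together with Cauchy--Schwarz,
\[
\int_\Sigma|\vp|\,d\sigma=\Big|\int_\Sigma\vp\,d\sigma\Big|=\Big|\sum_{n\in\vE}a_n\!\int_\Sigma e(n\cdot x)\,d\sigma\Big|\ll\frac{1}{\sqrt E}\,(\#\vE)^{1/2}\ll E^{-1/4+\ve},
\]
whereas the lower bound of the Main Theorem (Theorem~\ref{Theorem2x}) gives $\int_\Sigma|\vp|\,d\sigma\ge k_\Sigma/\Vert\vp\Vert_{L^\infty(\Sigma)}$. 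A contradiction would be immediate if we knew $\Vert\vp\Vert_{L^\infty(\Sigma)}\le E^{1/4-\delta}$ for some $\delta>0$; but $\Vert\vp\Vert_\infty$ may be of exact order $E^{1/4}$ (spherical Dirichlet kernels), so this bare comparison misses by an $E^{\ve}$ factor.

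To recover the lost factor I would thicken $\Sigma$. For a small fixed $\tau_0>0$ the parallel surfaces $\Sigma_s=\{y+s\nu(y):y\in\Sigma\}$, $|s|<\tau_0$, foliate a tubular neighbourhood of $\Sigma$, are again real-analytic with non-vanishing curvature, and Theorem~\ref{Theorem2x} applies to them uniformly: $\int_{\Sigma_s}\vp^2\,d\sigma_s\ge k^\ast>0$ for all $|s|<\tau_0$, $E>E_\Sigma$. Put $2\tau_1:=\dist(\Sigma,N)>0$, so $\vp>0$ on the whole slab $\{|s|<\tau_1\}$. Fix $\delta>0$ small and assume first $\tau_1\ge E^{-1/2+\delta}$. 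Take $r=E^{-1/2+\delta}$ and $\chi\in C^\infty(\mathbb T^3)$ supported where $|s|<r$, with $\chi\equiv1$ where $|s|<r/2$, absorbing the smooth foliation Jacobian into $\chi$. Since $\vp>0$ on $\supp\chi$,
\[
\int\vp\chi\,dx\ge\frac{\int\vp^2\chi\,dx}{\Vert\vp\Vert_{L^\infty(\supp\chi)}}\gg\frac{r\,k^\ast}{\Vert\vp\Vert_\infty}\gg E^{-3/4+\delta-\ve},
\]
while $\int\vp\chi\,dx=\sum_{n\in\vE}a_n\widehat\chi(-n)$, and integrating by parts in the normal variable near the two poles $y$ with $\nu(y)=\pm n/|n|$ (and in the tangential variables elsewhere) gives $|\widehat\chi(n)|\ll_N (r\sqrt E)^{-N}rE^{-1/2}+E^{-N/2}$; hence $|\int\vp\chi\,dx|\ll_N E^{-3/4+\delta-\delta N}+E^{1/4-N/2}$. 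Choosing $N$ large contradicts the lower bound, ruling out this case.

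The remaining case, $\dist(\Sigma,N)\lesssim E^{-1/2+\delta}$ — the nodal set essentially touching the wavelength-scale neighbourhood of $\Sigma$ — is the main obstacle. Here the tube estimate degenerates: when $r\sqrt E\lesssim1$ there is no gain in $\widehat\chi$, its normal poles again contributing $\sim rE^{-1/2}$, so one cannot simply play a slab integral against the Main Theorem on slices. Instead, the plan is to localize $\int_\Sigma\vp\,d\sigma=\sum_n a_n\widehat\sigma(-n)$ to $\sqrt E$-caps of $\sqrt E\,S^2$ and exploit the mean-equidistribution of $\vE$ in such caps (Lemma~\ref{Lemma5.6}), as in Sections~\ref{sec:3}--\ref{sec:6}, together with the remark that wherever $N$ comes $O(E^{-1/2+\delta})$-close to $\Sigma$ the gradient bound $\Vert\nabla\vp\Vert_\infty\ll E^{3/4+\ve}$ constrains $\vp$ on the adjacent portion of $\Sigma$; this should upgrade $\int_\Sigma|\vp|\,d\sigma\ge k_\Sigma/\Vert\vp\Vert_{L^\infty(\Sigma)}$ to a bound with a strictly larger power of $E$, while keeping $|\int_\Sigma\vp\,d\sigma|\ll E^{-1/4+\ve}$, which is the contradiction. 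The hypothesis $E\not\equiv 0,4,7\ (\mathrm{mod}\ 8)$ enters to guarantee that $\vE$ contains primitive lattice points, hence that the Linnik--Duke equidistribution underlying Lemma~\ref{Lemma5.6} is available. (For $d=2$ one has $\Vert\vp\Vert_\infty\ll E^{\ve}$, so the first comparison already yields a contradiction and neither the thickening nor the borderline analysis is needed — indeed one then obtains the quantitative count of Theorem~\ref{Theorem4}.)
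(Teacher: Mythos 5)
Your setup is the same as the paper's: assume $N\cap\Sigma=\emptyset$, deduce a fixed sign, bound $\bigl|\int_\Sigma\vp\,d\sigma\bigr|\ll E^{-1/4+\ve}$ by stationary phase and Cauchy--Schwarz, and observe that the trivial lower bound $\int_\Sigma|\vp|\,d\sigma\ge k_\Sigma/\Vert\vp\Vert_\infty\gg E^{-1/4-\ve}$ is ``barely insufficient.'' You correctly isolate the deficit. But your resolution of it is incomplete: the whole difficulty is concentrated in your second case, which you explicitly leave as a plan (``this should upgrade\dots''), and the mechanism you propose there is not the right one. The enemy is not the proximity of $N$ to $\Sigma$ per se but the possibility that $|\vp|$ concentrates on $\Sigma$ — i.e.\ $\Vert\vp\Vert_{L^\infty(\Sigma)}\sim E^{1/4}$ with $\int_\Sigma|\vp|^2\sim 1$ and $\int_\Sigma|\vp|\sim E^{-1/4}$ — and a pointwise gradient bound $\Vert\nabla\vp\Vert_\infty\ll E^{3/4+\ve}$ combined with cap localization does not by itself exclude this scenario or produce the needed power saving in $\int_\Sigma|\vp|\,d\sigma$.

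The paper's actual route bypasses the dichotomy on $\dist(\Sigma,N)$ entirely. It interpolates $L^2$ between $L^1$ and $L^4$ rather than between $L^1$ and $L^\infty$:
\begin{equation*}
c<\int_\Sigma|\vp|^2\omega\,d\sigma\leq \Big(\int_\Sigma|\vp|\,\omega\, d\sigma\Big)^{2/3}\Big(\int_\Sigma|\vp|^4\omega\, d\sigma\Big)^{1/3},
\end{equation*}
so that everything reduces to the $L^4$ restriction bound with a power saving, $\int_\Sigma|\vp|^4\omega\,d\sigma<E^{1/2-\ve_0}$ (eq.~\eqref{eq9.36}), which then yields $\int_\Sigma|\vp|\,\omega\,d\sigma\gg E^{-1/4+\ve_0/2}$ and the contradiction. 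Proving \eqref{eq9.36} is the bulk of the argument: one removes large Fourier coefficients, partitions $\mathcal E$ into caps $\mathcal E_\alpha$ of diameter $E^{1/2-\tau}$, uses Linnik's equidistribution (this is exactly where $E\neq 0,4,7\bmod 8$ enters, to get $\#\mathcal E_\alpha\ll E^{1/2-2\tau+o(1)}$), counts quadruples $\xi_1-\xi_2+\xi_3-\xi_4$ in balls via the cap estimates of Section~\ref{sec:lattice pts}, and for the near-diagonal range $|\xi_1-\xi_2+\xi_3-\xi_4|>E^{1/2-\ve_1}$ invokes the oscillatory sum estimate of Lemma~\ref{Lemma5.11} with the shifted phase $\psi(\xi_1-\xi_2+v)$. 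None of this appears in your proposal, so the key idea is missing. (Your case-1 slab argument is a reasonable observation when the sign is constant on a tube of width $\gg E^{-1/2+\delta}$, but since the hard case is precisely the complementary one, it does not reduce the problem.)
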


The argument allows more precise statements that we do not attempt
to formulate here.

As before, \eqref{eq9.31} will be derived from a property
\be\label{eq9.32} \Big|\int_\Sigma \vp(x) \omega(x) d\sigma\Big|=
o\Big(\int_\Sigma |\vp(x) \, \omega(x) | d\sigma\Big) \ee with
$0\leq \omega\leq 1$ a smooth localizing function on $\Sigma$.

Letting
$$
\phi(x) =\sum_{\xi \in\mathcal E} \hat\phi (\xi) e(x \cdot\xi)
\qquad\mathcal E=\{\xi\in\mathbb Z^3; |\xi|^2 =E\}
$$
Then \eqref{eq1.10} allows to bound the left side of \eqref{eq9.32}
by $(\Vert\phi\Vert_2=1)$ \be\label{eq9.33}
\begin{aligned}
\sum_{\xi\in\mathcal E} |\hat\phi(\xi)| \ \Big|\int_\Sigma e(x\cdot
\xi) \omega(x) d\sigma\Big| &\lesssim \sum_{\xi\in\mathcal E}
\frac{|\hat\phi(\xi)|}{\sqrt E}
\\
&\lesssim E^{-\frac 12}(\#\mathcal E)^{\frac 12}\ll E^{-\frac
14+\ve}.
\end{aligned}
\ee According to Theorem \ref{Theorem2x}, \be \int_\Sigma |\vp|^2
\omega d\sigma >c \ee and hence, certainly
$$
\int_\Sigma |\vp|\omega d\sigma>\frac c{\Vert\vp\Vert_\infty}>
\Big(\sum |\hat\vp(\xi)|\Big)^{-1}\gg E^{-\frac 14-\ve}
$$
which is barely insufficient to conclude.

Instead of interpolating $L^2(\Sigma, d\sigma)$ between $L^1(\Sigma,
d\sigma)$ and $L^{\infty} (\Sigma, d\sigma)$, interpolate
$L^2(\Sigma, d\sigma)$ between $L^1(\Sigma, d\sigma)$ and
$L^4(\Sigma, d\sigma)$ \be\label{eq9.35} c<\int_\Sigma |\vp|^2\omega
d\sigma \leq \Big(\int|\vp|\omega d\sigma\Big)^{\frac 23}
\Big(\int|\vp|^4\omega d\sigma\Big)^{\frac 13} \ee reducing to
problem to establish a bound of the form \be\label{eq9.36}
\int_\Sigma |\vp|^4 \omega d\sigma < E^{\frac 12-\ve_0} \ee for some
$\ve_0>0$.

Note that from Theorem \ref{Theorem1} \be\label{eq9.37} \int_\Sigma
|\vp|^2 \omega d\sigma < C \ee and therefore \be\label{eq9.38}
\int_\Sigma |\vp|^4 \omega d\sigma <C\Vert\vp\Vert^2_\infty \leq
C\Big(\sum |\hat\vp(\xi) |\Big)^2\ll E^{\frac 12+\ve}. \ee
Decomposing $\vp =\vp_1 +\vp_2$, with
$$
\vp_1(x) =\sum_{|\hat\vp(\xi)| > E^{-\frac 14+\ve_1}} \hat\vp (\xi)
e(x\cdot \xi)
$$
the bound \eqref{eq9.38} implies that
$$
\int_\Sigma |\vp_1|^4 \omega d\sigma < E^{\frac 12-2\ve_1}
$$
and hence we may assume \be\label{eq9.39} |\hat\vp(\xi)|< E^{-\frac
14+}. \ee Fix $\rho =\frac 12-\tau$, $\tau>0$ sufficiently small,
and partition
$$
\mathcal E =\bigcup_\alpha \mathcal E_\alpha
$$
in $\sim E^{2\tau} $ sets of diameter at most $E^\rho$. Write
$$
\vp =\sum_\alpha \vp_\alpha\text { with } \vp_\alpha (x)
=\sum_{\xi\in \mathcal E_\alpha} \hat\vp (\xi) e(x\cdot \xi)
$$
and \be\label{eq9.40} \int_\Sigma |\vp|^4 \omega d\sigma\leq
E^{2\tau} \sum_\alpha \int_\Sigma |\vp_\alpha|^2 |\vp|^2\omega
d\sigma\;. \ee We choose $\tau$ small enough for Linnik's
equidistribution property to imply
%\marginpar{So we need $E\neq 0,4,7\mod 8$}
\be\label{eq9.41} \#\mathcal E_\alpha\ll E^{\frac 12-2\tau+o(1)}  \
\text { for each
  $\alpha$}
\ee (this is where we need to assume $E\neq 0,4,7\bmod 8$).
Expanding in Fourier and using again \eqref {eq1.10}, we obtain \be
\label{eq9.42}
\begin{aligned}
&\int_\Sigma |\vp|^2 |\vp_\alpha|^2 \omega d\sigma\lesssim\\
&\Big|\sum_{\xi_1-\xi_2+\xi_3-\xi_4\not= 0} \hat\vp(\xi_1)
\overline{\hat\vp(\xi_2)} \ \hat\vp_\alpha(\xi_3)
\overline{\hat\vp_\alpha(\xi_4)} \ \frac {e^{i\psi(\xi_1-\xi_2+
\xi_3-\xi_4)}}{|\xi_1 -\xi_2+\xi_3-\xi_4|}\Big|
\end{aligned}
\ee \be \label{eq9.43} +\sum_{\xi_1, \xi_2, \xi_3, \xi_4} \
|\hat\vp(\xi_1)| \, |\hat\vp(\xi_2)| \, |\hat\vp_\alpha(\xi_3)| \,
|\hat\vp_\alpha(\xi_4)| \ (1+|\xi_1-\xi_2+\xi_3+\xi_4|)^{-2}. \ee
From \eqref {eq9.39}, clearly \be \label{eq9.44}
\begin{aligned}
\eqref{eq9.43}&< E^{-1+o(1)} \sum_{\xi_1, \xi_2 \in\mathcal E; \xi_3, \xi_4 \in\mathcal E_\alpha} (1+|\xi_1-\xi_2+\xi_3-\xi_4|)^{-2}\\
&\lesssim E^{-1+o(1)} \sum_{2^k< E^{\frac 12}} 4^{-k} \# \{(\xi_1,
\xi_2, \xi_3, \xi_4)\in\mathcal E^2 \times\mathcal E^2_\alpha;
|\xi_1-\xi_2+\xi_3-\xi_4|< 2^k\}.
\end{aligned}
\ee We need to estimate for $r<R$ \be \label{eq9.45} \#\{(\xi_1,
\xi_2, \xi_3, \xi_4)\in \mathcal E^2\times\mathcal E_\alpha^2;
|\xi_1-\xi_2+\xi_3-\xi_4|< r\}. \ee Let $P_\delta$ be an approximate
identity on $\mathbb T^3$. Then
\begin{equation}\label{eq9.46}
\begin{aligned}
\eqref{eq9.45}&\lesssim \int_{\mathbb T^3}\Big|\sum_{\xi\in\mathcal
E} e(x.\xi)\Big|^2 \ \Big|\sum_{\xi\in \mathcal E_\alpha}
e(x.\xi)\Big|^2 P_{\frac 1r}(x)dx\\
&\lesssim r^3 \int_{\mathbb T^3} \Big|\sum _{\xi\in\mathcal E} e(x.\xi)\Big|^2 \ \Big|\sum_{\xi\in\mathcal E_\alpha} e(x.\xi)\Big|^2 dx\\
&=r^3\# \{(\xi_1, \xi_2, \xi_3, \xi_4)\in\mathcal E^2\times\mathcal E^2_\alpha; \xi_1-\xi_2 =\xi_3-\xi_4\}\\
&\leq r^3 |\mathcal E_\alpha|^2\Big[\max_{v\in\mathbb Z^3} (\#\{(\xi, \eta)\in \mathcal E^2; \xi-\eta =v\})\Big]\\
&\ll r^3 |\mathcal E_\alpha|^2 E^\ve\\
\end{aligned}
\ee \be\label{eq9.47} \ll r^3 E^{1-4\tau+\ve}. \
\quad\qquad\qquad\qquad\qquad\quad\qquad\qquad \ee We used here
\eqref{eq9.47} and the bound \be\label{eq9.48} \# \{(\xi,
\eta)\in\mathbb Z^3 \times \mathbb Z^3: |\xi|^2 =E=|\eta|^2 \ \text
{ and } \ \xi-\eta =v\}\ll E^\ve \ee which is a consequence of
\eqref{eq8.7}.

Another bound on \eqref{eq9.43} is obtained by fixing
$\xi_2\in\mathcal E, \xi_3, \xi_4 \in\mathcal E_\alpha$ and
observing that $\xi_1\in \mathcal E$ is restricted to some ball of
radius $r$. Hence, invoking Lemma \ref{Lemma6.8},
$$
\eqref{eq9.45}\ll |\mathcal E_\alpha|^2|\mathcal E|E^\ve \Big( 1+r\Big(\frac r{\sqrt E}\Big)^{\frac 1{20}}\Big)\\
$$
\be\label{eq9.49} \ll E^{\frac 32-4\tau+\ve}\Big(1+r\Big(\frac
r{\sqrt E}\Big)^{\frac 1{20}}\Big). \ee Thus \be\label{eq9.50}
\eqref{eq9.43} \ll E^{-4\tau+\ve} \sum_{2^k< E^{\frac 12}} \min
(2^k, 4^{-k} E^{\frac 12}+ 2^{-k} E^{\frac 12} \Big(\frac{2^k}{\sqrt
E}\Big)^{\frac 1{20}}\Big) \ll E^{\frac 14-4\tau+\ve}. \ee

Next, we estimate \eqref {eq9.42}. Let $0\leq\eta\leq 1$ be a bump
function on $\mathbb R^3$ such that $\eta(x)=0$ if $|x|<\frac 12$ or
$|x|\geq 2$.  Estimate
\begin{multline}\label{eq9.51}
\eqref{eq9.42}<\sum_{2^k<\sqrt E}\Big|\sum_{\xi_1, \xi_2, \xi_3,
  \xi_4} \eta\Big(\frac {\xi_1-\xi_2+\xi_3-\xi_4)}{2^k}\Big)
\hat\vp(\xi_1) \overline{\hat\vp (\xi_2)}
\hat\vp_\alpha (\xi_3) \overline{\hat\vp_\alpha(\xi_4)} \\
 \times \frac{e^{i\psi(\xi_1- \xi_2+\xi_3-\xi_4)}}{|\xi_1-\xi_2+\xi_3-\xi_4|}\Big|.
\end{multline}
Ignoring the oscillatory factor, the $k$-term in \eqref{eq9.51} can
be estimated by \be\label{eq9.52} E^{-1+o(1)} 2^{-k}[\# \{(\xi_1,
\xi_2, \xi_3, \xi_4)\in \mathcal E^2\times\mathcal E_\alpha^2;
|\xi_1-\xi_2+\xi_3-\xi_4|\lesssim 2^k\}] \ee recalling
\eqref{eq9.39}.  From \eqref{eq9.47}, \eqref{eq9.49}
$$
\eqref{eq9.52}\ll E^{-4\tau+\ve} \min (4^k, 2^{-k} E^{\frac 12}+E^{\frac 12}\Big(\frac{2^k}{\sqrt E}\Big)^{\frac 1{20}}\Big)\\
$$
\be\label{eq9.53} \ll E^{\frac 13 -4\tau+\ve}+E^{\frac
12-4\tau+\ve}\Big(\frac{2^k}{\sqrt E}\Big)^{\frac 1{20}}. \ee This
estimate is conclusive unless $2^k> E^{\frac 12-\ve_1}\gg E^\rho$
($\ve_1>0$ an arbitrary small fixed constant). For such $k$, the
oscillatory factor in \eqref{eq9.51} cannot be ignored.

Estimate the $k$-term in \eqref{eq9.51} by
\begin{multline}\label{eq9.54}
(\#\mathcal E_\alpha)^2 \cdot E^{-\frac 12+\ve} \max_{\xi_3,
  \xi_4\in\mathcal E_\alpha}\Big|\sum_{\xi_1, \xi_2\in\mathcal E} \eta
\Big(\frac{\xi_1-\xi_2+\xi_3-\xi_4}{2^k}\Big)
\hat\vp(\xi_1) \overline{\hat\vp (\xi_2)} \\
\times \frac{e^{i\psi(\xi_1-\xi_2+\xi_3-\xi_4)}}{|\xi_1-\xi_2+\xi_3-\xi_4|}\Big| \\
\ll 2^{-k} E^{-4\tau+\ve}\Big\{ \max_{\substack{|v|<E^\rho\\
|a_\xi|,
    |b_\xi|\leq 1}}\Big|\sum_{\substack{\xi_1, \xi_2\in\mathcal E\\
|\xi_1-\xi_2|>2^{k-2}}} a_{\xi_1}b_{\xi_2}  \
e^{i\psi(\xi_1-\xi_2+v)}\Big|\Big\}.
\end{multline}

In establishing \eqref{eq9.36}, we may obviously assume $\diam(\supp
\hat\vp)<c\sqrt E$ ($c$ as in Lemma~\ref{Lemma5.11}). It remains to
get a nontrivial bound on \be\label{eq9.55}
\sum_{\substack{\xi_1, \xi_2\in \mathcal E\\
2^{k-2} <|\xi_1-\xi_2| < c\sqrt E\\ \xi_1-\xi_2 +v\in Z}} a_{\xi_1}
b_{\xi_2} \, e^{i\psi(\xi_1-\xi_2+v)} \ee where $|v|< E^\rho,
\rho<\frac 12-\ve_1$. The same analysis used to prove Lemma
\ref{Lemma5.11} gives an estimate \be\label{eq9.56}
|\eqref{eq9.55}|< E^{1-\gamma} \ee for some $\gamma>0$\footnote
{Letting $R=\sqrt E$, $v'=\frac vR, |v'|<R^{-2\tau}$, consider the
function $\psi(x-y+v')$ with $x, y\in S^2$. The sets $\mathcal
W_{\delta, \delta_1}$ considered in Lemma \ref{Lemma2.18} for the
function $\psi\big(p(x) -p(y)\big)$ remain the same for the function
$\psi\big(p(x) - p(y)+v'\big)$, since $\delta, \delta_1> R^{-\ve}$
while $|v'|<R^{-2\tau}$, $\ve<\tau$. Thus the analysis from Section
4 still applies and we obtain Lemma~\ref{Lemma3.35} for $\psi(x-y)$
replaced by $\psi(x-y+v)$.}

Hence \be\label{eq9.57} \eqref{eq9.54} \ll E^{\frac
12-\gamma-4\tau+\ve_1+\ve} < E^{\frac 12-\frac 12\gamma-4\tau}. \ee
Thus from \eqref{eq9.53}, \eqref {eq9.57}, we obtain
\be\label{eq9.58} \eqref{eq9.42}\ll E^{\frac 12 -4\tau-\frac 1{20}
\ve_1+\ve}+ E^{\frac 12-\frac 12\gamma-4\tau} \ee and recalling
\eqref{eq9.40} \be\label{eq9.59} \int_\Sigma |\vp|^4 d\sigma
\lesssim E^{4\tau} \times \eqref{eq9.58} < E^{\frac 12-\frac
1{21}\ve_1} +E^{\frac 12-\frac 12\gamma} \ee

This completes the proof of \eqref{eq9.36} and
Theorem~\ref{Theorem5}. \qed

\noindent {\bf Remark.}

It is easily seen that if $\Sigma\subset\mathbb T^3$ is a smooth
surface, then \be\label{eq9.60} \max_{\Vert\vp_E\Vert_2 =1}
\Big(\int_\Sigma |\vp_E|^4 d\sigma\Big) \gtrsim \frac 1E
(\#\{\xi\in\mathbb Z^3; |\xi|^2 =E\})^2 \ee (consider the
contribution of $\Sigma'\cap B(0, \frac 1{10R})$ with $0\in\Sigma'$
a shift of $\Sigma$).

Since by \eqref {eq8.6} there are arbitrary large eigenvalues $E$
for which \be \#\{\xi\in\mathbb Z^3; |\xi|^2 =E\}\gtrsim E^{1/2}
(\log\log E) \ee one can not hope for uniform $L^4$-restriction
bounds.

\section{Higher Dimension}

\subsection{}
 We are not able at the time of this writing to prove either
Theorem~ \ref{Theorem1} or Theorem~\ref{Theorem2x} in dimension
$\geq 4$.

It was proven by R.~Hu \cite{H} that if $(M, g)$ is a smooth compact
Riemannian manifold of dimension $d$ and $\Sigma$ a smooth
submanifold of dimension $d-1$ with positive (or negative) definite
second fundamental form, then \be\label{eq10.1}
\Vert\vp_E\Vert_{L^2(\Sigma)} < C_\Sigma E^{\frac 1{12}}
\Vert\vp_E\Vert_{L^2(M)} \ee for all eigenfunctions $\vp_E,
-\Delta\vp_E =E\vp_E$ of the Laplace-Beltrami operator $\Delta$ of
$M$. For $d=2$, the result is due to \cite{BGT}.

In the case of the flat torus $M=\mathbb T^d$, one can show an
improvement over \eqref{eq10.1} in arbitrary dimension
\be\label{eq10.2} \Vert \vp_E\Vert_{L^2(\Sigma)} < C_\Sigma E^{\frac
1{12}-\ve_d}\Vert\vp_E\Vert_{L^2(\mathbb T^d)} \ee for some
$\ve_d>0$ (with same assumption on $\Sigma$).

We will not present the proof here, as we believe the validity of
our Theorem \ref{Theorem1} in any dimension is the truth.

\subsection{}
Theorem \ref{Theorem1} in its dual formulation is the following
statement about restriction of the Fourier transform.

\begin{theorem}\label{Theorem1x}
Let $\Sigma \subset\mathbb T^3$ be real analytic with nowhere
vanishing curvature. For $E\in\mathbb Z_+$, denote
$$
\mathcal E_E=\{\xi\in\mathbb Z^3; |\xi|^2=E\}.
$$
Then the restriction operator
$$
L^2\Big(\Sigma, d\sigma\Big) \rightarrow \ell^2(\mathcal E_E) :
\mu\rightarrow \hat\mu|_{\mathcal E_E}
$$
has norm bounded by $C_\Sigma$.
\end{theorem}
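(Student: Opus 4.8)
The plan is to obtain Theorem~\ref{Theorem1x} from Theorem~\ref{Theorem1} by a soft duality argument: all the analytic content (the lattice-point bounds, the oscillatory-sum estimate, the mean-equidistribution input) has already been absorbed into Theorem~\ref{Theorem1}, so what remains is purely formal. First I would fix $E\in\mathbb Z_+$, set $\mathcal E_E=\{\xi\in\mathbb Z^3:|\xi|^2=E\}$, and introduce the \emph{extension} operator
\[
T=T_E:\ell^2(\mathcal E_E)\longrightarrow L^2(\Sigma,d\sigma),\qquad (Ta)(x)=\sum_{n\in\mathcal E_E}a_n\,e^{in\cdot x},\quad x\in\Sigma .
\]
Since the characters $\{e^{in\cdot x}\}$ are orthonormal in $L^2(\mathbb T^3)$, one has $\|Ta\|_{L^2(\mathbb T^3)}=\|a\|_{\ell^2}$, and $Ta$ is an eigenfunction with eigenvalue $E$; thus Theorem~\ref{Theorem1} is exactly the statement $\|Ta\|_{L^2(\Sigma,d\sigma)}^2\le K_\Sigma\|a\|_{\ell^2}^2$, i.e. $\|T\|\le K_\Sigma^{1/2}$.

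Next I would identify the adjoint $T^\ast:L^2(\Sigma,d\sigma)\to\ell^2(\mathcal E_E)$. For $g\in L^2(\Sigma,d\sigma)$ and $a\in\ell^2(\mathcal E_E)$,
\[
\langle Ta,g\rangle_{L^2(\Sigma,d\sigma)}=\int_\Sigma\Big(\sum_{n\in\mathcal E_E}a_n e^{in\cdot x}\Big)\overline{g(x)}\,d\sigma(x)=\sum_{n\in\mathcal E_E}a_n\,\overline{\Big(\int_\Sigma e^{-in\cdot x}g(x)\,d\sigma(x)\Big)},
\]
so that, writing $\mu=g\,d\sigma$ for the associated (absolutely continuous) measure on $\Sigma$, we get $(T^\ast g)_n=\int_\Sigma e^{-in\cdot x}\,d\mu(x)=\widehat\mu(n)$. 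In other words $T^\ast$ is precisely the Fourier-restriction operator $\mu\mapsto\widehat\mu|_{\mathcal E_E}$ of the theorem, and every element of $L^2(\Sigma,d\sigma)$ is of this form. Hence
\[
\big\|\widehat\mu|_{\mathcal E_E}\big\|_{\ell^2(\mathcal E_E)}=\|T^\ast\mu\|_{\ell^2}\le\|T^\ast\|\,\|\mu\|_{L^2(\Sigma,d\sigma)}=\|T\|\,\|\mu\|_{L^2(\Sigma,d\sigma)}\le K_\Sigma^{1/2}\,\|\mu\|_{L^2(\Sigma,d\sigma)},
\]
which is the assertion with $C_\Sigma=K_\Sigma^{1/2}$.

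The only points needing (minor) care are bookkeeping ones: matching the Fourier-normalization convention ($e(z)=e^{2\pi i z}$ versus $e^{iz}$, which merely rescales constants), placing the complex conjugates so that $T^\ast$ comes out as the \emph{unweighted} transform $\widehat\mu$ rather than a $\sigma$-twisted variant, and observing that, $\mathcal E_E$ being finite, $T$ and $T^\ast$ are automatically bounded so that no density or approximation issue arises. Accordingly there is no real obstacle in this theorem — its difficulty lies entirely in Theorem~\ref{Theorem1}, established in the preceding sections.
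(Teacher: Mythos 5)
Your proposal is correct and coincides with the paper's own argument: the paper introduces Theorem~\ref{Theorem1x} precisely as the dual formulation of Theorem~\ref{Theorem1}, and your identification of the restriction map as the adjoint of the extension operator $T:\ell^2(\mathcal E_E)\to L^2(\Sigma,d\sigma)$, with $\|T\|\leq K_\Sigma^{1/2}$ given by Theorem~\ref{Theorem1}, is exactly that duality. All the substance indeed lies in Theorem~\ref{Theorem1}, so nothing further is needed.
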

Setting $R=\sqrt E$, our argument involves the following properties
of
 $\mathcal E=\mathcal E_E\subset RS^2$:

\noindent{\bf i)} There is $\ve_1>0$ such that if $r=R^{1-\ve_1}$
and $C_r\subset RS^2$ is a cap of size $r$, then (for some
sufficiently small $\ve>0$) \be\label{eq10.3} |\mathcal E\cap
C_r|\lesssim \Big(\frac rR\Big)^2 R^{1+\ve} \ee

\noindent{\bf ii)} There is some constant $\eta>0$ such that if
$r<R$ and $C_r\subset RS^2$, then \be\label{eqref10.4} |\mathcal
E\cap C_r|\ll R^\ve\Big[\Big(\frac rR\Big)^\eta r+1\Big] \text { for
all $\ve>0$} \ee

\noindent{\bf iii)}  Denoting $\{C_\alpha\}$ a partition of $RS^2$
in cells of size $\sim\sqrt R$, \be\label{10.5} \sum_\alpha|\mathcal
E\cap C_\alpha|^2 \ll R^{1+\ve} \text { for all $\ve>0$} \ee holds.

 Note that we did not use the fact that $\mathcal E\subset\mathbb Z^3$.

The `idealization' of $\mathcal E$ is a set $\mathcal S\subset RS^2$
which elements are $\sqrt R$-separated. For such sets $\mathcal S$,
the restriction operator \be\label{eq10.6} L^2 \Big(\Sigma,
d\sigma\Big)\rightarrow \ell^2 (\mathcal S):\mu\rightarrow
\hat\mu\big|_{\mathcal S} \ee with $\Sigma$ as in Theorem
\ref{Theorem1x}, is easily seen to be bounded. By \eqref{eq1.10}, it
suffices indeed to show that \be\label{eq10.7} \sum_{\xi,
\xi'\in\mathcal S} \frac{|a_\xi| \, |a_{\xi'}|} {|\xi-\xi'|+1} \leq
C\big(\sum_{\xi\in\mathcal S}|a_\xi|^2\Big). \ee Our assumption on
$\mathcal S$ implies that $\max_{\xi'} \big(\Sigma_{\xi\in S} \frac
1{|\xi-\xi'|}\big) < C$ and \eqref {eq10.7} follows from Schur's
test.

Surprisingly, the higher dimensional analogue, where one considers a
set $\mathcal S\subset RS^{d-1}$ of $R^{\frac 1{d-1}}$-separated
points as idealization of $\mathcal E =\{\xi\in\mathbb Z^d; |\xi|^2
=R^2\}$, may fail for $d$ large enough. This illustrates the
difficulty of proving  Theorem~\ref{Theorem1} for general dimension
and the need to exploit somehow that $\mathcal E\subset\mathbb Z^d$.

In the next example $\Sigma =S^{d-1}$.

\begin{lemma} \label{Lemma10.8}

Let $d\geq 8$. Then for large $R$ there is a set $S=S(R)\subset
\hfill\break \{x \in\mathbb R^d| \, |x|=R\}$ with the following
property \be\label{eq10.9} |\xi-\xi'| \gtrsim R^{\frac 1{d-1}} \text
{ for } \xi\not= \xi' \text { in } S \ee and such that the operator
$$
L^2(S^{d-1}, d\sigma)\to \ell^2(S): \mu\mapsto\hat\mu|S
$$
has norm at least $R^{\frac 16- \frac 1{d-1}}$.
\end{lemma}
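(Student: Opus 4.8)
The plan is to construct $S$ so that many of its points cluster into a single flat ``strip'' on the sphere, where the Fourier transform of a well-chosen measure on $S^{d-1}$ fails to be square-summable against the natural $\ell^2$ norm on $S$. Concretely, I would first exploit the high-dimensional lattice-point count: for $d\geq 5$ one has $\rho_d(R^2)\approx R^{d-2}$, and more importantly the estimates \eqref{eq17}--\eqref{eq18} (or their dual constructions) show that a cap $C_r\subset RS^{d-1}$ of size $r\approx R^{(d-2)/(d-1)}$ already contains $\gtrsim r^{d-1}/R \approx R^{d-3}$ lattice points, which is much more than the ``ideal'' count $(r/R)^{d-1}R^{d-2}\cdot(\text{spacing})^{-(d-1)}$ would predict if the points were $R^{1/(d-1)}$-separated throughout. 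So first I would take $\mathcal E=\mathbb Z^d\cap RS^{d-1}$ and thin it out to a subset $S$ which is $\gtrsim R^{1/(d-1)}$-separated (possible by a greedy/packing argument, losing only a bounded factor in each cap), while retaining a cap $C=C_r$ with $r\approx R^{(d-2)/(d-1)}$ for which $|S\cap C|\gtrsim R^{d-3-\epsilon}$ — more than the flat-model heuristic allows, because the lattice points inside such a cap are forced onto a bounded number of affine subspaces of dimension roughly $d-2$ and hence pack too densely relative to the cap's ``true'' dimension.

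Next I would test the restriction operator against a single rank-one input. Take $\mu = \hat{g}\,d\sigma$ where $g$ is (essentially) the indicator of a small spherical cap $C^*$ on $S^{d-1}$ dual to the cap $C$ on $RS^{d-1}$: by stationary phase, $\hat\mu(\xi)=\widehat{(g\,d\sigma)}(\xi)$ has modulus $\gtrsim R^{-(d-1)/2}$ (the standard $|\xi|^{-(d-1)/2}$ decay of the sphere's Fourier transform) for all $\xi$ in the cap $C$, with a coherent phase after removing the linear part. Then
\[
\|\hat\mu|_S\|_{\ell^2(S)}^2 \;\gtrsim\; \sum_{\xi\in S\cap C}|\hat\mu(\xi)|^2 \;\gtrsim\; R^{-(d-1)}\,|S\cap C| \;\gtrsim\; R^{-(d-1)}\,R^{d-3-\epsilon} \;=\; R^{-2-\epsilon},
\]
while $\|\mu\|_{L^2(S^{d-1},d\sigma)}^2 = \int |g|^2\,d\sigma \approx \sigma(C^*)\approx (r/R)^{d-1}\approx R^{-(d-1)/(d-1)\cdot ?}$; carefully, $C^*$ has radius $\approx R/r\cdot R^{-1}=r^{-1}\approx R^{-(d-2)/(d-1)}$, so $\sigma(C^*)\approx R^{-(d-1)(d-2)/(d-1)} = R^{-(d-2)}$. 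Hence the operator norm squared is $\gtrsim R^{-2-\epsilon}/R^{-(d-2)} = R^{d-4-\epsilon}$, giving a norm $\gtrsim R^{(d-4)/2}$, which exceeds $R^{1/6-1/(d-1)}$ once $d$ is large. (The exact exponent $\frac16-\frac1{d-1}$ in the statement will come out of optimizing the size $r$ of the cap and the definition of $\mathcal E$-like sets $S$ more cleverly than the crude choice above; I would tune $r\approx R^\theta$ and track the two competing contributions — the $|\xi|^{-(d-1)/2}$ decay versus the anomalous lattice-point concentration — to land on the claimed bound.)

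The main obstacle is the construction of the dense cap while preserving genuine $R^{1/(d-1)}$-separation \emph{and} keeping the phase of $\hat\mu$ coherent over $S\cap C$. The phase function from stationary phase is, after subtracting the linear term, quadratic in $\xi$ with Hessian of size $\sim 1/R$ on the cap $C$ of size $r$, contributing a phase variation of order $r^2/R\approx R^{(d-3)/(d-1)}$, which is large — so one cannot simply sum $|\hat\mu(\xi)|^2$ naively unless one works with $|\hat\mu|^2$ (which I did above and is fine, since squaring kills the phase) rather than with $\hat\mu$ itself. Thus testing against a single $\mu$ and using the \emph{square} of the $\ell^2$-norm sidesteps the phase issue entirely; the real work is the arithmetic/geometric input that a cap of the right size on the rational sphere holds anomalously many well-separated lattice points, which is exactly the phenomenon quantified (in the opposite, upper-bound direction) by the term $r^{d-3}$ in \eqref{eq18} and which becomes dominant, and genuinely large, precisely when $d\geq 8$.
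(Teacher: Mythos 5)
Your test function cannot work, and the gap is quantitative, not just technical. You take $\mu=g\,d\sigma$ with $g$ essentially the indicator of a cap $C^*$ of radius $\rho\approx r^{-1}\approx R^{-(d-2)/(d-1)}$ and claim $|\hat\mu(\xi)|\gtrsim R^{-(d-1)/2}$ on the dual cap $C$. That amplitude is the stationary-phase size of the Fourier transform of the \emph{full} surface measure (equivalently, of a cap of radius at least the Fresnel scale $R^{-1/2}$); for a cap of radius $\rho\ll R^{-1/2}$ there is no oscillation at all and the correct size is $|\hat\mu(\xi)|\approx\sigma(C^*)\approx\rho^{d-1}=R^{-(d-2)}$, which is far smaller. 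Redoing your computation with the correct amplitude gives $\Vert\hat\mu|_S\Vert_2^2/\Vert g\Vert_2^2\lesssim \sigma(C^*)\cdot|S\cap C|\approx R^{-(d-2)}R^{d-3}=R^{-1}$, i.e.\ the single-cap test yields operator norm $O(R^{-1/2})$, not $R^{(d-4)/2}$. Indeed, for \emph{any} single cap $C^*$ of radius $\rho$ one has $|\widehat{g\,d\sigma}(\xi)|\leq\Vert g\Vert_{L^1}\leq\sigma(C^*)^{1/2}\Vert g\Vert_{L^2}$ while the dual cap contains at most $(\rho K)^{-(d-1)}$ points of a $K$-separated set ($K=R^{1/(d-1)}$), so the ratio is at most $\sigma(C^*)(\rho K)^{-(d-1)}=K^{-(d-1)}=R^{-1}$: a one-bump test can never exhibit unboundedness. (Your claimed exponent $R^{(d-4)/2}$ should itself have been a warning sign, since it would make the restriction problem trivially hopeless for all separated sets.) A secondary point: $R^{d-3}$ points with separation $R^{1/(d-1)}$ in a cap of size $R^{(d-2)/(d-1)}$ is exactly the saturated packing count, not an ``anomalous concentration,'' so there is no excess density to exploit there.

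The idea you are missing is constructive interference across \emph{many} separated bumps, and this is exactly where the arithmetic and the hypothesis $d\geq 8$ enter in the paper's proof. There, $S$ is not a subset of $\mathbb Z^d\cap RS^{d-1}$ at all: it is the graph of the sphere over the dilated lattice $K\mathbb Z^{d-1}$ restricted to a cap of size $R^{2/3}$ (so the separation \eqref{eq10.9} is automatic). The measure $\mu$ is supported on $\Omega$, a union of $|\mathcal F|\sim K^{d-3}$ balls of radius $R^{-2/3}$ centered at the points $\frac1K\mathcal F$, where $\mathcal F=\{y\in\mathbb Z^{d-1}:|y|^2=K^2\}$; the count $|\mathcal F|\sim K^{d-3}$ requires $d-1\geq 7$, which is precisely where $d\geq 8$ is used. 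The crucial point is that for $\xi=(Kz,\xi_d)\in S$ and $x$ in the ball around $y/K$, the phase is $e(Kz\cdot y/K)=e(z\cdot y)=1$ exactly, so all $K^{d-3}$ components of $\Omega$ contribute with the \emph{same} phase to $\hat\mu(\xi)$ for \emph{every} $\xi\in S$ simultaneously; this coherent multiplicity (after the normalization $\Vert d\mu/d\sigma\Vert_2=1$) is the entire source of the gain $\Vert\hat\mu|_S\Vert_2^2\gtrsim|S|\,|\Omega|\sim R^{1/3}K^{-2}$, i.e.\ norm $R^{1/6-1/(d-1)}$. Squaring $|\hat\mu(\xi)|$ does remove the phase in the $\xi$-sum, as you note, but the phase you actually need to control is the one \emph{inside} the integral defining $\hat\mu(\xi)$, over the support of $\mu$, and no single cap can make that integral large enough.
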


%\noindent{\sl Proof.}
\begin{proof}
Let $K=[R^{\frac 1{d-1}}]$. In fact we will only use points in the
cap
$$
C=\{|x|=R\} \cap B\Big(Re_d, \frac 1{100} R^{2/3}\Big).
$$
(see Figure~\ref{figcirc}). We choose \be\label{eq10.10} S=\Big\{
\Big(Kz_1, \ldots, Kz_{d-1},
\sqrt{R^2-K^2(z_1^2+\cdots+z^2_{d-1})}\Big), z_i\in\mathbb Z, |z|<
\frac {R^{2/3}}{100K}\Big\} \ee

\begin{figure}[h]
\begin{center}
\input circ.tex
\caption{} \label{figcirc}
\end{center}
\end{figure}

Next, we introduce the measure $\mu$ on $S_{d-1}$,
$\Vert\frac{d\mu}{d\sigma}\Vert_2 =1$. Let
$$
\mathcal F=\{(y_1, \ldots, y_{d-1})\in \mathbb Z^{d-1};
y^2_1+\cdots+ y^2_{d-1} =K^2\} \;
$$
Thus
$$
|\mathcal F| \sim K^{d-3}.
$$
Define
$$
\Omega =\Big\{x=(x_1, \ldots, x_d)\in S^{d-1}; \dist \Big(x', \frac
1K\mathcal F\Big)< R^{-\frac 23}\Big\}
$$
where $ x'=(x_1, \ldots, x_{d-1})$. Hence
$$
1-x^2_d =|x'|^2 >1- 2R^{-2/3} \text { and } |x_d|< \sqrt 2 R^{-\frac
13}.
$$
Also \be\label{eq10.11} |\Omega| \sim |\mathcal F| \cdot R^{-\frac
23(d-2)} R^{-\frac 13} \sim K^{d-3} R^{-\frac 23 d+1}. \ee Define
$\mu$ on $S_{d-1}$ by \be\label{eq10.12} \frac{d\mu}{d\sigma} =
\frac{e(-R\cdot x_d)}{|\Omega|^{\frac 12}} 1_\Omega\;. \ee Evaluate
\be\label{eq 10.13} \sum_{\xi\in S} |\hat\mu(\xi)|^2 =|\Omega|^{-1}
\sum_{\xi\in S} |\widehat {1_\Omega} (\xi -Re_d)|^2. \ee Note that
$S-Re_d$ is contained in $\frac 1{100} R^{2/3}\times\cdots\times
\frac 1{100} R^{2/3}\times \frac 1{100} R^{1/3}$ and therefore, from
definition of $\Omega$ and $\mathcal S$ \be\label{eq10.14}
\begin{aligned}
e\big((\xi -Re_d)\cdot x\big) &\approx e(\xi_1x_1+\cdots+ \xi_{d-1} x_{d-1})\\
&\approx e(Kz \cdot x')=1
\end{aligned}
\ee for $\xi\in S, x\in\Omega$ and $z\in\mathbb Z^{d-1}\cap B\big(0,
\frac{R^{2/3}}{100K}\big)$. It follows from the definition of
$\Omega$ that \be\label{eq10.15} \Vert \hat\mu |_S\Vert^2_2 \gtrsim
|\mathcal S| \, |\Omega| \sim\Big(\frac{R^{2/3}}K\Big)^{d-1} \cdot
K^{d-3} R^{-\frac 23 d+1}=R^{\frac 13} K^{-2} \ee hence  the claim.
Note that we may replace $S$ by $T(S)$, with $T$ an arbitrary
orthogonal transformation of $\mathbb R^d$, with the same conclusion
for the restriction operator. \end{proof}

\section{Restriction upper bounds for generic  eigenvalues}
\label{sec:generic}

In this section we prove Theorem~\ref{Theorem6}. The proof of
Theorem~\ref{Theorem6} is based on the following arithmetic
statement (Lemma 2.9 in \cite{BR2}).

\begin{lemma}\label{Lemma11.1}
Fix $\ve>0$ and taking $N\in\mathbb Z_+$ large, $E\in\{1, \ldots,
N\}$ and $\lambda =\sqrt E$, one has that \be\label{eq11.2}
\min_{\substack {x\not= y\in\mathbb Z^2\\ |x|=\lambda =|y|}} |x-y|>
\lambda^{1-\ve} \ee except for a set of $E$-values of size at most
$N^{1-\frac\ve 3}$.
\end{lemma}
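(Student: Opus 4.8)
The plan is to bound the exceptional set by a direct counting argument: call $E$ \emph{bad} if there exist $x\ne y\in\Z^2$ with $|x|^2=|y|^2=E$ and $|x-y|\le\lambda^{1-\ve}=E^{(1-\ve)/2}$, attach to each bad $E$ a witnessing pair, and then count how many bad $E\le N$ can arise. Given such a pair $x\ne y$, set $v=x-y$ and $w=x+y$, so $v,w\in\Z^2$, $x=(v+w)/2$ and $y=(w-v)/2$. From $|x|^2=|y|^2$ one gets $v\cdot w=|x|^2-|y|^2=0$, hence $4E=|v+w|^2=|v|^2+|w|^2$. Here $v\ne0$ by assumption, and $w\ne0$ as well: $w=0$ would force $|v|=2|x|=2\lambda$, which contradicts $|v|\le\lambda^{1-\ve}$ once $\lambda$ (hence $N$) is large. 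Thus each bad $E$ yields a pair of nonzero orthogonal integer vectors $(v,w)$ with $4E=|v|^2+|w|^2$ and $|v|\le E^{(1-\ve)/2}\le N^{(1-\ve)/2}$.

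Next I would parametrize these pairs. Write $v=d\,v'$ with $d=\gcd(v_1,v_2)\ge1$ and $v'$ primitive. Since $w\perp v'$ and $v'$ is primitive, the integer vectors orthogonal to $v'$ form the rank-one lattice $\Z\cdot (v')^{\perp}$, where $(v')^{\perp}=(-v_2',v_1')$ satisfies $|(v')^{\perp}|=|v'|$; hence $w=k\,(v')^{\perp}$ for some integer $k$, and $|w|=|k|\,|v'|$. Putting $m=|k|\ge1$ we obtain
\[
4E=|v'|^2\bigl(d^2+m^2\bigr).
\]
The two size constraints then read $|v'|^2 d^2=|v|^2\le N^{1-\ve}$ and $|v'|^2(d^2+m^2)=4E\le4N$, the latter giving in particular $|v'|^2m^2\le4N$. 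So every bad $E\le N$ produces a triple $(v',d,m)$ with $v'\in\Z^2\setminus\{0\}$, $d,m\in\Z_{\ge1}$, and
\[
d\,|v'|\le N^{(1-\ve)/2},\qquad m\,|v'|\le 2\sqrt N;
\]
moreover the triple determines $E=\tfrac14|v'|^2(d^2+m^2)$, so distinct bad $E$'s give distinct triples.

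Finally I would count the triples. For fixed $v'$ with $1\le|v'|\le N^{(1-\ve)/2}$ there are at most $N^{(1-\ve)/2}/|v'|$ admissible values of $d$ and at most $2\sqrt N/|v'|$ admissible values of $m$, so the number of bad $E\le N$ is
\[
\le \sum_{1\le|v'|\le N^{(1-\ve)/2}}\frac{N^{(1-\ve)/2}}{|v'|}\cdot\frac{2\sqrt N}{|v'|}
=2N^{1-\ve/2}\sum_{0<|v'|\le N^{(1-\ve)/2}}\frac{1}{|v'|^2}\ll N^{1-\ve/2}\log N,
\]
using the elementary dyadic bound $\sum_{0<|v'|\le T}|v'|^{-2}\ll\log T$ for lattice points in the plane. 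For $N$ large in terms of $\ve$ this is $\le N^{1-\ve/3}$, which is the assertion of the lemma.

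The argument is mostly bookkeeping; the one step deserving care is the parametrization — reducing $v$ to a primitive vector via its $\gcd$ and then describing all integer vectors orthogonal to a primitive vector as a rank-one lattice — together with excluding the degenerate case $w=0$, which is ruled out exactly because $|x-y|$ is forced to be far smaller than $\lambda$. No information on $r_2(E)$ is needed: the power saving comes entirely from the constraint $|v|\le E^{(1-\ve)/2}$, which pins $d\,|v'|$ to size $N^{(1-\ve)/2}$ rather than $N^{1/2}$. (This is Lemma~2.9 of \cite{BR2}.)
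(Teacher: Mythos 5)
Your proof is correct and is essentially the paper's own argument (Lemma~2.9 of \cite{BR2}) in symmetrized form: the paper writes the short difference $z=x-y=dz'$ with $z'$ primitive and counts lattice points $x$, $|x|\le\sqrt N$, on the lines $2x\cdot z'=d|z'|^2$ (spacing $|z'|$), which after the substitution $w=x+y=k(v')^{\perp}$ is exactly your count of triples $(v',d,m)$. Both versions yield the same bound $O(N^{1-\ve/2}\log N)\le N^{1-\ve/3}$.
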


We recall the argument.

\noindent {\bf Proof of Lemma \ref{Lemma11.1}.}

Let $M=\sqrt N$ and estimate the size of the set \be\label{eq11.3}
S=\{x\in\mathbb Z^2; |x|\leq M \text { and } 2x. z=|z|^2 \text { for
some } z\in\mathbb Z^2, 0<|z|<M^{1-\ve}\}. \ee Writing $z=d.z',
d\in\mathbb Z_+$ and $z'=(z_1', z_2')\in\mathbb Z^2$ primitive, the
equation \be\label{eq11.4} 2x.z'=d|z'|^2 \ee has at most $c\frac
M{|z'|}$ solutions in $x, |x|\leq M$, for given $z'$ primitive.

Hence
$$
|S|\leq C\sum_{1\leq d<M} \ \sum_{\substack{z'\in \mathbb Z^2\\
0<|z'|<\frac{M^{1-\ve}} d}} \ \frac M{|z'|} < C\sum_{d< M}
\frac{M^{2-\ve}} d< CM^{2-\ve}\log N.
$$
Since $|S|$ is obviously an upper bound for the number of
exceptional $E\in \{1, \ldots, N\}$, Lemma \ref{Lemma11.1} follows.

\medskip
Theorem \ref{Theorem6} is therefore a consequence of
\medskip

\begin{lemma}\label{Lemma11.4}

Let $\ve>0$ be small enough and $E=\lambda^2 \in\mathbb Z_+$ satisfy
\eqref {eq11.2}.

Let $\Sigma$ be a $C^2$-smooth curve in $\mathbb T^2$. Then any
eigenfunction $\vp_\lambda$ of $\mathbb T^2$ satisfies
\be\label{eq11.5} \Vert\vp_\lambda\Vert_{L^2(\Sigma)} \leq C_\Sigma
\Vert\vp_\lambda\Vert_2.\ee
\end{lemma}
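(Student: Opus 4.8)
The goal is a uniform $L^2$-restriction bound $\Vert\vp_\lambda\Vert_{L^2(\Sigma)}\le C_\Sigma\Vert\vp_\lambda\Vert_2$ for those eigenvalues $E=\lambda^2$ satisfying the separation property \eqref{eq11.2}, i.e. the lattice points on $\lambda S^1$ are $\lambda^{1-\ve}$-separated. The strategy is to imitate the $d=2$ argument from the Introduction, but now the separation is exactly what lets us bypass Jarnik and control the off-diagonal terms directly. Write $\vp_\lambda=\sum_{n\in\vE}a_n e(n\cdot x)$ with $\vE=\Z^2\cap\lambda S^1$ and $\sum|a_n|^2=1$. Since $\Sigma$ is only assumed $C^2$, I cannot localize to a piece of nonvanishing curvature; instead I should cover $\Sigma$ by finitely many coordinate charts and, on each, use only the crude decay $|\widehat\sigma(\xi)|\ll|\xi|^{-1/2}$ from \eqref{eq11} on the curved portions together with a trivial $O(1)$ bound, or — more robustly — split $\Sigma$ into an (arc-length bounded) union of graphs and integrate by parts once. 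The key point is that on $\Sigma$,
\[
\int_\Sigma|\vp_\lambda|^2\,d\sigma=\sum_{m,n\in\vE}a_m\overline{a_n}\,\widehat\sigma(m-n),
\]
and the diagonal $m=n$ contributes exactly $\widehat\sigma(0)\sum|a_n|^2=\Vert\vp_\lambda\Vert_2^2$ (up to the normalization of $\sigma$). So everything reduces to bounding $\sum_{m\ne n}|a_m||a_n|\,|\widehat\sigma(m-n)|$.

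\textbf{Controlling the off-diagonal sum.} For $m\ne n$ in $\vE$ the separation hypothesis \eqref{eq11.2} gives $|m-n|>\lambda^{1-\ve}$, hence by \eqref{eq11} (applied on each curved chart; on a genuinely flat chart one simply integrates by parts to get even faster decay away from the finitely many tangency directions, which contribute $O(|\xi|^{-1})$ generically and are handled by the same separation) we obtain $|\widehat\sigma(m-n)|\ll\lambda^{-(1-\ve)/2}$. Therefore
\[
\sum_{m\ne n}|a_m|\,|a_n|\,|\widehat\sigma(m-n)|
\ll\lambda^{-(1-\ve)/2}\Big(\sum_{n}|a_n|\Big)^2
\ll\lambda^{-(1-\ve)/2}\,\#\vE\cdot\sum_n|a_n|^2,
\]
by Cauchy–Schwarz. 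Since $\#\vE=\rho_2(E)\ll\lambda^{\ve'}$ for every $\ve'>0$ (the classical divisor bound for sums of two squares), the whole off-diagonal sum is $\ll\lambda^{-1/2+\ve+\ve'}$, which tends to $0$. Hence for $\lambda$ large the restriction integral is $(1+o(1))\Vert\vp_\lambda\Vert_2^2$, and absorbing the finitely many small-$\lambda$ exceptions into the constant $C_\Sigma$ gives \eqref{eq11.5}. (Note the lower bound $\int_\Sigma|\vp_\lambda|^2\ge(1-o(1))\Vert\vp_\lambda\Vert_2^2$ comes out of the same computation, but only the upper bound is asserted here.)

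\textbf{The main obstacle.} The only delicate point is that $\Sigma$ is merely $C^2$ and possibly flat somewhere, so the clean stationary-phase decay \eqref{eq11}, which assumed nonvanishing curvature, is not available globally. The fix is that a single integration by parts in a graph parametrization $\Sigma\colon t\mapsto(t,f(t))$ gives $\big|\int e(\xi\cdot(t,f(t)))\,dt\big|\ll |\xi_1+\xi_2 f'(t)|^{-1}$ away from stationary points plus a second-derivative remainder; the set of $\xi\in\Z^2$ for which $\xi_1+\xi_2 f'$ has a zero in the chart is a union of $O(1)$ sectors, and on those one falls back on $|\widehat\sigma(\xi)|\ll 1$ — but then the relevant $m-n$ all lie in a sector and by \eqref{eq11.2} there are at most $O(\#\vE)=O(\lambda^{\ve'})$ of them in each chart, each contributing $|a_m||a_n|$, whose total is again $\ll\lambda^{\ve'}\sum|a_n|^2$ — wait, that is not yet $o(1)$. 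So in fact one should be slightly more careful: on the flat sectors use that a $C^2$ curve still has $\big|\int e(\xi\cdot\gamma(t))\,dt\big|\ll |\xi|^{-1/2}$ provided we only integrate over the portion where curvature is bounded below, and handle genuine straight-line sub-segments of $\Sigma$ by the observation that a straight segment in $\TT^2$ with irrational-or-generic slope still forces $|\xi\cdot\dot\gamma|\gg\lambda^{1-\ve}$ for $\xi=m-n$ via \eqref{eq11.2}, exactly as in the proof of Theorem~\ref{Theorem4} (see \eqref{eq9.30}). Thus the real content is packaging the $C^2$ regularity plus the arithmetic separation \eqref{eq11.2} so that every pair $m\ne n$ genuinely gains a factor $\lambda^{-(1-\ve)/2}$ (or at least $\lambda^{-c}$ for some fixed $c>0$), which then beats the $\lambda^{\ve'}$ from $\#\vE$; once that is secured the proof is the two-line Cauchy–Schwarz above.
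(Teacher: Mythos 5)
There is a genuine gap, and it sits exactly at the point you flagged as ``the main obstacle'': the claim that, thanks to \eqref{eq11.2}, \emph{every} off-diagonal pair $m\neq n$ gains a factor $\lambda^{-c}$ — either through $|\widehat\sigma(m-n)|\ll |m-n|^{-1/2}$ or through $|(m-n)\cdot\dot\gamma|\gg\lambda^{1-\ve}$ on straight pieces — is false. The separation hypothesis \eqref{eq11.2} bounds $|m-n|$ from below; it says nothing about the projection of $m-n$ onto a \emph{fixed} direction. Concretely, let $\Sigma$ contain a horizontal segment and take $m=(a,b)$, $n=(a,-b)$ with $a^2+b^2=E$, $b\neq 0$: then $(m-n)\cdot\dot\gamma=0$, the phase $(m-n)\cdot x$ is constant along the segment, and the corresponding coefficient $\int_\Sigma e((m-n)\cdot x)\,d\sigma$ is of the order of the length of the segment, with no decay at all, however separated $\vE$ is. (The same example, with $\vp=e(m\cdot x)-e(n\cdot x)$ vanishing identically on $x_2=0$, shows that your parenthetical claims that the off-diagonal sum is $o(1)$ and that the restriction integral is $(1+o(1))\Vert\vp_\lambda\Vert_2^2$ cannot hold for general $C^2$ curves: the off-diagonal term can be comparable to — indeed cancel — the diagonal.) In addition, \eqref{eq11} is only available under a curvature lower bound, and a general $C^2$ curve does not decompose into finitely many arcs of curvature bounded below plus straight segments, so the proposed chart-by-chart dichotomy is not available either. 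Your fallback of using the trivial bound on the ``bad sectors'' then only yields $O(\lambda^{\ve'})$, as you yourself noticed, so the argument as written does not close.

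What is missing is a counting, not a size, argument, and this is what the paper does. One partitions an arclength parametrization into intervals $I_s$ of length $\lambda^{-\rho}$, $\tfrac12<\rho<1$, replaces $\gamma$ on $I_s$ by its tangent line (error $O(|\xi|\lambda^{-2\rho})$, harmless since $|\vE|\ll\lambda^{\ve}$), and uses only the non-stationary bound $\min\{\lambda^{-\rho},\,|(\xi-\xi')\cdot\dot\gamma(t_s)|^{-1}\}$ as in \eqref{eq11.7}. The role of \eqref{eq11.2} is then the key observation \eqref{eq11.8}: for a fixed direction $\dot\gamma(t_s)$ and a fixed $\xi\in\vE$, there is \emph{at most one} $\xi'\in\vE$ for which the chord $\xi-\xi'$ fails to satisfy $|P_{\dot\gamma(t_s)}(\xi-\xi')|\gtrsim\lambda^{-\ve}|\xi-\xi'|\gtrsim\lambda^{1-2\ve}$; indeed two such $\xi'$ would give chords from $\xi$ within angle $\sim\lambda^{-\ve}$ of the normal direction, hence would lie within $O(\lambda^{1-\ve})$ of each other, contradicting \eqref{eq11.2}. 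Thus the bad pairs form a partial matching $\mathcal E_s$, so $\sum_{(\xi,\xi')\in\mathcal E_s}|a_\xi||a_{\xi'}|\leq\sum_\xi|a_\xi|^2=1$; weighted by $\lambda^{-\rho}$ and summed over the $\sim\lambda^{\rho}$ intervals this contributes $O(1)$, while the good pairs contribute $\lesssim\lambda^{\rho-1+2\ve}|\vE|^2\lesssim 1$. So the conclusion is a uniform constant $C_\Sigma$ (not $1+o(1)$), obtained by bounding the \emph{number} of non-decaying pairs per point and per tangent direction rather than by forcing decay for all pairs — the step your proposal would need but does not supply.
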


\begin{proof}

Let $\gamma:I\to \Sigma$, $I\subset [0, 1]$, be an arclength
parametrization. Fix $\frac 12<\rho<1$ and partition $I=\bigcup I_s,
I_s =[t_s, t_{s+1}]$ in intervals of size $\lambda^{-\rho}$. Since
$$\gamma(t) =\gamma(t_s)+(t-t_s)\dot\gamma (t_s)+O(\lambda^{-2\rho})$$
for $t\in I_s$, it follows  that \be\label{eq11.6}
\begin{split}
\Big|\int_I e^{i\xi.\gamma(t)}dt\Big| &\leq
\sum_s\Big|\int_{t_s}^{t_{s+1}} e^{i\xi\cdot  \gamma(t)} dt\Big| \\
&= \sum_s\Big|\int_0^{\lambda^{-\rho}} e^{i\xi\cdot  \dot\gamma(t_s)
u}du \Big| + O(|\xi|\lambda^{-2\rho}).
\end{split}
\ee Denote $\mathcal E =\{\xi\in\mathbb Z^2; |\xi|=\lambda\}$ and
$\vp=\sum_{\xi\in\mathcal E} a_\xi e(x\cdot \xi)$,
$\Vert\vp\Vert_2\leq 1$. Estimate using \eqref{eq11.6}
\be\label{eq11.7}
\begin{aligned}
\int_\Sigma |\vp|^2 d\sigma&\leq \sum_{\xi, \xi'\in\mathcal E}|a_\xi|\, |a_{\xi'}|\Big|\int_I e\big(\gamma(t)\cdot (\xi-\xi')\big)dt\Big|\\
&\leq \sum_s \ \sum_{\xi, \xi'\in \mathcal E} |a_\xi| \, |a_{\xi'}|
\min \Big\{ \lambda^{-\rho}, \frac 1{|(\xi-\xi')\cdot
\dot\gamma(t_s)|}\Big\} +|\mathcal E|^2 \lambda^{1-2\rho}.
\end{aligned}
\ee Fix $1\leq s\leq \lambda^\rho$. If we fix $\xi\in\mathcal E$ and
let $\xi'\in\mathcal E\backslash \{\xi\}$ vary, it follows from
\eqref{eq11.2} that \be\label{eq11.8}
|P_{\dot\gamma(t_s)}(\xi-\xi')|\gtrsim
\lambda^{-\ve}|\xi-\xi'|\gtrsim \lambda^{1-2\ve} \ee

\noindent except for at most 1 element. Thus \eqref{eq11.8} holds
for $(\xi, \xi')\not\in \mathcal E_s$ where $\mathcal
E_s\subset\mathcal E$ has the property that for each $\xi$ (resp.
$\xi'$) there is at most one $\xi'$ (resp. $\xi$) with $(\xi,
\xi')\in\mathcal E_s$.  From \eqref{eq11.8}
$$
\begin{aligned}
\eqref{eq11.7}&\lesssim \sum_s \ \sum_{\xi, \xi'\in\mathcal
E}|a_\xi| \, |a_{\xi'}|
\lambda^{-1+2\ve}+\sum_s\lambda^{-\rho}\sum_{(\xi, \xi')\in \mathcal
E_s}
|a_\xi| \, |a_{\xi'}|+|\mathcal E|^2\lambda^{1-2\rho}\\
&\lesssim \lambda^{-1+\rho+2\ve}|\mathcal E|^2 +\max_s \sum_{(\xi, \xi')\in\mathcal E_s}|a_\xi| \, |a_{\xi'}|+|\mathcal E|^2 \lambda^{1-2\rho}\\
&\lesssim 1+ |\mathcal E|^2 (\lambda^{-1+\rho+2\ve}+
\lambda^{1-2\rho})\lesssim 1
\end{aligned}
$$
for $\ve>0$ small enough, since $\frac 12<\rho<1$.

This proves Lemma \ref{Lemma11.4}.
\end{proof}

\section{The number of nodal domains for a random eigenfunction}
\label{sec:NaSod}

In this section  we prove the analogue of the Nazarov-Sodin theorem
\cite{N-S} on the number of nodal domains for $\mathbb T^d, d\geq
3$. We restrict ourselves to $d=3$ as some extra arithmetical
assumptions are required in this case.

\begin{theorem}\label{Theorem7}
Let $d=3$. Assume $E=\lambda^2 \in\mathbb Z$ sufficiently large and
$E\not= 0, 4, 7 (\text{mod}\, 8)$. The number of components of the
nodal set $N$ of a `typical' eigenfunction $\vp_\lambda$
 is of the order $\lambda^3$.
\end{theorem}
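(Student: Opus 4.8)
The plan is to adapt the Nazarov--Sodin strategy for random spherical harmonics to the torus $\mathbb T^3$. We work on the eigenspace $\{\xi\in\mathbb Z^3:|\xi|^2=E\}=\mathcal E$, equipped with the Gaussian ensemble $\varphi_\lambda(x)=\sum_{\xi\in\mathcal E}a_\xi e(x\cdot\xi)$ with the $a_\xi$ i.i.d.\ standard complex Gaussians (or the real analogue), and we call an eigenfunction `typical' if it lies in the bulk of this ensemble. The heuristic is that $\varphi_\lambda$ locally looks, at scale $1/\lambda$, like a random monochromatic wave, and monochromatic waves in $\mathbb R^3$ have a positive density of nodal components per unit volume; rescaling, one expects $\sim\lambda^3$ components in $\mathbb T^3$. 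The upper bound $\#\{\text{components}\}\ll\lambda^3$ is the easy direction and follows from Courant-type / Donnelly--Fefferman considerations: the nodal set has $(d-1)$-Hausdorff measure $\ll\lambda$ (on $\mathbb T^3$, $\ll\lambda^{d-1}=\lambda^2$ actually by the flat analogue), and a standard argument bounds the number of components by $C\lambda^d$. So the substance is the lower bound $\gg\lambda^3$ with high probability.

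For the lower bound I would follow the local-to-global scheme. First, tile $\mathbb T^3$ by $\sim\lambda^3$ cubes $Q_j$ of side $\sim c/\lambda$. It suffices to show that, with probability $\to 1$, a positive proportion of the cubes $Q_j$ are \emph{stable}: $\varphi_\lambda$ restricted to (a slightly shrunk) $Q_j$ has an entire nodal component contained in $Q_j$, detected by a deterministic local event --- e.g.\ $\varphi_\lambda$ is positive on a small ball at the center of $Q_j$, negative on the boundary sphere of a concentric slightly larger ball, and the $C^1$-norm on $Q_j$ is $\ll\lambda^{1+\epsilon}$ so the nodal set inside is a genuine hypersurface. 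The key analytic input is a rescaled local limit / coupling statement: on a cube of side $c/\lambda$, the field $x\mapsto\varphi_\lambda(z+x/\lambda)$ converges in distribution (in $C^k$ of the unit cube) to a Gaussian field whose spectral measure is a weak-$*$ limit of the empirical measure $\frac1{\#\mathcal E}\sum_{\xi\in\mathcal E}\delta_{\xi/\lambda}$ on $S^2$. Here the arithmetic hypothesis $E\not\equiv 0,4,7\pmod 8$ enters: by Duke's theorem the projected lattice points $\xi/\lambda$ equidistribute on $S^2$, so the limiting field is precisely the (rotation-invariant) random monochromatic wave $\Phi$ on $\mathbb R^3$, for which the above local event has a fixed positive probability $p_0>0$. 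One also needs this convergence to hold simultaneously with quantitative control: the number of $\xi\in\mathcal E$ in any $\sqrt\lambda$-cap is small in an averaged sense, which is exactly Lemma~\ref{IntLemma5.6}, giving the non-degeneracy and tightness needed for the $C^k$-convergence and for uniform bounds on the Gaussian density.

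The second ingredient is a second-moment / concentration argument. Let $Y_j=\mathbf 1_{\{Q_j\text{ stable}\}}$ and $Y=\sum_j Y_j$. From the local limit, $\mathbb E Y_j\ge p_0/2$ for $\lambda$ large, so $\mathbb E Y\gg\lambda^3$. For the variance, cubes $Q_j$ and $Q_{j'}$ separated by more than, say, $(\log\lambda)^C/\lambda$ give nearly independent events, because the covariance kernel $\mathbb E[\varphi_\lambda(x)\overline{\varphi_\lambda(y)}]=\frac1{\#\mathcal E}\sum_{\xi\in\mathcal E}e((x-y)\cdot\xi)$ decays (after rescaling) like the Fourier transform of the limiting spectral measure, i.e.\ like the Bessel-type kernel of the monochromatic wave, which is $\ll(\lambda|x-y|)^{-1}$; again the averaged equidistribution Lemma~\ref{IntLemma5.6} is what prevents pathological non-decay. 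Hence $\operatorname{Var}(Y)=o((\mathbb E Y)^2)$, and Chebyshev gives $Y\gg\lambda^3$ with probability $\to1$. Since distinct stable cubes contribute distinct nodal components, $\#\{\text{components of }N\}\ge Y\gg\lambda^3$, completing the lower bound and hence the theorem.

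The main obstacle I expect is establishing the local Gaussian-field limit with \emph{enough uniformity}: one must control the spectral measure $\mu_\lambda=\frac1{\#\mathcal E}\sum_{\xi\in\mathcal E}\delta_{\xi/\lambda}$ not merely in the weak-$*$ sense (Duke) but finely enough that the finite-dimensional covariance matrices of $(\varphi_\lambda, \nabla\varphi_\lambda, \ldots)$ at $O(1)$ rescaled points are uniformly non-degenerate, so that the probability of the deterministic stability event stays bounded below and the small-ball/density estimates are uniform in $\lambda$ and in the location $z$. This is where the arithmetic of $\mathcal E$ --- Duke's equidistribution together with the mean-square cap bound of Lemma~\ref{IntLemma5.6} (and, if needed, the cap bounds \eqref{eq8.2}) --- does real work, and it is the reason the clean statement requires $E\not\equiv 0,4,7\pmod 8$; the analogous $d\ge4$ statement of Theorem~\ref{IntTheorem7} is easier precisely because $\#\mathcal E$ is then of the expected size $\sim\lambda^{d-2}$ with genuine equidistribution.
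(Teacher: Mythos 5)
Your overall strategy (Nazarov--Sodin adapted to $\mathbb T^3$, with Duke/Linnik equidistribution supplying the arithmetic input and the condition $E\neq 0,4,7\bmod 8$) is in the right family, but as written it has two genuine gaps, both at the points you flag only in passing. First, you assert that the local ``stability'' event (positive on a small central ball, negative on a concentric sphere) has probability $p_0>0$ for the limiting monochromatic field. That positivity is not free: it is precisely the heart of the matter, and proving it requires a barrier-type argument (e.g.\ noting that $\hat\sigma(|x|)$ lies in the Cameron--Martin space of the limit field and changes sign on spheres). Without that, the first-moment bound $\mathbb E Y\gg\lambda^3$ is not established. Second, your variance step rests on the claim that the toral covariance kernel $\frac1{\#\mathcal E}\sum_{\xi\in\mathcal E}e((x-y)\cdot\xi)$ decays like $(\lambda|x-y|)^{-1}$; this is false uniformly in $x-y$ on the torus (there are special separations, e.g.\ rational points of small height, where the exponentials interfere constructively and the kernel does not decay), and Lemma~\ref{IntLemma5.6} gives a mean-square cap count, not pointwise kernel decay. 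So the concentration statement $\operatorname{Var}(Y)=o((\mathbb E Y)^2)$ is unsubstantiated; repairing it is possible but requires substantially more machinery than you indicate.

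For comparison, the paper's proof sidesteps both issues and is considerably lighter. It takes the single explicit barrier $F_1(x)=\frac{\sqrt2}{\sqrt{|\mathcal E|}}\sum_{\xi\in\mathcal E}\cos 2\pi x\cdot\xi$ as one element of an orthonormal rotation of the Gaussian ensemble, so that $\psi^\omega=\frac1{\sqrt N}g_1(\omega)F_1+G^\omega$ with $g_1$ independent of $G^\omega$. Equidistribution of $\mathcal E/\lambda$ (this is where $E\neq0,4,7\bmod 8$ enters) gives $F_1(x)\approx\sqrt N\,\hat\sigma(\lambda|x|)$ at scale $1/\lambda$, hence $F_1(0)=\sqrt N$ and $F_1<-c\sqrt N$ on a sphere of radius $r\sim1/\lambda$; the remainder $G^\omega$ is controlled on that ball by an $L^1$ bound after writing $G^\omega=G^\omega*\eta_\lambda$ with $\hat\eta_\lambda\equiv1$ on $\{|\xi|=\lambda\}$ --- no $C^k$ local limit theorem and no covariance decay are needed. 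Conditioning on $g_1$ large then yields a nodal component in each $1/\lambda$-ball with fixed positive probability, and translation invariance of the ensemble plus Courant's upper bound give the stated $\sim\lambda^3$ count for a `typical' eigenfunction; no second-moment concentration is proved or needed for the (deliberately informal) statement. If you want your stronger ``probability $\to1$'' conclusion, you must either prove the variance bound by other means or weaken the claim to match the paper's.
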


%\noindent{\sl Proof.}
\begin{proof}
In \cite{N-S}, the corresponding result is proven for the sphere,
based on a `barrier' argument. It turns out that the same method can
be easily adapted to the torus $\mathbb T^d$, at least when $d\geq
3$, to produce the required lower bound (the upper bound follows
from Courant's nodal domain theorem).

First, denoting $X_\lambda = \text{ span }  \{\vp; -\Delta\vp=
\lambda^2\vp\}$ and $P(X_\lambda)$ the corresponding projective
space,
 a generic element of $P(X_\lambda)$ is represented by a Gaussian random variable
\be\label{11.9} \vp^\omega(x) =\frac 1{|\mathcal E|^{\frac 12}}
\sum_{\xi\in\mathcal
  E} \big(g_\xi(\omega)\cos 2\pi x\cdot \xi+h_\xi (\omega) \sin 2\pi x\cdot
\xi) \ee with $\mathcal E\cap(-\mathcal E)=\phi$, $\mathcal E\cup
(-\mathcal E)= \{\xi\in\mathbb Z^d: |\xi|=\lambda\}$ and $\{g_\xi\},
\{ h_\xi\}$ independent, real, normalized Gaussian random variables.

Denoting \be\label{eq11.10} \{f_j\}=\{\sqrt 2\cos 2\pi x\cdot \xi,
\quad \sqrt 2\sin 2\pi x\cdot \xi : \xi\in\mathcal E\} \ee rewrite
$\vp^\omega$ as \be\label{eq11.11} \vp^\omega = \frac
1{\sqrt{2|\mathcal E|}} \sum_{1\leq j\leq 2|\mathcal E|} g_j(\omega)
f_j \ee where $\{g_j\}$ are as above.

Denote $N= 2|\mathcal E|$ and let $T$ be an $N\times N$ orthogonal
matrix. Defining \be\label{eq11.12} F_i(x) =\sum^N_{j=1} T_{ij}
f_j(x) \ee the Gaussian random variable $\vp^\omega$ has the same
distribution as \be\label{eq11.13} \psi^\omega=\frac 1{\sqrt N}
\sum^N_{j=1} g_j(\omega) F_j \ee (by invariance of the Gaussian
ensemble under the orthogonal group).

Choose $T$ with \be\label{eq11.14}
T_{1j} =\begin{cases} \frac 1{\sqrt{|\mathcal E|}} &\text { if $f_j$ is even}\\
0 &\text { if $f_j$ is odd}.
\end{cases}
\ee Hence \be\label{eq11.15} F_1(x) =\frac{\sqrt 2}{\sqrt{|\mathcal
E|}} \sum_{\xi\in\mathcal E} \cos 2\pi x\cdot  \xi \ee that we use
as our `barrier' function.

Rewrite \be\label{eq11.16} \psi^\omega =\frac 1{\sqrt N} g_1
(\omega) F_1+G^\omega \ee with $G^\omega$ independent of $g_1$.

%We rely on Linnik's equidistribution theorem for lattice points in
%large caps (see \cite{Sar}).
%\marginpar{change}

Taking in \eqref{eq11.15} $\Vert x\Vert \lesssim \lambda^{-1}$, it
follows from the equidistribution of lattice  points on the sphere
(this is why we impose the condition $E\neq 0,4,7 \bmod 8$, see
\S~\ref{sec:9.1}) that \be\label{11.17}
\begin{aligned}
F_1(x) &= \sqrt N\Big\{\int_{S^2} (\cos 2\pi \lambda
x\cdot  \zeta)\sigma(d\zeta)+ O(\lambda^{-\ve})\Big\}\\
& =\sqrt N \big(\hat\sigma(\lambda|x|)+ O(\lambda^{-\ve})\big).
\end{aligned}
\ee Therefore, there is some $r\sim \frac 1\lambda$ such that (for
some constant $c>0$) \be\label{eq11.18} F_1 (x) < -c\sqrt N \text {
for } |x| =r. \ee Also, clearly \be\label{eq11.19} F_1(0) =\sqrt N.
\ee Assume we show that for some constant $C_1$, \be\label{eq11.20}
\max_{|x|\leq r} |G^\omega(x)|<C_1 \ee holds with probability at
least $\frac 12$ in $\omega$.

Since $g_1(\omega)$ is independent of $G^\omega$, it follows from
\eqref{eq11.18}, \eqref{eq11.19} \be\label{eq11.21} \psi^\omega(0)
\geq g_1(\omega)-|G^\omega(0)|> C_2 - C_1> 1 \ee and for $|x|=r$
\be\label{eq11.22} \psi^\omega(x) < -c g_1(\omega)+\max_{|x|=r}
|G^\omega(x)|< -cC_2+C_1< -1 \ee with probability at least $\frac 12
e^{-C_2^2} > c_3>0$ in $\omega$. For such $\omega$, since
$\psi^\omega$ satisfies \eqref{eq11.21}, \eqref {eq11.22}, the ball
$B(x, r)\subset\mathbb T^3$ will necessarily contain a nodal
component.

Partitioning $\mathbb T^3$ in boxes $Q_\alpha$ of size $\sim\frac
1\lambda$ and observing that $\vp^\omega$ and  any translate
$\vp^\omega (\cdot +a), a\in\mathbb T^3$, are random variables with
the same distribution, the preceding implies that, with large
probability in $\omega$, the nodal set $N_\omega$ of $\vp^\omega$
satisfies
$$
\# \{\alpha; Q_\alpha \text { contains a component of $N_\omega$}
\}\sim \lambda^3
$$
and hence $\vp^\omega$ has at least $\sim \lambda^3$ nodal
components.

It remains to justify \eqref{eq11.20}.

Take a radial bumpfunction $\eta$ on $\mathbb R^3$ such that
\be\label{eq11.23} \eta(x) \sim e^{-|x|} \text { and } \hat\eta (x)
=1 \text { for } |x|=1 \ee and set $\eta_\lambda(x) =\lambda^3
\eta(\lambda x)$. Thus $\hat\eta_\lambda (x) =1$ for $|x|=\lambda$
and therefore \be\label{eq11.24} G^\omega= G^\omega* \eta_\lambda.
\ee Since $\int\eta_\lambda=\int\eta<C$, clearly \be\label{eq11.25}
\int_{\mathbb R^3} |G^\omega(x)|\eta_\lambda (x) dx < C \ee with
probability at least $\frac 12$ in $\omega$.

Let $|y|\leq r$. By \eqref{eq11.24}, $|G^\omega(y)|\leq
\int|G^\omega(x)|\eta_\lambda (x-y) dx$, and since $\eta_\lambda
(x-y)\sim\eta_\lambda (x)$ for $|y|\lesssim \frac 1{\lambda}$ by the
choice of $\eta$ in \eqref {eq11.23}, \eqref {eq11.20} follows from
\eqref {eq11.25}.

This completes the proof of Theorem \ref{Theorem7}.
\end{proof}

\appendix
\section{ Lattice points in caps $(d\geq 4)$}\label{sec:appendix}

Let $N=R^2 \in\mathbb Z$. We show the following \be\label{app1}
|\mathcal E_R\cap C_r|\lesssim
\begin{cases} \frac{r^{d-1}}{R} + r^{d-3}  &\text { if } \quad d\geq 7\\[8pt]
\frac {r^5}R +(\log \omega(N))^2 r^3  &\text { if } \quad d=6\\[8pt]
\frac {r^4}R+ r^{2+\ve} R^\ve & \text { if } \quad d=5\\[8pt]
\frac {r^3} R (\log \omega (N))^2 + r^{\frac 32+\ve}R^\ve  &\text {
if } \quad d=4\end{cases} \ee Let $N=R^2$ and $b=(b_1, \ldots,
b_d)\in \mathcal E\cap C_r$. Then \be\label{app2}
\begin{aligned}
|\mathcal E_R\cap C_r|&\leq |\{x\in\mathbb Z^d| x_1^2+\cdots+ x_d^2 =N \text { and } |x_j-b_j|\leq r\}|\\
&\leq \Big|\Big\{y\in\mathbb Z^d\cap B_r\Big| \sum^d_{j+1} y_j^2+
2b_jy_j=0\Big\}\Big|.
\end{aligned}
\ee

Let $\gamma$ be a smooth bump function. Express \eqref{app2} by the
circle method as \be\label{app3} \int_\mathbb T \prod^d_{j=1}
\Big[\sum \gamma\Big(\frac yr\Big) e\big(( y^2+2b_jy)t\big)\Big] dt.
\ee Denote \be\label{app4} G(t, \vp)=\sum_y \gamma\Big(\frac yr\Big)
e\Big(y^2t+ y\vp\Big). \ee Let \be\label{app5} t=\frac aq+\beta, q<
r, (a, q)=1 \text { and } |\beta|<\frac 1{qr}. \ee By Poisson
summation \be\label{app6} G(t, \vp)\sim\sum_{m\in\mathbb Z} S(a, m;
q)J(\vp,\beta, m; q) \ee where \be\label{app7} S(a, m; q)= \frac 1q
\sum^{q-1}_{k=0} e_q (k^2 a-km) \ee and \be\label{app8} J(\vp,
\beta, m; q)=\int_{\mathbb R} \gamma\Big(\frac y r\Big)
e\Big(\Big(\vp+\frac mq\Big) y+ y^2\beta\Big) dy. \ee Note that
certainly \be\label{app9} |J(\vp, \beta, m; q)|\lesssim \min\Big( r,
\frac 1{\sqrt {|\beta|}}\Big) \ee and also (for appropriate choice
of $\gamma$) \be\label{app10} |J|\lesssim r \, e^{-(r|\vp+\frac
mq|)^{1/2} } \ \text { if } \ \Big|\vp+\frac mq\Big|> 2r|\beta|. \ee
In particular, it follows from \eqref{app10} that \eqref {app6} only
involves a few significant terms.

Substitution of \eqref{app6} in \eqref {app3} gives \be\label{app11}
\sum_{m_1, \ldots, m_d}\Big\{\prod^d_{j=1} S(a, m_j- 2ab_j, q)
\Big\} \Big\{ \prod^d_{j=1} J(2b_j\beta, \beta, m_j; q)\Big\} \ee
where it remains to perform the sum over $(a; q)=1$, integrate in
$|\beta|< \frac 1{rq}$ and sum over $q< r$.

Since \be\label{app12} S(a, m; q)= S(1, 0, q) \Big(\frac aq\Big) e_q
(m^2 a') \quad a'a \equiv 1 (\mod q) \ee the first factor in
\eqref{app11} equals \be\label{app13} S(1, 0, q)^d \Big(\frac
aq\Big)^d \ e_q\Big(a' \Big(\sum_j(m_j-2ab_j)^2\Big)\Big)\sim S(1,
0, q)^d \ \Big(\frac aq\Big)^d \ e_q (4aN+a'|m|^2). \ee Summing
\eqref{app13} over $a$, $(a, q)=1$ (the sum factors over the prime
factorization of $q$) and applying Weil's bound on the Kloosterman
sum ($d$ even) or Sali\'e sum ($d$ odd), gives the bound
\be\label{app14}
q^{-\frac d2}\cdot \begin{cases} \sqrt q\tau (q) \qquad &d  \ \text { odd}\\
\sqrt q \tau(q) (q, N)^{\frac 12} \qquad &d  \ \text {
even}.\end{cases} \ee Hence \be\label{app15}
\eqref{app3}\leq\sum_{q\leq r}\frac {(q, N)^{\frac 12}
\tau(q)}{q^{\frac {d-1} 2}} \sum_{m_1, \ldots, m_d}
\int\prod^d_{j=1} |J(2b_j\beta, \beta, m_j; q)| d\beta. \ee Since
$|b|=R$, we may  assume $|b_1|\sim R$.

From \eqref{app9}, \eqref{app10}) \be\label{app16} \sum_m|J(\vp,
\beta, m; q)|\lesssim \frac 1{\sqrt \beta} + re^{-(\frac rq)^{\frac
12} }\lesssim \frac 1{\sqrt \beta} \ee since $|\beta|< \frac 1{rq}$.
Hence \be\label{app17} \eqref{app15} \leq \sum_{q\leq r} \frac{(q,
N)^{\frac 12} \tau(q)}{q^{\frac {d-1}2}} \ \sum_{m_1}
\int\frac{|J(2b_1 \beta, \beta, m_1, q)|}{(\sqrt\beta+\frac
1r)^{d-1}}d\beta. \ee From \eqref{app10} \be\label{app18} |J(2b_1
\beta, \beta, m_1, q)|< r \, e^{-(r|2b_1\beta+\frac{m_1} q|)^{\frac
12}} \text { if } \Big|2b_1\beta+ \frac {m_1} q\Big|
> 2r|\beta|
\ee and hence \be\label{app19} \sum_{m_1} |J(2b_1 \beta,\beta, m_1,
q)|< r\, e^{-(\frac rq \Vert 2b_1q\beta\Vert)^{\frac 12}} \text { if
} \ \Vert 2b_1 q\beta\Vert > 2rq.|\beta|. \ee We use this property
to get a better estimate.

Write \be\label{app20} \beta =\frac \ell{2b_1q} +\beta',
|\beta'|<\frac 1{4|b_1|q} \text { and } \ell \in\mathbb Z, |\ell
|\lesssim \frac{|b_1|}r \sim \frac Rr. \ee Thus \eqref{app19}
implies \be\label{app21} \sum_m|J(2b_1\beta, \beta, m; q)| \lesssim
r \, e^{-(rR|\beta'|)^{\frac 12}} \text { if } |\beta'|> 10
\frac{r\ell}{R^{2} q}. \ee

\noindent Contribution of $|\beta|<\frac 1{r^2}$:

For such $\beta$,
 from \eqref{app20}, $|\ell|\lesssim \frac {Rq}{r^2}$ and
 \eqref{app21} will hold if $|\beta'|\gg \frac 1{Rr}$.

Since for $|\beta'|\lesssim \frac 1{Rr}$, \eqref{app21} is certainly
true, it is always valid.

The $m_1$-sum in \eqref{app17} is therefore bounded by
\be\label{app22}
\begin{aligned}
&r^{d-1} \Big(1+\frac{Rq}{r^2}\Big) r\int e^{(-rR|\beta'|)^{\frac 12}} d\beta'\\
&< r^{d-1} \Big( \frac 1R+\frac q{r^2}\Big)
\end{aligned}
\ee This gives the contribution \be\label{app23} \frac{r^{d-1}}R
\Big(\sum_{q\leq r} \frac{\tau(q) (q, N)^{\frac 12}}{q^{\frac{d-1}
2}}\Big) + r^{d-3} \Big(\sum_{q\leq r}\frac{\tau(q) (q, N)^ {\frac
12}}{ q^{\frac{d-3}2}}\Big). \ee

\noindent Contribution of $|\beta|>\frac 1{r^2}$:

Let $|\beta|\sim\frac B{r^2}$ with $B<\frac rq$. Then $|\ell|\sim
\frac {Rq B}{r^2}$ and \eqref{app21} will hold if $|\beta'|\gtrsim
\frac B{rR}$.

Using also \eqref {app16} the contribution in \eqref{app17} is at
most \be\label{app24}
\begin{aligned}
&\sum_{q\leq r} \frac {(q, N)^{\frac 12} \tau (q)}{q^{\frac
{d-1}{2}}} \Big( \frac {r^2}B\Big)^{\frac {d-1}2}
\Big(1+\frac {RqB}{r^2}\Big) \Big(\frac B{rR} \, \frac r{\sqrt B} + r\int e^{-r(R|\beta'|)^{\frac 12}} d\beta'\Big)\\
&\leq \frac{r^{d-1}}R B^{-\frac {d-2}2} \sum_{q\leq r} \frac{(q,
N)^{\frac 12} \tau(q)}{q^{\frac {d-1} 2 }} +r^{d-3} \sum_{q\leq r}
\frac {\tau(q)(q, N)^{\frac 12}} {q^{\frac{d-3}2}} B^{-\frac{d-4}2}.
\end{aligned}
\ee Summing \eqref{app24} over dyadic values of $B<\frac rq$ gives
\eqref{app23}, except if $d=4$, where in the second sum there is an
additional $\log\frac rq$ factor.

It remains to estimate the $q$-sums in \eqref{app23}
\be\label{app25} \sum_{q\leq r} \frac {(q, N)^{\frac 12}
\tau(q)}{q^{\frac {d-1}2}} \leq\Big(\sum_{\substack{ c|N\\ c\leq r}}
\frac {\tau(c)}
{c^{\frac d2-1}}\Big) \Big(\sum_{q_1< r} \frac {1}{q_1^{\frac{d-1}2 -\ve}}\Big)<\begin{cases} C \ \text { for } \ d\geq 5\\
C \, (\log\, \omega(N))^2 \ \text { for } \ d=4\end{cases} \ee and
\be\label{app26} \sum_{q\leq r} \frac {(q, N)^{\frac 12}\tau(q)}
{q^{\frac {d-3}2}} \leq \Big(\sum _{\substack { c|N\\ c\leq r}}
\frac {\tau(c)}{c^{\frac d2 -2}}\Big) \Big( \sum_{q_1< r} \frac
{\tau(q_1)}{q_1^{\frac {d-3} 2}} \Big) \ll
\begin{cases} C \ \text { for } d\geq 7\\
C\, (\log\, \omega (N))^2 \ \text { for } d=6\\
R^\ve \ \text { for } d=5.\end{cases} \ee while for $d=4$, we have
\be\label{ass27} \sum_{q\leq r} \frac {(q, N)^{\frac 12}
\tau(q)}{q^{\frac 12}} \log\frac rq \ll r^{\frac 12+\ve}R^\ve. \ee
This gives \eqref{app1}.


\begin{thebibliography}{AAAAA}

\bibitem [A]{A}
G.E.~Andrews, {\em A lower bound for the volume of strictly convex
bodies with many boundary lattice points}. Trans. Amer. Math. Soc.
106 (1963), 270--279.

\bibitem [B] {B} J.~Bourgain, {\sl Geodesic restrictions and
  $L^p$-estimates for eigenfunctions on Riemannian surfaces}, in
``Linear and Complex Analysis'', Translations AMS, Ser. 2, Vol. 226
  (2009), 27--36.

\bibitem [B2] {B2} J.~Bourgain, {\sl Eigenfunction bounds for compact manifolds with integrable geodesic flow}, preprint IHES (1993).

\bibitem [BR] {BR-CRAS}
J.~Bourgain and Z.~Rudnick {\em Restriction of toral eigenfunctions
to
  hypersurfaces}, C.R. Math. Acad. Sci. Paris 347 (2009), no 21--22,
  1249--1253.


\bibitem [B-R1]{B-R1} J.Bourgain, Z.~Rudnick, {\sl On the nodal sets
  of toral eigenfunctions}, Inventiones Math.  185   (2011),
  199-237.

\bibitem [BR2]{BR2} J.~Bourgain, Z.~Rudnick,
{\sl On the geometry of the nodal lines of eigenfunctions of the
two-dimensional torus}, accepted for publication in Annales Henri
Poincare. arXiv:1012.3843v2.

\bibitem [BRS]{BRS} J.~Bourgain, Z.~Rudnick, P.~Sarnak,
{\em Local statistics of lattice points on the sphere}, in
preparation.

\bibitem[BGT] {BGT}  N.~Burq, P.~G\'erard, N.~Tzvetkov, {\sl Restrictions of the Laplace-Beltrami eigenfunctions to submanifolds}, Duke Math. J.
138 (3) (2007), 445--486.

\bibitem [CC]{CC} J.~Cilleruelo, A.~C\'ordoba, {\sl Trigonometric polynomials and lattice points}, Proc. Amer. Math. Soc. 115 (4) (1992),
899--905.


\bibitem [CG]{CG} J.~Cilleruelo, A.~Granville, {\sl Lattice points on circles, squares in arithmetic progressions and sumsets of squares}, in
Additive Combinatorics, in: CRM Proc. Lecture Notes, vol. 43, Amer.
Math. Soc, Proidence, Ri, 2007, 241--262.

\bibitem [D]{D} W.~Duke, {\sl Hyperbolic distribution problems and half-integral weight Maass-forms}, Invent.~Math. 92 (1988), 73--90.


\bibitem[D-SP]{Duke-SP}
W.~Duke and R.~Schulze-Pillot, {\em Representation of integers by
positive ternary quadratic forms and equidistribution of lattice
points on ellipsoids}.  Invent. Math.  99  (1990),  no. 1, 49--57.

\bibitem [D-F]{D-F} H.~Donnelly, C.~Fefferman, {\sl Nodal sets of eigenfunctions of Riemannian manifolds}, Invent. Math. 93 (1988), no 1,
161--183.




\bibitem [G-F]{G-F} E.~Golubeva, O.~Fomenko, {\sl Asymptotic
  distribution of lattice points on the three-dimensional sphere},
J.~Soviet Math. 52 (1990), no 3, 3036--3098.

\bibitem[G-F2]{G-F2} E. Golubeva, O.~Fomenko, {\sl On Y.V.~ Linnik's conjecture connected with the distributionof lattice points on the
three-dimensional sphere}, J.~Math.~Sci. 70 (1994), no 6, 2077-2079.

\bibitem[G-S]{G-S} V.~Guillemin, S.~Sternberg, {\sl Geometric asymptotics}, Math. Surveys 14, AMS 1977.

\bibitem [H]{H} R.~Hu, {\sl $L^p$ norm estimates of eigenfunctions
  restricted to submanifolds}, Forum Math.   21(6) (2009), 1021--1052.

\bibitem[I]{Iwaniec}
H. Iwaniec, Fourier coefficients of modular forms of half-integral
weight. Invent. Math. 87 (1987), no. 2, 385--401.

\bibitem [J]{Jar} V.~Jarnik, {\sl \"Uber die Gitterpunkte auf konvexen Kurven}, Math. Z. 24 (1) (1926), 500-518.

\bibitem [N-S]{N-S} F.~Nazarov, M.~Sodin, {\sl On the number of nodal domains of random spherical harmonics}, Amer. J. Math. 131 (2009),
1337--1357.

\bibitem [So1]{So1} C.~Sogge, {\it Concerning the $L^p$-norm of spectral clusters for second-order elliptic operators on compact manifolds},
J.~Funct. Anal. 77 (1988), no 5 123--138.

\bibitem[So2]{So2} C.~Sogge, {\em Kakeya-Nikodym averages and $L^p$-norms of
eigenfunctions}, to appear in Tohoku Mathematical Journal.
arXiv:0907.4827v6.


\bibitem [Sar]{Sar} P.~Sarnak, {\it some applications of modulus forms}, Cambridge Tracts in Math. 99, Cambridge UP (1990).

\bibitem [Sar2] {Sar2} P.Sarnak, private communication.


\bibitem [T-Z]{T-Z} J.~Toth, S.~Zelditch, {\sl Counting nodal times which touch the boundary of an analytic domain}, J.~Diff. Geom. 18 (2009),
no 3, 649--686.



\end{thebibliography}
\end{document}